\def\ess~inf{\mathop{\rm ess~inf}}
\numberwithin{equation}{section}
\newenvironment{key words}{\emph{\texttt{Keywords}}\mbox{  }}{ }
\newtheorem{theorem}{Theorem}[section]
\newtheorem{lemma}[theorem]{Lemma}
\renewenvironment{proof}{\noindent{\textbf{Proof.}}}{\hfill$\Box$}
\theoremstyle{remark}
\theoremstyle{plain}
\newcommand{\Rmnum}[1]{\expandafter\@slowromancap\romannumeral #1@}
\begin{document}

\renewcommand{\headrulewidth}{0.5pt}
\renewcommand{\thefootnote}{\fnsymbol {footnote}}
\title
{\textbf{On convergence properties for generalized Schr\"{o}dinger operators along tangential curves} \thanks{This work is supported by Natural Science Foundation of China (No.11871452; No.11601427);
China Postdoctoral Science Foundation (No. 2021M693139); the Fundamental Research Funds for the Central Universities (No. E1E40109).}}

\author{{Wenjuan Li and Huiju Wang }  }
\date{}
 \maketitle

 {\bf Abstract:}  In this paper, we consider convergence properties for  generalized Schr\"{o}dinger operators along tangential curves in $\mathbb{R}^{n} \times \mathbb{R}$ with less smoothness comparing with Lipschitz condition. Firstly, we obtain sharp convergence rate for generalized Schr\"{o}dinger operators with polynomial growth along tangential curves in $\mathbb{R}^{n} \times \mathbb{R}$, $n \ge 1$. Secondly, it was open until now on pointwise convergence of solutions to the Schr\"{o}dinger equation along non-$C^1$ curves in $\mathbb{R}^{n} \times \mathbb{R}$, $n\geq 2$, we obtain the corresponding results along  some tangential curves when  $n=2$ by the broad-narrow argument and polynomial partitioning. Moreover, the corresponding convergence rate will follow. Thirdly, we get the  convergence result along a family of restricted tangential curves  in $\mathbb{R} \times \mathbb{R}$. As a consequence, we obtain the sharp  $L^p$-Schr\"{o}dinger maximal estimates along tangential curves  in $\mathbb{R} \times \mathbb{R}$.

{\bf Keywords:} Schr\"{o}dinger operator; Pointwise convergence; Convergence rate; Tangential curve.

{\bf Mathematics Subject Classification}: 42B20, 42B25, 35S10.

\tableofcontents

\section{\textbf{Introduction}\label{Section 1}}

\noindent
Let $P(\xi)$ be a real continuous function defined on $\mathbb{R}^{n}$, $D=\frac{1}{i}(\frac{\partial}{\partial x_{1}}\frac{\partial}{\partial x_{2}},...,\frac{\partial}{\partial x_{n}}).$ $P(D)$ is defined via its real symbol
\[P(D)f(x)= \int_{\mathbb{R}^{n}}{e^{ix \cdot \xi}P(\xi)\hat{f}(\xi)d\xi},\]
where $\hat{f}(\xi)$ denotes the Fourier transform of $f$.

The solution to the generalized Schr\"{o}dinger equation
\begin{equation}\label{Eq1}
\begin{cases}
 \partial_{t}u(x,t)-iP(D)u(x,t) =0 \:\:\:\ x \in \mathbb{R}^{n}, t \in \mathbb{R}^{+},\\
u(x,0)=f \\
\end{cases}
\end{equation}
can be formally written as
\begin{equation}
e^{itP(D)}f(x):= \int_{\mathbb{R}^{n}}{e^{ix \cdot \xi +itP(\xi)} \hat{f} (\xi)d\xi }.
\end{equation}
For example, when $P(\xi) = |\xi|^{2}$, the operator $e^{itP(D)}$ is the celebrated Schr\"{o}dinger operator $e^{it\Delta}$.  The related pointwise convergence problem is to determine the optimal $s$ for which
\begin{equation}
\mathop{lim}_{t \rightarrow 0^{+}} e^{itP(D)}f(x) = f(x)
\end{equation}
almost everywhere whenever $f \in H^{s}(\mathbb{R}^{n})$. For various $P(\xi)$, many experts have made many valuable contributions on the development of this subject, see the related articles \cite{B1, B2, B}, \cite{C}, \cite{CK}, \cite{DK,DG,DN,DL,DGL,DKWZ,DZ,Guth1,Guth2}, \cite{L,LR,LW,LWY,LR1,LR2,LR3,Miao,MVV}, \cite{RVV}, \cite{SS,S,S3,S2,Trebels,SPS,TV,V,W,WS,Zh}.

There are various interesting generalizations of the pointwise  convergence problem. The readers can see \cite{LW} and references therein.  One of such generalizations is to consider convergence properties for generalized Schr\"{o}dinger operators along curves $(\gamma(x,t) ,t)$ instead of the vertical line $(x,t)$. Here $\gamma(x,t)$ maps $\mathbb{R}^{n} \times \mathbb{R}$ to $\mathbb{R}^{n}$, $\gamma(x,0) =x$.   When $\gamma(x,t)$ is a $C^{1}$ function in $t$, the convergence properties are very similar with that in the vertical case, see \cite{LR}. However, much less is known when $\gamma(x,t)$ is just  $\alpha$-H\"{o}lder continuous in $t$, $0< \alpha <1$. Such  curves ($\gamma(x, t), t$) are called tangential curves since as $t \rightarrow 0$, $(\gamma(x,t),t)$ approaches $(x,0)$ tangentially to the hyperplane $\{(y,t)\in \mathbb{R}^n \times \mathbb{R}: t=0\}$. One can see figure 3 in Subsection \ref{B} below for the geometry.

In this paper, we mainly consider four kinds of convergence properties for (generalized) Schr\"{o}dinger operators along tangential curves:

 \textbf{(A)} a.e. convergence rate for generalized Schr\"{o}dinger operators along tangential  curves in $\mathbb{R}^{n} \times \mathbb{R}$, $n \ge 1$;

\textbf{(B)} a.e. pointwise convergence for Schr\"{o}dinger operator along tangential  curves in $\mathbb{R}^{2} \times \mathbb{R}$;

\textbf{(C)} a.e. convergence for Schr\"{o}dinger operator along a family of restricted tangential curves in $\mathbb{R} \times \mathbb{R}$;

\textbf{(D)} sharp  $L^p$-Schr\"{o}dinger maximal estimates along tangential curves in  $\mathbb{R}\times \mathbb{R}$.

In the rest of this introduction, we will introduce (A), (B), (C), (D) in Subsection \ref{A}, Subsection \ref{B}, Subsection \ref{C}, Subsection \ref{D}, respectively. Our results obtained in Subsection \ref{C} and  Subsection \ref{D} can be extended to more general operators, such as elliptic operators and fractional operators. It will appear in our subsequent articles.

\subsection{Convergence rate for  generalized Schr\"{o}dinger operators and applications}\label{A}

The problems on a.e. convergence rate of important operators (such as Fourier multipliers, certain integral means and summability means of Fourier integrals) were investigated in a lot of works  \cite{Carbery, MW, Trebels, CFW, LW} etc. In \cite{LW}, the authors studied the relationship between smoothness of the functions and the convergence rate for generalized Schr\"{o}dinger operators with polynomial growth along the curves in $\mathbb{R}^{n} \times \mathbb{R}$.

Denote by $B(x_{0},r)$ the ball with center $x_{0} \in \mathbb{R}^{n}$ and radius $r \lesssim 1$. Suppose that $\gamma(x,t)$ satisfies
\begin{equation}\label{Holder}
|\gamma(x,t)-\gamma(x,t^{\prime})| \le C|t-t^{\prime}|^{\alpha}, \:\ 0<\alpha \le 1
\end{equation}
uniformly for $x \in B(x_{0},r)$ and $t,t^{\prime} \in [0,1]$, $\gamma(x,0)=x$.
\begin{theorem}\cite{LW}\label{theorem1.1}
If there exist real numbers $m \ge 1$ and $s_{0} \ge 0$ such that
\begin{equation}\label{phase}
|P(\xi)| \lesssim |\xi|^{m},\hspace{0.2cm} |\xi| \rightarrow +\infty,
\end{equation}
and for each $s > s_{0}$,
\begin{equation}
\biggl\|\mathop{sup}_{0<t<1} |e^{itP(D)}(f)(\gamma(x,t))|\biggl\|_{L^{p}(B(x_{0},r))} \lesssim \|f\|_{H^{s}(\mathbb{R}^{n})},  \:\ p \ge 1,
\end{equation}
then for all $f \in H^{s+\delta}(\mathbb{R}^{n})$, $0 \le \delta <m$,
\begin{equation}\label{classicalresult}
e^{itP(D)}(f)(\gamma(x,t)) - f(x) = o(t^{\alpha \delta /m}), \:\ a.e. \:\:\ x \in B(x_{0},r) \text{\quad as \quad} t \rightarrow 0^{+}.
\end{equation}
\end{theorem}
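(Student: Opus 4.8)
The plan is to reduce the little-$o$ conclusion \eqref{classicalresult} to one quantitative input, the \emph{weighted} maximal estimate
\begin{equation}\label{weightedmax}
\Bigl\|\,\sup_{0<t<1}\,t^{-\alpha\delta/m}\bigl|e^{itP(D)}g(\gamma(x,t))-g(x)\bigr|\,\Bigr\|_{L^{p}(B(x_{0},r))}\lesssim\|g\|_{H^{s+\delta}(\mathbb{R}^{n})},\qquad 0\le\delta<m,
\end{equation}
valid for all $g\in H^{s+\delta}(\mathbb{R}^{n})$ with the same $p$ and the same $s>s_{0}$. Granting \eqref{weightedmax}, one finishes by density: fix $\varepsilon>0$ and write $f=f^{\flat}+f^{\sharp}$ with $\widehat{f^{\flat}}$ compactly supported and $\|f^{\sharp}\|_{H^{s+\delta}(\mathbb{R}^{n})}<\varepsilon$. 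Since $\widehat{f^{\flat}}$ and $|\xi|\widehat{f^{\flat}}$ are integrable, $e^{itP(D)}f^{\flat}$ has gradient bounded uniformly in $t\in(0,1)$ and $|e^{itP(D)}f^{\flat}(x)-f^{\flat}(x)|\le t\int|P(\xi)||\widehat{f^{\flat}}(\xi)|\,d\xi\lesssim t$; combined with \eqref{Holder} this gives $|e^{itP(D)}f^{\flat}(\gamma(x,t))-f^{\flat}(x)|\lesssim t^{\alpha}$, hence $t^{-\alpha\delta/m}|e^{itP(D)}f^{\flat}(\gamma(x,t))-f^{\flat}(x)|\lesssim t^{\alpha(m-\delta)/m}\to0$ uniformly in $x$, the exponent being positive precisely because $\delta<m$. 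For the remainder, \eqref{weightedmax} bounds $\sup_{0<t<1}t^{-\alpha\delta/m}|e^{itP(D)}f^{\sharp}(\gamma(x,t))-f^{\sharp}(x)|$ by $\lesssim\varepsilon$ in $L^{p}(B(x_{0},r))$, so Chebyshev's inequality gives
\begin{equation*}
\bigl|\{x\in B(x_{0},r):\ \limsup_{t\to0^{+}}t^{-\alpha\delta/m}|e^{itP(D)}f(\gamma(x,t))-f(x)|>\lambda\}\bigr|\lesssim\lambda^{-p}\varepsilon^{p}\qquad(\lambda>0);
\end{equation*}
letting $\varepsilon\to0$ and then $\lambda\to0$ yields \eqref{classicalresult}.

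To prove \eqref{weightedmax}, decompose $g=\sum_{j\ge0}g_{j}$ with $\widehat{g_{0}}$ supported where $|\xi|\lesssim1$ and $\widehat{g_{j}}$ supported where $|\xi|\sim2^{j}$; the block $g_{0}$ is handled exactly as $f^{\flat}$ above. For $j\ge1$ set $E_{j}(x,t)=e^{itP(D)}g_{j}(\gamma(x,t))-g_{j}(x)$ and play two bounds against each other. \emph{Smoothness bound.} Writing $E_{j}=\bigl[e^{itP(D)}g_{j}(\gamma(x,t))-g_{j}(\gamma(x,t))\bigr]+\bigl[g_{j}(\gamma(x,t))-g_{j}(x)\bigr]$, the first bracket is controlled by $|e^{itP(\xi)}-1|\le t|P(\xi)|\lesssim t2^{jm}$ on the frequency support (here \eqref{phase} and the continuity of $P$ give $|P(\xi)|\lesssim|\xi|^{m}$ for $|\xi|\gtrsim1$), and the second by $|\gamma(x,t)-x|\lesssim t^{\alpha}$ together with $\|\nabla g_{j}\|_{L^{2}}\lesssim2^{j}\|g_{j}\|_{L^{2}}$; since $t\le t^{\alpha}$ and $m\ge1$, a Bernstein/H\"older step on the fixed ball yields $\|\sup_{0<t<1}t^{-\alpha}|E_{j}(\cdot,t)|\|_{L^{p}(B(x_{0},r))}\lesssim2^{j\mu}\|g_{j}\|_{L^{2}}$ with $\mu=m+n(1/2-1/p)_{+}$. \emph{Maximal bound.} Using the hypothesis at a fixed level $s'\in(s_{0},s)$ and $g_{j}(x)=\lim_{t\to0^{+}}e^{itP(D)}g_{j}(\gamma(x,t))$, we get $\|\sup_{0<t<1}|E_{j}(\cdot,t)|\|_{L^{p}(B(x_{0},r))}\lesssim2^{js'}\|g_{j}\|_{L^{2}}$. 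Splitting the supremum at a $j$-dependent threshold $\tau_{j}\in(0,1]$ --- the smoothness bound for $t\le\tau_{j}$, where $t^{-\alpha\delta/m}\le t^{-\alpha}\tau_{j}^{\alpha(m-\delta)/m}$, and the maximal bound for $t>\tau_{j}$, where $t^{-\alpha\delta/m}\le\tau_{j}^{-\alpha\delta/m}$ --- and optimizing in $\tau_{j}$ gives $\|\sup_{0<t<1}t^{-\alpha\delta/m}|E_{j}(\cdot,t)|\|_{L^{p}(B(x_{0},r))}\lesssim2^{j\beta}\|g_{j}\|_{L^{2}}$ with $\beta=s'+\tfrac{\delta}{m}(\mu-s')$. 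Since $\beta=(s+\delta)-\bigl[(s-s')+\tfrac{\delta}{m}(s'+m-\mu)\bigr]$ and, as $s'<s$ and $\mu\le s'+m$, the bracket is strictly positive, we have $\beta<s+\delta$; summing in $j$ and applying Cauchy--Schwarz against $\|g\|_{H^{s+\delta}(\mathbb{R}^{n})}^{2}=\sum_{j}2^{2j(s+\delta)}\|g_{j}\|_{L^{2}}^{2}$ proves \eqref{weightedmax}.

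I expect the main obstacle to be exactly that strict positivity, i.e.\ securing a genuine geometric gain over $2^{j(s+\delta)}$ rather than mere equality. Two facts make it work: the maximal hypothesis may be invoked strictly below $s$ (harmless, since it is assumed for \emph{every} $s>s_{0}$), and $\mu\le s'+m$, i.e.\ $n(1/2-1/p)_{+}\le s'$ --- which holds because testing the hypothesis itself on a single frequency-$2^{j}$ wave packet already forces $s_{0}\ge n(1/2-1/p)$. The power of $t$ surviving the balance is $\alpha\delta/m$: the $\alpha$ comes from the H\"older exponent in \eqref{Holder}, the $\delta$ from the extra smoothness, and the $1/m$ from the growth exponent in \eqref{phase}; this is precisely the rate in \eqref{classicalresult}, and is the one expected to be optimal, by the matching wave-packet example. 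The remaining issues are routine and only fix the value of $\mu$: the passage from $L^{2}$ to $L^{p}$ on the fixed ball $B(x_{0},r)$ and the estimate for $\|g_{j}(\gamma(\cdot,t))-g_{j}(\cdot)\|_{L^{p}(B(x_{0},r))}$, which use only $r\lesssim1$, $\gamma(x,0)=x$, and the standing regularity assumptions on $x\mapsto\gamma(x,t)$.
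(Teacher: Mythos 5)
Your high-level architecture — prove a weighted maximal estimate, then deduce almost-everywhere decay by a split-off-a-smooth-piece-plus-Chebyshev argument, with the weighted estimate obtained by Littlewood–Paley, a time threshold $\tau_j$, a smoothness bound for $t\le\tau_j$, and the assumed maximal bound for $t>\tau_j$ — is exactly the structure of the paper's proof (of the stronger Theorem \ref{theorem1.1n}, from which Theorem \ref{theorem1.1} follows). The serious gap is in the smoothness bound itself, and it is precisely where the paper invokes Lemma \ref{lemma2.1n}, which you do not use and cannot dispense with.

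You claim $\bigl\|\sup_{0<t<1}t^{-\alpha}|E_j|\bigr\|_{L^p(B(x_0,r))}\lesssim 2^{j\mu}\|g_j\|_{L^2}$ with $\mu=m+n(1/2-1/p)_+$. The hypotheses give only the H\"older condition \eqref{Holder} on $\gamma$ in $t$; there is \emph{no} bilipschitz assumption in $x$. So for the first bracket $h_j^t(\gamma(x,t))$ with $h_j^t=\int e^{iy\cdot\xi}(e^{itP(\xi)}-1)\widehat{g_j}(\xi)\,d\xi$, you cannot change variables to bring the $L^p$ norm onto $h_j^t$ itself; the composition with $\gamma(\cdot,t)$ must be undone some other way. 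Pointwise ($L^1$ on the Fourier side) costs a full factor $2^{jn/2}$, which would force $\mu\ge m+n/2$ and ruin the bookkeeping whenever $n/2>s$ (e.g.\ the classical case $n=1$, $p=2$, $s_0=1/4$). Your invocation of ``$\|\nabla g_j\|_{L^2}\lesssim 2^j\|g_j\|_{L^2}$'' for the second bracket hints at an $L^2$ route, and that one can indeed be made to work with a Hardy--Littlewood maximal function (the integrand is a fixed function of $x$). But the first bracket genuinely depends on $t$ through $e^{itP(\xi)}$ \emph{and} through $\gamma(x,t)$; one cannot take $\sup_t$ inside a single $L^2\to L^2$ maximal-function bound. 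The paper's Lemma \ref{lemma2.1n} resolves exactly this: for $t<\lambda^{-1/\alpha}$ (hence on your small-$t$ regime $t\le\tau_j$ with $\tau_j\lesssim 2^{-jm}\le 2^{-j/\alpha}$) it converts $\gamma(x,t)$ into a fixed lattice shift $x+\mathfrak{l}/\lambda$ with rapidly decaying weights, after which one can Taylor-expand $e^{itP(\xi)}-1$ and apply Plancherel, costing $2^{jm}$ per power of $t$ and no $2^{jn/2}$.

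Two consequences for your write-up. First, the smoothness bound with your claimed $\mu$ is false over the whole interval $(0,1)$: it is available only for $t\le\tau_j$ with $\tau_j$ no larger than $2^{-jm}$, and only after injecting the lattice-shift (or full-Taylor-in-$\gamma$) device. Second, once the constraint $\tau_j\lesssim 2^{-jm}$ is imposed, your unconstrained optimizer $\tau_j=2^{j(s'-\mu)/\alpha}$ is frequently infeasible (for instance $m=2$, $\alpha=1/2$, $n=1$, $p=2$, $s$ near $s_0=1/4$). What saves the argument is precisely the case dichotomy you did not spell out: if the optimal $\tau_j$ is feasible, your formula $\beta=s'+\tfrac{\delta}{m}(\mu-s')<s+\delta$ works as you say; if it is not, one must take $\tau_j=2^{-jm}$, and the resulting small-$t$ exponent $\mu-\alpha m+\alpha\delta$ is then strictly below $s+\delta$ exactly because the infeasibility condition $m(1-\alpha)<s-n(1/2-1/p)_+$ holds. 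This duality closes the estimate, but none of it is in your proposal; the paper avoids the optimization entirely by fixing the threshold at $2^{-mk}$ from the start and estimating $t^{-\delta/m}|E_k|$ directly rather than via the wasteful intermediary $t^{-\alpha}|E_k|\cdot\tau_k^{\alpha(m-\delta)/m}$.
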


The notation "$o$" means infinitesimal of high order.

Li and Wang \cite{LW} showed that  in the case of vertical line ($\alpha=1$), $t^{\delta/m}$ in the inequality (\ref{classicalresult}) can not be replaced by $t^{\delta/m^{\prime}}$ for some  $0<m^{\prime} < m$. However, for general case, there is no counterexample in \cite{LW} to show $t^{\alpha\delta/m}$ in the inequality (\ref{classicalresult}) is optimal.

Moreover, from the red line in Figure 1 and Figure 2, we observed that when smoothness $\alpha$ of the curve is fixed, the convergence rate will become close to $t^{\alpha}$ as long as smoothness $\delta$ of the function $f$ tends to $m$.  The problem is whether it can also guarantee the convergence rate will be close to  $t^{\alpha}$, if smoothness $\delta$ of the function is far away from $m$. We will  give a confirmed answer in this article.

\begin{theorem}\label{theorem1.1n}
If there exist real numbers $m \ge 1$ and $s_{0} \ge 0$ such that $P(\xi)$ satisfies (\ref{phase})
and for each $s > s_{0}$,
\begin{equation}\label{Eq1.8n}
\biggl\|\mathop{sup}_{0<t<1} |e^{itP(D)}(f)(\gamma(x,t))|\biggl\|_{L^{p}(B(x_{0},r))} \lesssim \|f\|_{H^{s}(\mathbb{R}^{n})},  \:\ p \ge 1.
\end{equation}
Then we have,

(1) when $1/m \le \alpha <1$, for each $s >s_{0}$, $f \in H^{s+\delta}(\mathbb{R}^{n})$, $0 \le \delta < \alpha m$, it holds
\begin{equation}\label{Eq1.9n}
\biggl\|\mathop{sup}_{0<t<1} \frac{|e^{itP(D)}(f)(\gamma(x,t))-f(x)|}{t^{\delta/m}}\biggl\|_{L^{q}(B(x_{0},r))} \lesssim \|f\|_{H^{s+ \delta}(\mathbb{R}^{n})}, \quad \quad q=\min\{p,2\};
\end{equation}

(2) when $ 0< \alpha <1/m$, for each $s >s_{0}$, $f \in H^{s+\delta}(\mathbb{R}^{n})$, $0 \le \delta < 1$, it holds
\begin{equation}\label{Eq1.91n}
\biggl\|\mathop{sup}_{0<t<1} \frac{|e^{itP(D)}(f)(\gamma(x,t))-f(x)|}{t^{\alpha \delta}}\biggl\|_{L^{q}(B(x_{0},r))} \lesssim \|f\|_{H^{s+ \delta}(\mathbb{R}^{n})}, \quad \quad q=\min\{p,2\}.
\end{equation}
\end{theorem}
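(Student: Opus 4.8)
The plan is to prove both \eqref{Eq1.9n} and \eqref{Eq1.91n} by a Littlewood--Paley decomposition combined with a splitting of the time interval governed by the relevant frequency scale. By density it suffices to treat Schwartz $f$. Write $f=\sum_{k\ge 0}f_k$, where $\widehat{f_k}$ is supported in $\{|\xi|\sim 2^k\}$ for $k\ge 1$ and in $\{|\xi|\lesssim 1\}$ for $k=0$, and set
\[
D_k(x,t):=e^{itP(D)}f_k(\gamma(x,t))-f_k(x).
\]
By the triangle inequality in $L^q(B(x_0,r))$ it is enough to bound $\bigl\|\sup_{0<t<1}|D_k|/t^{\beta}\bigr\|_{L^q(B(x_0,r))}$ for each $k$ (with $\beta=\delta/m$ in case (1) and $\beta=\alpha\delta$ in case (2)) by a constant times $2^{-\varepsilon k}2^{k(s+\delta)}\|f_k\|_{L^2}$; summing in $k$ by Cauchy--Schwarz then yields $\|f\|_{H^{s+\delta}}$. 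The decay $2^{-\varepsilon k}$ comes from fixing some $s'\in(s_0,s)$ and applying the hypothesis \eqref{Eq1.8n} with exponent $s'$: since \eqref{Eq1.8n} holds for \emph{every} $s>s_0$, for a frequency-localized piece it reads $\bigl\|\sup_{0<t<1}|e^{itP(D)}f_k(\gamma(x,t))|\bigr\|_{L^p(B(x_0,r))}\lesssim 2^{ks'}\|f_k\|_{L^2}$, with $\varepsilon=s-s'$.

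For the fixed-$k$ estimate I split $t\in(0,1)$ at the threshold $t_k:=2^{-km}$ when $1/m\le\alpha<1$ and $t_k:=2^{-k/\alpha}$ when $0<\alpha<1/m$; this choice is made precisely so that on $(0,t_k)$ the curve displacement obeys $|\gamma(x,t)-x|\le Ct^{\alpha}\le C2^{-k}$, i.e.\ stays within one wavelength of $f_k$. On the large-$t$ range $[t_k,1)$ I use $|D_k|\le |e^{itP(D)}f_k(\gamma(x,t))|+|f_k(x)|$ and $t^{-\beta}\le t_k^{-\beta}=2^{k\delta}$, so that \eqref{Eq1.8n} (with exponent $s'$, together with $q\le p$) and $\|f_k\|_{L^q(B(x_0,r))}\lesssim\|f_k\|_{L^2}$ give the bound $2^{k(s'+\delta)}\|f_k\|_{L^2}$. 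On the small-$t$ range $(0,t_k)$ I use the algebraic identity
\[
D_k(x,t)=\bigl[f_k(\gamma(x,t))-f_k(x)\bigr]+\bigl[(e^{itP(D)}-I)f_k(x)\bigr]+\bigl[(e^{itP(D)}-I)f_k(\gamma(x,t))-(e^{itP(D)}-I)f_k(x)\bigr]
\]
and estimate the three brackets separately: (i) the spatial increment by the mean value theorem, $|f_k(\gamma(x,t))-f_k(x)|\lesssim t^{\alpha}\sup_{B(x,2^{-k})}|\nabla f_k|\lesssim t^{\alpha}2^{k}Mf_k(x)$, using that $\nabla f_k$ has frequency $\lesssim 2^k$ (so its sup over an $O(2^{-k})$-ball is dominated by the Hardy--Littlewood maximal function); (ii) the vertical time-increment by the Fourier-multiplier bound $\|(e^{itP(D)}-I)f_k\|_{L^2}\lesssim t\,2^{km}\|f_k\|_{L^2}$, upgraded to a maximal bound over dyadic $t$-blocks by the one-dimensional Sobolev embedding $H^1_t\hookrightarrow L^\infty_t$ (legitimate because the time variable in $e^{itP(D)}$ \emph{is} differentiable, $\partial_t e^{itP(D)}f_k=iP(D)e^{itP(D)}f_k$, and $\|P(D)f_k\|_{L^2}\lesssim 2^{km}\|f_k\|_{L^2}$ by \eqref{phase}); (iii) the cross term, which equals $h_t(\gamma(x,t))-h_t(x)$ with $h_t=(e^{itP(D)}-I)f_k$, by combining (i) and (ii). The case $q<2$ is handled by passing $M$ through $L^q(B(x_0,r))\subset L^2(B(x_0,r))\subset L^2(\mathbb R^n)$. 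Inserting the weight $t^{-\beta}$ and summing the dyadic $t$-blocks (which converges since $\beta<\alpha\le1$ in case (1) and $\alpha\delta<1$, $\delta<1$ in case (2)), each of (i)--(iii) is bounded by $2^{k(s'+\delta)}\|f_k\|_{L^2}$; here the constraints $\delta<\alpha m$ (case (1)) and $\delta<1$ (case (2)) are used, together with $1-m\alpha\le 0$ (resp.\ $m-1/\alpha\le 0$), which is exactly what forces the resulting exponent not to exceed $s'+\delta$.

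The main obstacle is that $\gamma$ is only $\alpha$-Hölder in $t$: one cannot differentiate $e^{itP(D)}f_k(\gamma(x,t))$ in $t$, so on the large-$t$ range \eqref{Eq1.8n} must be used as a black box, and extracting summability in $k$ depends crucially on the estimate being available for every $s>s_0$ (allowing the exponent $s'<s$). The second delicate point is the choice of the threshold $t_k$ and the bookkeeping of the two competing scales $t^{\alpha}2^k$ (spatial displacement) and $t\,2^{km}$ (temporal oscillation): their ordering at $t=t_k$ is exactly what distinguishes the two regimes $\alpha\ge1/m$ and $\alpha<1/m$, and it simultaneously dictates the sharp rate, $t^{\delta/m}$ versus $t^{\alpha\delta}$, and the admissible range of $\delta$.
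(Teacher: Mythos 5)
Your proposal matches the paper's overall architecture — the Littlewood--Paley decomposition, the time threshold $t_k=2^{-km}$ (resp.\ $2^{-k/\alpha}$), the triangle inequality into $k\lesssim 1$ and $k\gg 1$, the large-$t$ bound obtained directly from the hypothesis \eqref{Eq1.8n} applied at an exponent $s'\in(s_0,s)$ to extract the geometric decay $2^{-k(s-s')}$ — but the treatment of the small-$t$ piece is genuinely different. The paper does a two-term split $e^{itP(D)}f_k(\gamma(x,t))-f_k(x) = \bigl[(e^{itP(D)}-I)f_k\bigr](\gamma(x,t)) + \bigl[f_k(\gamma(x,t))-f_k(x)\bigr]$ and estimates each summand by expanding the relevant phase in a Taylor series ($e^{itP(\xi)}-1=\sum_{j\ge1}(itP(\xi))^j/j!$ or $e^{i(\gamma(x,t)-x)\cdot\xi}-1$), after which $\sup_t$ is trivial term-by-term (each factor $t^j\le t_k^j$) and the residual dependence of the base point on $\gamma(x,t)$ is handled by Lemma~\ref{lemma2.1n} — a Peetre/Fourier-expansion device that converts evaluation on the curve into an absolutely convergent sum of lattice translates. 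No differentiability in $t$ is ever invoked. Your three-term split instead pulls out the vertical increment $(e^{itP(D)}-I)f_k(x)$ and attacks its $\sup_t$ by one-dimensional Sobolev embedding, using $\partial_t e^{itP(D)}=iP(D)e^{itP(D)}$; the spatial and cross increments you treat with a mean-value plus Hardy--Littlewood maximal function argument, which plays the role of the paper's Lemma~\ref{lemma2.1n}. Both routes give the same exponent bookkeeping, and yours is a defensible alternative.

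That said, your handling of the cross term is too thin. From the mean-value step you get $|C(x,t)|\lesssim t^{\alpha}2^{k}Mh_t(x)$ with $h_t=(e^{itP(D)}-I)f_k$ depending on $t$, so the $\sup_t$ and the maximal operator $M$ are entangled; merely ``combining (i) and (ii)'' does not separate them. The missing ingredient is the elementary pointwise monotonicity $\sup_{0<t<t_k}M(|h_t|)(x)\le M\bigl(\sup_{0<t<t_k}|h_t|\bigr)(x)$, after which the inner supremum is exactly the quantity your Sobolev-in-$t$ argument controls in $L^2$, closing the estimate with the correct power $t_k^{1+\alpha-\beta}2^{k(m+1)}\le 2^{k\delta}$ (using $\alpha m\ge1$ in case (1), $m\le 1/\alpha$ in case (2)). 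Once that step is inserted the argument is complete. A minor slip: the inclusions in ``$L^q(B(x_0,r))\subset L^2(B(x_0,r))\subset L^2(\mathbb{R}^n)$'' are written in the wrong direction; the intended statement is just $\|g\|_{L^q(B(x_0,r))}\lesssim\|g\|_{L^2(B(x_0,r))}\le\|g\|_{L^2(\mathbb{R}^n)}$ via H\"older on the bounded ball, which is what you actually use.
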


By standard arguments, we obtain the following convergence rate result.

\begin{theorem}\label{theorem1.2n}
Under the assumption of Theorem \ref{theorem1.1n}, we have

(1) if $1/m \le \alpha <1$, then for all $f \in H^{s+\delta}(\mathbb{R}^{n})$,
\begin{equation}\label{Eq1.92n}
e^{itP(D)}(f)(\gamma(x,t)) - f(x) = o(t^{h}), \:\ a.e. \:\:\ x \in B(x_{0},r) \text{\quad as \quad} t \rightarrow 0^{+},
\end{equation}
whenever $( \delta, h) \in D_{1}:= \{(x,y): x \ge0, y \ge 0, y \le x/m, y< \alpha \}$;

(2) if $0 < \alpha < 1/m$, then for all $f \in H^{s+\delta}(\mathbb{R}^{n})$,
\begin{equation}\label{Eq1.93n}
e^{itP(D)}(f)(\gamma(x,t)) - f(x) = o(t^{h}), \:\ a.e. \:\:\ x \in B(x_{0},r) \text{\quad as \quad} t \rightarrow 0^{+},
\end{equation}
whenever $( \delta, h) \in D_{2}:= \{(x,y): x \ge0, y \ge 0, y \le \alpha x, y< \alpha \}$.
\end{theorem}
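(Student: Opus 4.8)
The plan is to deduce Theorem \ref{theorem1.2n} from the maximal estimate in Theorem \ref{theorem1.1n} by the standard argument that converts a weighted maximal bound into a pointwise rate of convergence. I will treat case (1); case (2) is identical after replacing the exponent $\delta/m$ by $\alpha\delta$ and $D_1$ by $D_2$.

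First I would reduce to endpoint values of the parameters. Fix $f\in H^{s+\delta}(\mathbb{R}^n)$ and a target exponent $h$ with $(\delta,h)\in D_1$, so $0\le h\le \delta/m$ and $h<\alpha$. Pick $\delta'$ with $h\le \delta'/m$ (equivalently $\delta'\ge mh$) and $\delta'<\alpha m$ and $\delta'\le\delta$; such a $\delta'$ exists precisely because $mh\le\delta$ and $mh<\alpha m$, i.e. because $(\delta,h)\in D_1$. Since $f\in H^{s+\delta}\subset H^{s+\delta'}$, it suffices to prove the rate $o(t^{h})$ with $\delta'$ in place of $\delta$; and since $h\le\delta'/m$, it further suffices to prove the rate $o(t^{\delta'/m})$, because $t^{\delta'/m}=o(t^{h})$ fails only when $\delta'/m<h$, which we have excluded — more precisely $t^{\delta'/m}\lesssim t^{h}$ as $t\to 0^+$, so an $o(t^{\delta'/m})$ bound is automatically $o(t^{h})$ when $h<\delta'/m$, and when $h=\delta'/m$ the two coincide. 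Wait — I must be careful: $o(t^{\delta'/m})$ implies $o(t^h)$ only if $t^{\delta'/m}/t^h\to$ constant or $0$, i.e. $\delta'/m\ge h$, which holds. Good. So the whole theorem reduces to: for each $\delta'<\alpha m$ and $f\in H^{s+\delta'}(\mathbb{R}^n)$, one has $e^{itP(D)}(f)(\gamma(x,t))-f(x)=o(t^{\delta'/m})$ a.e.\ on $B(x_0,r)$.

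Next I would run the usual density/maximal-function argument. Define
\[
\Lambda f(x):=\limsup_{t\to 0^+}\frac{|e^{itP(D)}(f)(\gamma(x,t))-f(x)|}{t^{\delta'/m}}.
\]
By Theorem \ref{theorem1.1n}(1) (applied with $\delta'$ in the role of $\delta$, and with $s$ replaced by some $s_1>s_0$ with $s_1+\delta'\le s+\delta'$, i.e.\ $s_1\le s$; choose $s_0<s_1<s$), the sublinear operator $f\mapsto \big\|\sup_{0<t<1} t^{-\delta'/m}|e^{itP(D)}(f)(\gamma(\cdot,t))-f(\cdot)|\big\|_{L^q(B(x_0,r))}$ is bounded from $H^{s_1+\delta'}(\mathbb{R}^n)$, hence a fortiori $\Lambda$ obeys $\|\Lambda f\|_{L^q(B(x_0,r))}\lesssim \|f\|_{H^{s_1+\delta'}}\lesssim\|f\|_{H^{s+\delta'}}$. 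For Schwartz $g$, a direct Taylor/stationary-phase estimate gives $|e^{itP(D)}g(\gamma(x,t))-g(x)|\le |e^{itP(D)}g(\gamma(x,t))-e^{itP(D)}g(x)|+|e^{itP(D)}g(x)-g(x)|\lesssim_g |\gamma(x,t)-x|+t\lesssim_g t^{\alpha}+t\lesssim_g t^{\alpha}$, so $\Lambda g(x)\lesssim_g \limsup_{t\to0^+} t^{\alpha-\delta'/m}=0$ since $\delta'<\alpha m$ forces $\alpha-\delta'/m>0$. Now given $f\in H^{s+\delta'}$ and $\varepsilon>0$, pick Schwartz $g$ with $\|f-g\|_{H^{s+\delta'}}<\varepsilon$; subadditivity of $\limsup$ gives $\Lambda f\le \Lambda(f-g)+\Lambda g=\Lambda(f-g)$ a.e., so for any $\lambda>0$, Chebyshev yields $|\{x\in B(x_0,r):\Lambda f(x)>\lambda\}|\le \lambda^{-q}\|\Lambda(f-g)\|_{L^q}^q\lesssim \lambda^{-q}\varepsilon^q$. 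Letting $\varepsilon\to0$ shows $\Lambda f=0$ a.e.\ on $B(x_0,r)$, which is exactly $e^{itP(D)}(f)(\gamma(x,t))-f(x)=o(t^{\delta'/m})$ a.e., completing case (1); case (2) follows verbatim with $\alpha\delta'$ replacing $\delta'/m$ and using $\delta'<1$ so that $\alpha-\alpha\delta'=\alpha(1-\delta')>0$ controls the Schwartz term.

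The only genuinely delicate point is the reduction step: one must check that the region $D_1$ (resp.\ $D_2$) is exactly the set of pairs $(\delta,h)$ for which an admissible intermediate exponent $\delta'$ exists, i.e.\ that $\{(\delta,h):\exists\,\delta'\le\delta,\ \delta'<\alpha m,\ mh\le\delta'\}=\{(\delta,h):h\le\delta/m,\ h<\alpha,\ \delta,h\ge 0\}$; this is an elementary but necessary interval-nonemptiness verification (the condition $mh\le\delta$ and $mh<\alpha m$ being equivalent to $h\le\delta/m$ and $h<\alpha$). Everything else — the maximal bound, the Schwartz estimate, the density argument — is routine, so I anticipate no real obstacle beyond bookkeeping of the exponents.
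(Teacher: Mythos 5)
Your proof is correct and follows exactly the argument the paper gestures at with "by standard arguments": the density/Chebyshev device that converts the weighted maximal bound of Theorem~\ref{theorem1.1n} into an a.e.\ $o(t^{h})$ rate. The one genuinely non-trivial bookkeeping step — reducing from an arbitrary $(\delta,h)\in D_1$ to an admissible intermediate exponent $\delta'\in[mh,\alpha m)\cap[0,\delta]$ so that Theorem~\ref{theorem1.1n}(1) applies, and noting that $o(t^{\delta'/m})\Rightarrow o(t^h)$ when $\delta'/m\ge h$ — you identify and carry out correctly, and the analogous choice $\delta'=h/\alpha$ works in case (2). The Schwartz-function estimate $\Lambda g=0$ (using $|\gamma(x,t)-x|\lesssim t^\alpha$ and $\delta'<\alpha m$, resp.\ $\delta'<1$) and the subadditivity/$L^q$-Chebyshev closing argument are standard and sound. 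A cosmetic simplification: introducing $s_1\in(s_0,s)$ is unnecessary; applying Theorem~\ref{theorem1.1n} directly with the given $s>s_0$ and $\delta'$ already yields $\|\Lambda(f-g)\|_{L^q}\lesssim\|f-g\|_{H^{s+\delta'}}$, since $H^{s+\delta}\hookrightarrow H^{s+\delta'}$ for $\delta'\le\delta$.
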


In order to compare the results of Theorem \ref{theorem1.1} (Red line) and Theorem  \ref{theorem1.2n} (Green line), we will give two figures as follows.
\begin{center}
\includegraphics[height=5cm]{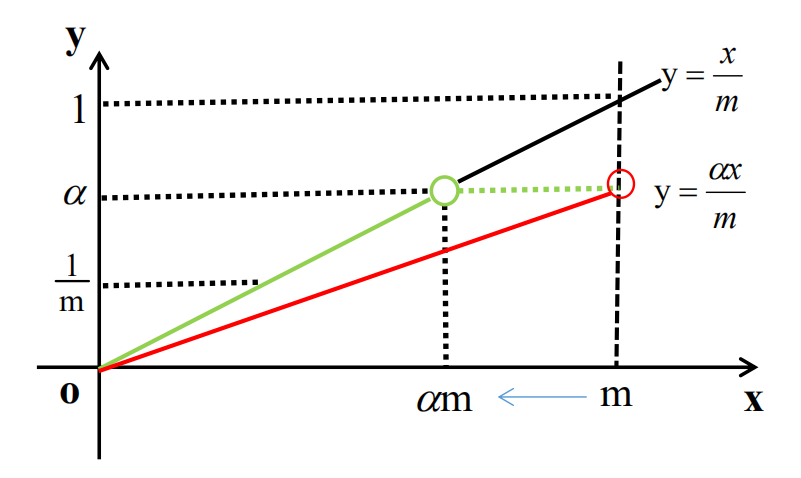}
\end{center}
\begin{center}
Figure 1. $1/m\leq \alpha<1$.

$x$ means  smoothness $\delta$ of the function $f$, $y$ means the convergence rate
\end{center}

\begin{center}
\includegraphics[height=5cm]{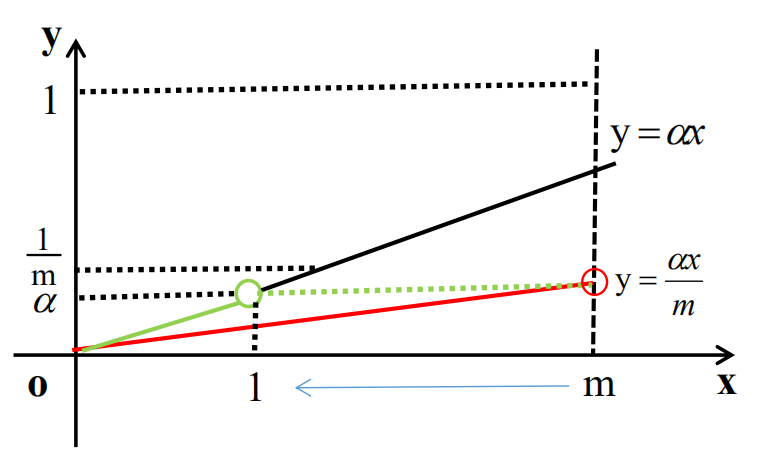}
\end{center}
\begin{center}
Figure 2. $0 < \alpha<1/m$.

$x$ means  smoothness $\delta$ of the function $f$, $y$ means the convergence rate
\end{center}

We  notice that it is difficult for us to extend  the regions  obtained in Theorem \ref{theorem1.2n} under such a general assumption on $\gamma$,  $P$ and $s_{0}$.
On one hand, for non-zero Schwartz functions, the convergence rate seems no faster than $t^{\alpha}$ as $t$ tends to $0$ along some curves $(\gamma(x,t),t)$, where functions $\gamma(x,t)$ are chosen as in Theorem \ref{theorem2.1n} below.
\begin{theorem} \label{theorem2.1n}\cite{LW}
 There exists
\[\gamma(x,t) = x - e_{1}t^{\alpha}, \:\ e_{1} = (1, 0,...,0),\]
such that for each Schwartz function $f$ and $0 < \alpha < 1$, if
\begin{equation}\label{Eq2.19n}
\mathop{lim}_{t \rightarrow 0^{+}}{\frac{e^{itP(D)}(f)(\gamma(x,t))-f(x)}{t^{\alpha}}}=0, \:\ a.e. \:\ x \in \mathbb{R}^{n},
\end{equation}
then $f\equiv 0,$  where $P(\xi)$ satisfies (\ref{phase}).
\end{theorem}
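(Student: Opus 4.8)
The plan is to evaluate the limit in (\ref{Eq2.19n}) \emph{exactly}. I claim that for $\gamma(x,t)=x-e_{1}t^{\alpha}$ and any $f\in\mathcal{S}(\mathbb{R}^{n})$,
\[
\lim_{t\to 0^{+}}\frac{e^{itP(D)}f(\gamma(x,t))-f(x)}{t^{\alpha}}=-\,\partial_{x_{1}}f(x)\qquad\text{for every }x\in\mathbb{R}^{n},
\]
after which hypothesis (\ref{Eq2.19n}) forces $\partial_{x_{1}}f\equiv 0$, and the Schwartz decay of $f$ then forces $f\equiv 0$.

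To establish the identity I would split
\[
\frac{e^{itP(D)}f(x-e_{1}t^{\alpha})-f(x)}{t^{\alpha}}
=\frac{e^{itP(D)}f(x-e_{1}t^{\alpha})-f(x-e_{1}t^{\alpha})}{t^{\alpha}}
+\frac{f(x-e_{1}t^{\alpha})-f(x)}{t^{\alpha}}
=:\mathrm{I}(x,t)+\mathrm{II}(x,t).
\]
For $\mathrm{I}$, write $e^{itP(D)}f(y)-f(y)=\int_{\mathbb{R}^{n}}e^{iy\cdot\xi}\bigl(e^{itP(\xi)}-1\bigr)\widehat{f}(\xi)\,d\xi$ and use $|e^{itP(\xi)}-1|\le t|P(\xi)|$ together with the bound $|P(\xi)|\lesssim(1+|\xi|)^{m}$, which follows from (\ref{phase}) and the continuity of $P$. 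Since $\widehat{f}$ is Schwartz, $\int_{\mathbb{R}^{n}}(1+|\xi|)^{m}|\widehat{f}(\xi)|\,d\xi<\infty$, so $|e^{itP(D)}f(y)-f(y)|\lesssim_{f}t$ uniformly in $y$; hence $|\mathrm{I}(x,t)|\lesssim_{f}t^{1-\alpha}\to 0$ because $\alpha<1$. For $\mathrm{II}$, set $h=t^{\alpha}$, which tends to $0^{+}$; since $f$ is smooth, a first-order Taylor expansion gives $\mathrm{II}(x,t)=\frac{f(x-he_{1})-f(x)}{h}\to-\partial_{x_{1}}f(x)$.

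Adding the two limits proves the identity. Consequently, if (\ref{Eq2.19n}) holds, then $\partial_{x_{1}}f(x)=0$ for almost every $x$; as $\partial_{x_{1}}f$ is continuous, this gives $\partial_{x_{1}}f\equiv 0$ on $\mathbb{R}^{n}$, so, writing $x=(x_{1},x')\in\mathbb{R}\times\mathbb{R}^{n-1}$, the function $f(x_{1},x')=f(0,x')$ is independent of $x_{1}$. Letting $x_{1}\to+\infty$ and using $f\in\mathcal{S}(\mathbb{R}^{n})$ yields $f(0,x')=0$ for all $x'$, i.e. $f\equiv 0$.

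The argument presents no serious obstacle; the one place where the hypotheses enter in an essential — and sharp — way is the strict inequality $\alpha<1$, which is exactly what makes the Schr\"odinger-evolution error $\mathrm{I}$, of size $t^{1-\alpha}$, negligible against the tangential displacement $t^{\alpha}$ carried by $\mathrm{II}$. This also explains why the exponent $t^{\alpha}$ cannot be improved along such curves: the exact almost-everywhere limit of the normalized difference is $-\partial_{x_{1}}f$, which is generically nonzero.
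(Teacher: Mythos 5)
Your proof is correct, and it is the natural argument: decompose the normalized difference quotient into the Schr\"odinger-evolution error $\mathrm{I}$ and the translation quotient $\mathrm{II}$, show $\mathrm{I}=O(t^{1-\alpha})\to 0$ because $P$ is real with polynomial growth and $\alpha<1$, show $\mathrm{II}\to-\partial_{x_1}f(x)$, and conclude $\partial_{x_1}f\equiv 0$, which is incompatible with $f\in\mathcal{S}$ unless $f\equiv 0$. The paper defers to \cite{LW} for the proof, but this is essentially the only reasonable route, and the key point you isolate — that the a.e.\ limit is exactly $-\partial_{x_1}f$, so that no improvement over the $t^{\alpha}$ rate is possible along such curves — is precisely what the theorem is designed to exhibit.
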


On the other hand, a stronger version of Theorem \ref{theorem1.1n} can be obtained from its proof.

\begin{theorem}\label{thsharp}
For any $\varepsilon >0$, if a  function $f$ defined on $\mathbb{R}^{n}$ with supp $\hat{f}
\subset \{\xi: |\xi| \sim R\}$ for some $R \gg 1$ satisfies
\[\biggl\|\mathop{sup}_{0<t<1} |e^{itP(D)}f(\gamma(x,t))|\biggl\|_{L^{p}(B(x_{0},r))} \le C_{\varepsilon} R^{\varepsilon }\|f\|_{L^{2}(\mathbb{R}^{n})},  \:\ p \ge 1,\]
where $C_{\varepsilon} >0$, $P(\xi)$ satisfies (\ref{phase}) for some $m \ge 1$, then we have

(1) when $1/m \le \alpha <1$,
\begin{equation}\label{sharp1}
\biggl\|\mathop{sup}_{0<t<1} \frac{|e^{itP(D)}(f)(\gamma(x,t))-f(x)|}{t^{\delta/m}}\biggl\|_{L^{q}(B(x_{0},r))} \le C_{\varepsilon} R^{\varepsilon +\delta}\|f\|_{L^{2}(\mathbb{R}^{n})}, \:\ q=\min\{p,2\};
\end{equation}

(2) when $0 < \alpha <1/m$,
\begin{equation}\label{sharp2}
\biggl\|\mathop{sup}_{0<t<1} \frac{|e^{itP(D)}(f)(\gamma(x,t))-f(x)|}{t^{\alpha \delta}}\biggl\|_{L^{q}(B(x_{0},r))} \le C_{\varepsilon} R^{\varepsilon +\delta}\|f\|_{L^{2}(\mathbb{R}^{n})}, \:\ q=\min\{p,2\}.
\end{equation}
\end{theorem}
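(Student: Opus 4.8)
The plan is to decompose the phase difference $e^{itP(D)}f(\gamma(x,t)) - f(x)$ along a Littlewood–Paley–type partition adapted to the single frequency annulus $|\xi| \sim R$, and then estimate each piece by trading the oscillation against the Hölder regularity of the curve. Write
\[
e^{itP(D)}f(\gamma(x,t)) - f(x) = \int_{\mathbb{R}^n} e^{i\gamma(x,t)\cdot \xi}\bigl(e^{itP(\xi)} - 1\bigr)\hat f(\xi)\,d\xi + \int_{\mathbb{R}^n}\bigl(e^{i\gamma(x,t)\cdot\xi} - e^{ix\cdot\xi}\bigr)\hat f(\xi)\,d\xi.
\]
For the first term, on the support $|\xi|\sim R$ one has $|P(\xi)|\lesssim R^m$ by \eqref{phase}, so $|e^{itP(\xi)}-1|\lesssim \min\{1, t R^m\}$; for the second term $|e^{i\gamma(x,t)\cdot\xi}-e^{ix\cdot\xi}|\lesssim \min\{1,|\gamma(x,t)-x|\,|\xi|\}\lesssim \min\{1, t^{\alpha} R\}$ using \eqref{Holder} with $x'=x$, $t'=0$.

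First I would dyadically partition the time interval $(0,1)$ into $t\sim 2^{-k}$, and on each piece run the hypothesised maximal bound. The key observation is the gain: inserting the factor $\min\{1,tR^m\}$ (resp. $\min\{1,t^{\alpha}R\}$) costs a power of $t$, and after summing over the dyadic scales one recovers an $L^q$ estimate for the quotient with an $R^{\delta}$ loss provided the exponent in the denominator is $\delta/m$ in case (1) and $\alpha\delta$ in case (2). Concretely, when $1/m\le\alpha<1$ the dominant contribution is governed by the $e^{itP(\xi)}-1$ term: on scales where $tR^m\le 1$ one estimates trivially in $L^2$ by $\|(tR^m)^{\delta/m}\hat f\|_2 / t^{\delta/m}\lesssim R^{\delta}\|f\|_2$ (for the $L^2$ part one uses Plancherel directly, and for the $L^p$ part one feeds it into the hypothesised maximal estimate), while on scales $tR^m>1$ one bounds the difference by $2\mathop{\sup}_t|e^{itP(D)}f(\gamma(x,t))| + |f|$ and absorbs the $t^{-\delta/m}\le R^{\delta}$ factor. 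When $0<\alpha<1/m$ the roles swap: now $t^{\alpha}R$ is the relevant small quantity, the threshold is $t\sim R^{-1/\alpha}$, and the same dichotomy with $t^{-\alpha\delta}\le R^{\delta}$ on the far scales closes the argument. The choice $q=\min\{p,2\}$ is forced because on the "easy" scales one wants the $L^2$ orthogonality, while on the "hard" scales one needs the $L^p$ maximal hypothesis; interpolation/Hölder on the fixed ball $B(x_0,r)$ of bounded measure lets one pass between them.

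The main obstacle will be handling the intermediate time scales cleanly — the scales near the threshold $tR^m\sim 1$ (resp. $t^{\alpha}R\sim 1$) — and making sure the dyadic sum over $k$ converges with only an $R^{\varepsilon+\delta}$ loss rather than picking up a spurious $\log R$ or a larger power of $R$. This requires that the per-scale estimate decays geometrically in $k$ away from the threshold: on the small side the factor $\min\{1,tR^m\}^{1-\delta/m}$ (note $\delta<\alpha m\le m$, so $1-\delta/m>0$) gives a summable geometric series in $k$, and on the large side the factor $t^{-\delta/m}$ relative to the $L^p$ bound must be beaten by the decay coming from $\|f\|_2$ being at frequency $R$ — here one again uses $\min\{1,tR^m\}$ but now on the "1" side, so one instead exploits that the number of such scales is $O(\log R)$ and the quotient weight is at most $R^{\delta}$, yielding the stated $C_\varepsilon R^{\varepsilon+\delta}$ after absorbing the logarithm into $R^{\varepsilon}$. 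A secondary technical point is that the hypothesised maximal estimate is stated for $f\in H^s$ with $s>s_0$, whereas here $f$ is a single-annulus function and the bound is assumed in the scale-invariant form with an $R^\varepsilon$ constant; one simply applies that assumed form directly, so no extra work is needed there.
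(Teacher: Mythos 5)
Your high-level structure matches the paper's: split the time interval at the threshold $t\sim R^{-m}$ (case 1) or $t\sim R^{-1/\alpha}$ (case 2), use the triangle inequality to decompose the difference into an operator‐difference term $\int e^{i\gamma(x,t)\cdot\xi}(e^{itP(\xi)}-1)\hat f\,d\xi$ and a curve‐difference term $\int(e^{i\gamma(x,t)\cdot\xi}-e^{ix\cdot\xi})\hat f\,d\xi$, handle the large‐$t$ range by absorbing $t^{-\delta/m}\lesssim R^{\delta}$ into the hypothesised maximal bound, and handle the small‐$t$ range by exploiting the smallness of $tR^m$ or $t^{\alpha}R$. That is exactly the paper's skeleton.

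However, there is a genuine gap in your treatment of the small‐$t$ range. You write that on scales $tR^m\le 1$ "one estimates trivially in $L^2$ by $\|(tR^m)^{\delta/m}\hat f\|_2 / t^{\delta/m}\lesssim R^{\delta}\|f\|_2$ \ldots one uses Plancherel directly." But Plancherel does not apply to $x\mapsto \int e^{i\gamma(x,t)\cdot\xi}(e^{itP(\xi)}-1)\hat f(\xi)\,d\xi$: the map $x\mapsto\gamma(x,t)$ is only assumed H\"older in $t$ and carries no structure in $x$ whatsoever (no Lipschitz, no measure preservation), so composing a multiplier operator with $\gamma(\cdot,t)$ destroys $L^2$-boundedness. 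If instead you fall back on the pointwise bound $|\int e^{i\gamma(x,t)\cdot\xi}(e^{itP(\xi)}-1)\hat f\,d\xi|\le\int\min\{1,tR^m\}|\hat f|$, you land on $\|\hat f\|_{L^1}$, which costs a fatal extra factor $R^{n/2}$ via Cauchy--Schwarz on the annulus. This is precisely where the paper's Lemma~\ref{lemma2.1n} is indispensable: because on the small‐$t$ side one arranges $t\le R^{-1/\alpha}$ (using $m\ge 1/\alpha$ in case~1), the displacement $|\gamma(x,t)-x|\lesssim t^{\alpha}\lesssim R^{-1}$ is below the frequency scale, so $e^{i(\gamma(x,t)-x)\cdot\xi}$ can be Fourier‐expanded as a rapidly decaying sum of harmonics $e^{i\mathfrak l\cdot\xi/R}$, each of which produces a genuine Fourier integral in $x$ that Plancherel does control. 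The paper then combines this with a Taylor expansion of $e^{itP(\xi)}-1$ in powers of $tP(\xi)$ to retain the full $(tR^m)^{j}$ gain. Your proposal does not supply any mechanism for converting the $\gamma$-composed integral into an object with an $L^2(\hat f)$ bound, and without one the small‐time estimate does not close.

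A further, secondary issue: on the curve‐difference term you invoke $\min\{1,t^{\alpha}R\}$, but the exponent you need against $t^{\delta/m}$ is $(t^{\alpha}R)^{\delta/(\alpha m)}$, and the requirement that this leave a convergent $j$-sum under Taylor expansion (the paper's estimate~(\ref{Eq2.16n})) silently uses $\alpha m\ge 1$, which is exactly why the threshold and the normalising power change in case~2. Your sketch conflates the two pieces' thresholds; the paper keeps them separate and picks $\min\{R^{-m},R^{-1/\alpha}\}$ as the single cutoff in each case so that both Taylor series converge simultaneously and Lemma~\ref{lemma2.1n} applies.
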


Under the conditions of Theorem \ref{thsharp}, $t^{\delta/m}$ on the left hand side of inequality (\ref{sharp1}) can not be replaced by $t^{\delta/m^{\prime}}$ for some  $0<m^{\prime} < m$. Also,  $t^{\alpha \delta}$ on the left hand side of inequality (\ref{sharp2}) can not be replaced by $t^{ \alpha^{\prime} \delta}$ for some  $\alpha^{\prime}>\alpha $. Two counterexamples will be given in Subsection \ref{necessary} below.

Here we will give some applications. Let's first recall some results  on convergence problem for (fractional) Schr\"{o}dinger operators along tangential curves.

In $\mathbb{R} \times \mathbb{R}$,  \cite{CLV} established convergence result for Schr\"{o}dinger operator along the curve $(\gamma(x,t),t)$, where $\gamma(x,t)$ is given in Theorem \ref{theorem CLV} below. The authors obtained the following maximal  estimate from which the pointwise convergence result follows.
 \begin{theorem}\label{theorem CLV}\cite{CLV}
Let $n=1$ and $0<\alpha\leq 1$. Denote by $B_r(x_0)$ the interval which has center at $x_{0}$ with length $2r$, and  by $I_T(t_0)$ the interval which has center at $t_{0}$ with length $2T$. Suppose that a function $\gamma$ satisfies H\"{o}lder condition of order $\alpha$, $0<\alpha\leq 1$ in $t$
\begin{equation*}
|\gamma(x,t)-\gamma(x,t')|\leq C|t-t'|^{\alpha},
\end{equation*}
and is bilipschitz in $x$
\begin{equation*}
C_1|x-y|\leq |\gamma(x,t)-\gamma(y,t)|\leq C_2|x-y|,
\end{equation*}
for arbitrary $x$, $y \in B_r(x_0)$ and $t$, $t'\in I_T(t_0)$, $\gamma(x,0)=x$. Then
\begin{equation*}
\biggl\|\sup_{t\in I_T(t_0)}|e^{it\triangle}f(\gamma(x,t))|\biggl\|_{L^2(B_r(x_0))}\leq C\|f\|_{H^s(\mathbb{R})},
\end{equation*}
if $s>\max\{1/2-\alpha, 1/4\}$.
\end{theorem}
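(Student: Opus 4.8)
\medskip
\noindent\textbf{Proof proposal.} The plan is to reduce to a single frequency scale by Littlewood--Paley theory and then run a one–dimensional $TT^{*}$ argument in which the bilipschitz hypothesis in $x$ and the H\"older hypothesis in $t$ play complementary roles. First I would decompose $f=\sum_{R}f_{R}$ dyadically in frequency, with $\widehat{f_{R}}$ supported in $\{|\xi|\sim R\}$, $R\ge 1$, the part with $|\xi|\lesssim1$ being trivially bounded by $\|f\|_{2}$. Since $\|f\|_{H^{s}}^{2}\sim\sum_{R}R^{2s}\|f_{R}\|_{2}^{2}$, by the triangle inequality and Cauchy--Schwarz it suffices to prove the single–frequency bound
\[
\Bigl\|\sup_{t\in I_{T}(t_{0})}|e^{it\triangle}f_{R}(\gamma(x,t))|\Bigr\|_{L^{2}(B_{r}(x_{0}))}\lesssim R^{\max\{1/2-\alpha,\,1/4\}}\|f_{R}\|_{2}
\]
(a $\log R$ near $\alpha=1/2$ being harmless), because the sum over dyadic $R$ then converges whenever $s>\max\{1/2-\alpha,1/4\}$. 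For fixed $R$ I would linearise the supremum: by a standard measurable selection $\|\sup_{t\in I}|F(x,t)|\|_{L^{2}_{x}}=\sup_{t(\cdot)}\|F(x,t(x))\|_{L^{2}_{x}}$ over measurable maps $t\colon B_{r}(x_{0})\to I_{T}(t_{0})$, so it is enough to bound, uniformly in $t(\cdot)$, the operator $Tg(x)=\int_{|\xi|\sim R}e^{i(\gamma(x,t(x))\xi-t(x)\xi^{2})}g(\xi)\,d\xi$ from $L^{2}(\{|\xi|\sim R\})$ to $L^{2}(B_{r}(x_{0}))$, applied to $g=\widehat{f_{R}}$ (for Schwartz $f$ the supremum is a genuine continuous one, the general case following by density).

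The core is the $TT^{*}$ estimate. Writing $\|Tg\|_{2}^{2}=\langle T^{*}Tg,g\rangle$ and $\|T^{*}T\|=\|TT^{*}\|$, the operator $TT^{*}$ on $L^{2}(B_{r}(x_{0}))$ has kernel
\[
\mathcal{K}(x,y)=\int_{|\xi|\sim R}e^{\,i\bigl[(\gamma(x,t(x))-\gamma(y,t(y)))\xi-(t(x)-t(y))\xi^{2}\bigr]}\,d\xi .
\]
Set $a=\gamma(x,t(x))-\gamma(y,t(y))$ and $b=t(x)-t(y)$; on $B_{r}(x_{0})\times I_{T}(t_{0})$ one has $|a|,|b|\lesssim1$. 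The phase $\xi\mapsto a\xi-b\xi^{2}$ has constant second derivative $-2b$, so van der Corput's lemma gives $|\mathcal{K}(x,y)|\lesssim|b|^{-1/2}$, while trivially $|\mathcal{K}(x,y)|\le R$, and when $|a|$ and $R|b|$ are of different orders the phase is non-stationary on $\{|\xi|\sim R\}$ so one integration by parts yields $|\mathcal{K}(x,y)|\lesssim\min\{|a|^{-1},(R|b|)^{-1}\}$. Now the hypotheses enter: inserting $\gamma(y,t(x))$ as an intermediate point and using the bilipschitz bound in $x$ and the H\"older bound in $t$,
\[
C_{1}|x-y|-C|b|^{\alpha}\ \le\ |a|\ \le\ C_{2}|x-y|+C|b|^{\alpha}.
\]
Hence in the good range $|x-y|\ge\max\{R^{-1},\,c\,|b|^{\alpha}\}$ one has $|a|\sim|x-y|$, and running the stationary-phase/van der Corput estimates with $a$ in the role of the linear coefficient (distinguishing whether the stationary point $-a/(2b)$ lies in $\{|\xi|\sim R\}$) gives $|\mathcal{K}(x,y)|\lesssim(R/|x-y|)^{1/2}$, Carleson's one–dimensional bound. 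In the bad range either $|x-y|<R^{-1}$, where one uses $|\mathcal{K}|\le R$, or $|x-y|<c|b|^{\alpha}$, i.e.\ $|b|\gtrsim|x-y|^{1/\alpha}$, where van der Corput gives $|\mathcal{K}|\lesssim|b|^{-1/2}\lesssim|x-y|^{-1/(2\alpha)}$; combining with $|\mathcal{K}|\le R$ one obtains, for all $x,y\in B_{r}(x_{0})$,
\[
|\mathcal{K}(x,y)|\ \lesssim\ \Bigl(\frac{R}{|x-y|}\Bigr)^{1/2}+\min\bigl\{R,\ |x-y|^{-1/(2\alpha)}\bigr\}.
\]

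It then remains to apply Schur's test. Integrating the first term over $|y-x|\lesssim r$ gives $\lesssim R^{1/2}r^{1/2}\lesssim R^{1/2}$; integrating the second term, splitting at $|x-y|=R^{-2\alpha}$, gives $\lesssim R\cdot R^{-2\alpha}+\int_{R^{-2\alpha}}^{r}\rho^{-1/(2\alpha)}\,d\rho\lesssim R^{\max\{1-2\alpha,\,0\}}$ (with a $\log R$ if $\alpha=1/2$). Thus $\|TT^{*}\|\lesssim R^{1/2}+R^{\max\{1-2\alpha,0\}}$, so $\|T\|\lesssim R^{1/4}+R^{\max\{1/2-\alpha,0\}}\lesssim R^{\max\{1/4,\,1/2-\alpha\}}$; since this is uniform in $t(\cdot)$, undoing the linearisation and summing the Littlewood--Paley pieces proves the theorem for $s>\max\{1/2-\alpha,1/4\}$.

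The step I expect to be the main obstacle is the kernel estimate in the $TT^{*}$ argument. One has to see that the bilipschitz hypothesis in $x$ is precisely what forces the linear part of the phase to be comparable to $|x-y|$, which is what lets the one–dimensional stationary-phase analysis recover Carleson's exponent $1/4$; and the H\"older hypothesis in $t$ is precisely what controls the residual region where $|x-y|$ is too small relative to $|t(x)-t(y)|^{\alpha}$ for stationary phase to help, where the bound $|t(x)-t(y)|^{-1/2}$ together with $|t(x)-t(y)|\gtrsim|x-y|^{1/\alpha}$ produces the exponent $1/2-\alpha$. Keeping the various oscillatory-integral sub-cases mutually consistent (the sign of $b$, whether the stationary point lies in $\{|\xi|\sim R\}$, and whether $|x-y|$ is below or above $R^{-1}$ and $R^{-2\alpha}$) is the technical heart of the matter, although each individual estimate is elementary.
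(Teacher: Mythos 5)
Your argument is correct and coincides, in the $L^{2}$ case, with the method the paper itself develops when it reproves and strengthens this statement in Lemma \ref{lemma2.3} of Section 5: Littlewood--Paley reduction to a single frequency annulus, Kolmogorov--Seliverstov--Plessner linearisation, a $TT^{*}$ duality, kernel estimates combining the trivial bound, van der Corput, and non-stationary phase (with the bilipschitz-in-$x$ and H\"older-in-$t$ hypotheses entering precisely through the sandwich $C_{1}|x-y|-C|b|^{\alpha}\le |a|\le C_{2}|x-y|+C|b|^{\alpha}$ obtained by inserting the intermediate point $\gamma(y,t(x))$), and finally Schur's test to close the $L^{2}$ bound. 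The only distinction worth recording is that the paper attributes to \cite{CLV} the additional use of a time-localizing lemma, reducing first to time intervals of length $\sim R^{-1}$; your proof, like the paper's own self-contained argument in Lemma \ref{lemma2.3}, lets the van der Corput / stationary-phase case analysis absorb all time separations $|t(x)-t(y)|$ directly, which is a modest but genuine simplification relative to the cited source.
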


 Based on the result from Theorem \ref{theorem CLV}, we will give an application for Theorem \ref{theorem1.2n}.

\begin{theorem}
Under the assumption of Theorem \ref{theorem CLV}, we have,

(1) if $1/2 \le \alpha <1$, then for each $s > \frac{1}{4}$ and all $f \in H^{s+\delta}(\mathbb{R})$,
\begin{equation}\label{Eq1.94n}
e^{it\Delta}(f)(\gamma(x,t)) - f(x) = o(t^{h}), \:\ a.e. \:\:\ x \in B(x_{0},r) \text{\quad as \quad} t \rightarrow t_{0}^{+},
\end{equation}
whenever $( \delta, h) \in D_{1}:= \{(x,y): x \ge0, y \ge 0, y \le x/2, y< \alpha \}$.

(2) if $0 < \alpha < 1/2$, then for each $s > \max\{\frac{1}{2}-\alpha,\frac{1}{4}\}$ and all $f \in H^{s+\delta}(\mathbb{R})$,
\begin{equation}\label{Eq1.95n}
e^{it\Delta}(f)(\gamma(x,t)) - f(x) = o(t^{h}), \:\ a.e. \:\:\ x \in B(x_{0},r) \text{\quad as \quad} t \rightarrow 0^{+},
\end{equation}
whenever $( \delta, h) \in D_{2}:= \{(x,y): x \ge0, y \ge 0, y \le \alpha x, y< \alpha \}$.
\end{theorem}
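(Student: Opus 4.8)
The plan is to deduce the theorem directly from Theorem \ref{theorem1.2n} by specializing to the Schr\"odinger case $n=1$, $P(\xi)=|\xi|^{2}$, and feeding in the maximal estimate of Theorem \ref{theorem CLV}. First I would verify the hypotheses of Theorem \ref{theorem1.1n} (hence of Theorem \ref{theorem1.2n}) in this setting. Choosing $m=2$, condition (\ref{phase}) holds trivially since $|P(\xi)|=|\xi|^{2}$. The curve $\gamma$ in Theorem \ref{theorem CLV} is $\alpha$-H\"older in $t$ and bilipschitz in $x$; in particular it satisfies (\ref{Holder}), so it is an admissible tangential curve (the interval $B_r(x_0)$ there is the ball $B(x_0,r)$ here). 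For the maximal bound, apply Theorem \ref{theorem CLV} with $t_0=\tfrac12$ and $T=\tfrac12$, so that $I_T(t_0)=[0,1]$; this gives
\[
\Bigl\|\sup_{0<t<1}|e^{it\Delta}f(\gamma(x,t))|\Bigr\|_{L^{2}(B(x_0,r))}\le C\|f\|_{H^{s}(\mathbb{R})}
\]
for every $s>\max\{1/2-\alpha,\,1/4\}$, which is exactly (\ref{Eq1.8n}) with $p=2$ and $s_0=\max\{1/2-\alpha,1/4\}$; note that $s_0=1/4$ precisely when $\alpha\ge 1/2$.

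Next I would invoke Theorem \ref{theorem1.2n} with $m=2$, $p=2$ (so $q=\min\{p,2\}=2$), and the above $s_0$. When $1/m=1/2\le\alpha<1$, part (1) gives, for all $f\in H^{s+\delta}(\mathbb{R})$ with $s>1/4$, that $e^{it\Delta}(f)(\gamma(x,t))-f(x)=o(t^{h})$ a.e.\ as $t\to0^{+}$ whenever $(\delta,h)\in D_1=\{(x,y):x\ge0,\ y\ge0,\ y\le x/2,\ y<\alpha\}$; this is (\ref{Eq1.94n}). When $0<\alpha<1/m=1/2$, part (2) gives the same conclusion for $s>\max\{1/2-\alpha,1/4\}$ with $D_2=\{(x,y):x\ge0,\ y\ge0,\ y\le\alpha x,\ y<\alpha\}$, which is (\ref{Eq1.95n}).

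Since everything reduces to checking the hypotheses of an already-proven theorem, there is no substantive obstacle. The only technical point is converting the $I_T(t_0)$-localized estimate of Theorem \ref{theorem CLV} into the $(0,1)$-form demanded by (\ref{Eq1.8n}): this is handled by covering the compact $t$-interval by finitely many short intervals and using the group law $e^{it\Delta}=e^{i(t-t_0)\Delta}e^{it_0\Delta}$ together with the $H^{s}$-boundedness of $e^{it_0\Delta}$ (which also lets one reduce convergence near a general time $t_0$ to the case $t_0=0$ employed above).
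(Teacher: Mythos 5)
Your proposal is correct and matches the paper's intended argument exactly: the theorem is presented in the paper as a direct application of Theorem~\ref{theorem1.2n} with $n=1$, $P(\xi)=|\xi|^{2}$, $m=2$, $p=q=2$, and $s_0=\max\{1/2-\alpha,1/4\}$ supplied by Theorem~\ref{theorem CLV}, which is precisely your specialization. The only superfluous step is the closing remark about covering $[0,1]$ by short intervals and using the group law, since taking $t_0=1/2$, $T=1/2$ in Theorem~\ref{theorem CLV} already yields the maximal bound on $(0,1)$ in one shot.
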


In \cite{CLV}, the authors adopted the $TT^*$ method, the time localizing lemma and the van der Corput's lemma to establish Theorem \ref{theorem CLV}. Cho-Lee extended this result in \cite{CL} where they obtained the capacity dimension of the divergence set. Recently, geting around of using the time localizing lemma, the corresponding result was obtained for fractional Schr\"{o}dinger operators by Cho-Shiraki \cite{CS}. Combing with Theorem \ref{theorem1.2n}, we can get the convergence rate results. But we omit the results here.

\subsection{Convergence results along tangential curves in $\mathbb{R}^{2} \times \mathbb{R}$}\label{B}
Comparing with the  case in $\mathbb{R} \times \mathbb{R}$, much less is known about the convergence problem for Schr\"{o}dinger operator along tangential curves in higher dimensional case $\mathbb{R}^{n} \times \mathbb{R}, n \ge2$. It follows from  \cite[Proposition 4.3]{CLV} that if $P(\xi)$ satisfies
 \[|D_{\xi}^{\beta}P(\xi)| \lesssim |\xi|^{m-|\beta|-1}, \quad \quad |P(\xi)| \sim |\xi|^{m-1} \]
 for $|\xi| \gg 1$ and $m \ge 2$, $\gamma(x,t)$ is bi-lipschitz in $x$ and satisfies H\"{o}lder condition of order $\frac{1}{m-1}$ in $t$. Then the convergence of  $e^{itP(D)}f$ along the curve $(\gamma(x,t),t)$ follows from the convergence of $e^{itP(D)}f$ along the vertical line $(x,t)$. It is clear that some convergence result along tangential curves for generalized Schr\"{o}dinger operators can be obtained when $m >2$.

For $m=2$ and $n=1$, Cho-Lee-Vargas \cite{CLV}  showed the pointwise convergence  along the curve $(\gamma(x,t), t)$ as required by Theorem \ref{theorem CLV}.
 Ding and Niu \cite{DN} improved the above theorem, i.e. $f\in H^s(\mathbb{R})$ for $s\geq 1/4$, if $1/2\leq \alpha\leq 1$.

However, the convergence problem  along tangential curve  in  $\mathbb{R}^{n} \times \mathbb{R}$ ($n \ge2$) was open until now  when $m=2$.

It is now well-known that the method of restriction estimates for the paraboloid adopted by the references \cite{Guth1, Guth2} can be applied to get the sharp convergence result for $e^{it\Delta}f$ along the vertical line $(x,t)$, one can see the articles \cite{DGL} for $n=2$ and \cite{DZ} for $n \ge 3$. So it is interesting to seek if these methods can be applied to get convergence result for $e^{it\Delta}f$ along tangential curves. In this Subsection, we give a convergence result for Schr\"{o}dinger operator along tangential curves in  $\mathbb{R}^2 \times \mathbb{R}$ by the broad-narrow argument and polynomial partitioning. Moreover, we obtain the corresponding convergence rate.

Let $\Gamma_{\alpha}:=\{\gamma:[0,1]\rightarrow\mathbb{R}^{2}: \text{for each } t,t^{\prime} \in [0, 1], | \gamma(t)-\gamma(t^{\prime})| \le C_{\alpha}|t-t^{\prime}|^{\alpha}\}, C_{\alpha} \ge 1.$
We consider the convergence problem of the Schr\"{o}dinger operator along the curves $(x+\gamma(t), t)$ with $\gamma \in \Gamma_{\alpha}$ for some $\alpha \in [1/2,1)$. For example, $\gamma(t)=t^{\alpha}\mu, \alpha \in [1/2,1)$ and $\mu$ is a bounded vector in $\mathbb{R}^2$.

We have the maximal estimate below.
\begin{theorem}\label{maximal theorem along tangential}
Let $p=3.2$. For all $\varepsilon >0$ and $f \in H^{3/8+\varepsilon}(\mathbb{R}^2)$, it holds
\begin{equation}\label{maximal estimate along tangential}
  \biggl\|\sup_{t\in (0,1)}|e^{it\triangle}f(x+\gamma(t))|\biggl\|_{L^p(B(0,1))}\leq C \|f\|_{H^s(\mathbb{R}^{2})}.
\end{equation}
Here, the constant $C$ depends only on $\varepsilon$ and $C_{\alpha}$, but does not depend on the choice of $\gamma$.
\end{theorem}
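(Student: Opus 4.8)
The plan is to reduce \eqref{maximal estimate along tangential} to a local ``refined Strichartz''–type inequality adapted to the curve $\gamma$, and to prove the latter by the Bourgain--Guth broad--narrow decomposition together with Guth's polynomial partitioning, along the lines of Du--Guth--Li's treatment of the planar Carleson problem. First I would carry out the standard reductions: by Littlewood--Paley, almost orthogonality in $t$ for the maximal function, and a Tao-type $\varepsilon$-removal argument, it suffices to prove, for $f$ with $\widehat f$ supported in $\{|\xi|\sim R\}$,
\[
\bigl\|\sup_{0<t<1}|e^{it\triangle}f(x+\gamma(t))|\bigr\|_{L^{p}(B(0,1))}\lesssim_{\varepsilon}R^{3/8+\varepsilon}\|f\|_{L^{2}(\mathbb{R}^{2})},\qquad p=16/5 .
\]
The parabolic rescaling $\xi=R\eta$, $x=R^{-1}y$, $t=R^{-2}\tau$ turns this into
\[
\bigl\|\sup_{0<\tau<R^{2}}|e^{i\tau\triangle}g(y+\widetilde\gamma(\tau))|\bigr\|_{L^{p}(B(0,R))}\lesssim_{\varepsilon}R^{\varepsilon}\|g\|_{L^{2}(\mathbb{R}^{2})},\qquad \widehat g\subset B(0,1),
\]
where $\widetilde\gamma(\tau):=R\,\gamma(R^{-2}\tau)$ obeys $|\widetilde\gamma(\tau)-\widetilde\gamma(\tau')|\le C_{\alpha}R^{1-2\alpha}|\tau-\tau'|^{\alpha}\le C_{\alpha}|\tau-\tau'|^{\alpha}$ \emph{precisely because} $\alpha\ge 1/2$. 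This compatibility of the tangential curve with parabolic rescaling is exactly why the hypothesis $\alpha\ge\tfrac12$ (i.e.\ $\alpha\ge 1/m$ with $m=2$) appears and why the constant can be taken independent of the individual $\gamma$. Finally I would linearise the supremum by a measurable $\tau(\cdot)$, decompose $B(0,R)$ into $R^{1/2}$-balls, and pigeonhole so that on each such ball $\tau(\cdot)$ varies by at most the time-length over which $\widetilde\gamma$ moves less than $R^{1/2}$, namely $\lesssim R^{1/(2\alpha)}$, and $|e^{i\tau(y)\triangle}g(y+\widetilde\gamma(\tau(y)))|\sim\beta$ there.

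Next comes the broad--narrow dichotomy. Fix a large $K$, split the frequency ball into $K^{-1}$-caps $\theta$, write $g=\sum_\theta g_\theta$, and at each $y$ either a single cap dominates (the narrow case) or there are $\gtrsim K^{-O(1)}$ pairwise $K^{-1}$-transverse caps contributing comparably (the broad case); one sums the two contributions separately. In the narrow case I would proceed by parabolic rescaling of the dominating cap and an induction on scales, and/or by reduction to lower-dimensional maximal estimates along tangential curves as in \cite{CLV, DN}; the admissibility of $\gamma$ under these operations again rests on $\alpha\ge 1/2$, and since for $\alpha\ge1/2$ the one-dimensional estimate requires only regularity $s>\max\{1/2-\alpha,1/4\}=1/4$, well below $3/8$, the narrow case is not the bottleneck.

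The heart of the matter is the broad case, where I would follow Guth's polynomial partitioning. Partition $B(0,R)$ by a polynomial of bounded degree into cells plus an $R^{1-\delta}$-neighbourhood of its zero set, and split the broad contribution into a cellular (transverse) term and a tangential term. The wave-packet decomposition of $e^{i\tau\triangle}g$ gives tubes of width $R^{1/2}$; the key geometric input is that, since $\widetilde\gamma$ moves by at most $C_\alpha\ell^\alpha$ over any $\tau$-interval of length $\ell$, on scales $\ell\lesssim R^{1/(2\alpha)}$ this displacement is $\lesssim R^{1/2}$, so composing with $y\mapsto(y+\widetilde\gamma(\tau),\tau)$ still sends each wave packet into an $O(R^{1/2})$-neighbourhood of a genuine tube and the incidence/counting estimates survive --- the only price being that the set obtained by lifting the linearised maximal function to space-time is a version of the Lipschitz graph occurring in the vertical case that is fattened in the $\tau$-direction (by $R^{1/(2\alpha)}$ rather than $R^{1/2}$). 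The bilinear restriction/extension estimate for the paraboloid is unaffected by the extra term $\widetilde\gamma(\tau)\cdot\xi$ in the phase, since it is linear in $\xi$ and does not change the Hessian in $\xi$, so it can still be used in the broad cells, while the tangential term is handled by induction on the degree together with the narrow estimate. Keeping track of the number of cells, the number of tubes meeting a cell, and the measure of $Y=\bigcup(\text{relevant }R^{1/2}\text{-balls})$ produces a curve-adapted refined Strichartz inequality of the form $\beta\,|Y|^{1/p}\lesssim_\varepsilon R^\varepsilon\|g\|_2$; the degradation from the exponent $3$ available in the vertical Du--Guth--Li estimate down to $p=16/5$ is exactly the cost of that $\tau$-fattening, and undoing the rescaling converts $p=16/5$ into the threshold $s=3/8$.

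The step I expect to be the main obstacle is the execution of this last argument uniformly in the curve: running the polynomial-partitioning iteration while retaining quantitative control of $\widetilde\gamma$ --- checking that after passing to a cell (which lives on a smaller spatial scale) the relevant restriction and rescaling of $\widetilde\gamma$ still satisfies an $\alpha$-Hölder bound with a constant controlled independently of the stage of the iteration (this is where $\alpha\ge 1/2$ is used repeatedly) --- and dealing with the wave packets tangent to the zero set, where the transversality that the broad estimate relies on degenerates and the curve's low regularity is most dangerous. Balancing the partitioning scales against the threshold $\ell\sim R^{1/(2\alpha)}$ below which $\widetilde\gamma$ looks straight at the tube scale is what pins down the pair $(p,s)=(16/5,3/8)$; combining the resulting broad bound with the narrow estimate, summing over the $O(\log R)$ dyadic values of $\beta$ and the pigeonholing parameters, and reversing the reductions of the first paragraph then yields \eqref{maximal estimate along tangential}.
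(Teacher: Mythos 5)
Your overall roadmap (Littlewood--Paley, time localization, parabolic rescaling, broad--narrow dichotomy, and polynomial partitioning as in Du--Li and Du--Guth--Li) coincides with the paper's: the paper reduces Theorem \ref{maximal theorem along tangential} through Theorems \ref{local in time estimate} and \ref{reduction main theorem} to a broad-norm estimate proved by polynomial partitioning, and the exponent bookkeeping you carry out ($p=16/5$, rescaled exponent $R^{2/p-5/8+\varepsilon}=R^{\varepsilon}$, the Hölder bound $|\widetilde\gamma(\tau)-\widetilde\gamma(\tau')|\le C_\alpha R^{1-2\alpha}|\tau-\tau'|^\alpha\le C_\alpha|\tau-\tau'|^\alpha$ for $\alpha\ge 1/2$) matches the paper. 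But there is a genuine gap in how you propose to handle the non-smoothness of $\gamma$, and it is precisely the point the paper singles out as the main obstacle. Because $\widetilde\gamma$ is only Hölder continuous, the Fourier support of $(y,\tau)\mapsto e^{i\tau\Delta}g(y+\widetilde\gamma(\tau))$ in the space-time variables is \emph{not} concentrated near the paraboloid or indeed any nice set, so the locally constant property of the solution in $(y,\tau)$ --- which underlies the time-localization lemma, the $BL^p_{k,A}L^q$ reformulation of the broad norm, and the wave-packet/cell incidence counting at every stage of the polynomial-partitioning iteration --- simply fails. The paper's substitute is Lemma \ref{main lemma}: a Fourier-series expansion that, at any intermediate scale $\rho$, replaces the moving curve $R\gamma(t/R^{2})$ by its value frozen at a reference time $t_{0}$, modulo a rapidly convergent sum over modulated translates $f_{\mathfrak l}$. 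You invoke only the coarsest consequence of the Hölder bound (that over the full rescaled time window the displacement is $O(R^{1/2})$, so wave packets land in $O(R^{1/2})$-neighbourhoods of genuine tubes), but that alone does not give a locally constant substitute at the intermediate ball scales $\rho$ with $R^{1/2}\le\rho\le R$ at which the cellular/tangential decomposition is carried out; without something like Lemma \ref{main lemma} the induction cannot close.

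Two further points compound this. Your rescaled estimate has $\sup_{0<\tau<R^{2}}$, which indicates the time-localizing lemma (passing to intervals of length $R^{-1}$ \emph{before} rescaling, so $\tau\in(0,R)$ afterward) was not applied; the displacement of $\widetilde\gamma$ over a $\tau$-window of length $R^{2}$ is $O(R)$, not $O(R^{1/2})$, so tubes are fattened to width $R$ and your wave-packet picture collapses. Your replacement --- pigeonholing the linearized $\tau(\cdot)$ to vary by $\lesssim R^{1/(2\alpha)}$ on each spatial $R^{1/2}$-ball --- introduces a summation over $\sim R^{2-1/(2\alpha)}$ time pieces that you give no mechanism to absorb, and it also mismatches the scale $R$ at which the paper runs its induction. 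Finally, the assertion that ``the bilinear restriction/extension estimate for the paraboloid is unaffected by the extra term $\widetilde\gamma(\tau)\cdot\xi$ since it is linear in $\xi$'' is too hasty: the issue is not the $\xi$-Hessian but the lack of smoothness in $\tau$, which is exactly why the paper remarks that it does not know how to apply decoupling here. In short, the outline is right but the key lemma --- the Hölder-curve substitute for the locally constant property, applied uniformly in $\gamma$ and at every scale in the partitioning --- is missing, and it is the content of the paper's proof.
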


\begin{center}
\includegraphics[height=7cm]{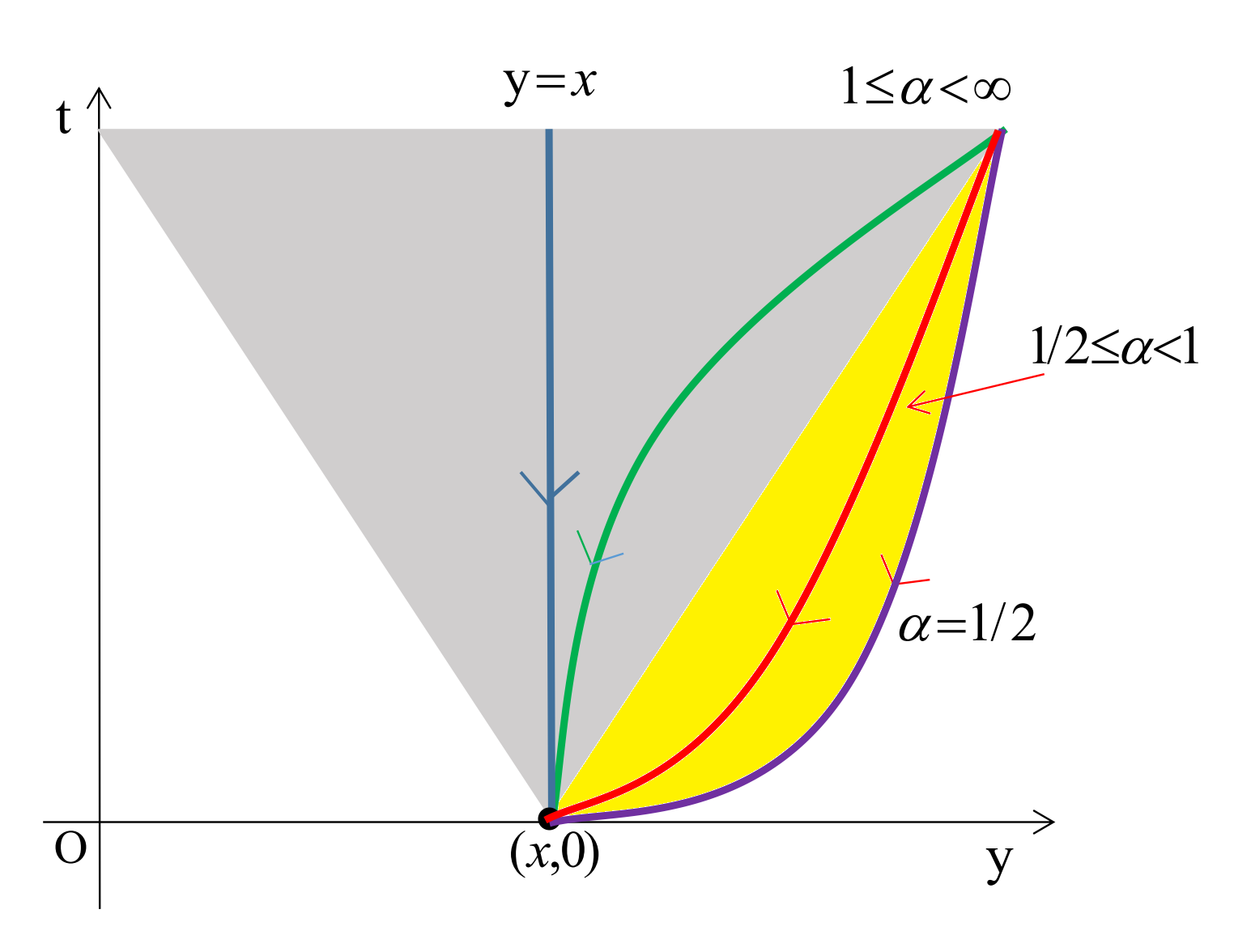}
\end{center}
\begin{center}
Figure 3. The blue line is convergence result along vertical lines from \cite{DGL}; The green line is convergence result along $C^1$-curves from \cite{LR}; The red line is convergence result along tangential curves from Theorem \ref{maximal theorem along tangential} above.
\end{center}

By Theorem \ref{maximal theorem along tangential}, if $\gamma \in \Gamma_{\alpha}$  and $\gamma(0)=(0,0)$, then the convergence result
 \begin{equation}\label{Eq13}
\mathop {\lim }\limits_{t \to 0^{+}} {e^{it\Delta }}f\left( {x + \gamma \left( {t} \right)} \right) = f(x) \:\ a.e. \:\ x \in \mathbb{R}^2
 \end{equation}
holds whenever $f \in H^{s}(\mathbb{R}^2)$, $s > 3/8.$ Hence, the corresponding convergence rate follows from Theorem \ref{maximal theorem along tangential} and Theorem \ref{theorem1.2n}.

\begin{theorem}
For each $s > \frac{3}{8}$ and all $f \in H^{s+\delta}(\mathbb{R}^{2})$. If $\gamma \in \Gamma_{\alpha}$, $\gamma(0) =(0,0)$, then
\begin{equation}\label{Eq1.96n}
e^{it\Delta}f(x + \gamma(t)) - f(x) = o(t^{h}), \:\ a.e. \:\:\ x \in \mathbb{R}^{2} \text{\quad as \quad} t \rightarrow 0^{+},
\end{equation}
whenever $( \delta, h) \in D_{1}:= \{(x,y): x \ge0, y \ge 0, y \le x/2, y< \alpha \}$.
\end{theorem}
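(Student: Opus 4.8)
The plan is to derive the statement directly from Theorem~\ref{maximal theorem along tangential} and Theorem~\ref{theorem1.2n}(1); the proof reduces to checking that the hypotheses of Theorem~\ref{theorem1.2n} are met here, with $n=2$, $P(\xi)=|\xi|^2$ and $m=2$.

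First I would put the geometric data into the form required by Theorem~\ref{theorem1.1n}. Fix $\gamma\in\Gamma_\alpha$ with $\gamma(0)=(0,0)$, where $\alpha\in[1/2,1)$ as in this subsection, and set $\widetilde\gamma(x,t):=x+\gamma(t)$. Then $\widetilde\gamma(x,0)=x$, and $|\widetilde\gamma(x,t)-\widetilde\gamma(x,t')|=|\gamma(t)-\gamma(t')|\le C_\alpha|t-t'|^\alpha$ for every $x$, so $\widetilde\gamma$ satisfies the H\"older condition~(\ref{Holder}) uniformly in $x$, in particular on every ball $B(x_0,r)$. With $P(\xi)=|\xi|^2$ we have $|P(\xi)|=|\xi|^2$, so~(\ref{phase}) holds with $m=2$; since $\alpha\ge 1/2=1/m$, we are in case~(1) of Theorem~\ref{theorem1.1n} and Theorem~\ref{theorem1.2n}, whose region $D_1=\{(x,y):x\ge0,\ y\ge0,\ y\le x/m,\ y<\alpha\}$ becomes exactly the region $\{(x,y):x\ge0,\ y\ge0,\ y\le x/2,\ y<\alpha\}$ appearing in the statement.

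Next I would supply the maximal estimate~(\ref{Eq1.8n}) that Theorem~\ref{theorem1.2n} requires. Theorem~\ref{maximal theorem along tangential} gives, for every $s>3/8$ and every $\gamma\in\Gamma_\alpha$, the bound $\| \sup_{0<t<1}|e^{it\Delta}f(x+\gamma(t))| \|_{L^{3.2}(B(0,1))}\le C\|f\|_{H^s(\mathbb{R}^2)}$ with $C$ depending only on $s$ and $C_\alpha$, not on $\gamma$. To upgrade this to an arbitrary ball $B(x_0,r)$ with $r\lesssim1$, I would cover $B(x_0,r)$ by $O(1)$ unit balls and, on each, apply a translation in the $x$-variable; such a translation preserves $\|f\|_{H^s(\mathbb{R}^2)}$ and, because $\gamma$ does not depend on $x$, leaves the curve itself unchanged, so summing the $O(1)$ pieces yields~(\ref{Eq1.8n}) with $p=3.2\ge1$ and $s_0=3/8$. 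Hence all the hypotheses of Theorem~\ref{theorem1.1n}, and therefore of Theorem~\ref{theorem1.2n}, are in force.

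Finally, Theorem~\ref{theorem1.2n}(1) yields, for every $s>3/8$ and every $f\in H^{s+\delta}(\mathbb{R}^2)$, that $e^{it\Delta}f(x+\gamma(t))-f(x)=o(t^h)$ for a.e.\ $x\in B(x_0,r)$ as $t\to0^+$ whenever $(\delta,h)\in D_1$. Writing $\mathbb{R}^2$ as a countable union of unit balls and discarding the corresponding countable union of null sets promotes this to a.e.\ $x\in\mathbb{R}^2$, which is the claim. The one point that needs care is the uniformity in $\gamma$ of the constant in Theorem~\ref{maximal theorem along tangential} --- this is precisely what legitimizes the localization/translation step --- but that uniformity is built into the statement of Theorem~\ref{maximal theorem along tangential}, so no genuinely new difficulty appears: all the analytic content already resides in Theorems~\ref{maximal theorem along tangential} and~\ref{theorem1.2n}.
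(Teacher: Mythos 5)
Your proposal is correct and follows exactly the route the paper intends: the paper states the theorem without a separate proof, remarking only that it "follows from Theorem \ref{maximal theorem along tangential} and Theorem \ref{theorem1.2n}", and your argument supplies precisely the verification that the hypotheses of Theorem \ref{theorem1.2n}(1) (with $P(\xi)=|\xi|^2$, $m=2$, $s_0=3/8$, $p=3.2$) are met, together with the translation/covering step needed to pass from a unit ball to a.e.\ $x\in\mathbb{R}^2$. No gap.
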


Before the proof of Theorem \ref{maximal theorem along tangential}, we give the following remark. If Theorem \ref{maximal theorem along tangential} holds true, then for each $(x_{0}, t_{0} ) \in \mathbb{R}^{2}\times \mathbb{R}$,   and $\gamma \text{  which maps } [t_{0}, t_{0} +1]$ to $\mathbb{R}^{2}$ such that for each $t,t^{\prime} \in [t_{0}, t_{0}+1]$,
\[|\gamma(t) - \gamma(t^{\prime})| \le C_{\alpha} |t-t^{\prime}|^{\alpha},\]
  it holds
\begin{equation}\label{Eq1.21}
 \biggl\|\sup_{t\in (t_{0},t_{0}+1)}|e^{it\triangle}f(x+\gamma(t))|\biggl\|_{L^p(B(x_{0},1))}\leq C \|f\|_{H^s(\mathbb{R}^{2})},\hspace{0.2cm}s>3/8,
\end{equation}
where the constant $C$ depends only on $\varepsilon$ and $C_{\alpha}$.
Indeed,  changing variables implies that
\[\biggl\|\sup_{t\in (t_{0},t_{0}+1)}|e^{it\triangle}f(x+\gamma(t))|\biggl\|_{L^p(B(x_{0},1))} = \biggl\|\sup_{t\in (0,1)}|e^{it\triangle}g(x+\gamma(t+t_{0}))|\biggl\|_{L^p(B(0,1))}, \]
in which $\hat{g}(\xi) = e^{ix_{0} \cdot \xi + it_{0}|\xi|^{2}}\hat{f}(\xi)$. It is obvious that $\gamma(\cdot + t_{0}) \in \Gamma_{\alpha}$, then by Theorem \ref{maximal theorem along tangential}, we have
\[ \biggl\|\sup_{t\in (0,1)}|e^{it\triangle}g(x+\gamma(t+t_{0}))|\biggl\|_{L^p(B(0,1))} \le C \|g\|_{H^s(\mathbb{R}^{2})}, \hspace{0.2cm}s>3/8. \]
Here the constant $C$ depends only on $\varepsilon$ and $C_{\alpha}$, but does not depend on the choice of $\gamma$. Since $\|f\|_{H^s} = \|g\|_{H^s}$,
inequality (\ref{Eq1.21}) follows. Moreover,  it follows from inequality (\ref{Eq1.21}) that
\[\mathop {\lim }\limits_{t \to t_{0}^{+}} {e^{it\Delta }}f\left( {x + \gamma \left( {t} \right)} \right) = e^{it_{0}\Delta}f(x + \gamma(t_{0})) \:\ a.e. \:\ x\in \mathbb{R}^2.\]

Now let's turn to reduce the proof of Theorem \ref{maximal theorem along tangential}. By Littlewood-Paley decomposition,  it suffices to show that
\begin{equation}\label{annulas maximal estimate along tangential}
 \biggl\|\sup_{t\in (0,1)}|e^{it\triangle}f(x+\gamma(t))|\biggl\|_{L^p(B(0,1))}\leq C R^{3/8+
\varepsilon}\|f\|_{L^2},
\end{equation}
whenever supp $\hat{f} \subset \{{\xi \in \mathbb{R}^{2}: |\xi| \sim R}\}$, $R \gg 1$, the constant $C$ depends on $\varepsilon$ and $C_{\alpha}$, but does not depend on the choice of $\gamma$. For each $\gamma \in \Gamma_{\alpha}$, according to the time localizing lemma, the global estimate
\begin{equation}\label{global estimate}
 \biggl\|\sup_{t\in (0,1)}|e^{it\triangle}f(x+\gamma(t))|\biggl\|_{L^p(B(0,1))}\leq C^{\prime} R^{3/8+ \varepsilon}\|f\|_{L^2}
 \end{equation}
follows from the local estimate on each interval $(t_{0},t_{0}+R^{-1}) \subset (0,1)$,
 \begin{equation}\label{local estimate}
 \biggl\|\sup_{t\in (t_{0},t_{0}+R^{-1})}|e^{it\triangle}f(x+\gamma(t))|\biggl\|_{L^p(B(0,1))}\leq C^{\prime \prime} R^{3/8+ \varepsilon}\|f\|_{L^2}.
 \end{equation}
Notice that the constant $C^{\prime}$ in the global estimate (\ref{global estimate}) depends on $C_{\alpha}$ and the constant $C^{\prime \prime}$ in the local estimate (\ref{local estimate}). Therefore, in order to show inequality (\ref{annulas maximal estimate along tangential}), we just need to  prove the theorem below.
\begin{theorem}\label{local in time estimate}
Let  $p=3.2$, $R \gg 1$ and
\[\Gamma_{\alpha,R^{-1}}:=\{\gamma:[0,R^{-1}]\rightarrow\mathbb{R}^{2}: \text{for each } t,t^{\prime} \in [0, R^{-1}], | \gamma(t)-\gamma(t^{\prime})| \le C_{\alpha}|t-t^{\prime}|^{\alpha}\}.\]
For any $\varepsilon >0$, we have
\begin{equation}\label{local annulas maximal estimate along tangential curves}
 \biggl\|\sup_{t\in (0,R^{-1})}|e^{it\triangle}f(x+\gamma(t))|\biggl\|_{L^p(B(0,1))}\leq C R^{3/8+
\varepsilon}\|f\|_{L^2},
\end{equation}
for all $f$ with supp $\hat{f} \subset \{{\xi \in \mathbb{R}^{2}: |\xi| \sim R}\}$, where the constant $C$ depends only on $\varepsilon$ and $C_{\alpha}$.
\end{theorem}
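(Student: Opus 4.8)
Since $|t-t'|^{\alpha}\le|t-t'|^{1/2}$ for $t,t'\in[0,R^{-1}]\subset[0,1]$ when $\alpha\ge1/2$, one has $\Gamma_{\alpha,R^{-1}}\subset\Gamma_{1/2,R^{-1}}$ with the same constant, so it suffices to treat $\alpha=1/2$. The plan is to parabolically rescale and then run the broad--narrow analysis of Bourgain--Guth together with $\ell^2$-decoupling and Guth's polynomial partitioning, in the form used by Du--Guth--Li \cite{DGL} for the Schr\"odinger maximal operator in $\mathbb{R}^2$, checking at every step that the tangential curve enters only as a tame perturbation of the wave-packet geometry. Concretely, writing $g(\eta)=\hat f(R\eta)$ (supported in $\{|\eta|\sim1\}$) and $y=Rx$, $\tau=R^{2}t$, one has $e^{it\triangle}f(x+\gamma(t))=R^{2}\,Eg\big(y+\tilde\gamma(\tau),\tau\big)$, where $E$ is the extension operator for the unit paraboloid and $\tilde\gamma(\tau):=R\,\gamma(R^{-2}\tau)$ for $\tau\in(0,R)$. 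A direct computation gives $|\tilde\gamma(\tau)-\tilde\gamma(\tau')|\le C_{\alpha}R^{1-2\alpha}|\tau-\tau'|^{\alpha}=C_\alpha|\tau-\tau'|^{1/2}$ on $(0,R)$, and, more importantly, for every $\rho\le R$ the curve $\tilde\gamma$ moves by at most $C_\alpha\rho^{1/2}$ over any time interval of length $\rho$, i.e.\ by at most one wave-packet width at scale $\rho$; it is precisely the localization to the short interval $(0,R^{-1})$ together with $\alpha\ge1/2$ that produces this scale-invariant bound. After unwinding the $L^{3.2}$ and $L^2$ normalizations, (\ref{local annulas maximal estimate along tangential curves}) becomes equivalent to the model estimate
\[
\Big\|\sup_{\tau\in(0,R)}\big|Eg(x+\tilde\gamma(\tau),\tau)\big|\Big\|_{L^{3.2}(B(0,R))}\le C_\varepsilon R^{\varepsilon}\|g\|_{L^2},
\]
with $C_\varepsilon$ depending only on $\varepsilon$ and $C_\alpha$, not on $\tilde\gamma$.

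Next I would reduce this maximal estimate to a refined Strichartz inequality \emph{along} $\tilde\gamma$. Linearize the supremum by a measurable choice $\tau(\cdot)$, decompose $B(0,R)$ into unit cubes on which $|Eg(x+\tilde\gamma(\tau(x)),\tau(x))|$ is essentially constant, and pigeonhole in the dyadic height and in the number of such cubes. Using that $\tilde\gamma$ shifts space by at most one $R^{1/2}$-tube width, one can still organize $Eg$ into $R^{1/2}$-wave packets so that the curve-shifted quantity is controlled by the number of $R^{1/2}$-tubes meeting a given cube; the tube-width bound is exactly what keeps that number bounded. This is the step where the curve is absorbed into the tube geometry, and it puts us in position to invoke a refined Strichartz estimate along $\tilde\gamma$.

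That refined Strichartz estimate is then proved by induction on the scale $R$. In the narrow case one parabolically rescales a $K^{-1}$-cap; by the displacement bound the rescaled curve is again admissible on the new, shorter time interval with constant $C_\alpha$, so the inductive hypothesis applies and the narrow term is acceptable. In the broad case one invokes the bilinear (equivalently, $k$-broad) restriction estimate for the paraboloid together with polynomial partitioning: partition $B(0,R)$ by the zero set of a polynomial of bounded degree, iterate on the cells (again admissibility of $\tilde\gamma$ is preserved at the cell scale), and handle the wall by the transverse/tangential dichotomy. The curve-induced shift of each tube by at most a tube width worsens the overlap and transversality constants only by a factor depending on $C_\alpha$; it is, however, what forces the induction to close at the Sobolev exponent $3/8$ rather than $1/3$, and at $p=3.2$ rather than $p=3$.

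The step I expect to be the main obstacle is precisely this: making the polynomial-partitioning argument and the transversality in the broad part robust under the curve shift, since that shift can be as large as a full wave-packet width, so previously separated tubes may overlap and previously transverse tubes may become less transverse. One must exploit that the shift is the \emph{same} function $\tilde\gamma(\tau(x))$ for every wave packet --- hence coherent, causing only bounded additional overlap --- or else absorb the loss into the slightly larger $p=3.2$ and the exponent $3/8$. A secondary point needing care is that $\tilde\gamma$ must stay admissible through \emph{every} rescaling in the induction; this fails for $\alpha<1/2$, which is the structural reason that $\alpha\ge1/2$ is the natural threshold here.
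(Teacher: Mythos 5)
Your parabolic rescaling and the observation that $\tilde\gamma(\tau)=R\gamma(R^{-2}\tau)$ remains $\alpha$-H\"older with constant $C_\alpha$ on $[0,R]$, hence moves by at most one wave-packet width at every intermediate scale, are both correct and are indeed the structural heart of the paper's argument. The reduction to the scaled estimate on $B(0,R)\times[0,R]$ and the idea of closing an induction on scale by checking that admissibility of $\tilde\gamma$ persists under rescaling match the paper's Theorem \ref{reduction main theorem} and the ``observation'' around (\ref{observation}).

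However, there is a genuine gap in the engine you propose. You want to run the Du--Guth--Li machinery: linearize, reduce to a refined Strichartz estimate along $\tilde\gamma$, and use $\ell^2$-decoupling together with polynomial partitioning. The problem, explicitly flagged in the paper, is that once you compose $e^{it\Delta}f$ with the non-smooth shift $x\mapsto x+R\gamma(t/R^2)$, the Fourier support (in both $x$ and $t$) of $e^{it\Delta}f(x+R\gamma(t/R^2))$ is no longer concentrated near the paraboloid. This kills $\ell^2$-decoupling outright, and also kills the locally-constant property on $\rho\times\rho$ blocks that underlies the DGL reduction to refined Strichartz. You cannot rescue this merely by noting that the shift is at most one tube width; the decoupling input needs the Fourier support to be a neighborhood of the paraboloid, which the tangential shift destroys. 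The paper's way around this is Lemma \ref{main lemma}: a Fourier-series expansion that trades the curve shift for an $\ell^1$-sum of modulated wave packets, giving a usable substitute for local constancy, and then it runs the \emph{pre-decoupling} Du--Li \cite{DL} broad-narrow plus polynomial partitioning scheme (Theorems \ref{Theorem 2.1}, \ref{Theorem3.1}, \ref{Theorem6.1}) directly on a $BL$-type norm, never passing through refined Strichartz or decoupling.

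This also corrects a misattribution in your last paragraph: the exponents $s=3/8$ and $p=3.2$ are not a quantitative loss forced by the curve shift perturbing transversality. They are the exponents that the Du--Li scheme already produces for the straight vertical line (and which DGL later sharpened to $1/3$ and $p\le 3$ by decoupling). The curve's only role is to bar the route to the sharper DGL numbers; within the DL framework the shift is handled losslessly via Lemma \ref{main lemma}. So while the scaffolding of your plan (rescaling, broad--narrow, polynomial partitioning, induction on scale with preserved admissibility of $\tilde\gamma$) is essentially the right one, the decoupling/refined-Strichartz core would not go through, and a genuinely different substitute for local constancy is required.
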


If Theorem \ref{local in time estimate} holds true, then for any interval $(t_{0},t_{0}+R^{-1}) \subset (0,1)$ and arbitrary $\gamma \in \Gamma_{\alpha}$,
\[ \biggl\|\sup_{t\in (t_{0},t_{0}+R^{-1})}|e^{it\triangle}f(x+\gamma(t))|\biggl\|_{L^p(B(0,1))} = \biggl\|\sup_{t\in (0, R^{-1})}|e^{it\triangle}g(x+\gamma(t+t_{0}))|\biggl\|_{L^p(B(0,1))},\]
where $\hat{g}(\xi) = e^{it_{0}|\xi|^{2}}\hat{f}(\xi)$. It is clear that $\gamma(t +t_{0}) \in \Gamma_{\alpha,R^{-1}}$ and Theorem \ref{local in time estimate} implies that
\[\biggl\|\sup_{t\in (0, R^{-1})}|e^{it\triangle}g(x+\gamma(t+t_{0}))|\biggl\|_{L^p(B(0,1))} \leq C R^{3/8+
\varepsilon}\|g\|_{L^2}, \]
where the constant $C$ depends only on $\varepsilon$ and $C_{\alpha}$. Then we have the local estimate (\ref{local estimate})   with the constant $C^{\prime \prime}$  depends only on $\varepsilon$ and $C_{\alpha}$, so does  the constant $C^{\prime}$ in  (\ref{global estimate}). Finally we arrive at inequality (\ref{annulas maximal estimate along tangential}).

By parabolic rescaling, Theorem \ref{local in time estimate} can be reduced to show that for each $\gamma \in \Gamma_{\alpha, R^{-1}}$,
\[ \biggl\|\sup_{t\in (0,R)}|e^{it\triangle}f(x+R\gamma (\frac{t}{R^{2}}))|\biggl\|_{L^p(B(0,R))}\leq C R^{2/p - 5/8 + \varepsilon} \|f\|_{L^2},\]
for all $f$ with supp $\hat{f} \subset B(0,1)$. But in order to apply the induction argument in the frequency space of $f$, we will prove the following theorem.

\begin{theorem}\label{reduction main theorem}
Let $p=3.2$. For arbitrary $\gamma \in \Gamma_{\alpha,R^{-1}}$, any $\varepsilon >0$, all balls $B(\xi_{0}, M^{-1}) \subset B(0,1) $ and $f$ with supp $\hat{f} \subset B(\xi_{0},M^{-1})$, it holds
\begin{equation}\label{reduction main estimate}
 \biggl\|\sup_{t\in (0,R)}|e^{it\triangle}f(x+R\gamma (\frac{t}{R^{2}}))|\biggl\|_{L^p(B(0,R))}\leq C M^{-\varepsilon^{2}}R^{2/p - 5/8 + \varepsilon} \|f\|_{L^2}.
\end{equation}
Here the constant $C$ depends only on $\varepsilon$ and $C_{\alpha}$, but does not depend on the choice of $\gamma$.
\end{theorem}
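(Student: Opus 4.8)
The plan is to run the broad--narrow analysis together with polynomial partitioning, following the scheme of Du--Guth--Li \cite{DGL} for the two--dimensional Schr\"{o}dinger maximal operator; the new ingredient is the control of the \emph{translated} evolution $x\mapsto e^{it\triangle}f(x+R\gamma(t/R^{2}))$. The decisive geometric observation is that, since $\gamma\in\Gamma_{\alpha,R^{-1}}$ with $\alpha\ge 1/2$, for all $t,t'\in(0,R)$ one has
\[
\bigl|R\gamma(t/R^{2})-R\gamma(t'/R^{2})\bigr|\le C_{\alpha}R\,\bigl|t/R^{2}-t'/R^{2}\bigr|^{\alpha}\le C_{\alpha}R^{1-\alpha}\le C_{\alpha}R^{1/2}.
\]
Hence, as $t$ ranges over $(0,R)$, the point $x+R\gamma(t/R^{2})$ stays in a fixed ball of radius $O(C_{\alpha}R^{1/2})$ about $x$. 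Since $R^{1/2}$ is exactly the spatial width of a frequency--$1$ wave packet at scale $R$, translating by $R\gamma(t/R^{2})$ only fattens each wave-packet tube by a bounded factor, so all the incidence geometry of \cite{DGL} survives with constants depending only on $C_{\alpha}$. This is the only place the hypothesis $\alpha\ge 1/2$ enters.

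First I would linearize the supremum by a measurable $t:B(0,R)\to(0,R)$, set $F(x):=e^{it(x)\triangle}f(x+R\gamma(t(x)/R^{2}))$, and argue by a double induction on the spatial radius $R$ and on the reciprocal frequency radius $M$. The base case $M\gtrsim R^{1/2}$, where $\widehat f$ lives on a single $R^{-1/2}$--cap, is handled directly: $e^{it\triangle}f$ is then concentrated along straight $R^{1/2}\times R$ tubes on which it is essentially constant, the translation moves within $O(C_{\alpha})$ such tubes, and an $L^{2}$--orthogonality argument gives a bound of the form $R^{\varepsilon}\|f\|_{2}$ (note $2/p-5/8=0$ when $p=3.2$) with room to spare; the case of small $R$ is trivial. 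For the inductive step I would fix a large $K=K(\varepsilon)$ and perform the broad--narrow decomposition of $B(\xi_{0},M^{-1})$ into $(KM)^{-1}$--caps: at each $x$, either $|F(x)|$ is comparable to the contribution of $O(1)$ caps lying in a $K^{-1}$--neighbourhood of a line (\emph{narrow} case), handled by the frequency induction at radius $(KM)^{-1}$ --- the passage from $(KM)^{-\varepsilon^{2}}$ to the required $M^{-\varepsilon^{2}}$ is what absorbs the $K$--losses --- or there are two $\gtrsim K^{-1}$--transverse caps $\tau_{1},\tau_{2}$ each carrying a $\gtrsim K^{-O(1)}$ fraction of $|F(x)|$ (\emph{broad} case).

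The broad case is a bilinear expression $|F(x)|\lesssim K^{O(1)}\bigl(|e^{it(x)\triangle}f_{\tau_{1}}(x+R\gamma(t(x)/R^{2}))|\,|e^{it(x)\triangle}f_{\tau_{2}}(x+R\gamma(t(x)/R^{2}))|\bigr)^{1/2}$ with $\tau_{1},\tau_{2}$ transverse, and the translation, being at each fixed $x$ a common spatial shift of the two factors, does not affect the transversality. I would estimate this by polynomial partitioning: choose $P$ of degree $D=D(\varepsilon)$ whose zero set $Z(P)$ equidistributes $\int|F|^{p}$ over $\sim D^{2}$ cells; on each cell a rescaling reduces matters to the same estimate at radius $\sim R/D$, closing the induction on $R$, while the wall contribution (points within $C_{\alpha}R^{1/2}$ of $Z(P)$) splits, as in \cite{DGL}, into wave packets tangential to $Z(P)$ --- controlled by the polynomial bound on the number of $R^{1/2}$--tubes in the $R^{1/2}$--neighbourhood of a degree--$D$ variety, together with an $L^{2}$ argument --- and wave packets transverse to $Z(P)$, handled by the bilinear restriction estimate for the paraboloid (or by a further nested polynomial partitioning at a smaller scale). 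Throughout, every straight tube $T$ in \cite{DGL} is replaced by its $C_{\alpha}R^{1/2}$--neighbourhood $\widetilde T$, which by the first paragraph is comparable to $T$, so all the $L^{2}$ and incidence estimates survive with constants depending only on $C_{\alpha}$ and $D$.

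I expect the main obstacle to be the interplay of the translated tubes $\widetilde T$ with $Z(P)$ in the wall term: one must verify that the dichotomy ``tangential versus transverse to $Z(P)$'' remains stable after the $t(x)$--dependent shift of size $\le C_{\alpha}R^{1/2}$, and that $D$ and $K$ can be chosen uniformly over all $\gamma\in\Gamma_{\alpha,R^{-1}}$ so that the final constant is independent of $\gamma$. Since the shift is at most the wave-packet width and does not depend on the frequency, this can be arranged; but it forces one to carry the argument of \cite{DGL} out step by step, keeping precise track of how the $M^{-\varepsilon^{2}}$ factor is consumed at each invocation of the induction hypothesis, so that the double induction on $(R,M)$ closes and the exponent $2/p-5/8=0$ is attained with only an $R^{\varepsilon}$ loss.
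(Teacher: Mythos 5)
Your plan is in essence the paper's: the same geometric observation that $R\gamma(t/R^{2})$ varies by $O(C_{\alpha}R^{1/2})$ (using $\alpha\ge 1/2$), the same double induction on $R$ and on the frequency radius $1/M$ with base case $M\gtrsim R^{1/2}$ reduced to Du--Li's vertical-line estimate, the same broad--narrow split at scale $K$ with the narrow caps fed back into the frequency induction and the $K$-loss absorbed by $(KM)^{-\varepsilon^{2}}\to M^{-\varepsilon^{2}}$, and the same polynomial-partitioning treatment of the broad part (the paper uses Guth's $BL^{p}_{k,A}L^{q}$ broad norms rather than a bilinear surrogate, but that is a cosmetic difference in two dimensions).

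There is, however, one genuine gap in the reasoning as written, and it is precisely the point the paper singles out as its main new technical contribution. You assert that since the shift is $O(C_{\alpha}R^{1/2})$ --- the wave-packet width --- it ``only fattens each wave-packet tube by a bounded factor, so all the incidence geometry of \cite{DGL} survives.'' That argument controls the \emph{physical-space} localization of each wave packet and does justify replacing tubes $T$ by $O(1)$-fattened tubes $\widetilde T$. But the broad--narrow machinery and the broad norm $\mu_{e^{it\Delta}f}(B_{K}\times I_{K}^{j})$ are set up by averaging $|e^{it\Delta}f_{\tau}|^{p}$ over each $K\times K\times K$ box, and this averaging step rests on the \emph{locally constant property}: that a function with Fourier support in a ball of radius $(KM)^{-1}$ is essentially constant on dual boxes. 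Here the function being maximized is $e^{it\Delta}f(x+R\gamma(t/R^{2}))$ with a merely measurable, $t$-dependent translation; its $(x,t)$-Fourier transform is \emph{not} concentrated near the paraboloid, and the standard locally constant argument does not apply directly. This is exactly why the paper proves Lemma~\ref{main lemma}: by Fourier-expanding the factor $e^{i[R\gamma(t/R^{2})-R\gamma(t_{0}/R^{2})]\cdot\rho^{-\alpha}\eta}\phi(\eta)$ in $\eta$ (uniformly in $t$, using the $O(1)$ bound on the exponent supplied by $\alpha\ge 1/2$), one obtains a rapidly convergent series of translated copies of $e^{it\Delta}f$ with the $\gamma$-shift \emph{frozen at $t_{0}$}, to which the locally constant property does apply. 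Without some substitute of this kind you cannot even write down the broad-norm inequality (3.18)--(3.19) that launches the induction; so this step must be supplied, and once it is, the rest of your outline (cellular/wall dichotomy, rescaling to $R/2$ respecting the class $\Gamma_{\alpha,R^{-1}}$ via $\bar\gamma(t)=2\gamma(t/4)$, transverse/tangential wall terms, uniformity in $\gamma$) goes through exactly as you describe.
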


We notice that the following feature plays key role in the proof of Theorem \ref{reduction main theorem}: $\gamma \in \Gamma_{\alpha, R^{-1}} $ implies that $R\gamma(\frac{t}{R^{2}})$ is $\alpha$-H\"{o}lder continuous for $t \in [0,R]$ when $\alpha \in [1/2, 1)$. However this feature is no longer true for $\alpha \in (0, 1/2)$, then the method in this paper does not work. We leave the case $\alpha \in (0, 1/2)$ for further consideration.

We put the detailed proof of Theorem \ref{reduction main theorem} into Section \ref{proof for reduction main theorem}, and briefly discuss the method here.
We would like to prove Theorem \ref{reduction main theorem} by the broad-narrow argument and polynomial partitioning. Such methods work very well in the study of Fourier restriction operators,  see \cite{Guth1} and  \cite{Guth2} for instance. Du-Li  \cite{DL} firstly applied the method in \cite{Guth2} to study convergence problem for Schr\"{o}dinger operator along vertical lines. Further, with the help of decoupling methods, the result of \cite{DL} was improved to sharp convergence in \cite{DGL}. The main difference between our problem and the vertical case is that the support for the  Fourier transform of $e^{it\Delta}f(x+R\gamma(\frac{t}{R^{2}}))$ is not clear, since $\gamma(t)$ is not smooth. This leads to  the failure of many nice properties which are very important in the research of the works \cite{Guth1,DGL, Guth2,DL}. In order to overcome these difficulties, we prove Lemma \ref{main lemma} below. Lemma \ref{main lemma} works  as a substitution for the locally constant property. However,  we still do not know if the decoupling method can help to improve the result obtained by Theorem \ref{reduction main theorem}, also by Theorem \ref{maximal theorem along tangential}.

The convergence result obtained by Theorem \ref{maximal theorem along tangential} may not be sharp. In fact, by Lemma \ref{equivalence} below and Bourgain's counterexample in \cite{B2}, we can get the following necessary condition.
\begin{theorem}\label{necessary condition}
Let  $\gamma(x,t)= x - e_{1}t^{\alpha}$, $e_{1}= (1,0)$, $\alpha \in [1/2,1)$. Then
\begin{equation}\label{tangential global}
\biggl\|\sup_{0<t<1}|e^{it\triangle}f(\gamma(x,t))|\biggl\|_{L^1(B(0,1))} \leq C \|f\|_{H^s(\mathbb{R}^{2})}
\end{equation}
holds for all $f \in H^{s}(\mathbb{R}^2)$ only if $s \ge 1/3$.
\end{theorem}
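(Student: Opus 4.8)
The plan is to prove the contrapositive: assuming the maximal estimate (\ref{tangential global}) holds for some $s<1/3$, derive a contradiction by transferring Bourgain's counterexample for the Schr\"odinger maximal function in $\mathbb{R}^{2}$ onto the tangential curve $(\gamma(x,t),t)=(x-e_{1}t^{\alpha},t)$. Restricting (\ref{tangential global}) to $f$ with $\mathrm{supp}\,\hat f\subset\{\xi:|\xi|\sim R\}$ and using $\|f\|_{H^{s}}\sim R^{s}\|f\|_{L^{2}}$, the estimate would give the frequency-localized bound
\[
\biggl\|\sup_{0<t<1}|e^{it\triangle}f(x-e_{1}t^{\alpha})|\biggr\|_{L^{1}(B(0,1))}\le C\,R^{s}\|f\|_{L^{2}(\mathbb{R}^{2})}
\]
with $C$ independent of $R\gg1$. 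Lemma \ref{equivalence} is the bridge that transfers such a bound along the curve $x-e_{1}t^{\alpha}$, $\alpha\in[1/2,1)$, to the corresponding maximal bound along vertical lines in $\mathbb{R}^{2}$; combined with the fact that the vertical-line Schr\"odinger maximal estimate in $\mathbb{R}^{2}$ fails for every $s<\tfrac{n}{2(n+1)}=\tfrac13$ (Bourgain's construction in \cite{B2}), this yields the desired contradiction.

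The transference I would realize concretely as follows. Recall that Bourgain's counterexample supplies, for each large $R$, a function $f_{R}$ with $\mathrm{supp}\,\hat f_{R}\subset\{|\xi|\sim R\}$ and $\|f_{R}\|_{L^{2}}=1$, an explicit linearizing time $t(\cdot)\colon B(0,\rho)\to(0,1)$, and a set $E\subset B(0,\rho)$ with $|E|\gtrsim R^{-\varepsilon}$ on which $|e^{it(x)\triangle}f_{R}(x)|\gtrsim R^{1/3-\varepsilon}$; since we are free to choose the whole configuration when building a counterexample, we use precisely this $f_{R}$ and this $t(\cdot)$. Setting $\widetilde x:=x+e_{1}\,t(x)^{\alpha}$, the elementary identity $e^{it(x)\triangle}f_{R}(\widetilde x-e_{1}t(x)^{\alpha})=e^{it(x)\triangle}f_{R}(x)$ shows that the maximal function on the left of (\ref{tangential global}) applied to $f_{R}$ is $\gtrsim R^{1/3-\varepsilon}$ at every point $\widetilde x$ of the image $\widetilde E$ of $E$. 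Because $t(x)\in(0,1)$ the displacement $t(x)^{\alpha}e_{1}$ has length $<1$, and because $t(\cdot)$ is the simple, well-structured function coming from Bourgain's construction (in particular constant on the pieces of his partition), the map $x\mapsto\widetilde x$ merely translates those pieces along $e_{1}$ with bounded overlap; hence, after harmlessly shrinking $\rho$ (and discarding any pieces on which $t$ is too close to $1$), $\widetilde E$ remains inside a fixed ball and $|\widetilde E|\gtrsim|E|\gtrsim R^{-\varepsilon}$.

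Integrating over $\widetilde E$ then gives $\bigl\|\sup_{0<t<1}|e^{it\triangle}f_{R}(x-e_{1}t^{\alpha})|\bigr\|_{L^{1}(B(0,1))}\gtrsim R^{-\varepsilon}\cdot R^{1/3-\varepsilon}=R^{1/3-2\varepsilon}$, whereas the frequency-localized bound above asserts this quantity is $\le C R^{s}\|f_{R}\|_{L^{2}}=C R^{s}$. Letting $R\to\infty$ along Bourgain's sequence and then $\varepsilon\to0$ forces $s\ge1/3$, contradicting $s<1/3$; this is exactly the failure of (\ref{tangential global}) for $s<1/3$, i.e. the asserted necessity of $s\ge1/3$.

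I expect the main obstacle to be the geometric step encapsulated in Lemma \ref{equivalence}: controlling the insertion of the $t$-dependent translation $t(x)^{\alpha}e_{1}$ into the vertical-line construction, that is, checking that it does not push the relevant mass out of the fixed ball and does not collapse the bad set $E$. This is the point at which both the explicit, well-structured form of Bourgain's linearizing time and the hypothesis $\alpha\in[1/2,1)$ are used (the same range for which $R\gamma(t/R^{2})$ stays $\alpha$-H\"older on $[0,R]$, cf. the discussion after Theorem \ref{reduction main theorem}); for $\alpha<1/2$, or for tangential curves more general than $x-e_{1}t^{\alpha}$, this compatibility is lost and a separate construction would be needed.
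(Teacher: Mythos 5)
Your proposal correctly identifies the same two ingredients the paper's proof uses—Lemma \ref{equivalence} and Bourgain's counterexample from \cite{B}—and the overall logic closes, so the route is essentially the paper's. Two points, though, deserve attention. First, the ``concrete realization'' you sketch (pushing the bad set $E$ forward under $x\mapsto x+e_1 t(x)^\alpha$ and asserting that the translated pieces of Bourgain's partition have bounded overlap, so that $|\widetilde E|\gtrsim|E|$) is not how the paper argues, and it is in fact the less robust route: the bounded-overlap claim is asserted, not proved, and it relies on internal structural features of Bourgain's linearizing time that you do not verify. The paper sidesteps all of this by applying Lemma \ref{equivalence} directly; its Fourier-series proof handles an \emph{arbitrary} measurable linearizing time and requires no knowledge of Bourgain's partition, so once you have the lemma you never need to open up the construction. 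Second, the Fourier support hypothesis in Lemma \ref{equivalence} is the thin slab $|\xi_1-\xi_1^0|\lesssim R^{1/2}$, not the annulus $\{|\xi|\sim R\}$ you write. The slab width is exactly the scale at which the displacement $t^\alpha$ (with $t<R^{-1}$ and $\alpha\ge 1/2$) satisfies $R^{1/2}t^\alpha\le 1$, which makes the Fourier coefficients in the lemma's proof decay rapidly; this is also where $\alpha\ge 1/2$ enters. What saves the argument is that Bourgain's $f_R$ happens to have $\mathrm{supp}\,\hat f_R\subset\{|\xi_1-R|\lesssim R^{1/2}\}$, so the lemma applies—but that compatibility has to be stated explicitly; restricting to a dyadic annulus alone is not enough. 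With these two adjustments your argument coincides with the paper's: from (\ref{tangential global}) restricted to $\sup_{0<t<R^{-1}}$, Lemma \ref{equivalence} yields (\ref{vertical}); Bourgain's $f_R$ gives $\sup_{0<t<R^{-1}}|e^{it\Delta}f_R(x)|\gtrsim R^{3/4}$ on a set of positive measure while $\|f_R\|_{H^s}\lesssim R^{s+5/12}$, forcing $R^{3/4}\lesssim R^{s+5/12}$, i.e.\ $s\ge 1/3$.
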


It is clear that there is a gap between $1/3$ and $3/8$. The next theorem implies that if one wants to improve the convergence result obtained by Theorem \ref{maximal theorem along tangential}, then the range of $p$ should be chosen very carefully. More concretely, if $s \rightarrow 1/3^+$, then $p$ should not be larger than $3$.

\begin{theorem}\label{upper bound1}
Let  $\gamma(x,t)= x - e_{1}t^{\alpha}$, $e_{1}= (1,0)$, $\alpha \in [1/2,1)$. If $0< s < 1/2$ and
\begin{equation}\label{tangential large p}
\biggl\|\sup_{0<t<1}|e^{it\triangle}f(\gamma(x,t))|\biggl\|_{L^p(B(0,1))} \leq C \|f\|_{H^s(\mathbb{R}^{2})}
\end{equation}
holds for all $f \in H^{s}(\mathbb{R}^2)$, then $p \le \frac{1}{1-2s}$.
\end{theorem}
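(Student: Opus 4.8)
The plan is to test the hypothesized bound (\ref{tangential large p}) against a single Knapp-type wave packet localized at frequency $\sim R$, and to use the sub-linear tangential displacement $-e_{1}t^{\alpha}$ (here $\alpha<1$ is crucial) to make the curve $\gamma(x,t)=(x_{1}-t^{\alpha},x_{2})$ ride along that packet as the base point $x$ ranges over a whole interval. This is the example that is sensitive to $p$, as opposed to the Bourgain-type example behind Theorem \ref{necessary condition}.

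First I would fix $R\gg 1$ and take $f=f_{R}$ with $\widehat f$ a smooth bump adapted to the square box $E=\{\xi\in\mathbb R^{2}:|\xi_{1}-R|\le R^{1/2},\ |\xi_{2}|\le R^{1/2}\}$. Since $\widehat f$ is supported in $\{|\xi|\sim R\}$ and $|E|\sim R$, one has $\|f\|_{H^{s}(\mathbb R^{2})}\sim R^{s}\|f\|_{L^{2}}\sim R^{s+1/2}$. A short stationary-phase (or direct) computation shows $f$ is essentially a bump of height $\sim R$ on $\{|x|\lesssim R^{-1/2}\}$, and that for every $t$ with $0<t\le c_{0}R^{-1}$ the solution $e^{it\Delta}f$ is still a bump of height $\gtrsim R$ on the ball of radius $\sim R^{-1/2}$ about $(-2tR,0)$: writing $\xi=(R,0)+\eta$ with $|\eta|\lesssim R^{1/2}$, the phase $y\cdot\xi+t|\xi|^{2}$ equals a constant plus $(y_{1}+2tR)\eta_{1}+y_{2}\eta_{2}+t|\eta|^{2}$, every term of which is $O(c_{0})$ once $|y_{1}+2tR|\le c_{0}R^{-1/2}$, $|y_{2}|\le c_{0}R^{-1/2}$ and $t\le c_{0}R^{-1}$ (so that $t|\eta|^{2}\lesssim c_{0}$); choosing $c_{0}$ small makes this phase less than $1/2$ in absolute value, whence $|e^{it\Delta}f(y)|\gtrsim R$.

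The main point is then to choose, for all $x$ in a large subset of $B(0,1)$, a time $t=t(x)\in(0,1)$ with $\gamma(x,t)$ inside the moving bump, i.e. $x_{1}-t^{\alpha}=-2tR$ and $|x_{2}|\lesssim R^{-1/2}$. Put $h(t):=t^{\alpha}-2tR$ on $[0,c_{0}R^{-1}]$. Then $h(0)=0$, $h$ attains its maximum $h(t_{*})\sim R^{-\alpha/(1-\alpha)}>0$ at $t_{*}\sim R^{-1/(1-\alpha)}$, and since $\alpha\in[1/2,1)$ one has $t_{*}\lesssim R^{-2}\ll c_{0}R^{-1}$; thereafter $h$ decreases on $[t_{*},c_{0}R^{-1}]$ to $h(c_{0}R^{-1})=c_{0}^{\alpha}R^{-\alpha}-2c_{0}$, which tends to $-2c_{0}$ as $R\to\infty$. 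By the intermediate value theorem, for each $x_{1}$ in a fixed interval $I\subset(-c_{0},0)$ whose length is bounded below independently of $R$, there is $t(x_{1})\in(t_{*},c_{0}R^{-1})\subset(0,1)$ with $h(t(x_{1}))=x_{1}$, hence with $\gamma(x,t(x_{1}))=(-2t(x_{1})R,\,x_{2})$. Therefore, on the set $S:=\{x:\ x_{1}\in I,\ |x_{2}|\le c_{0}R^{-1/2}\}\subset B(0,1)$, which has $|S|\sim R^{-1/2}$, we obtain $\sup_{0<t<1}|e^{it\Delta}f(\gamma(x,t))|\gtrsim R$.

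Feeding $f$ into (\ref{tangential large p}) and using this lower bound on $S$ gives
\[
R^{1-\frac{1}{2p}}\ \sim\ R\,|S|^{1/p}\ \lesssim\ \Bigl\|\sup_{0<t<1}|e^{it\Delta}f(\gamma(x,t))|\Bigr\|_{L^{p}(B(0,1))}\ \le\ C\,\|f\|_{H^{s}(\mathbb R^{2})}\ \sim\ R^{s+\frac12}.
\]
Letting $R\to\infty$ forces $1-\tfrac1{2p}\le s+\tfrac12$, i.e. $1-\tfrac1p\le 2s$; since $0<s<1/2$ this is precisely $p\le\frac1{1-2s}$, which is the claim. The step I expect to be the main obstacle is the construction of $t(x)$: one has to check that $x_{1}=t^{\alpha}-2tR$ is solvable within the short window $(0,c_{0}R^{-1})$ on which the packet still has full height $R$ and width $R^{-1/2}$, and over an $x_{1}$-interval of length bounded below — this is exactly where the sub-linear growth of $t^{\alpha}$ enters, and where one must keep all the localizations ($t\lesssim R^{-1}$, $|y_{1}+2tR|\lesssim R^{-1/2}$, $|y_{2}|\lesssim R^{-1/2}$) compatible at once; replacing $\mathbf{1}_{E}$ by a smooth adapted bump disposes of the harmless Schwartz tails of the packet.
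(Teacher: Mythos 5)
Your proof is correct, and it takes a genuinely different (more self-contained) route than the paper. The paper first proves Lemma \ref{equivalence}, which uses the Fourier-series expansion of $e^{iR^{1/2}t^{\alpha}\eta_{1}}$ (legitimate because $R^{1/2}t^{\alpha}\le 1$ when $t\le R^{-1}$ and $\alpha\ge 1/2$) to transfer the tangential maximal estimate on the time window $(0,R^{-1})$, for $f$ with $\hat f$ supported in a strip of $\xi_1$-width $R^{1/2}$, to the corresponding vertical maximal estimate; it then invokes Sj\"olin's Knapp-box counterexample for the vertical Schr\"odinger maximal function (Theorem 5 of \cite{S3}), which lives on a box of side $R^{1/2}$ centred at $(-R,-R)$ and produces a lower bound $\gtrsim 1$ on a diagonal strip of measure $\sim R^{-1/2}$, yielding the same inequality $R^{-1/(2p)}\lesssim R^{s-1/2}$. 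You instead build the tangential counterexample directly: you take the same Knapp-type packet (rotated to centre $(R,0)$), track the moving bump at position $(-2tR,0)$ by solving $x_{1}=t^{\alpha}-2tR$ for $t(x_{1})\in(0,c_{0}R^{-1})$ via the intermediate value theorem, and obtain the same lower bound $R^{1-1/(2p)}\lesssim R^{s+1/2}$. Both arguments hinge on the identical fact that the tangential displacement $t^{\alpha}$ stays within the packet's spatial resolution $R^{-1/2}$ on the window $t\lesssim R^{-1}$; the paper's route has the advantage of modularity (Lemma \ref{equivalence} is also used to prove Theorem \ref{necessary condition}, and it black-boxes the known vertical construction), while your route is shorter to verify from scratch and makes the geometry of how the curve rides the packet fully explicit.
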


Proofs of Theorem \ref{necessary condition} and Theorem \ref{upper bound1} will appear in Section \ref{proof of necessary condition}.

\subsection{Convergence results along a family of restricted tangential curves in $\mathbb{R} \times \mathbb{R}$}\label{C}
In this section, we consider the  convergence problem along a class of restricted tangential curves in $\mathbb{R} \times \mathbb{R}$ given by $\{(y,t): y \in \Gamma_{x,t}\}$ for each $t\in [0,1]$, where
\[\Gamma_{x,t}=\{\gamma(x,t,\theta): \theta\in \Theta\}\]
  for a given compact set $\Theta$  in $\mathbb{R}$. $\gamma$ is a map from $\mathbb{R} \times [0,1] \times \Theta$ to $\mathbb{R}$,  which satisfies $\gamma(x,0,\theta) =x$ for all $x \in \mathbb{R}$, $\theta \in \Theta$, and the following conditions (C1)-(C3) hold:\\
\textbf{(C1)} for fixed $t \in [0,1]$, $\theta \in \Theta$, $\gamma$ has at least $C^{1}$-regularity in $x$, and there exists a constant $C_{1} \ge 1$
such that for each $x, x^{\prime}  \in \mathbb{R}$, $\theta \in \Theta$, $t \in [0,1]$,
\begin{equation}\label{Eq1.3}
 C_{1}^{-1}|x- x^{\prime}| \le |\gamma(x,t, \theta)-\gamma(x^{\prime},t, \theta)| \le C_{1}|x- x^{\prime}|;
\end{equation}
\textbf{(C2)} there exists  a constant $C_{2} >0$ and $\alpha \in (0,1)$ such that for each $x \in \mathbb{R}$, $\theta \in \Theta$, $t,t^{\prime} \in [0,1]$,
\begin{equation}\label{Eq1.4}
|\gamma(x,t, \theta)-\gamma(x,t^{\prime}, \theta)| \le C_{2}|t- t^{\prime}|^{\alpha};
\end{equation}
\textbf{(C3)} there exists a constant $C_{3} >0$ such that for each $x \in \mathbb{R}$, $t \in [0,1]$, $\theta, \theta^{\prime} \in \Theta$,
\begin{equation}\label{Eq1.5}
|\gamma(x,t, \theta)-\gamma(x,t, \theta^{\prime})| \le C_{3}|\theta- \theta^{\prime}|.
\end{equation}
Let's study the relationship between the dimension of $\Theta$ and the optimal $s$ for which
 \begin{equation}\label{nontangential convergence}
 \lim_{\substack{(y,t) \rightarrow (x,0) \\ y\in \Gamma_{x,t}}}e^{it\Delta}f(y) = f(x) \hspace{0.3cm} a.e. \hspace{0.1cm} x\in \mathbb{R}
 \end{equation}
whenever $f \in {H^s}\left( {{\mathbb{R}}} \right)$. In order to understand such convergence problem better, we give the exact example below, see Figure 4 and Figure 5. Here we introduce the so-called logarithmic density or upper Minkowski dimension of $\Theta$ to characterize its size, which is defined by
\[\beta(\Theta)= \limsup_{\delta \rightarrow 0^{+}} \frac{logN(\delta)}{-log\delta},\]
where $N(\delta)$ is the minimum number of closed balls of diameter $\delta$ to cover $\Theta$. Apparently, when $\Theta$ is a single point, $\beta(\Theta) =0$; when $\Theta$ is a compact subset of $\mathbb{R}^{n}$ with positive Lebesgue measure, $\beta(\Theta) =n$.

\begin{center}
\includegraphics[height=2cm]{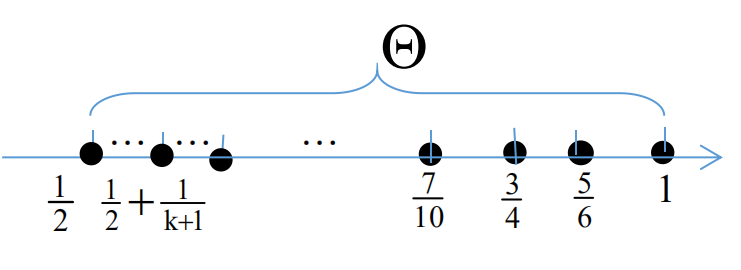}
\end{center}
\begin{center}
Figure 4.  $\Theta=\{1, \frac{5}{6}, \cdot \cdot \cdot, \frac{1}{2}+ \frac{1}{k+1}, \cdot \cdot \cdot,\frac{1}{2}  : k \in \mathbb{N}^{+}\}$.
\end{center}

\begin{center}
\includegraphics[height=8cm]{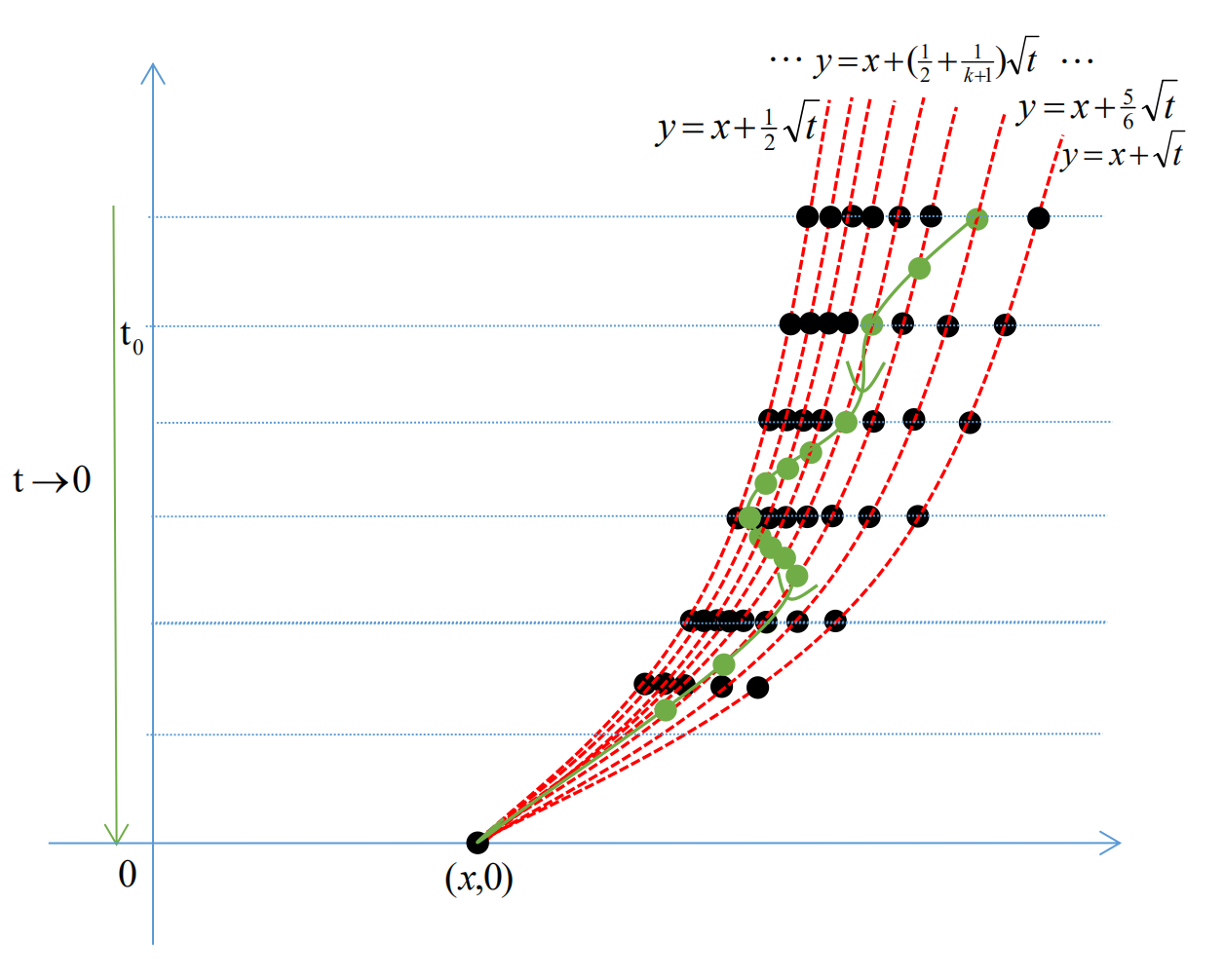}
\end{center}
\begin{center}
Figure 5. The convergence path (green curve) is determined by the set $\bigcup_{t\in [0,1/2]}\{(y,t):y\in \Gamma_{x,t}\}$ whose element consists of all black points where $\Gamma_{x,t}=\{x+\theta\sqrt{t}, \theta\in \Theta\}$.
\end{center}

In \cite{CLV}, this question is  considered  for a family of restricted straight lines  in $\mathbb{R }\times \mathbb{R}$. Exactly, for $t\in [-1,1]$, let $\Gamma_{x,t}=\{x+t\theta: $ and $\theta\in \Theta\}$, where $\Theta$ is a given compact set in $\mathbb{R}$. In \cite{CLV}, they proved that the corresponding non-tangential convergence result holds for $s>\frac{\beta(\Theta)+1}{4}$. Then Shiraki \cite{Shi} generalized this result to a wider class of equations which includes the fractional Schr\"{o}dinger equation. Very recently, Li-Wang-Yan \cite{LWY} obtained the corresponding non-tangential convergence result in any dimensions and extend the straight lines to more general curves with Lipschitz regularity in time variable. The problem is what will happen if the curves satisfy just $\alpha$-H\"{o}lder regularity ($0<\alpha<1$). Next, we will give an answer about it.


By standard argument, the convergence result follows from the maximal estimate below.
\begin{theorem}\label{nontangential maximal theorem}
When $\gamma$ satisfies (C1)-(C3), considering the maximal estimate
\begin{equation}\label{nontangential maximal estimate}
\biggl\|\sup_{t\in (0,1), \theta \in \Theta }|e^{it\triangle}f(\gamma(x,t, \theta))|\biggl\|_{L^p(B(x_0,r))}\leq C \|f\|_{H^s(\mathbb{R})}, \quad f \in H^{s}(\mathbb{R}),
\end{equation}
where $B(x_{0}, r) \subset \mathbb{R}$. We have,

(1) for   each $\alpha \in [1/2,1) $,  inequality (\ref{nontangential maximal estimate}) holds if  $s > s_{0}= \frac{ \beta (\Theta) +1}{4}$ and $p = 4$;

(2) for each  $\alpha \in (1/4,1/2) $,  inequality (\ref{nontangential maximal estimate}) holds if  $s >s_{0}= \frac{ \beta (\Theta) +1}{4}$ and $p = 8\alpha$;

(3) for each  $\alpha \in (0,1/4] $,  inequality (\ref{nontangential maximal estimate}) holds if  $s > s_{0}= \alpha \beta(\Theta) +\frac{1}{2}- \alpha$ and $p = 2$. \\
Moreover, we notice that the constant on the right hand side of inequality (\ref{nontangential maximal estimate}) depends only on $C_{1},C_{2}, C_{3}$, $\Theta$ and the choice of $B(x_{0},r)$, but does not depend on $f$.
\end{theorem}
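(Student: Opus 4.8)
The plan is to establish the maximal estimate \eqref{nontangential maximal estimate}; the a.e.\ convergence \eqref{nontangential convergence} then follows by a routine density argument. By Littlewood--Paley decomposition the matter reduces, for each $\varepsilon>0$, to the frequency-localized bound
\[
\Bigl\|\sup_{t\in(0,1),\,\theta\in\Theta}|e^{it\Delta}f(\gamma(x,t,\theta))|\Bigr\|_{L^p(B(x_0,r))}\lesssim_\varepsilon R^{s_0+\varepsilon}\|f\|_{L^2(\mathbb R)}
\]
for $f$ with $\widehat f$ supported in $\{|\xi|\sim R\}$, $R\gg1$, with $(p,s_0)$ as in the relevant case. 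Applying the time-localizing lemma of \cite{CLV} (used already in Subsection \ref{B}) reduces the supremum over $t\in(0,1)$ to a supremum over one interval $(t_0,t_0+R^{-1})$ without loss; after a unitary substitution the curve $\gamma(x,\cdot,\theta)$, restricted to such an interval, is still $\alpha$-Hölder in $t$ with the same constant and moves by at most $C_2R^{-\alpha}$ in space.

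The next step is to discretize $\Theta$ at a scale $\delta$ tuned to the target exponent. Cover $\Theta$ by $N(\delta)\lesssim_\varepsilon\delta^{-\beta(\Theta)-\varepsilon}$ balls $B(\theta_j,\delta)$; by \textbf{(C3)},
\[
\sup_{t\in(0,R^{-1}),\,\theta\in\Theta}|e^{it\Delta}f(\gamma(x,t,\theta))|\le\max_j\;\sup_{t\in(0,R^{-1})}\bigl(M_{C_3\delta}[e^{it\Delta}f]\bigr)(\gamma(x,t,\theta_j)),
\]
where $M_\rho h(v)=\sup_{|u-v|\le\rho}|h(u)|$. For $\alpha\ge1/2$ one takes $\delta=R^{-1}$, so that $M_{C_3\delta}$ is negligible at frequency $R$ and the inner term is the single-curve Schrödinger maximal function along $\gamma(\cdot,\cdot,\theta_j)$, a curve bi-Lipschitz in $x$ and $\alpha$-Hölder in $t$; here a parabolic rescaling turns the curve into an $\alpha$-Hölder curve of bounded modulus, so the one-dimensional $L^4$ Schrödinger maximal estimate, transplanted by the bi-Lipschitz change of variables $u=\gamma(x,t,\theta_j)$, gives the $L^4$ bound $\lesssim R^{1/4+\varepsilon}\|f\|_2$. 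For $\alpha<1/2$ the rescaled curve loses this property; one instead takes the coarser scale $\delta=R^{-2\alpha}$ and, using that the thickening $C_3R^{-2\alpha}$ is dominated by the curve's time-oscillation $R^{-\alpha}$, controls the inner term by a $TT^*$/van der Corput argument — giving the $L^2$ bound $\lesssim R^{1/2-\alpha+\varepsilon}\|f\|_2$ of Theorem \ref{theorem CLV} when $\alpha\le1/4$, and a thickened $L^p$ bound at threshold $1/4$ valid precisely for $p=8\alpha$ when $1/4<\alpha<1/2$. In every case the $\ell^p$ triangle inequality over the $N(\delta)$ values of $j$ produces exactly the claimed $s_0$: $N(R^{-1})^{1/4}R^{1/4}=R^{(\beta(\Theta)+1)/4}$, $N(R^{-2\alpha})^{1/(8\alpha)}R^{1/4}=R^{(\beta(\Theta)+1)/4}$, and $N(R^{-2\alpha})^{1/2}R^{1/2-\alpha}=R^{\alpha\beta(\Theta)+1/2-\alpha}$, respectively.

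The main obstacles are the two estimates used as black boxes per $\theta_j$. For $\alpha\ge1/2$ one needs the $L^4$ (not merely $L^2$) single-curve maximal estimate at the sharp regularity $s>1/4$, which requires checking that the bi-Lipschitz change of variables preserves the $L^4$-orthogonality of the paraboloid and that the rescaled $\alpha$-Hölder perturbation is subcritical. For $\alpha<1/2$, the key point — and the likely hard part — is the thickened estimate: \textbf{(C3)} gives no lower bound on $|\gamma(x,t,\theta_j)-\gamma(x,t,\theta_{j'})|$, so curves through distinct net points can collapse, and one must show that smearing the solution over an $R^{-2\alpha}$-ball in space across an $R^{-1}$-window in time costs nothing in $L^{8\alpha}$. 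I would attack this by splitting the (linearized) maximal function into an off-diagonal part, handled by a kernel estimate and Schur's test with the decay supplied by the oscillation $t\xi^2$ once $\theta(x)$ is discretized (so that one may integrate by parts in $x$ using the bi-Lipschitz condition \textbf{(C1)}), and a diagonal part where $\gamma(x,t(x),\theta(x))$ is pinned to a single $R^{-1}$-ball; the diagonal is absorbed using the trivial bound $\sup_t|e^{it\Delta}f(v)|\le\|\widehat f\|_{L^1}\lesssim R^{1/2}\|f\|_2$ together with a measure estimate on the pinning set, whose proof is where \textbf{(C2)} and the metric entropy of $\Theta$ have to combine to yield the exponent $8\alpha$. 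Keeping all constants independent of $\gamma$ and of the measurable linearizations $t(x),\theta(x)$ is the final point to be careful about.
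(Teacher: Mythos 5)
Your overall architecture — Littlewood--Paley reduction to frequency $\sim R$, cover $\Theta$ by $\lesssim R^{\mu\beta(\Theta)+\varepsilon}$ pieces at scale $R^{-\mu}$ with $\mu=\min\{1,2\alpha\}$, obtain a uniform $L^p$ bound per piece at exponent $\nu=\max\{1/4,1/2-\alpha\}$, and then sum in $\ell^p$ — is exactly the scheme the paper follows, and your bookkeeping of how $\mu,\nu,p$ combine to produce $s_0$ is correct. But the entire content of the theorem is the per-piece estimate, and that is where your argument has a genuine gap. You propose, for $\alpha\ge1/2$, to time-localize and then ``transplant'' a vertical $L^4$ Schr\"odinger maximal estimate along a single curve via the change of variables $u=\gamma(x,t,\theta_j)$. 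This does not work as stated. After Kolmogorov--Seliverstov--Plessner linearization the map is $x\mapsto\gamma(x,t(x),\theta_j)$ with $t(x)$ an arbitrary measurable function, and (C1) only gives bi-Lipschitz behaviour in $x$ at \emph{fixed} $t$; since $\gamma$ is merely $\alpha$-H\"older in $t$ with $\alpha<1$, and $t(x)$ can jump, the composed map is in general neither Lipschitz nor injective at scales below $R^{-\alpha}$. The Fourier-expansion trick used elsewhere in the paper to ``absorb'' curve motion also does not apply here: at frequency scale $R$, rapid decay of the Fourier coefficients requires the displacement over the time window to be $\lesssim R^{-1}$, whereas the displacement is $R^{-\alpha}\gg R^{-1}$. (This contrasts with the $n=2$ argument in Subsection \ref{B}, where an extra reduction to frequency \emph{slabs} of width $R^{1/2}$ makes $R^{1/2}\cdot R^{-\alpha}\lesssim1$ for $\alpha\ge1/2$; there is no analogous slab structure here.) Moreover, the ``single-curve $L^4$'' estimate you want to black-box is precisely the $\beta(\Theta)=0$ instance of the theorem you are trying to prove (Theorem \ref{upper bound}(1) in the paper), so the reduction is circular.

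The paper does not time-localize and does not reduce to the vertical case. After KSP linearization and duality it estimates the $TT^*$ kernel
\[
K(x,y)=\int e^{i(\gamma(y,t(y),\theta(y))-\gamma(x,t(x),\theta(x)))\xi+i(t(y)-t(x))\xi^2}\phi(\xi/\lambda)\,d\xi
\]
directly: (E2) kills the contribution when $|t(x)-t(y)|\gtrsim\lambda^{-1}$ by non-stationary phase (this replaces any time-localizing lemma), and on the complementary region (E3) gives the two-term bound $|K|\lesssim\max\{\lambda^{1/2}|x-y|^{-1/2},|x-y|^{-1/2\alpha}\}$ for $|x-y|\gtrsim\mathrm{diam}(\Theta_k)$, proved via van der Corput on the phase $\psi$ once the $\theta$-oscillation is controlled by the choice $\mathrm{diam}(\Theta_k)=\lambda^{-\mu}$. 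The exponent $p$ then drops out of HLS applied to this kernel: HLS with $|x-y|^{-1/2}$ gives $L^{4/3}\to L^4$ (Case 1), with $|x-y|^{-1/4\alpha}$ gives $L^{8\alpha/(8\alpha-1)}\to L^{8\alpha}$ (Case 2), and for $\alpha\le1/4$ the diagonal piece is handled with the Hardy--Littlewood maximal function and the off-diagonal with Schur's test plus HLS to land on $\lambda^{1-2\alpha}$ in $L^2$ (Case 3). Your sketch for $\alpha<1/2$ gestures at this kind of diagonal/off-diagonal split but leaves the mechanism that produces the exponent $8\alpha$ unidentified; in the paper it is precisely the HLS exponent forced by the kernel decay $|x-y|^{-1/4\alpha}$. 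If you want to salvage your plan you should abandon the change-of-variables reduction and prove the per-piece $L^p$ bound by the $TT^*$/kernel route for each of the three ranges of $\alpha$ — at which point you will have reproduced the proof of Lemma \ref{lemma2.3}.
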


Then we have the following convergence result for Schr\"{o}dinger  operator along a family of restricted tangential curves.
\begin{theorem}\label{nontangential convergence theorem}
Let $\gamma$ satisfy (C1)-(C3). The convergence result (\ref{nontangential convergence}) holds almost everywhere if

(1)   $\alpha \in [1/2,1) $, $f \in H^{s}(\mathbb{R})$ and $s > s_{0}= \frac{ \beta (\Theta) +1}{4}$;

(2)  $\alpha \in (1/4,1/2) $, $f \in H^{s}(\mathbb{R})$ and $s >s_{0}= \frac{ \beta (\Theta) +1}{4}$;

(3)   $\alpha \in (0,1/4] $, $f \in H^{s}(\mathbb{R})$ and $s > s_{0}= \alpha \beta(\Theta) +\frac{1}{2}- \alpha$.
\end{theorem}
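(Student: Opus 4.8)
The plan is to deduce the a.e.\ convergence in each of the three regimes from the maximal estimate of Theorem \ref{nontangential maximal theorem} by the standard density argument, with the pair $(p,s_0)$ dictated by the range of $\alpha$: $(p,s_0)=(4,\tfrac{\beta(\Theta)+1}{4})$ when $\alpha\in[1/2,1)$, $(p,s_0)=(8\alpha,\tfrac{\beta(\Theta)+1}{4})$ when $\alpha\in(1/4,1/2)$, and $(p,s_0)=(2,\alpha\beta(\Theta)+\tfrac12-\alpha)$ when $\alpha\in(0,1/4]$. Fix a ball $B(x_0,r)\subset\mathbb{R}$; since $B(x_0,r)$ is arbitrary it suffices to establish (\ref{nontangential convergence}) for a.e.\ $x\in B(x_0,r)$.

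First I would verify the convergence for a dense subclass. For $g$ in the Schwartz class, $e^{it\Delta}g(x)=\int_{\mathbb{R}}e^{ix\xi+it|\xi|^2}\hat g(\xi)\,d\xi$ and its $x$-derivative are uniformly bounded for $t\in[0,1]$ (their Fourier transforms are integrable uniformly in $t$), and $e^{it\Delta}g\to g$ uniformly as $t\to 0^+$ by dominated convergence on the Fourier side. Combined with the bound $\sup_{\theta\in\Theta}|\gamma(x,t,\theta)-x|\le C_2 t^\alpha$, which follows from (C2) with $t'=0$ and $\gamma(x,0,\theta)=x$, one gets $\sup_{y\in\Gamma_{x,t}}|e^{it\Delta}g(y)-g(x)|\to 0$ as $t\to 0^+$, so (\ref{nontangential convergence}) holds everywhere for such $g$.

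Next, introduce the oscillation functional $\mathcal{O}f(x)=\limsup_{(y,t)\to(x,0),\,y\in\Gamma_{x,t}}|e^{it\Delta}f(y)-f(x)|$ and the maximal function $\mathfrak{M}f(x)=\sup_{t\in(0,1),\,\theta\in\Theta}|e^{it\Delta}f(\gamma(x,t,\theta))|$. Given $f\in H^s(\mathbb{R})$ with $s>s_0$ and $\varepsilon>0$, choose a Schwartz function $g$ with $\|f-g\|_{H^s(\mathbb{R})}<\varepsilon$. Since $\mathcal{O}g\equiv 0$, we have $\mathcal{O}f(x)=\mathcal{O}(f-g)(x)\le \mathfrak{M}(f-g)(x)+|f(x)-g(x)|$ for a.e.\ $x$ (using a Lebesgue representative of $f-g$). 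By Theorem \ref{nontangential maximal theorem} with the relevant $(p,s_0)$ together with Chebyshev's inequality,
\begin{equation*}
\bigl|\{x\in B(x_0,r):\mathfrak{M}(f-g)(x)>\lambda\}\bigr|\le \lambda^{-p}\,C^p\,\|f-g\|_{H^s(\mathbb{R})}^p\le \lambda^{-p}C^p\varepsilon^p,
\end{equation*}
and since $p\ge 2$ in every case, $\|f-g\|_{L^2}\le\|f-g\|_{H^s}<\varepsilon$ yields $|\{x\in B(x_0,r):|f(x)-g(x)|>\lambda\}|\le\lambda^{-2}\varepsilon^2$. Hence $|\{x\in B(x_0,r):\mathcal{O}f(x)>2\lambda\}|\lesssim \lambda^{-p}\varepsilon^p+\lambda^{-2}\varepsilon^2$; letting $\varepsilon\to 0$ shows this set is null for every $\lambda>0$, and taking $\lambda=1/k$, $k\in\mathbb{N}^{+}$, gives $\mathcal{O}f=0$ a.e.\ on $B(x_0,r)$, which is exactly (\ref{nontangential convergence}).

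This argument is entirely routine once Theorem \ref{nontangential maximal theorem} is in hand; the only mild point to watch is that the convergence path is governed by the two-parameter family $y\in\Gamma_{x,t}$ rather than by a single curve, so one must exploit the uniformity in $\theta$ provided by (C2) (and the compactness of $\Theta$) both in the dense-class step and in passing from the estimate for $\mathfrak{M}(f-g)$ to the bound for $\mathcal{O}f$. I do not expect a genuine obstacle here: all of the analytic content of the three cases is already contained in Theorem \ref{nontangential maximal theorem}.
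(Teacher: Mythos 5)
Your proof is correct and is exactly the ``standard argument'' the paper invokes (without spelling it out) immediately before Theorem \ref{nontangential maximal theorem}: prove convergence everywhere for Schwartz data using the uniform shrinking $\sup_{\theta}|\gamma(x,t,\theta)-x|\lesssim t^{\alpha}$ from (C2), then control the oscillation of the $H^{s}$-remainder via the maximal estimate, Chebyshev, and a density limit. The only minor imprecision is writing $\mathcal{O}f=\mathcal{O}(f-g)$, which should be the inequality $\mathcal{O}f\le\mathcal{O}(f-g)$; this does not affect the argument.
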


Theorem \ref{nontangential convergence theorem} is sharp when $\beta(\Theta) =0 $ (see \cite{CLV} or Theorem \ref{theorem CLV} in this paper) and $\beta(\Theta) = 1$ (see \cite{SS}). We leave necessity of the case  $0< \beta(\Theta) <1$ for further consideration.

Here we briefly sketch the main idea for the proof of Theorem \ref{nontangential maximal theorem} and leave the details to Section
\ref{proof of nontangential}. By Littlewood-Paley decomposition, we only need to consider $f$ whose Fourier
transform is supported in $\{\xi \in \mathbb{R}: |\xi| \sim \lambda \}$, $\lambda \gg 1$. Next we decompose $\Theta$ into small subsets $\{\Theta_{k}\}$ such that $\Theta= \cup_{k}\Theta_{k}$ with bounded overlap, where each $\Theta_{k}$ is contained in a closed ball with  diameter $\lambda^{-\mu}$, $\mu >0$. Then the number of $\Theta_{k}$ is dominated by $\lambda^{\mu \beta(\Theta)  + \varepsilon }$, for any $\varepsilon >0$.
Theorem \ref{nontangential maximal theorem} is reduced to consider the
estimate
\begin{equation}\label{local nontangential estimate}
\biggl\|\mathop{\rm{sup}}_{t \in (0,1), \theta \in \Theta_{k}} |e^{it\Delta}f(\gamma(x,t,\theta))|\biggl\|_{L^{p}(B(x_{0},r))} \le C \lambda^{\nu} \|f\|_{L^{2}},
\end{equation}
where the constant $C$ does not depend on $k$. By \cite{CLV} or Theorem \ref{theorem CLV} in this paper,  $\nu$ can not be smaller than $ \max\{1/2 - \alpha, 1/4\}$.
Then we have to solve two problems:

\textbf{(P1)} what is the smallest possible value for $\mu$ such that (\ref{local nontangential estimate}) holds for $\nu= \max\{1/2 - \alpha, 1/4\}$;

\textbf{(P2)} what is the largest possible value for $p$ such that (\ref{local nontangential estimate}) holds for $\nu= \max\{1/2 - \alpha, 1/4\}$.
These two problems are solved by Lemma \ref{lemma2.3} below.  What's more, we will further discuss the problem \textbf{(P2)} in the next Subsection.
\subsection{Sharp $L^p-$Schr\"{o}dinger maximal estimates along tangential curves  in $\mathbb{R} \times \mathbb{R}$ }\label{D}
We notice that the  problem \textbf{(P2)} is of independent interest in the study of Schr\"{o}dinger maximal function.
For celebrated $L^p$-Schr\"{o}dinger maximal estimate, one would find the optimal $p$ and $s$ such that the maximal estimate holds. When spatial dimension $n=2$, Du-Guth-Li \cite{DGL} proved the sharp $L^p$-estimates for all $p\le 3$ and $s>1/3$. When spatial dimension $n\geq 3$, Du-Zhang \cite{DZ} proved the sharp $L^2$-estimate with $s>n/2(n+1)$, but  the sharp $L^p$-estimate of Schr\"{o}dinger maximal function is still unknown for $p >2$. Partial results on this problem are obtained by using polynomial partitioning and refined Strichartz estimates  in \cite{CMM,DKWZ,WS}.

As a consequence of  Theorem \ref{nontangential maximal theorem}, we achieve the sharp $L^p$-Schr\"{o}dinger maximal estimates along tangential curves in $\mathbb{R} \times \mathbb{R}$. In fact, we take $\Theta$ to be the set only consisting of a single point $\theta_0$, and rewrite the conditions (C1)-(C3) as follows. Here we abuse the notation a bit and replace $\gamma(x,t,\theta_0)$ by $\gamma(x,t)$. Let $\gamma$ be a map from $\mathbb{R} \times [0,1]$ to $\mathbb{R}$,  which satisfies $\gamma(x,0) =x$ for all $x \in \mathbb{R}$ and the following conditions $(C1)^{\prime}$-$(C2)^{\prime}$ hold:\\
$\boldsymbol{(C1)^{\prime}}$ for fixed $t \in [0,1]$,  $\gamma$ has at least $C^{1}$-regularity in $x$, and there exists a constant $C_{1} \ge 1$
such that for each $x, x^{\prime}  \in \mathbb{R}$, $t \in [0,1]$,
\begin{equation}\label{Eq1.3}
 C_{1}^{-1}|x- x^{\prime}| \le |\gamma(x,t)-\gamma(x^{\prime},t)| \le C_{1}|x- x^{\prime}|;
\end{equation}
$\boldsymbol{(C2)^{\prime}}$ there exists  a constant $C_{2} >0$ and $\alpha \in (0,1)$ such that for each $x \in \mathbb{R}$, $t,t^{\prime} \in [0,1]$,
\begin{equation}\label{Eq1.4}
|\gamma(x,t)-\gamma(x,t^{\prime})| \le C_{2}|t- t^{\prime}|^{\alpha}.
\end{equation}

Specially, Sj\"{o}lin \cite{S3} has studied  $L^p$-Schr\"{o}dinger maximal estimates for the case $\gamma(x,t)=x$.

 \begin{theorem}\label{upper bound}
Let $n=1$ and $ 0 <\alpha < 1$.  Suppose that a function $\gamma$ satisfies conditions $(C1)^{\prime}$, $(C2)^{\prime}$
for arbitrary $x$, $y \in B(x_0,r) \subset \mathbb{R}$ and $t$, $t'\in [0,1]$, $\gamma(x,0)=x$. Considering the $L^{p}$-maximal estimate,
\begin{equation}\label{Lp estimate}
\biggl\|\sup_{t\in (0,1)}|e^{it\triangle}f(\gamma(x,t))|\biggl\|_{L^p(B(x_0,r))}\leq C \|f\|_{H^s(\mathbb{R})},
\end{equation}
we have,

(1) for each $s >1/4$ and $\alpha \in [1/2,1) $,  inequality (\ref{Lp estimate}) holds if  $p \le 4$;

(2) for each $s >1/4$ and $\alpha \in (1/4,1/2) $,  inequality (\ref{Lp estimate}) holds if  $p \le 8\alpha$;

(3) for each $s >1/2-\alpha$ and $\alpha \in (0,1/4] $,  inequality (\ref{Lp estimate}) holds if  $p \le 2$. \\
Moreover, the constant $C$ on the right hand side of inequality (\ref{Lp estimate}) depends only on $C_{1},C_{2}$ and the choice of $B(x_{0},r)$.
\end{theorem}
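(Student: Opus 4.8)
The plan is to read off Theorem \ref{upper bound} from Theorem \ref{nontangential maximal theorem} by collapsing the parameter set $\Theta$ to a single point, followed by an elementary descent in the integrability exponent. First I would fix an arbitrary $\theta_0 \in \mathbb{R}$, set $\Theta := \{\theta_0\}$, and declare $\gamma(x,t,\theta_0) := \gamma(x,t)$. With this identification, hypothesis $(C1)^{\prime}$ is precisely (C1), hypothesis $(C2)^{\prime}$ is precisely (C2), and (C3) holds trivially for any $C_3 > 0$ since the only admissible pair is $\theta = \theta^{\prime} = \theta_0$. Moreover $\Theta$ is compact and $N(\delta) = 1$ for all $\delta > 0$, so $\beta(\Theta) = 0$. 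Plugging $\beta(\Theta)=0$ into the three cases of Theorem \ref{nontangential maximal theorem} yields: for $\alpha \in [1/2,1)$, the estimate $\|\sup_{t\in(0,1)}|e^{it\triangle}f(\gamma(x,t))|\|_{L^4(B(x_0,r))} \le C\|f\|_{H^s(\mathbb{R})}$ with $s > 1/4$; for $\alpha \in (1/4,1/2)$, the same bound with the exponent $4$ replaced by $8\alpha \in (2,4)$ and still $s > 1/4$; and for $\alpha \in (0,1/4]$, the $L^2(B(x_0,r))$ bound with $s > 1/2 - \alpha$ (which is exactly $\alpha\beta(\Theta) + 1/2 - \alpha$). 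In each regime this is precisely inequality (\ref{Lp estimate}) at the endpoint exponent $p_0 \in \{4,\, 8\alpha,\, 2\}$.

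To obtain the full ranges $p \le p_0$ claimed in Theorem \ref{upper bound}, I would invoke H\"older's inequality on the finite-measure set $B(x_0,r)$: for $1 \le p \le p_0$,
\[
\|F\|_{L^p(B(x_0,r))} \le (2r)^{\frac1p - \frac1{p_0}}\,\|F\|_{L^{p_0}(B(x_0,r))},
\]
applied to $F(x) = \sup_{t\in(0,1)}|e^{it\triangle}f(\gamma(x,t))|$. This immediately upgrades the endpoint estimate to all $p \le p_0$ with the same threshold on $s$, the implicit constant merely acquiring the harmless factor $(2r)^{1/p-1/p_0}$. Finally, since $\Theta$ is a fixed single point and $C_3$ never enters, the constant produced by Theorem \ref{nontangential maximal theorem} depends only on $C_1$, $C_2$, and the choice of $B(x_0,r)$, which is the dependence asserted in the statement.

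I do not expect any genuine obstacle in this deduction: the entire analytic weight of the result sits inside Theorem \ref{nontangential maximal theorem}, and ultimately inside Lemma \ref{lemma2.3}, which settles problem (P2) — the optimal range of $p$ for the frequency-localized tangential maximal bound. The only points requiring a line of verification are that the single-point family legitimately satisfies $(C1)$--$(C3)$ and that $\beta(\{\theta_0\}) = 0$, both immediate. (If one wished to avoid citing Theorem \ref{nontangential maximal theorem}, the real work would be to prove the $L^p$-improvement directly for one bilipschitz, $\alpha$-H\"older curve: interpolating the sharp $L^2$ maximal bound of Theorem \ref{theorem CLV} against the trivial fixed-time bound $\|e^{it\triangle}f(\gamma(\cdot,t))\|_{L^\infty} \lesssim \lambda^{1/2}\|f\|_{L^2}$ for $|\xi|\sim\lambda$ only reaches $s>1/2-\tfrac1{2p}$, i.e. $p=2$ at $s=1/4$, so it does not by itself give $p=4$ when $\alpha\ge 1/2$, and one is forced into the same refined argument that underlies Lemma \ref{lemma2.3}.)
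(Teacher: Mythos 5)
Your proposal is correct and matches the paper's own route: the text immediately preceding Theorem \ref{upper bound} explicitly states that the result follows from Theorem \ref{nontangential maximal theorem} by taking $\Theta=\{\theta_0\}$, whence $\beta(\Theta)=0$ and the dependence on $C_3$ and $\Theta$ disappears. Your additional observation that the full range $p\le p_0$ then follows from H\"older's inequality on the finite-measure ball $B(x_0,r)$ is the (tacit) final step the paper leaves to the reader, and your sanity check that $(C1)^{\prime},(C2)^{\prime}$ reduce to $(C1),(C2)$ with $(C3)$ vacuous is exactly right.
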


\begin{center}
\includegraphics[height=4cm]{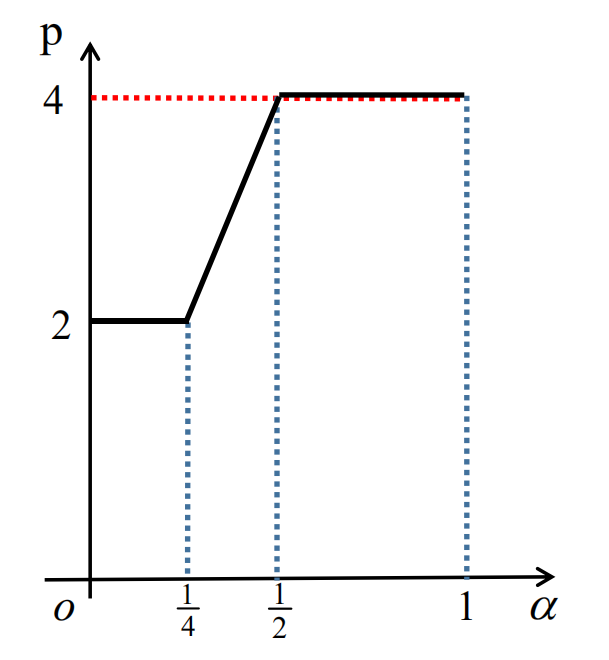}
\end{center}
\begin{center}
Figure 6. Relationship between $p$ and $\alpha$ when $s$ is fixed.
\end{center}

\begin{center}
\includegraphics[height=4cm]{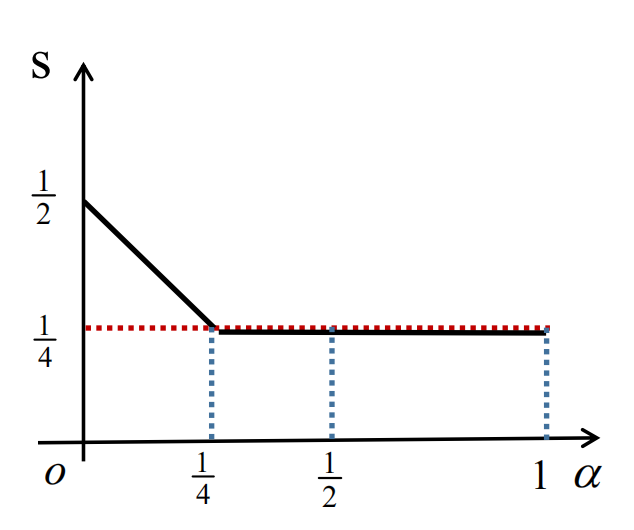}
\end{center}
\begin{center}
Figure 7. Relationship between $s$ and $\alpha$ when $p$ is fixed.
\end{center}

It is clear that Theorem \ref{upper bound} improves the previous $L^2$-Schr\"{o}dinger maximal estimates  of \cite{CLV}, see also Theorem \ref{theorem CLV} in this paper.
 Finally, we will show that the upper bound for $p$ obtained by Theorem \ref{upper bound} can not be improved when $\gamma(x,t)$ are chosen as in Theorem \ref{necessary bound for p} below.
 \begin{theorem}\label{necessary bound for p}
Taking $\gamma(x,t)=x+t^{\alpha}$, we consider the $L^{p}$-maximal estimate
\begin{equation}\label{special Lp estimate}
\biggl\|\sup_{t\in (0,1)}|e^{it\triangle}f(\gamma(x,t))|\biggl\|_{L^p(B(0,1))}\leq C \|f\|_{H^s(\mathbb{R})},
\end{equation}
we have,

(1) inequality (\ref{special Lp estimate}) holds for each $s >1/4$ and $\alpha \in [1/2,1) $  only if  $p \le 4$;

(2)  inequality (\ref{special Lp estimate}) holds for each $s >1/4$ and $\alpha \in (1/4,1/2) $  only if $p \le 8\alpha$;

(3)  inequality (\ref{special Lp estimate}) holds for each $s >1/2-\alpha$ and $\alpha \in (0,1/4] $  only if  $p \le 2$.
 \end{theorem}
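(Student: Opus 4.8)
The plan is to test the maximal estimate \eqref{special Lp estimate} against explicitly chosen functions $f$ (Knapp-type wave packets) and track how the left-hand side and the right-hand side scale as the frequency parameter $R\to\infty$. Fix a large dyadic $R$ and let $f$ have $\widehat f$ a smooth bump adapted to a cap or rectangle in $\{\xi\sim R\}$; then $\|f\|_{H^s}\sim R^s\|f\|_{L^2}$, so the right-hand side is $\sim R^s\|f\|_{L^2}$, and the game is to produce, on a set of $x$ of positive measure (of size a fixed power of $R$), values $|e^{it\Delta}f(x+t^\alpha)|\gtrsim\|f\|_{L^2}\cdot R^{a}$ for a suitable exponent $a$ by choosing $t=t(x)$ so that the phase $x\cdot\xi+t|\xi|^2+t^\alpha\xi$ is stationary (nearly constant) over the support of $\widehat f$. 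Comparing powers of $R$ forces $s\ge a - (\text{measure exponent})/p$, which rearranges to the asserted upper bounds on $p$.

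Concretely, for part (1) with $\alpha\in[1/2,1)$ I would use the same family of examples that shows $L^4$ (equivalently $s>1/4$) is the threshold on the vertical line: take $\widehat f=\mathbf 1_{\{\xi\sim R\}}$, so $e^{it\Delta}f(y)=\int_{\xi\sim R}e^{i(y\xi+t\xi^2)}\,d\xi$, and note that along $y=x+t^\alpha$ the extra term $t^\alpha\xi$ is, for $t\le R^{-1}$ and $\xi\sim R$, of size $\le R^{\alpha\cdot 0}\cdot\ldots$ — more precisely $t^\alpha\xi\lesssim R^{-\alpha}R = R^{1-\alpha}$, which is $\lesssim 1$ exactly when $\alpha\ge$ the relevant threshold after the natural rescaling; this is why the perturbation is harmless for $\alpha\ge 1/2$ and the classical $1$-D counterexample of Dahlberg–Kenig/Sjölin transplants to give "$p\le 4$ is necessary for $s>1/4$". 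For parts (2) and (3), where $\alpha<1/2$, the curve genuinely costs something: one localizes $t$ to a scale $\tau$ chosen so that both $\tau\xi^2\sim1$ and $\tau^\alpha\xi\sim1$ cannot simultaneously hold, picks $\tau$ to balance them against the spatial frequency, and thereby gets a \emph{smaller} region of constructive interference in $x$, i.e. a worse measure exponent; carrying the bookkeeping through yields the breakpoints $p\le 8\alpha$ on $(1/4,1/2)$ and $p\le 2$ on $(0,1/4]$, matching the positive results in Theorem \ref{upper bound} and the threshold $s>1/2-\alpha$ appearing there. I would organize this as: (i) reduce by Littlewood–Paley to frequency-localized $f$; (ii) for the claimed $s$-value and a test $p$ just above the asserted bound, exhibit $f=f_R$ and a set $E_R$ with $|E_R|\gtrsim R^{-c}$ on which $\sup_t|e^{it\Delta}f(x+t^\alpha)|\gtrsim R^{b}\|f\|_{L^2}$; (iii) plug into \eqref{special Lp estimate} and let $R\to\infty$ to reach a contradiction unless $p$ satisfies the stated inequality.

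The main obstacle is the third regime $\alpha\in(0,1/4]$, where the "right" counterexample is not a single Knapp packet but rather a sum of many modulated packets (à la Bourgain's counterexamples), because one must simultaneously exploit the $\alpha$-Hölder room in $t$ and the frequency localization, and the two scales $\tau$ and $R$ must be tuned against each other so that the curve's displacement $t^\alpha$ traverses just enough of a wave packet to align its phase; keeping the number of packets, their separation, and the resulting $L^2$-norm versus pointwise-size trade-off consistent is the delicate part. I expect the cleanest route is to quote Bourgain's construction from \cite{B2} (as the paper already does for Theorem \ref{necessary condition} via Lemma \ref{equivalence}) and adapt the time-localization scaling $t\mapsto \tau t$, $\xi\mapsto$ (its dual scaling), which converts the $\gamma(x,t)=x+t^\alpha$ curve into $x+\tau^\alpha(\cdot)$ and lets the known lower bounds be read off after optimizing $\tau$ in $R$. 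The first two regimes should then follow as comparatively routine rescalings of the one-dimensional Dahlberg–Kenig example, with the $\alpha$-dependence entering only through the admissible range of the localization parameter.
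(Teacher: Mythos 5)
The general scaling strategy you propose — test against a frequency-localized $f$, find a set $E_R$ where $\sup_t|e^{it\Delta}f(x+t^\alpha)|$ is large, and compare powers of $R$ — is the right one, and your reading of part~(1) (for $\alpha\ge 1/2$ the shift $t^\alpha$ is smaller than the natural spatial uncertainty and is harmless) is essentially correct. However, you have misdiagnosed regimes (2) and (3). You assert that for $\alpha\in(0,1/4]$ one needs a Bourgain-type superposition of many modulated packets and a delicate tuning of a time-localization scale $\tau$ against $R$, but in fact the paper uses \emph{exactly the same single example for all three cases}, with no many-packet construction at all. Concretely, take $\hat f=\chi_{B(0,\lambda^{1/2})}$ (a low-frequency ball, not an annulus $\{\xi\sim R\}$; this distinction matters for the $H^s$-norm computation: here $\|f\|_{H^s}\lesssim\lambda^{1/4+s/2}$, whereas your normalization $\|f\|_{H^s}\sim R^s\|f\|_{L^2}$ implicitly assumes frequency $\sim R$). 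Then for $t\in(0,\tfrac{1}{100}\lambda^{-1})$ and $|x-t^\alpha|\le\tfrac{1}{100}\lambda^{-1/2}$ the whole phase $\lambda^{1/2}(x-t^\alpha)\eta+\lambda t\eta^2$ is $O(1)$ uniformly over $|\eta|\le 1$, so $|e^{it\Delta}f(x+t^\alpha)|\gtrsim\lambda^{1/2}$ on
\[
S=\{x:\,|x-t^\alpha|\le\tfrac{1}{100}\lambda^{-1/2}\text{ for some }t\in(0,\tfrac{1}{100}\lambda^{-1})\},
\]
and the only thing that varies with $\alpha$ is $|S|\sim\max(\lambda^{-\alpha},\lambda^{-1/2})$: for $\alpha\ge1/2$ the curve's drift $\lambda^{-\alpha}$ is absorbed into the spatial width and $|S|\sim\lambda^{-1/2}$, while for $\alpha<1/2$ the drift dominates and $|S|\sim\lambda^{-\alpha}$ is \emph{larger}, giving a stronger counterexample. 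Plugging $\lambda^{1/2}|S|^{1/p}\lesssim\lambda^{1/4+s/2}$ and letting $\lambda\to\infty$ then yields $p\le\frac{2}{1-2s}$ (hence $p\le 4$ as $s\to 1/4^+$) when $\alpha\ge 1/2$, and $p\le\frac{4\alpha}{1-2s}$ (hence $p\le 8\alpha$ as $s\to 1/4^+$, and $p\le 2$ as $s\to(1/2-\alpha)^+$) when $\alpha<1/2$, covering all three parts in one computation.

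So the gap is not a subtle one in regime (3); it is that you did not notice that a single ball example, with $t$ restricted to $(0,c\lambda^{-1})$, already produces the enlarged stationary set $S$ via the monotone map $t\mapsto t^\alpha$, and that this enlargement is precisely what degrades the admissible $p$ as $\alpha$ decreases. Your proposal to import Bourgain's construction from \cite{B2} and optimize a separate time scale $\tau$ is not wrong in principle but adds substantial machinery that the problem does not require, and without carrying the bookkeeping through you have not actually verified the breakpoints $8\alpha$ and $2$. I would recommend dropping the multi-packet idea entirely, writing down the ball example above (the idea is from \cite[Proposition 1.5]{CLV}), and computing $|S|$ directly.
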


We will show the proof for Theorem  \ref{necessary bound for p} in Section \ref{Necessary}.

\textbf{Conventions}: Throughout this article, we shall use the notation $A\ll B$, which means that  there is a sufficiently large constant $G$, which does not depend on the relevant parameters arising in the context in which
the quantities $A$ and $B$ appear, such that $G A\leq B$.  We write
$A\sim B$, and mean that $A$ and $B$ are comparable. By
$A\lesssim B$ we mean that $A \le CB $ for some constant $C$ independent of the parameters related to  $A$ and $B$. We write
$A\wedge B=\min\{A,B\}$. Given $\mathbb{R}^{n}$, we write $B(0,1)$ instead of the unit ball $B^{n}(0,1)$ in $\mathbb{R}^{n}$ centered at the origin for short, and the same notation is valid for $B(x_{0},r)$. We denote $RapDec(R)$ for terms rapidly decaying in $R$ which can be ignored in our estimate.


\section{Proof of Theorem \ref{theorem1.1n}}\label{Section 2n}

We will prove the sufficiency and necessity of Theorem \ref{theorem1.1n} in Subsection \ref{sufficient} and Subsection
\ref{necessary} respectively. We notice that some of
the details during the proofs have been appeared in \cite{LW}, but we write most of them for the reader's convenience.

\subsection{Sufficiency}\label{sufficient}
\begin{lemma}\label{lemma2.1n} (\cite{LW})
Assume that $g$ is a Schwartz function whose Fourier transform is supported in the annulus $A(\lambda)=\{\xi \in \mathbb{R}^{n}: |\xi| \sim \lambda\}$. $\gamma(x,t)$ satisfies
\[|\gamma(x,t)- x| \lesssim t^{\alpha},\hspace{0.8cm} \gamma(x,0)=x\]
for all $x \in B(x_{0},r)$ and $t \in (0, \lambda^{-\frac{1}{\alpha}})$. Then for each $x \in B(x_{0},r)$ and $t \in (0, \lambda^{-\frac{1}{\alpha}})$,
\begin{align}
|e^{itP(D)}g(\gamma(x,t))| \le \sum_{\mathfrak{l} \in \mathbb{Z}^{n}}{\frac{C_{n}}{(1+|\mathfrak{l}|)^{n+1}}\biggl|\int_{\mathbb{R}^{n}}{e^{i(x+ \frac{\mathfrak{l}}{\lambda})\cdot\xi+itP(\xi)}\hat{g}}(\xi)d\xi \biggl|}.
\end{align}
\end{lemma}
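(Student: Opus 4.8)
The plan is to expand a bounded modulation factor in a Fourier series on a torus, thereby writing the curved evaluation $e^{itP(D)}g(\gamma(x,t))$ as an absolutely convergent superposition of the shifted ``vertical'' evaluations $e^{itP(D)}g\bigl(x+\mathfrak{l}/\lambda\bigr)$, $\mathfrak{l}\in\mathbb{Z}^{n}$.

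First I would write
\[
e^{itP(D)}g(\gamma(x,t)) = \int_{\mathbb{R}^{n}} e^{i\gamma(x,t)\cdot\xi + itP(\xi)}\,\hat{g}(\xi)\,d\xi
\]
and set $y=y(x,t):=\gamma(x,t)-x$. By hypothesis $|y|\lesssim t^{\alpha} < \lambda^{-1}$ for $t\in(0,\lambda^{-1/\alpha})$, so $|\lambda y|\le C_{0}$ with $C_{0}$ depending only on the implicit constant in $|\gamma(x,t)-x|\lesssim t^{\alpha}$. Fix once and for all a smooth function $\psi$ with $\psi\equiv 1$ on the fixed annulus $\{\eta\in\mathbb{R}^{n}:|\eta|\sim 1\}$ obtained by rescaling $A(\lambda)$, and with $\operatorname{supp}\psi$ contained in a fixed compact subset of the open cube $(-\pi,\pi)^{n}$; this is possible because the rescaled annulus lies well inside that cube. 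Since $\operatorname{supp}\hat{g}\subset A(\lambda)$, inserting $\psi(\xi/\lambda)$ costs nothing:
\[
e^{itP(D)}g(\gamma(x,t)) = \int_{\mathbb{R}^{n}} e^{ix\cdot\xi+itP(\xi)}\;\psi(\xi/\lambda)\,e^{iy\cdot\xi}\;\hat{g}(\xi)\,d\xi .
\]

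Next, after the substitution $\eta=\xi/\lambda$ the factor $\psi(\eta)\,e^{i\lambda y\cdot\eta}$ is smooth and supported in $(-\pi,\pi)^{n}$; extending it $2\pi$-periodically in each variable and expanding in a Fourier series gives
\[
\psi(\eta)\,e^{i\lambda y\cdot\eta}=\sum_{\mathfrak{l}\in\mathbb{Z}^{n}} a_{\mathfrak{l}}\,e^{i\mathfrak{l}\cdot\eta},\qquad
a_{\mathfrak{l}}=(2\pi)^{-n}\int_{(-\pi,\pi)^{n}}\psi(\eta)\,e^{i\lambda y\cdot\eta}\,e^{-i\mathfrak{l}\cdot\eta}\,d\eta .
\]
Integrating by parts $n+1$ times and using that every derivative of $\psi(\eta)e^{i\lambda y\cdot\eta}$ of order at most $n+1$ is bounded by a constant depending only on $n$, $\psi$ and $C_{0}$ — hence independent of $x$, $t$ and $\lambda$ — one obtains $|a_{\mathfrak{l}}|\le C_{n}(1+|\mathfrak{l}|)^{-(n+1)}$. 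Undoing the rescaling, for $\xi\in\operatorname{supp}\hat{g}$ we have $\psi(\xi/\lambda)=1$, hence there $e^{iy\cdot\xi}=\sum_{\mathfrak{l}} a_{\mathfrak{l}}\,e^{i\mathfrak{l}\cdot\xi/\lambda}$. Substituting this into the last display and interchanging the sum with the integral (legitimate since $\sum_{\mathfrak{l}}|a_{\mathfrak{l}}|<\infty$ and $\hat{g}\in L^{1}$) gives
\[
e^{itP(D)}g(\gamma(x,t))=\sum_{\mathfrak{l}\in\mathbb{Z}^{n}} a_{\mathfrak{l}}\int_{\mathbb{R}^{n}} e^{i(x+\frac{\mathfrak{l}}{\lambda})\cdot\xi+itP(\xi)}\,\hat{g}(\xi)\,d\xi ,
\]
and taking absolute values together with the bound on $|a_{\mathfrak{l}}|$ yields the claimed inequality.

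I expect the only genuinely substantive point to be the \emph{uniformity} of the Fourier-coefficient bound: one must verify that $C_{n}$ in $|a_{\mathfrak{l}}|\le C_{n}(1+|\mathfrak{l}|)^{-(n+1)}$ does not depend on $t$ (equivalently on $y$). This is exactly where the restriction $t\in(0,\lambda^{-1/\alpha})$ enters — it forces $|\lambda y|\le C_{0}$, so that the sup-norms of $\partial^{\beta}\bigl(\psi(\eta)e^{i\lambda y\cdot\eta}\bigr)$ for $|\beta|\le n+1$ are controlled uniformly. A secondary, purely bookkeeping point is that $\psi$ can indeed be supported inside the fundamental cube, which only requires $A(\lambda)$ to rescale into a fixed compact subset of $(-\pi,\pi)^{n}$; for the standard Littlewood--Paley meaning of ``$|\xi|\sim\lambda$'' this holds, and otherwise it is arranged by a harmless dilation without affecting the final form of the estimate.
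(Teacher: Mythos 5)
Your proof is correct, and it is the standard argument. The paper does not reproduce a proof of this lemma (it is cited from \cite{LW}), but exactly the same Fourier-series-of-the-modulation device appears verbatim in the paper's own Lemma \ref{main lemma}: rescale the frequency variable so that the annulus lies in a fixed fundamental cube, absorb the curve-dependent phase $e^{iy\cdot\xi}$ (equivalently $e^{i\lambda y\cdot\eta}$) into a compactly supported cutoff, expand in a Fourier series whose coefficients decay like $(1+|\mathfrak{l}|)^{-(n+1)}$ uniformly in the time variable because $|\lambda y|\le C_{0}$, and resum. You correctly isolate the two points that carry the weight of the argument: the uniformity of the coefficient bound over $t\in(0,\lambda^{-1/\alpha})$, which is precisely what the hypothesis on $\gamma$ buys, and the harmless normalization needed so that the rescaled annulus sits inside $(-\pi,\pi)^{n}$. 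Nothing to change.
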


\textbf{Proof of Theorem \ref{theorem1.1n}.}
\textbf{(1)} When $1/m \le \alpha <1$, it is sufficient to show that for arbitrary $\varepsilon >0$,  $s_{1} = s_{0} + \varepsilon$,
\begin{equation}\label{Eq2.3n}
\biggl\|\mathop{sup}_{0<t<1} \frac{|e^{itP(D)}(f)(\gamma(x,t))-f(x)|}{t^{\delta/m}}\biggl\|_{L^{q}(B(x_{0},r))} \lesssim \|f\|_{H^{s_{1}+ \delta}(\mathbb{R}^{n})}
\end{equation}
where $0<\delta< \alpha m $.

Now we decompose $f$ as
\[f=\sum_{k=0}^{\infty}{f_{k}},\]
where supp$ \hat{f_{0}} \subset B(0,1)$, supp$ \hat{f_{k}} \subset \{\xi: |\xi| \sim 2^{k}\}$, $k \ge 1$. It follows that
\begin{align}\label{Eq2.10n}
& \biggl\|\mathop{sup}_{0<t<1} \frac{|e^{itP(D)}(f)(\gamma(x,t))-f(x)|}{t^{ \delta/m}}\biggl\|_{L^{q}(B(x_{0},r))} \nonumber\\
&\le \sum_{k=0}^{\infty}{\biggl\|\mathop{sup}_{0<t<1} \frac{|e^{itP(D)}(f_{k})(\gamma(x,t))-f_{k}(x)|}{t^{ \delta/m}}\biggl\|_{L^{q}(B(x_{0},r))}}.
\end{align}

For $k \lesssim 1$, since $P(\xi)$ is continuous,
\begin{align}\label{Eq2.11n}
&\biggl\|\mathop{sup}_{0<t<1} \frac{|e^{itP(D)}(f_{k})(\gamma(x,t))-f_{k}(x)|}{t^{ \delta/m}}\biggl\|_{L^{q}(B(x_{0},r))} \nonumber\\
&\le \biggl\|\mathop{sup}_{0<t<1} \frac{|e^{itP(D)}(f_{k})(\gamma(x,t))-f_{k}(\gamma(x,t))|}{t^{ \delta/m}}\biggl\|_{L^{q}(B(x_{0},r))} \nonumber\\
&  +\biggl\|\mathop{sup}_{0<t<1} \frac{|f_{k}(\gamma(x,t))-f_{k}(x)|}{t^{ \delta/m}}\biggl\|_{L^{q}(B(x_{0},r))} \nonumber\\
&\lesssim \|f\|_{H^{s_{1}+ \delta}(\mathbb{R}^{n})},
\end{align}
where we used
\[  \frac{|e^{itP(D)}(f_{k})(\gamma(x,t))-f_{k}(\gamma(x,t))|}{t^{ \delta/m}} \le t^{1- \delta/m}\int_{\mathbb{R}^{n}}{|P(\xi)||\hat{f_{k}}(\xi)|d\xi},\]
and
\[  \frac{|f_{k}(\gamma(x,t))-f_{k}(x)|}{t^{ \delta/m}} \le t^{\alpha- \delta/m}\int_{\mathbb{R}^{n}}{|\xi||\hat{f_{k}}(\xi)|d\xi}.\]

For $k \gg 1$,
\begin{align}\label{Eq2.12n}
&\biggl\|\mathop{sup}_{0<t<1} \frac{|e^{itP(D)}(f_{k})(\gamma(x,t))-f_{k}(x)|}{t^{ \delta/m}}\biggl\|_{L^{q}(B(x_{0},r))} \nonumber\\
&\le \biggl\|\mathop{sup}_{2^{-mk} \le t<1} \frac{|e^{itP(D)}(f_{k})(\gamma(x,t))-f_{k}(x)|}{t^{ \delta/m}}\biggl\|_{L^{q}(B(x_{0},r))} \nonumber
\end{align}
\begin{align}
& \:\:\ + \biggl\|\mathop{sup}_{0<t<2^{-mk}} \frac{|e^{itP(D)}(f_{k})(\gamma(x,t))-f_{k}(x)|}{t^{ \delta/m}}\biggl\|_{L^{q}(B(x_{0},r))} \nonumber\\
&:= I + II.
\end{align}

We first estimate $I$, inequality (\ref{Eq1.8n}) implies
\begin{equation}\label{Eq2.13n}
\biggl\|\mathop{sup}_{2^{-mk}  \le t<1} |e^{itP(D)}(f_{k})(\gamma(x,t))|\biggl\|_{L^{p}(B(x_{0},r))} \lesssim 2^{(s_{0}+\frac{\varepsilon}{2})k}\|f_{k}\|_{L^{2}(\mathbb{R}^{n})},
\end{equation}
therefore,
\begin{align}\label{Eq2.15n}
 I  &\le 2^{\delta k} \biggl\|\mathop{sup}_{2^{-mk}  \le t < 1} |e^{itP(D)}(f_{k})(\gamma(x,t))-f_{k}(x)|\biggl\|_{L^{q}(B(x_{0},r))} \nonumber\\
&\le 2^{\delta k} \biggl\{\biggl\|\mathop{sup}_{2^{-mk}  \le t < 1} |e^{itP(D)}(f_{k})(\gamma(x,t))|\biggl\|_{L^{q}(B(x_{0},r))} + \|f_{k}\|_{L^{q}(B(x_{0},r))} \biggl\}\nonumber\\
&\lesssim 2^{\delta k} \biggl\{\biggl\|\mathop{sup}_{2^{-mk} \le t < 1} |e^{itP(D)}(f_{k})(\gamma(x,t))|\biggl\|_{L^{p}(B(x_{0},r))} + \|f_{k}\|_{L^{2}(B(x_{0},r))} \biggl\} \nonumber\\
&\lesssim 2^{\delta k}2^{(s_{0}+\frac{\varepsilon}{2})k}\|f_{k}\|_{L^{2}(\mathbb{R}^{n})} \nonumber\\
&\lesssim 2^{-\frac{\varepsilon k}{2}}\|f\|_{H^{s_{1}+ \delta}(\mathbb{R}^{n})}.
\end{align}

For $II$, by the triangle inequality,
\begin{align}
II &\le \biggl\|\mathop{sup}_{0<t<2^{-mk}} \frac{|e^{itP(D)}(f_{k})(\gamma(x,t))-f_{k}(\gamma(x,t))|}{t^{ \delta/m}}\biggl\|_{L^{q}(B(x_{0},r))} \nonumber\\
& \:\:\ + \biggl\|\mathop{sup}_{0<t<2^{-mk}} \frac{|f_{k}(\gamma(x,t))-f_{k}(x)|}{t^{ \delta/m}}\biggl\|_{L^{q}(B(x_{0},r))}.
\end{align}

Using Taylor's formula, we have
\begin{align}\label{Eq2.16n}
&\biggl\|\mathop{sup}_{0<t<2^{-mk}} \frac{|f_{k}(\gamma(x,t))-f_{k}(x)|}{t^{ \delta/m}}\biggl\|_{L^{q}(B(x_{0},r))} \nonumber\\
&\lesssim \sum_{j \ge 1} \frac{1}{j!} \sum_{h_{1}, h_{2},...,h_{j} \in \{1,2,...,n\}} \biggl\|\mathop{sup}_{0<t<2^{-mk}}  \frac{ \Pi_{d=1}^{j}|\gamma_{h_{d}}(x,t)-x_{h_{d}}| }{t^{\delta/m}} \biggl|\int_{\mathbb{R}^{n}}{e^{ix\cdot\xi} \Pi_{d=1}^{j}\xi_{h_{d}} \hat{f_{k}}}(\xi)d\xi \biggl| \biggl\|_{L^{2}(B(x_{0},r))} \nonumber\\
&\lesssim \sum_{j \ge 1} \frac{2^{-\alpha mkj+ \delta k}}{j!} \sum_{h_{1}, h_{2},...,h_{j} \in \{1,2,...,n\}} \biggl\|  \int_{\mathbb{R}^{n}}{e^{ix\cdot\xi} \Pi_{d=1}^{j}\xi_{h_{d}} \hat{f_{k}}}(\xi)d\xi \biggl\|_{L^{2}(B(x_{0},r))} \nonumber
\end{align}
\begin{align}
&\lesssim \sum_{j \ge 1} \frac{2^{-\alpha mkj+ \delta k} 2^{kj} n^{j}}{j! }  \| f_{k} \|_{L^{2}(\mathbb{R}^{n})} \nonumber\\
&\lesssim 2^{\delta k}  \|\hat{f}_{k}\|_{L^{2}(\mathbb{R}^{n})} \nonumber\\
&\lesssim 2^{-s_{1}k}\|f\|_{H^{s_{1}+ \delta}(\mathbb{R}^{n})}.
\end{align}

By Taylor's formula and Lemma \ref{lemma2.1n} (notice that $2^{-mk} \le 2^{-k/\alpha}$, since $m \ge 1/\alpha$), we get
\begin{align}\label{Eq2.17n}
&\biggl\|\mathop{sup}_{0<t<2^{-mk}} \frac{|e^{itP(D)}(f_{k})(\gamma(x,t))-f_{k}(\gamma(x,t))|}{t^{ \delta/m}}\biggl\|_{L^{q}(B(x_{0},r))} \nonumber\\
&\le \sum_{j=1}^{\infty}{\frac{2^{-mkj+\delta k}}{j!} \biggl\|\mathop{sup}_{0<t<2^{-mk}} \biggl|\int_{\mathbb{R}^{n}}{e^{i\gamma(x,t)\cdot\xi}P(\xi)^{j}\hat{f_{k}}}(\xi)d\xi \biggl|\biggl\|_{L^{q}(B(x_{0},r))}} \nonumber\\
&\le \sum_{j=1}^{\infty}{\frac{2^{-mkj+\delta k}}{j!} \sum_{\mathfrak{l} \in \mathbb{Z}^{n}}{\frac{C_{n}}{(1+|\mathfrak{l}|)^{n+1}}\biggl\|\int_{\mathbb{R}^{n}}{e^{i(x+ \frac{\mathfrak{l}}{2^{k}})\cdot\xi}P(\xi)^{j}\hat{f_{k}}}(\xi)d\xi \biggl\|_{L^{q}(B(x_{0},r))}}} \nonumber\\
&\le \sum_{j=1}^{\infty}{\frac{2^{-mkj+\delta k}}{j!} \sum_{\mathfrak{l} \in \mathbb{Z}^{n}}{\frac{C_{n}}{(1+|\mathfrak{l}|)^{n+1}}\|P(\xi)^{j}\hat{f_{k}}(\xi)\|_{L^{2}(\mathbb{R}^{n})}}} \nonumber\\
&\lesssim \sum_{j=1}^{\infty}{\frac{2^{-mkj+\delta k}2^{mkj}}{j!} \|\hat{f_{k}}}(\xi)\|_{L^{2}(\mathbb{R}^{n})} \nonumber\\
&\lesssim 2^{-s_{1}k}\|f\|_{H^{s_{1}+ \delta}(\mathbb{R}^{n})}.
\end{align}

Inequalities (\ref{Eq2.15n}), (\ref{Eq2.16n}) and (\ref{Eq2.17n}) yield for $k \gg 1$,
\begin{align}\label{Eq2.18n}
\biggl\|\mathop{sup}_{0<t<1} \frac{|e^{itP(D)}(f_{k})(\gamma(x,t))-f_{k}(x)|}{t^{ \delta/m}}\biggl\|_{L^{q}(B(x_{0},r))} &\lesssim 2^{-\frac{\varepsilon k}{2}}\|f\|_{H^{s_{1}+ \delta}(\mathbb{R}^{n})}.
\end{align}

Obviously, inequality (\ref{Eq2.3n}) follows from inequalities (\ref{Eq2.10n}), (\ref{Eq2.11n}) and (\ref{Eq2.18n}).

\textbf{(2)} When $0 < \alpha <1/m$, we just need to show that for  arbitrary $\varepsilon >0$,  $s_{1} = s_{0} + \varepsilon$,
\begin{equation}
\biggl\|\mathop{sup}_{0<t<1} \frac{|e^{itP(D)}(f)(\gamma(x,t))-f(x)|}{t^{\alpha \delta}}\biggl\|_{L^{q}(B(x_{0},r))} \lesssim \|f\|_{H^{s_{1}+ \delta}(\mathbb{R}^{n})}
\end{equation}
where $0<\delta< 1 $. The proof is very similar with that of part (1), we write the details only for completeness.

For this goal, we decompose $f$ as
\[f=\sum_{k=0}^{\infty}{f_{k}},\]
where supp$ \hat{f_{0}} \subset B(0,1)$, supp$ \hat{f_{k}} \subset \{\xi: |\xi| \sim 2^{k}\}$, $k \ge 1$.

For $k \lesssim 1$,  just as the similar argument in the part (1), we have
\begin{align}
\biggl\|\mathop{sup}_{0<t<1} \frac{|e^{itP(D)}(f_{k})(\gamma(x,t))-f_{k}(x)|}{t^{\alpha \delta}}\biggl\|_{L^{q}(B(x_{0},r))}
\lesssim \|f\|_{H^{s_{1}+ \delta}(\mathbb{R}^{n})},
\end{align}
which follows from two inequalities below
\[  \frac{|e^{itP(D)}(f_{k})(\gamma(x,t))-f_{k}(\gamma(x,t))|}{t^{\alpha \delta}} \le t^{1- \alpha \delta}\int_{\mathbb{R}^{n}}{|P(\xi)||\hat{f_{k}}(\xi)|d\xi},\]
and
\[  \frac{|f_{k}(\gamma(x,t))-f_{k}(x)|}{t^{ \alpha \delta}} \le t^{\alpha- \alpha \delta}\int_{\mathbb{R}^{n}}{|\xi||\hat{f_{k}}(\xi)|d\xi}.\]

For $k \gg 1$,
\begin{align}
&\biggl\|\mathop{sup}_{0<t<1} \frac{|e^{itP(D)}(f_{k})(\gamma(x,t))-f_{k}(x)|}{t^{\alpha \delta}}\biggl\|_{L^{q}(B(x_{0},r))} \nonumber\\
&\le \biggl\|\mathop{sup}_{2^{-k/ \alpha} \le t<1} \frac{|e^{itP(D)}(f_{k})(\gamma(x,t))-f_{k}(x)|}{t^{ \alpha \delta}}\biggl\|_{L^{q}(B(x_{0},r))} \nonumber\\
& \:\:\ + \biggl\|\mathop{sup}_{0<t<2^{-k / \alpha}} \frac{|e^{itP(D)}(f_{k})(\gamma(x,t))-f_{k}(x)|}{t^{\alpha \delta}}\biggl\|_{L^{q}(B(x_{0},r))} \nonumber\\
&:= I^{\prime} + II^{\prime}.
\end{align}

From inequality (\ref{Eq1.8n}) we obtain
\begin{equation}
\biggl\|\mathop{sup}_{2^{-k/ \alpha}  \le t<1} |e^{itP(D)}(f_{k})(\gamma(x,t))|\biggl\|_{L^{p}(B(x_{0},r))} \lesssim 2^{(s_{0}+\frac{\varepsilon}{2})k}\|f_{k}\|_{L^{2}(\mathbb{R}^{n})}.
\end{equation}
Thus,
\begin{align}
 I^{\prime}  &\le 2^{\delta k} \biggl\|\mathop{sup}_{2^{-k / \alpha }  \le t < 1} |e^{itP(D)}(f_{k})(\gamma(x,t))-f_{k}(x)|\biggl\|_{L^{q}(B(x_{0},r))} \nonumber\\
&\le 2^{\delta k} \biggl\{\biggl\|\mathop{sup}_{2^{-k / \alpha}  \le t < 1} |e^{itP(D)}(f_{k})(\gamma(x,t))|\biggl\|_{L^{q}(B(x_{0},r))} + \|f_{k}\|_{L^{q}(B(x_{0},r))} \biggl\}\nonumber\\
&\lesssim 2^{\delta k} \biggl\{\biggl\|\mathop{sup}_{2^{-k / \alpha} \le t < 1} |e^{itP(D)}(f_{k})(\gamma(x,t))|\biggl\|_{L^{p}(B(x_{0},r))} + \|f_{k}\|_{L^{2}(B(x_{0},r))} \biggl\} \nonumber\\
&\lesssim 2^{\delta k}2^{(s_{0}+\frac{\varepsilon}{2})k}\|f_{k}\|_{L^{2}(\mathbb{R}^{n})} \nonumber\\
&\lesssim 2^{-\frac{\varepsilon k}{2}}\|f\|_{H^{s_{1}+ \delta}(\mathbb{R}^{n})}.
\end{align}

For $II^{\prime}$, by the triangle inequality,
\begin{align}
II^{\prime} &\le \biggl\|\mathop{sup}_{0<t<2^{-k/ \alpha}} \frac{|e^{itP(D)}(f_{k})(\gamma(x,t))-f_{k}(\gamma(x,t))|}{t^{\alpha \delta}}\biggl\|_{L^{q}(B(x_{0},r))} \nonumber\\
& \:\:\ + \biggl\|\mathop{sup}_{0<t<2^{-k / \alpha}} \frac{|f_{k}(\gamma(x,t))-f_{k}(x)|}{t^{\alpha \delta}}\biggl\|_{L^{q}(B(x_{0},r))}.
\end{align}

Applying Taylor's formula, we have
\begin{align}
&\biggl\|\mathop{sup}_{0<t<2^{-k / \alpha}} \frac{|f_{k}(\gamma(x,t))-f_{k}(x)|}{t^{\alpha \delta}}\biggl\|_{L^{q}(B(x_{0},r))} \nonumber\\
&\lesssim \sum_{j \ge 1} \frac{1}{j!} \sum_{h_{1}, h_{2},...,h_{j} \in \{1,2,...,n\}} \biggl\|\mathop{sup}_{0<t<2^{-k/ \alpha}}  \frac{ \Pi_{d=1}^{j}|\gamma_{h_{d}}(x,t)-x_{h_{d}}| }{t^{\alpha \delta}} \biggl|\int_{\mathbb{R}^{n}}{e^{ix\cdot\xi} \Pi_{d=1}^{j}\xi_{h_{d}} \hat{f_{k}}}(\xi)d\xi \biggl| \biggl\|_{L^{2}(B(x_{0},r))} \nonumber\\
&\lesssim \sum_{j \ge 1} \frac{2^{- kj+ \delta k}}{j!} \sum_{h_{1}, h_{2},...,h_{j} \in \{1,2,...,n\}} \biggl\|  \int_{\mathbb{R}^{n}}{e^{ix\cdot\xi} \Pi_{d=1}^{j}\xi_{h_{d}} \hat{f_{k}}}(\xi)d\xi \biggl\|_{L^{2}(B(x_{0},r))} \nonumber\\
&\lesssim \sum_{j \ge 1} \frac{2^{- kj+ \delta k} 2^{kj} n^{j}}{j! }  \| f_{k} \|_{L^{2}(\mathbb{R}^{n})} \nonumber\\
&\lesssim 2^{\delta k}  \|\hat{f}_{k}\|_{L^{2}(\mathbb{R}^{n})} \nonumber\\
&\lesssim 2^{-s_{1}k}\|f\|_{H^{s_{1}+ \delta}(\mathbb{R}^{n})}.
\end{align}

Using Taylor's formula and Lemma \ref{lemma2.1n}, we get
\begin{align}
&\biggl\|\mathop{sup}_{0<t<2^{-k / \alpha}} \frac{|e^{itP(D)}(f_{k})(\gamma(x,t))-f_{k}(\gamma(x,t))|}{t^{\alpha \delta}}\biggl\|_{L^{q}(B(x_{0},r))} \nonumber\\
&\le \sum_{j=1}^{\infty}{\frac{2^{-kj/ \alpha+\delta k}}{j!} \biggl\|\mathop{sup}_{0<t<2^{-k / \alpha}} \biggl|\int_{\mathbb{R}^{n}}{e^{i\gamma(x,t)\cdot\xi}P(\xi)^{j}\hat{f_{k}}}(\xi)d\xi \biggl|\biggl\|_{L^{q}(B(x_{0},r))}} \nonumber\\
&\le \sum_{j=1}^{\infty}{\frac{2^{-kj / \alpha+\delta k}}{j!} \sum_{\mathfrak{l} \in \mathbb{Z}^{n}}{\frac{C_{n}}{(1+|\mathfrak{l}|)^{n+1}}\biggl\|\int_{\mathbb{R}^{n}}{e^{i(x+ \frac{\mathfrak{l}}{2^{k}})\cdot\xi}P(\xi)^{j}\hat{f_{k}}}(\xi)d\xi \biggl\|_{L^{q}(B(x_{0},r))}}} \nonumber\\
&\le \sum_{j=1}^{\infty}{\frac{2^{-kj / \alpha+\delta k}}{j!} \sum_{\mathfrak{l} \in \mathbb{Z}^{n}}{\frac{C_{n}}{(1+|\mathfrak{l}|)^{n+1}}\|P(\xi)^{j}\hat{f_{k}}(\xi)\|_{L^{2}(\mathbb{R}^{n})}}} \nonumber\\
&\lesssim \sum_{j=1}^{\infty}{\frac{2^{-kj / \alpha +\delta k}2^{mkj}}{j!} \|\hat{f_{k}}}(\xi)\|_{L^{2}(\mathbb{R}^{n})} \nonumber\\
&\lesssim 2^{-s_{1}k}\|f\|_{H^{s_{1}+ \delta}(\mathbb{R}^{n})}.
\end{align}
So we arrive at
\begin{align}
\biggl\|\mathop{sup}_{0<t<1} \frac{|e^{itP(D)}(f_{k})(\gamma(x,t))-f_{k}(x)|}{t^{ \alpha \delta}}\biggl\|_{L^{q}(B(x_{0},r))} &\lesssim 2^{-\frac{\varepsilon k}{2}}\|f\|_{H^{s_{1}+ \delta}(\mathbb{R}^{n})}
\end{align}
for $k \gg 1$, then  finish the proof of part (2).   \hfill$\Box$


\subsection{Necessity}\label{necessary}
In this subsection, we give two counterexamples to show the sharpness of the exponent of the variable $t$ obtained by Theorem \ref{thsharp}.

 Set $f_{R}$ with
\[\widehat{f_{R}}(\xi) =\chi_{Q_{R}}(\xi),\]
where $\chi_{Q_{R}}$ is the characteristic function on the cube $Q_{R}:=\{\xi: R \le \xi_{i} \le R+1, 1 \le i \le n \}$, $R \ge 1$. Let
\[P(\xi) = \xi_{1}^{m}.\]
Choose $\gamma(x,t)= x + \mu t^{\alpha}$, $0<\alpha <1$, $\mu = (1/1000,0,...,0)$.
H\"{o}lder's inequality implies
\[\biggl\|\mathop{sup}_{0<t<1} |e^{itP(D)}f_{R}(\gamma(x,t))|\biggl\|_{L^{p}(B(0,1))} \lesssim \|f_{R}\|_{L^{2}(\mathbb{R}^{n})},  \:\ p \ge 1,\]
obviously,
\begin{equation}\label{Eq7.12+}
\biggl\|\mathop{sup}_{0<t<1} |e^{itP(D)}f_{R}(\gamma(x,t))|\biggl\|_{L^{p}(B(0,1))} \lesssim R^{\varepsilon}\|f_{R}\|_{L^{2}(\mathbb{R}^{n})},  \:\ p \ge 1,
\end{equation}
for arbitrary $\varepsilon > 0$. The convergent rate result corresponding to inequality (\ref{Eq7.12+}) is
\begin{equation}\label{Eq7.12}
\biggl\|\mathop{sup}_{0<t<1} \frac{|e^{itP(D)}(f_{R})(\gamma(x,t))-f_{R}(x)|}{t^{\delta_{1}}}\biggl\|_{L^{1}(B(0,1))} \lesssim R^{\varepsilon + \delta_{2}} \|f_{R}\|_{L^{2}(\mathbb{R}^{n})}.
\end{equation}
We observed that the implied constant in inequality (\ref{Eq7.12}) does not depend on $R$.  It is not hard to see that
\begin{equation}\label{Eq7.13}
R^{\varepsilon + \delta_{2}}\|f_{R}\|_{L^{2}(\mathbb{R}^{n})} \sim R^{\varepsilon + \delta_{2}}.
\end{equation}
By Taylor's formula, for arbitrary $t \in (0,1)$, we get
\begin{align}\label{Eq7.14}
&t^{-\delta_{1}}|e^{itP(D)}(f_{R})(\gamma(x,t))-f_{R}(x)| \nonumber\\
&=t^{-\delta_{1}}\biggl|\int_{Q_{R}}  e^{i x \cdot \xi} (e^{i \frac{t^{\alpha}}{1000}\xi_{1}+ itP(\xi)}-1)d\xi \biggl| \nonumber\\
&=t^{-\delta_{1}}\biggl|\int_{Q_{R}}  e^{i x \cdot \xi } (i \frac{t^{\alpha}}{1000}\xi_{1}+ it\xi_{1}^{m}) d\xi
+ \sum_{j\ge2} \frac{1}{j!}\int_{Q_{R}}  e^{i x \cdot \xi } (i \frac{t^{\alpha}}{1000}\xi_{1}+  it\xi_{1}^{m})^{j} d\xi \biggl| \nonumber\\
&\ge  t^{-\delta_{1}}\biggl| \biggl| \int_{Q_{R}}  e^{i x \cdot \xi } (i \frac{t^{\alpha}}{1000}\xi_{1}+ it\xi_{1}^{m}) d\xi\biggl| - \biggl| \sum_{j\ge2} \frac{1}{j!}\int_{Q_{R}}  e^{i x \cdot \xi } (i \frac{t^{\alpha}}{1000}\xi_{1}+  it\xi_{1}^{m})^{j} d\xi \biggl| \biggl|.
\end{align}

When $1/m \le \alpha <1$, we take $t_{0}= \frac{R^{-m}}{100}$, for each $\xi \in Q_{R}$,
\[ \frac{1}{100} \le  \frac{t_{0}^{\alpha}}{1000}\xi_{1}+ t_{0}\xi_{1}^{m}   \le \frac{1}{50} \frac{(R+1)^{m}}{R^{m}}. \]
Then for any $x \in B(0,\frac{1}{1000})$,
\[\biggl|\int_{Q_{R}}  e^{i x \cdot \xi } (i \frac{t_{0}^{\alpha}}{1000}\xi_{1}+ it_{0}\xi_{1}^{m}) d\xi\biggl|
= \biggl|\int_{\{\eta: 0 \le \eta_{i} \le 1, 1 \le i \le n\}}  e^{i x \cdot \eta} [ \frac{t_{0}^{\alpha}}{1000} (\eta_{1}+R )+ t_{0} (\eta_{1}+R )^{m}] d\eta \biggl|,\]
where the phase function satisfies
\[|x \cdot \eta | \le \frac{1}{100}. \]
So we have
\begin{equation}\label{Eq7.14+}
\biggl|\int_{Q_{R}}  e^{i x \cdot \xi } (i \frac{t_{0}^{\alpha}}{1000}\xi_{1}+ it_{0}\xi_{1}^{m}) d\xi\biggl| \ge  \frac{1}{200}
\end{equation}
for each $x \in B(0,\frac{1}{1000})$.
When $R$ is sufficiently large so that $(R+1)^{m}/R^{m} \le 2$,  we obtain
\begin{align}\label{Eq7.14++++}
&\biggl|\sum_{j\ge2} \frac{1}{j!}\int_{Q_{R}}  e^{i x \cdot \xi } (i \frac{t_{0}^{\alpha}}{1000}\xi_{1}+ it_{0}\xi_{1}^{m})^{j} d\xi \biggl| \nonumber\\
&\le \sum_{j\ge2} \frac{1}{j!}\int_{Q_{R}}   \biggl| \frac{t_{0}^{\alpha}}{1000}\xi_{1}+ t_{0}\xi_{1}^{m} \biggl|^{j} d\xi  \nonumber\\
&\le \sum_{j\ge2} \frac{1}{j!} \biggl(\frac{1}{50}\biggl)^{j} \biggl( \frac{(R+1)^{m}}{R^{m}}  \biggl)^{j} \nonumber\\
&\le \frac{e-2}{625}
\end{align}
for any $x \in B(0,\frac{1}{1000})$. Therefore, for each $R$ which satisfies  $(R+1)^{m}/R^{m} \le 2$, inequalities (\ref{Eq7.14}), (\ref{Eq7.14+}) and (\ref{Eq7.14++++}) imply
\begin{align}\label{Eq7.15}
&\biggl\|\mathop{sup}_{0<t<1} \frac{|e^{itP(D)}(f_{R})(\gamma(x,t))-f_{R}(x)|}{t^{\delta_{1}}}\biggl\|_{L^{1}(B(0,1))}  \nonumber\\
&\ge \biggl\| \frac{|e^{it_{0}P(D)}(f_{R})(\gamma(x,t_{0}))-f_{R}(x)|}{t_{0}^{\delta_{1}}}\biggl\|_{L^{1}(B(0,\frac{1}{1000}))} \nonumber\\
&\gtrsim R^{\delta_{1}m}.
\end{align}
If inequality (\ref{Eq7.12}) holds true for each $\varepsilon > 0$, inequalities (\ref{Eq7.13}) and (\ref{Eq7.15}) yield
\[R^{\delta_{1}m-\delta_{2}-\varepsilon} \lesssim 1.\]
When $R$ tends to infinity, this is possible only if
\[\delta_{1} \le \delta_{2}/m,\]
since $\varepsilon$ is arbitrarily small.

When $0 <\alpha < 1/m$, we choose $t_{0}= R^{-1/\alpha}$, then for each $\xi \in Q_{R}$,
\[ \frac{1}{1000} \le  \frac{t_{0}^{\alpha}}{1000}\xi_{1}+ t_{0}\xi_{1}^{m}    \le \frac{1}{1000} \frac{(R+1)}{R} + \frac{(R+1)^{m}}{R^{1/ \alpha}}. \]
Therefore, we have
\begin{equation}\label{Eq7.14++}
\biggl|\int_{Q_{R}}  e^{i x \cdot \xi } (i \frac{t_{0}^{\alpha}}{1000}\xi_{1}+ it_{0}\xi_{1}^{m}) d\xi\biggl| \ge  \frac{1}{2000}
\end{equation}
for any $x \in B(0,\frac{1}{1000})$.
When $R$ is sufficiently large so that $(R+1)/R \le 3/2$ and $(R+1)^{m} / R^{1/ \alpha} < 1/2000$,  we obtain
\begin{align}\label{Eq7.14+++}
&\biggl|\sum_{j\ge2} \frac{1}{j!}\int_{Q_{R}}  e^{i x \cdot \xi } (i \frac{t_{0}^{\alpha}}{1000}\xi_{1}+ it_{0}\xi_{1}^{m})^{j} d\xi \biggl| \nonumber\\
&\le \sum_{j\ge2} \frac{1}{j!}\int_{Q_{R}}   \biggl| \frac{t_{0}^{\alpha}}{1000}\xi_{1}+ t_{0}\xi_{1}^{m} \biggl|^{j} d\xi \nonumber\\
&\le \sum_{j\ge2} \frac{1}{j!} \biggl(\frac{1}{500}\biggl)^{j}  \nonumber\\
&\le \frac{e-2}{250000}
\end{align}
for each $x \in B(0,\frac{1}{1000})$. Therefore, for large enough $R$, inequalities (\ref{Eq7.14}), (\ref{Eq7.14++}) and (\ref{Eq7.14+++}) imply
\begin{align}\label{Eq7.15+}
&\biggl\|\mathop{sup}_{0<t<1} \frac{|e^{itP(D)}(f_{R})(\gamma(x,t))-f_{R}(x)|}{t^{\delta_{1}}}\biggl\|_{L^{1}(B(0,1))}
\nonumber\\
&\ge \biggl\| \frac{|e^{it_{0}P(D)}(f_{R})(\gamma(x,t_{0}))-f_{R}(x)|}{t_{0}^{\delta_{1}}}\biggl\|_{L^{1}(B(0,\frac{1}{1000}))} \nonumber\\
&\gtrsim R^{\delta_{1}/\alpha}.
\end{align}
If inequality (\ref{Eq7.12}) holds true for each $\varepsilon > 0$, inequality (\ref{Eq7.13}) and (\ref{Eq7.15+}) yield
\[R^{\delta_{1}/\alpha-\delta_{2}-\varepsilon} \lesssim 1.\]
When $R$ tends to infinity, since $\varepsilon$ is arbitrarily small,  this is possible only if
\[\delta_{1} \le \alpha \delta_{2}.\]

\section{Proof of Theorem \ref{reduction main theorem}}\label{proof for reduction main theorem}
Before the proof of Theorem \ref{reduction main theorem}, we first prove the following lemma.
\begin{lemma}\label{main lemma}
Let $\gamma \in \Gamma_{\alpha, R^{-1}}$, $\alpha \in [1/2,1)$. suppose that $\widetilde{\gamma}$ is a measurable function from $[0, R]$ to $\mathbb{R}^{2}$.

\textbf{(1)} If supp $\hat{f} \subset B(\xi_{0}, \rho^{-\alpha})$ for some $1 \le \rho \le R$, then for each $(x,t) \in B(x_{0}, \rho) \times (t_{0},t_{0}+\rho) \subset B(0,R)\times [0,R]$, we have
\begin{equation}\label{expansion 1}
\biggl|e^{it\Delta}f(x+\widetilde{\gamma}(t) + R\gamma(\frac{t}{R^{2}})) \biggl| \lesssim \sum_{\mathfrak{l} \in \mathbb{Z}^{2}}
\frac{1}{(1+|\mathfrak{l}|)^{100}} \biggl|e^{it\Delta}f_{\mathfrak{l}}(x+ \widetilde{\gamma}(t) + R\gamma(\frac{t_{0}}{R^{2}}) )\biggl|,
\end{equation}
where $\hat{f_{\mathfrak{l}}}(\xi) = e^{i\rho^{\alpha} \mathfrak{l} \cdot \xi} \hat{f}(\xi)$, the implied constant does not depend on the choice of $\gamma$ and $\widetilde{\gamma}$.

 \textbf{(2)} If $\hat{f}$ is supported in a ball of radius $\rho^{-1}$, then for each $(x,t) \in B(x_{0}, \rho) \times (t_{0},t_{0}+\rho)\subset B(0,R)\times [0,R]$, we have
\begin{align}\label{expansion 2}
&\biggl|e^{it\Delta}f(x+ R\gamma(\frac{t}{R^{2}})) \biggl| \nonumber\\
&\lesssim \sum_{\mathfrak{l, m} \in \mathbb{Z}^{2} \times \mathbb{Z}^{2}}
\frac{\rho^{-3}}{(1+|\mathfrak{l}|)^{100}(1+|\mathfrak{m}|)^{100}}  \int_{B(x_{0},\rho)} \int_{t_{0}}^{t_{0} + \rho} \biggl|e^{it\Delta}f_{\mathfrak{l,m}}(y + R\gamma(\frac{t}{R^{2}}) )\biggl|dtdy,
\end{align}
where $\widehat{f_{\mathfrak{l,m}}}(\xi) = e^{i\rho \mathfrak{l} \cdot \xi+ i\rho \mathfrak{m} \cdot \xi} \hat{f}(\xi)$, the implied constant does not depend on the choice of $\gamma$.
\end{lemma}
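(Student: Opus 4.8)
The plan is to establish both parts with one device: a ``frozen-parameter'' replacement for the locally constant property, obtained by expanding an exponential factor of the type $e^{i\mathbf a\cdot(\xi-\xi_\ast)+ib(|\xi|^{2}-|\xi_\ast|^{2})}$ into a Fourier series over a cube of the same size as the frequency support of $f$. The single quantitative input, used everywhere, is that for $\gamma\in\Gamma_{\alpha,R^{-1}}$ with $\alpha\in[1/2,1)$ and $t,t'\in(t_0,t_0+\rho)$ with $\rho\le R$ one has
\[
|R\gamma(t/R^{2})-R\gamma(t'/R^{2})|\le C_\alpha R^{1-2\alpha}|t-t'|^{\alpha}\le C_\alpha\rho ,
\]
because $R^{1-2\alpha}\le1$ and $\rho^{\alpha}\le\rho$; the same computation with $t'=t_0$ gives $|R\gamma(t/R^{2})-R\gamma(t_0/R^{2})|\le C_\alpha\rho^{\alpha}$. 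All implied constants below depend only on $C_\alpha$ and on the exponent $100$.

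\textbf{Part (1).} I would put $\mathbf a:=R\gamma(t/R^{2})-R\gamma(t_0/R^{2})$, so $|\mathbf a|\le C_\alpha\rho^{\alpha}$, and $z:=x+\widetilde\gamma(t)+R\gamma(t_0/R^{2})$, so that the left side of (\ref{expansion 1}) is $\bigl|e^{it\Delta}f(z+\mathbf a)\bigr|=\bigl|\int\widehat f(\xi)\,e^{i\mathbf a\cdot(\xi-\xi_0)}e^{i\mathbf a\cdot\xi_0}e^{iz\cdot\xi+it|\xi|^{2}}\,d\xi\bigr|$. Since $\mathrm{supp}\,\widehat f\subset B(\xi_0,\rho^{-\alpha})$, on this support $|\mathbf a\cdot(\xi-\xi_0)|\le C_\alpha$; fixing a bump $\psi$ with $\psi\equiv1$ on $B(0,1)$ and $\mathrm{supp}\,\psi\subset B(0,2)$, I expand $\psi(\rho^{\alpha}(\xi-\xi_0))e^{i\mathbf a\cdot(\xi-\xi_0)}$ into a Fourier series over $\xi_0+\rho^{-\alpha}[-\pi,\pi]^{2}$, which contains both supports. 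Under $\eta=\rho^{\alpha}(\xi-\xi_0)$ this becomes $\psi(\eta)e^{i\mathbf c\cdot\eta}$ with $|\mathbf c|\le C_\alpha$, so the coefficients $c_{\mathfrak l}(\mathbf a)$ obey $|c_{\mathfrak l}(\mathbf a)|\lesssim(1+|\mathfrak l|)^{-100}$ uniformly. Reinserting the series, the $\mathfrak l$-th term equals $c_{\mathfrak l}(\mathbf a)$ times $e^{it\Delta}f_{\mathfrak l}(z)$ up to a unimodular phase, with $\widehat{f_{\mathfrak l}}(\xi)=e^{i\rho^{\alpha}\mathfrak l\cdot\xi}\widehat f(\xi)$, and taking moduli yields (\ref{expansion 1}). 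Observe that $\widetilde\gamma(t)$ enters only through $z$ and cancels from the exponential ratio, so the constant is independent of $\widetilde\gamma$ and of $\gamma$.

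\textbf{Part (2).} Here I would iterate the same expansion twice, each pass producing one of the indices; let $\xi_\ast$ denote the centre of the radius-$\rho^{-1}$ ball supporting $\widehat f$, which we may take inside $B(0,1)$, as is the case in all our applications. First pass (spatial, same time): for every $y\in B(x_0,\rho)$, the ratio of $e^{it\Delta}f(x+R\gamma(t/R^{2}))$ and $e^{it\Delta}f(y+R\gamma(t/R^{2}))$ is the factor $e^{i(x-y)\cdot\xi}$ with $|x-y|\le2\rho$; expanding $\psi(\rho(\xi-\xi_\ast))e^{i(x-y)\cdot(\xi-\xi_\ast)}$ over $\xi_\ast+\rho^{-1}[-\pi,\pi]^{2}$ gives $\bigl|e^{it\Delta}f(x+R\gamma(t/R^{2}))\bigr|\lesssim\sum_{\mathfrak l}(1+|\mathfrak l|)^{-100}\bigl|e^{it\Delta}f_{\mathfrak l}(y+R\gamma(t/R^{2}))\bigr|$ with $\widehat{f_{\mathfrak l}}(\xi)=e^{i\rho\mathfrak l\cdot\xi}\widehat f(\xi)$, and averaging over $y\in B(x_0,\rho)$ produces the factor $\rho^{-2}\int_{B(x_0,\rho)}dy$. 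Second pass (space and time): for every $t'\in(t_0,t_0+\rho)$, the ratio of $e^{it\Delta}f_{\mathfrak l}(y+R\gamma(t/R^{2}))$ and $e^{it'\Delta}f_{\mathfrak l}(y+R\gamma(t'/R^{2}))$ is $e^{i\mathbf a\cdot\xi+ib|\xi|^{2}}$ with $\mathbf a=R\gamma(t/R^{2})-R\gamma(t'/R^{2})$, $|\mathbf a|\le C_\alpha\rho$, and $b=t-t'$, $|b|\le\rho$; since $|\xi_\ast|\lesssim1$, on $B(\xi_\ast,\rho^{-1})$ both $|\mathbf a\cdot(\xi-\xi_\ast)|$ and $|b(|\xi|^{2}-|\xi_\ast|^{2})|$ are $\lesssim1$, so expanding $\psi(\rho(\xi-\xi_\ast))e^{i[\mathbf a\cdot(\xi-\xi_\ast)+b(|\xi|^{2}-|\xi_\ast|^{2})]}$ over the same cube gives $\bigl|e^{it\Delta}f_{\mathfrak l}(y+R\gamma(t/R^{2}))\bigr|\lesssim\sum_{\mathfrak m}(1+|\mathfrak m|)^{-100}\bigl|e^{it'\Delta}f_{\mathfrak{l,m}}(y+R\gamma(t'/R^{2}))\bigr|$ with $\widehat{f_{\mathfrak{l,m}}}(\xi)=e^{i\rho(\mathfrak l+\mathfrak m)\cdot\xi}\widehat f(\xi)$, precisely as in the statement; averaging over $t'\in(t_0,t_0+\rho)$ produces $\rho^{-1}\int_{t_0}^{t_0+\rho}dt'$. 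Composing the two passes gives (\ref{expansion 2}), with the factor $\rho^{-3}$ and the double sum.

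The step that needs the most care is the \emph{uniform} rapid decay of the coefficients of $\psi(\eta)\,e^{i(\mathrm{linear})+i(\mathrm{quadratic})}$ over all shifts $\mathbf a,b$ that occur: after rescaling, the phases are $\rho^{-\alpha}\mathbf a\cdot\eta$ in part (1) and $\rho^{-1}\mathbf a\cdot\eta+2b\rho^{-1}\xi_\ast\cdot\eta+b\rho^{-2}|\eta|^{2}$ in part (2), and one must check that all of their derivatives are bounded by a constant depending only on $C_\alpha$. This is exactly where $\alpha\ge1/2$ (forcing $R^{1-2\alpha}\le1$, hence the spatial shift $|R\gamma(t/R^{2})-R\gamma(t'/R^{2})|\lesssim\rho$) and $|\xi_\ast|\lesssim1$ are used, and it is also the reason this lemma must \emph{replace} rather than invoke the usual locally constant property: $e^{it\Delta}f(x+R\gamma(t/R^{2}))$ has no usable space--time Fourier support because $\gamma$ is only Hölder continuous, so instead we compare values at frozen parameters $(x,t)$ versus $(y,t')$ — where the curve contributes only a bounded shift — and average in $(y,t')$ afterwards.
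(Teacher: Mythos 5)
Your proof is correct, and for part (1) it is essentially the paper's argument: expanding the factor $e^{i\mathbf a\cdot(\xi-\xi_0)}$ with $\mathbf a = R\gamma(t/R^2)-R\gamma(t_0/R^2)$ into a Fourier series over the cube containing $\mathrm{supp}\,\hat f$ is the same computation as the paper's change of variables $\xi = \rho^{-\alpha}\eta+\xi_0$ followed by expansion of $e^{i[R\gamma(t/R^2)-R\gamma(t_0/R^2)]\cdot\rho^{-\alpha}\eta}\phi(\eta)$, and both use $\alpha\ge 1/2$ through $R^{1-2\alpha}\le1$ to keep the phase derivatives uniformly bounded.

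Part (2) is where you diverge from the paper in a genuine way. The paper's route is a three-step sandwich: first apply part (1) with $\widetilde\gamma\equiv0$ to replace the moving curve term $R\gamma(t/R^{2})$ by the \emph{frozen} term $R\gamma(t_{0}/R^{2})$; then note that $(y,t)\mapsto e^{it\Delta}f_{\mathfrak l}(y+R\gamma(t_{0}/R^{2}))$ is an honest free Schr\"odinger wave with frequency support in a $\rho^{-1}$-ball and invoke the standard space-time locally constant property at scale $\rho$ to replace the pointwise value by an average over $B(x_{0},\rho)\times(t_{0},t_{0}+\rho)$; then apply part (1) once more, now with $\widetilde\gamma(t)=R\gamma(t/R^{2})$ and the reversed curve $-\gamma+\gamma(t_{0}/R^{2})$, to unfreeze the curve term back to $R\gamma(t/R^{2})$. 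Your route instead performs two direct Fourier-series passes with no freezing at $t_{0}$ and no invocation of the locally constant property: the first pass expands the purely spatial phase $e^{i(x-y)\cdot\xi}$ and averages in $y$; the second pass expands the combined shift $e^{i\mathbf a\cdot\xi+ib|\xi|^{2}}$ coming from changing $t$ to $t'$ simultaneously in the evolution and in the curve, and averages in $t'$. Both routes produce the same $\rho^{-3}$, the same double sum, and the same modulation $e^{i\rho(\mathfrak l+\mathfrak m)\cdot\xi}$, and both ultimately rely on the same two inputs -- the bound $|R\gamma(t/R^{2})-R\gamma(t'/R^{2})|\cdot\rho^{-1}\lesssim C_{\alpha}$ for $\alpha\ge1/2$ and the boundedness of the frequency centre $\xi_{\ast}$ (which keeps the quadratic phase contribution $O(1)$). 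What the paper's version buys is modularity: part (2) is derived purely by reusing part (1) twice plus a black-box locally constant lemma, which is why the auxiliary $\widetilde\gamma$ appears in the statement of (1). What your version buys is transparency and economy: it is self-contained, never needs the auxiliary $\widetilde\gamma$, never needs to verify that the reversed curve is again in $\Gamma_{\alpha,R^{-1}}$, and makes manifest that the \emph{only} mechanism at work is uniformly rapid decay of Fourier coefficients of a bounded-derivative phase times a bump. You also correctly flag the hypothesis $|\xi_{\ast}|\lesssim1$, which the paper leaves implicit but which is genuinely needed (in the paper's route it is hidden inside the temporal width $\lesssim\rho^{-1}$ of the space-time Fourier support, in yours it is visible in the cross term $2b\rho^{-1}\xi_{\ast}\cdot\eta$).
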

\begin{proof}
\textbf{(1)} We choose a cutoff function $\phi$ which equals $1$ on $B(0,1)$ and  is supported in $[-\pi, \pi]^{2}$, then
\begin{align}
&\biggl|e^{it\Delta}f(x+\widetilde{\gamma}(t) + R\gamma(\frac{t}{R^{2}})) \biggl| \nonumber\\
&= \biggl| \int_{\mathbb{R}^{2}}e^{ix \cdot \xi + i \widetilde{\gamma }(t) \cdot \xi + i R\gamma(\frac{t}{R^{2}}) \cdot \xi + it |\xi|^{2} } \phi \biggl(\frac{\xi -\xi_{0}}{\rho^{-\alpha}} \biggl) \hat{f}(\xi)d\xi \biggl| \nonumber\\
& = \rho^{-2\alpha} \biggl| \int_{\mathbb{R}^{2}}e^{ix \cdot (\rho^{-\alpha} \eta + \xi_{0}) + i \widetilde{\gamma }(t) \cdot (\rho^{-\alpha} \eta + \xi_{0}) + i [R\gamma(\frac{t}{R^{2}}) - R\gamma(\frac{t_{0}}{R^{2}}) ] \cdot (\rho^{-\alpha} \eta + \xi_{0}) + i  R\gamma(\frac{t_{0}}{R^{2}})  \cdot (\rho^{-\alpha} \eta + \xi_{0}) + it |\rho^{-\alpha} \eta + \xi_{0}|^{2} } \nonumber\\
& \quad \quad  \times \phi(\eta) \hat{f}(\rho^{-\alpha} \eta + \xi_{0})d\eta \biggl| \nonumber\\
& = \rho^{-2\alpha} \biggl| \int_{\mathbb{R}^{2}}e^{ix \cdot (\rho^{-\alpha} \eta + \xi_{0}) + i \widetilde{\gamma }(t) \cdot (\rho^{-\alpha} \eta + \xi_{0}) + i [R\gamma(\frac{t}{R^{2}}) - R\gamma(\frac{t_{0}}{R^{2}}) ] \cdot \rho^{-\alpha} \eta  + i  R\gamma(\frac{t_{0}}{R^{2}})  \cdot (\rho^{-\alpha} \eta + \xi_{0}) + it |\rho^{-\alpha} \eta + \xi_{0}|^{2} } \nonumber\\
& \quad \quad  \times \phi(\eta) \hat{f}(\rho^{-\alpha} \eta + \xi_{0})d\eta \biggl|. \nonumber
\end{align}
By Fourier expansion,
\[ e^{i [R\gamma(\frac{t}{R^{2}}) - R\gamma(\frac{t_{0}}{R^{2}}) ] \cdot \rho^{-\alpha} \eta  }\phi(\eta) = \sum_{\mathfrak{l} \in \mathbb{Z}^{2}} c_{\mathfrak{l}}(t) e^{i\mathfrak{l} \cdot \eta}.\]
Since for each $t \in (t_{0},t_{0} +\rho)$, we get
\[ \biggl| R\gamma(\frac{t}{R^{2}}) - R\gamma(\frac{t_{0}}{R^{2}}) \biggl| \rho^{-\alpha} \le C_{\alpha} R^{1-2\alpha} \rho^{\alpha}  \rho^{-\alpha} \le C_{\alpha}.\]
Integration by parts implies that,
\[|c_{\mathfrak{l}}(t)| \lesssim \frac{1}{(1+|\mathfrak{l}|)^{100}},\]
where the implied constant does not depend on the choice of $\gamma$. Therefore, we obtain
\begin{align}
&\biggl|e^{it\Delta}f(x+\widetilde{\gamma}(t) + R\gamma(\frac{t}{R^{2}})) \biggl| \nonumber\\
&\le \sum_{\mathfrak{l} \in \mathbb{Z}^{2}} \rho^{-2\alpha} |c_{\mathfrak{l}}(t)| \biggl| \int_{\mathbb{R}^{2}}e^{ix \cdot (\rho^{-\alpha} \eta + \xi_{0}) + i \widetilde{\gamma }(t) \cdot (\rho^{-\alpha} \eta + \xi_{0}) + i\mathfrak{l} \cdot  \eta  + i  R\gamma(\frac{t_{0}}{R^{2}})  \cdot (\rho^{-\alpha} \eta + \xi_{0}) + it |\rho^{-\alpha} \eta + \xi_{0}|^{2} } \nonumber\\
& \quad \quad  \times   \hat{f}(\rho^{-\alpha} \eta + \xi_{0})d\eta \biggl| \nonumber\\
&= \sum_{\mathfrak{l} \in \mathbb{Z}^{2}}  |c_{\mathfrak{l}}(t)| \biggl| \int_{\mathbb{R}^{2}}e^{ix \cdot \xi + i \widetilde{\gamma }(t) \cdot \xi + i\mathfrak{l} \cdot  (\rho^{\alpha} \xi -\rho^{\alpha} \xi_{0})  + i  R\gamma(\frac{t_{0}}{R^{2}})  \cdot \xi + it |\xi|^{2} }  \hat{f}(\xi)d\xi \biggl| \nonumber\\
& \lesssim \sum_{\mathfrak{l} \in \mathbb{Z}^{2}}
\frac{1}{(1+|\mathfrak{l}|)^{100}} \biggl|e^{it\Delta}f_{\mathfrak{l}}(x+ \widetilde{\gamma}(t) + R\gamma(\frac{t_{0}}{R^{2}}) )\biggl|.
\end{align}

\textbf{(2)} By inequality (\ref{expansion 1}), taking $\tilde{\gamma}\equiv0$, we have
\begin{equation}
\biggl|e^{it\Delta}f(x  + R\gamma(\frac{t}{R^{2}})) \biggl| \lesssim \sum_{\mathfrak{l} \in \mathbb{Z}^{2}}
\frac{1}{(1+|\mathfrak{l}|)^{100}} \biggl|e^{it\Delta}f_{\mathfrak{l}}(x + R\gamma(\frac{t_{0}}{R^{2}}) )\biggl|.
\end{equation}
Since the Fourier transform of $e^{it\Delta}f_{\mathfrak{l}}(x + R\gamma(\frac{t_{0}}{R^{2}}) )$ is supported in a ball of radius $\rho^{-1}$, by locally constant property, for any $(x,t) \in B(x_{0}, \rho) \times (t_{0},t_{0}+\rho)$, we get
\[ \biggl |e^{it\Delta}f_{\mathfrak{l}}(x + R\gamma(\frac{t_{0}}{R^{2}}) ) \biggl |  \lesssim \rho^{-3} \int_{B(x_{0},\rho)} \int_{t_{0}}^{t_{0} + \rho} \biggl|e^{it\Delta}f_{\mathfrak{l}}(y + R\gamma(\frac{t_{0}}{R^{2}}) )\biggl|dtdy,\]
where the implied constant does not depend on $\mathfrak{l}$.
Taking $\tilde{\gamma}(t) = R\gamma(\frac{t}{R^{2}})$, and applying inequality (\ref{expansion 1}) with   $-\gamma(t)+ \gamma(\frac{t_{0}}{R^{3}})$ instead of $\gamma(t)$, then for each $(y,t) \in B(x_{0}, \rho) \times (t_{0},t_{0}+\rho)$,
\begin{align}
  &\biggl|e^{it\Delta}f_{\mathfrak{l}}(y + R\gamma(\frac{t_{0}}{R^{2}}) )\biggl| \nonumber\\
  &=\biggl|e^{it\Delta}f_{\mathfrak{l}}( y +R\gamma(\frac{t}{R^{2}}) -R\gamma(\frac{t}{R^{2}})+  R\gamma(\frac{t_{0}}{R^{2}}) )\biggl| \nonumber\\
  &\lesssim \sum_{\mathfrak{m} \in \mathbb{Z}^{2}} \frac{1}{(1+|\mathfrak{m}|)^{100}} \biggl|e^{it\Delta}f_{\mathfrak{l,m}}(y +R\gamma(\frac{t}{R^{2}}) )\biggl| \nonumber
\end{align}
and inequality (\ref{expansion 2}) is proved.
\end{proof}

Now we are ready to prove Theorem \ref{reduction main theorem} by induction on $M$. Let's first check the base of the induction.
When $  M^{-1} \leq R^{-1/2} $ and  supp $\hat{f} \subset B(\xi_{0}, R^{-1/2})$,  we can apply the same method as described in the proof of Lemma \ref{main lemma} (1) to obtain that
 for each $(x,t) \in B(0,R) \times (0,R)$,
 \begin{equation}
\biggl|e^{it\Delta}f(x + R\gamma(\frac{t}{R^{2}})) \biggl| \lesssim \sum_{\mathfrak{l} \in \mathbb{Z}^{2}}
\frac{1}{(1+|\mathfrak{l}|)^{100}} \biggl|e^{it\Delta}f_{\mathfrak{l}}(x  + R\gamma(0) )\biggl|,
\end{equation}
 where $\hat{f_{\mathfrak{l}}}(\xi) = e^{iR^{1/2} \mathfrak{l} \cdot \xi} \hat{f}(\xi)$, then
 \begin{align}
 &\biggl\|\sup_{t\in (0,R)}|e^{it\triangle}f(x+R\gamma (\frac{t}{R^{2}}))|\biggl\|_{L^p(B(0,R))} \nonumber\\
 &\lesssim \sum_{\mathfrak{l} \in \mathbb{Z}^{2}} \frac{1}{(1+|\mathfrak{l}|)^{100}} \biggl\|\sup_{t\in (0,R)}|e^{it\triangle}f_{\mathfrak{l}}(x+R\gamma (0))|\biggl\|_{L^p(B(0,R))} \nonumber\\
 &\le \sum_{\mathfrak{l} \in \mathbb{Z}^{2}} \frac{1}{(1+|\mathfrak{l}|)^{100}} C M^{-\varepsilon^{2}}R^{2/p - 5/8 + \varepsilon} \biggl \|e^{iR^{1/2}\mathfrak{l} \cdot \xi + i R\gamma(0) \cdot \xi}\hat{f}(\xi) \biggl \|_{L^2} \nonumber\\
&\le C M^{-\varepsilon^{2}}R^{2/p - 5/8 + \varepsilon}  \|f\|_{L^{2}},
 \end{align}
 and the constant $C$ does not depend on the choice of $\gamma$. Hence inequality (\ref{reduction main estimate}) holds when $M^{-1} \leq R^{-1/2}$. Note that we have used the result obtained by \cite{DL} for Schr\"{o}dinger operator along vertical lines here.

Now we assume that inequality (\ref{reduction main estimate}) holds for  $f$ whose Fourier transform is supported in a ball of radius $(KM)^{-1}$, $K \gg 1$.

We   decompose $B\left( {0,R} \right)$ into balls ${B_K}$ of  radius $K$, and interval $\left( {0,R} \right)$ into intervals $I_K^j$ of length $K$, then
\begin{align}
&\left\| {\mathop {\sup }\limits_{t \in \left( {0,R} \right]} \left| {{e^{it\Delta}}f(x+R\gamma(\frac{t}{R^{2}}))} \right| } \right\|_{{L^p}\left( {B\left( {0,R} \right)} \right)} \nonumber\\ &= \biggl(\sum\limits_{{B_K} \subset B\left( {0,R} \right)} {\int_{{B_K}} {\mathop {\sup }\limits_{I_K^j \subset \left[ {0,R} \right]} } } \mathop {\sup }\limits_{t \in I_K^j} {\left| {{e^{it\Delta}}f(x+R\gamma(\frac{t}{R^{2}}))} \right|^p}dx\biggl)^{1/p}.
\end{align}
We divide $B\left( {{\xi _0},{M^{ - 1}}} \right)$ into balls $\tau $ of radius ${\left( {KM} \right)^{ - 1}},$ $f = \sum\nolimits_\tau  {{f_\tau }} ,\widehat{{f_\tau }} = {\left. {\hat{f}} \right|_\tau }.$ By Lemma \ref{main lemma} (2), for each $(x,t) \in B_{K} \times I_{K}^{j}$,
\begin{align}
&\biggl|e^{it\Delta}f(x+ R\gamma(\frac{t}{R^{2}})) \biggl| \le \sum_{\tau}\biggl|e^{it\Delta}f_{\tau}(x+ R\gamma(\frac{t}{R^{2}})) \biggl| \nonumber\\
&\lesssim \sum_{\mathfrak{l, m} \in \mathbb{Z}^{2} \times \mathbb{Z}^{2}}
\frac{K^{-3}}{(1+|\mathfrak{l}|)^{100}(1+|\mathfrak{m}|)^{100}} \sum_{\tau}  \int_{B_{K}} \int_{I_{K}^{j}} \biggl|e^{it\Delta}f_{\tau, \mathfrak{l,m}}(y + R\gamma(\frac{t}{R^{2}}) )\biggl|dtdy.
\end{align}
Therefore,
\begin{align}\label{Eq29}
&\biggl\|\sup_{t\in (0,R)}|e^{it\triangle}f(x+R\gamma(\frac{t}{R^{2}}))|\biggl\|_{L^p(B(0,R))} \nonumber\\
&\lesssim \sum_{\mathfrak{l, m} \in \mathbb{Z}^{2} \times \mathbb{Z}^{2}}
\frac{1}{(1+|\mathfrak{l}|)^{100}(1+|\mathfrak{m}|)^{100}} \nonumber\\
& \quad \times \biggl\{ \sum_{B_{K} \subset B(0,R)} \int_{B_{K}} \mathop{sup}_{ I_{K}^{j} \subset (0,R)} \biggl( \sum_{\tau}  K^{-3} \int_{B_{K}} \int_{I_{K}^{j}} \biggl|e^{it\Delta}f_{\tau, \mathfrak{l,m}}(y + R\gamma(\frac{t}{R^{2}}) )\biggl| dtdy \biggl)^{p} dx\biggl\}^{1/p} \nonumber
\end{align}
\begin{align}
&\lesssim \sum_{\mathfrak{l, m} \in \mathbb{Z}^{2} \times \mathbb{Z}^{2}}
\frac{1}{(1+|\mathfrak{l}|)^{100}(1+|\mathfrak{m}|)^{100}} \nonumber\\
& \quad \times \biggl\{ \sum_{B_{K} \subset B(0,R)}  \mathop{sup}_{ I_{K}^{j} \subset (0,R)} \biggl[ \sum_{\tau}  K^{-1/p} \biggl( \int_{B_{K}} \int_{I_{K}^{j}} \biggl|e^{it\Delta}f_{\tau, \mathfrak{l,m}}(y + R\gamma(\frac{t}{R^{2}}) )\biggl|^{p} dtdy \biggl)^{1/p} \biggl]^{p} \biggl\}^{1/p}.
\end{align}

Fix $\mathfrak{l,m}$, for each ${B_K} \times I_K^j$ and a parameter $A \in \mathbb{Z}^{+}$, we choose $(k-1)$-dimensional subspaces $V_1^{0},V_2^{0},...,V_A^{0}$ such that
\begin{align}\label{Eq210}
&{\mu _{{e^{it\Delta}}f_{\mathfrak{l, m}}(x+R\gamma(\frac{t}{R^{2}}))}}\left( {{B_K} \times I_K^j} \right) \nonumber\\
&:= \mathop {\min }\limits_{{V_1},{V_2},...,{V_A}} \biggl\{ {\mathop {\max }\limits_{\tau  \notin {V_\alpha },\alpha  = 1,2,...,A} \int \int_{{B_K} \times I_K^j} {{{\left| {{e^{it\Delta}}{f_{\tau,\mathfrak{l,m}}(y+R\gamma(\frac{t}{R^{2}})) }} \right|}^p}dydt} \biggl\}}
\end{align}
achieves the minimum, in our case $k=2$. We say that $\tau  \in {V_\alpha }$ if
\[\mathop {\inf }\limits_{\xi  \in \tau } Angle\left( {\frac{{\left( { - 2\xi ,1} \right)}}{{\left| {\left( { - 2\xi ,1} \right)} \right|}},{V_\alpha }} \right) \le {\left( {KM} \right)^{ - 1}}.\]
Then from inequality (\ref{Eq29}),
\begin{align}
 &\biggl\|\sup_{t\in (0,R)}|e^{it\triangle}f(x+R\gamma(\frac{t}{R^{2}}))|\biggl\|_{L^p(B(0,R))} \nonumber\\
&\lesssim \sum_{\mathfrak{l, m} \in \mathbb{Z}^{2} \times \mathbb{Z}^{2}}
\frac{1}{(1+|\mathfrak{l}|)^{100}(1+|\mathfrak{m}|)^{100}} \nonumber\\
& \times \biggl\{ \sum_{B_{K} \subset B(0,R)}  \mathop{sup}_{ I_{K}^{j} \subset (0,R)} \biggl[ \sum_{\tau: \tau \notin V_{\alpha}^{0}: 1 \le \alpha \le A}  \frac{1}{K^{1/p}} \biggl( \int_{B_{K}} \int_{I_{K}^{j}} \biggl|e^{it\Delta}f_{\tau, \mathfrak{l,m}}(y + R\gamma(\frac{t}{R^{2}}) )\biggl|^{p} dtdy \biggl)^{1/p} \biggl]^{p} \biggl\}^{1/p} \nonumber\\
&\quad \quad +\sum_{\mathfrak{l, m} \in \mathbb{Z}^{2} \times \mathbb{Z}^{2}}
\frac{1}{(1+|\mathfrak{l}|)^{100}(1+|\mathfrak{m}|)^{100}} \nonumber\\
& \times \biggl\{ \sum_{B_{K} \subset B(0,R)}  \mathop{sup}_{ I_{K}^{j} \subset (0,R)} \biggl[ \sum_{\tau: \tau \in \text{ some } V_{\alpha}^{0}}  \frac{1}{K^{1/p}} \biggl( \int_{B_{K}} \int_{I_{K}^{j}} \biggl|e^{it\Delta}f_{\tau, \mathfrak{l,m}}(y + R\gamma(\frac{t}{R^{2}}) )\biggl|^{p} dtdy \biggl)^{1/p} \biggl]^{p} \biggl\}^{1/p} \nonumber\\
&\le K^{10} \sum_{\mathfrak{l, m} \in \mathbb{Z}^{2} \times \mathbb{Z}^{2}}
\frac{1}{(1+|\mathfrak{l}|)^{100}(1+|\mathfrak{m}|)^{100}} \biggl( \sum_{B_{K} \subset B(0,R)}  \mathop{sup}_{ I_{K}^{j} \subset (0,R)}  \mu_{e^{it\Delta}f_{\mathfrak{l,m}}(x+R\gamma(\frac{t}{R^{2}}))}(B_{K} \times I_{K}^{j})  \biggl)^{1/p} \nonumber\\
&\quad \quad +A \sum_{\mathfrak{l, m} \in \mathbb{Z}^{2} \times \mathbb{Z}^{2}}
\frac{1}{(1+|\mathfrak{l}|)^{100}(1+|\mathfrak{m}|)^{100}} \nonumber\\
& \times \biggl(\sum_{\tau: \tau \in \text{ some } V_{\alpha}^{0}}  \sum_{B_{K} \subset B(0,R)}  \mathop{sup}_{ I_{K}^{j} \subset (0,R)}   \frac{1}{K}  \int_{B_{K}} \int_{I_{K}^{j}} \biggl|e^{it\Delta}f_{\tau, \mathfrak{l,m}}(y + R\gamma(\frac{t}{R^{2}}) )\biggl|^{p} dtdy   \biggl)^{1/p} \nonumber\\
&:= I_{1} + I_{2}.
\end{align}

 If $I_{1}$ dominates, we define
\begin{align}\label{Eq211}
&\left\| {{e^{it\Delta}}f_{\mathfrak{l,m}}(x+R\gamma(\frac{t}{R^{2}}))} \right\|_{BL_{k,A}^p{L^\infty }\left( {B\left( {0,R} \right) \times \left[ {0,R} \right]} \right)} \nonumber\\
&: = \biggl( \sum_{B_{K} \subset B(0,R)}  \mathop{sup}_{ I_{K}^{j} \subset (0,R)}  \mu_{e^{it\Delta}f_{\mathfrak{l,m}}(x+R\gamma(\frac{t}{R^{2}}))}(B_{K} \times I_{K}^{j})  \biggl)^{1/p}.
  \end{align}
If Theorem \ref{Theorem 2.1} below holds true, we have
\[{I_1} \le {K^{10}} \sum_{\mathfrak{l, m} \in \mathbb{Z}^{2} \times \mathbb{Z}^{2}}
\frac{1}{(1+|\mathfrak{l}|)^{100}(1+|\mathfrak{m}|)^{100}}  { {C\left( {K,\frac{\varepsilon }{2}, C_{\alpha}} \right)R^{\frac{\varepsilon^{2}}{4}} {M^{ - {\varepsilon ^2}}}{R^{\frac{2}{p} - \frac{5}{8} + \frac{\varepsilon }{2}}}{{\left\| f_{\mathfrak{l,m}} \right\|}_{{L^2}}}} }.\]
Theorem \ref{reduction main theorem} follows from the fact that $R$ is sufficiently large and $\|f_{\mathfrak{l,m}}\|_{L^{2}}=\|f\|_{L^{2}} $.

If $I_{2}$ dominates, then we will prove (\ref{reduction main estimate}) by the induction on the frequency radius $1/M$. Notice that (\ref{reduction main estimate}) holds for frequency radius $1/(KM)$ by previous assumption. We get
\begin{align}
 {I_2} &\le A \sum_{\mathfrak{l, m} \in \mathbb{Z}^{2} \times \mathbb{Z}^{2}}
\frac{1}{(1+|\mathfrak{l}|)^{100}(1+|\mathfrak{m}|)^{100}} \nonumber\\
& \times \biggl(\sum_{\tau: \tau \in \text{ some } V_{\alpha}^{0}}  \sum_{B_{K} \subset B(0,R)}    \int_{B_{K}} \mathop{sup}_{ I_{K}^{j} \subset (0,R)} \frac{1}{K}  \int_{I_{K}^{j}} \biggl|e^{it\Delta}f_{\tau, \mathfrak{l,m}}(y + R\gamma(\frac{t}{R^{2}}) )\biggl|^{p} dtdy   \biggl)^{1/p} \nonumber\\
 &\lesssim A \sum_{\mathfrak{l, m} \in \mathbb{Z}^{2} \times \mathbb{Z}^{2}}
\frac{1}{(1+|\mathfrak{l}|)^{100}(1+|\mathfrak{m}|)^{100}} \nonumber\\
& \times \biggl(\sum_{\tau: \tau \in \text{ some } V_{\alpha}^{0}}  \sum_{B_{K} \subset B(0,R)}    \int_{B_{K}} \mathop{sup}_{ I_{K}^{j} \subset (0,R)} \mathop{sup}_{t \in I_{K}^{j}}  \biggl|e^{it\Delta}f_{\tau, \mathfrak{l,m}}(y + R\gamma(\frac{t}{R^{2}}) )\biggl|^{p} dy   \biggl)^{1/p} \nonumber\\
 &= A \sum_{\mathfrak{l, m} \in \mathbb{Z}^{2} \times \mathbb{Z}^{2}}
\frac{1}{(1+|\mathfrak{l}|)^{100}(1+|\mathfrak{m}|)^{100}} \nonumber\\
& \times \biggl(\sum_{\tau: \tau \in \text{ some } V_{\alpha}^{0}}     \int_{B(0,R)} \mathop{sup}_{ t \in (0,R)} \biggl|e^{it\Delta}f_{\tau, \mathfrak{l,m}}(y + R\gamma(\frac{t}{R^{2}}) )\biggl|^{p} dy   \biggl)^{1/p} \nonumber\\
&\le A \sum_{\mathfrak{l, m} \in \mathbb{Z}^{2} \times \mathbb{Z}^{2}}
\frac{1}{(1+|\mathfrak{l}|)^{100}(1+|\mathfrak{m}|)^{100}} \biggl(\sum_{\tau} (KM)^{-\varepsilon^{2}p}R^{2-\frac{5p}{8}+p\varepsilon} \|f_{\tau,\mathfrak{l,m}}\|_{L^{2}}^{p}  \biggl)^{1/p} \nonumber\\
  &\le C{A}{K^{ - {\varepsilon ^2}}}  {{C_\varepsilon }{M^{ - {\varepsilon ^2}}}{R^{\frac{2}{p} - \frac{5}{8} + \varepsilon }}{{\left\| {{f }} \right\|}_{{L^2}}}} .\nonumber
  \end{align}
Inequality (\ref{reduction main estimate}) follows if we choose $K$ sufficiently large such that $A{K^{ - {\varepsilon ^2}}} \ll 1.$ Notice the constants through our the proof are all independent of the choice of $\gamma$.

In the proof of Theorem \ref{reduction main theorem}, we used the following theorem.
\begin{theorem}\label{Theorem 2.1}
 For $2 \le p \le 3.2$ and $k=2,$ for each $\gamma \in \Gamma_{\alpha, R^{-1}}$ and any $\varepsilon  > 0,$ there exist positive constants $A = A\left( \varepsilon  \right)$ and $C\left( {K,\varepsilon, C_{\alpha} } \right)$ such that
\begin{equation}\label{Eq212}
  \left\| {{e^{it\Delta}}f(x+R\gamma(\frac{t}{R^{2}}))} \right\|_{BL_{k,A}^p{L^\infty }\left( {B\left( {0,R} \right) \times \left[ {0,R} \right]} \right)}^{} \le C\left( {K,\varepsilon, C_{\alpha} } \right){R^{\frac{2}{p} - \frac{5}{8} + \varepsilon }}{\left\| f \right\|_{{L^2}}},
\end{equation}
for all $R \ge 1,$ ${\xi _0} \in B\left( {0,1} \right), M^{-1}\geq R^{-1/2},$ all $f$ with $supp\hat{f} \subset B\left( {{\xi _0},{M^{ - 1}}} \right).$ The constant $C(K, \varepsilon, C_{\alpha})$ does not depend on the choice of $\gamma$.
\end{theorem}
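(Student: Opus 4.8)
\noindent
The plan is to prove \eqref{Eq212} by Guth's polynomial partitioning method \cite{Guth2}, in the form adapted to the Schr\"odinger maximal function by Du--Li \cite{DL} and Du--Guth--Li \cite{DGL}, carrying out simultaneous inductions on the radius $R$ and on the frequency scale $M^{-1}$. As in the reduction preceding this statement, the narrow contribution (the term $I_2$) is already disposed of by the inductive hypothesis of Theorem \ref{reduction main theorem} at frequency scale $KM$, so the whole task is to bound the broad norm $\|e^{it\Delta}f(x+R\gamma(\tfrac{t}{R^2}))\|_{BL^p_{k,A}L^\infty}$ of \eqref{Eq211}, for $k=2$ and $p=3.2$.

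First I would record the wave-packet structure, which is the only genuine difference from the vertical case: since $\gamma$ is merely $\alpha$-H\"older, $e^{it\Delta}f(x+R\gamma(\tfrac{t}{R^2}))$ has no clean Fourier support and the locally constant heuristic is unavailable. I would instead use Lemma \ref{main lemma}: over a spatial ball and time interval of radius $\rho$, part (1) replaces $e^{it\Delta}f(x+\widetilde\gamma(t)+R\gamma(\tfrac{t}{R^2}))$ by a rapidly convergent sum of translates $e^{it\Delta}f_{\mathfrak l}(x+\widetilde\gamma(t)+R\gamma(\tfrac{t_0}{R^2}))$ in which the tangential displacement has been frozen to the constant $R\gamma(\tfrac{t_0}{R^2})$ — a harmless shift — while part (2) plays the role of the locally constant property. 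The coefficients produced there are $O((1+|\mathfrak l|)^{-100})$ uniformly in $\gamma$, precisely because $\alpha\ge 1/2$ forces $|R\gamma(\tfrac{t}{R^2})-R\gamma(\tfrac{t_0}{R^2})|\rho^{-\alpha}\le C_\alpha R^{1-2\alpha}\rho^\alpha\rho^{-\alpha}\le C_\alpha$. With this substitution, the standard decomposition of $e^{it\Delta}f_{\mathfrak l}(\cdot+R\gamma(\tfrac{t_0}{R^2}))$ into tubes $T$ dual to the caps $\tau$, together with the tube--cap incidence geometry of the paraboloid, is available exactly as in \cite{DL,DGL}.

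Next I would run the partitioning. Choose a nonzero polynomial $P$ of degree $D=D(\varepsilon)=O_\varepsilon(1)$ whose zero set $Z(P)$ bisects the broad mass of $B(0,R)\times[0,R]$, and split the domain into $\lesssim D^2$ cells $O_i$ away from $Z(P)$ and a wall $W$ around $Z(P)$. In the cellular case each tube meets $O(D)$ cells, so rescaling inside each $O_i$ and applying the inductive hypothesis at radius $R$ gains a factor that, summed over the $\lesssim D^2$ cells, beats the polynomial-in-$D$ loss, closing the induction on $R$. In the wall case one sorts tubes into those transverse to $Z(P)$, treated by iterating the partitioning at lower degree, and those tangent to $Z(P)$, treated by the multilinear restriction estimate of Bennett--Carbery--Tao for the $2$-broad part; combined with $\ell^2$ orthogonality of the caps this yields precisely the exponent $2/p-5/8$ at $p=3.2$, matching the $n=2$ case of \cite{DGL}. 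Throughout, every use of frequency localization or the locally constant property is rerouted through Lemma \ref{main lemma}, so all implied constants remain uniform in $\gamma$.

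Finally, the base case $M^{-1}\le R^{-1/2}$ is handled exactly as in the base case verified above in the proof of Theorem \ref{reduction main theorem}: Lemma \ref{main lemma}(1) reduces \eqref{Eq212} to the vertical-line estimate of \cite{DL}. The $\varepsilon$-bookkeeping is then standard — the double induction produces an $R^{\varepsilon}$ loss absorbed because $D$, the number $A=A(\varepsilon)$ of subspaces, and the degree are all $O_\varepsilon(1)$, and $K$ is chosen at the very end so large that $AK^{-\varepsilon^2}\ll 1$ — giving the clean exponent $R^{2/p-5/8+\varepsilon}$. The main obstacle is the one already flagged: maintaining a usable wave-packet calculus despite the absence of a Fourier support for $e^{it\Delta}f(x+R\gamma(\tfrac{t}{R^2}))$, and verifying that Lemma \ref{main lemma} supplies it with constants independent of $\gamma$, so that $C(K,\varepsilon,C_\alpha)$ genuinely does not depend on the curve.
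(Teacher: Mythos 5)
Your high-level architecture is right: polynomial partitioning in the style of Guth and Du--Li, a broad/narrow split with the narrow part already absorbed into the induction on $M$ in Theorem \ref{reduction main theorem}, and Lemma \ref{main lemma} standing in for the locally constant property to compensate for the loss of Fourier support of $e^{it\Delta}f(x+R\gamma(t/R^{2}))$. You also correctly identify the base case $M^{-1}\le R^{-1/2}$ as reducing to the vertical-line estimate of \cite{DL}. This matches the paper's plan at the level of strategy, and the observation that the constants produced by Lemma \ref{main lemma} are uniform in $\gamma$ (because $\alpha\ge 1/2$) is indeed where the curve-independence of $C(K,\varepsilon,C_{\alpha})$ comes from.

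However, several steps as stated would not close the argument, and they differ from what the paper actually does. First, you never address how to make sense of the $BL^{p}_{k,A}L^{\infty}$ norm inside the induction: the $L^{\infty}$-in-time norm does not sum over subcells, and the paper gets around this by approximating $BL^{p}_{k,A}L^{\infty}$ by $BL^{p}_{k,A}L^{q}$ and proving the quantitative $L^{q}$ version (Theorem \ref{Theorem3.1}), which carries the extra factor $R^{\delta(\log\bar A-\log A)}$ needed for the induction on $A$; that reduction is load-bearing and is missing from your plan. Second, the tangential contribution to $Z(P)$ is not disposed of by a Bennett--Carbery--Tao multilinear estimate. The paper instead passes to Theorem \ref{Theorem6.1}, an estimate for $f$ concentrated in wave packets tangent to a $2$-dimensional algebraic surface, which runs its own cellular/algebraic induction; inside that theorem the contribution from wave packets tangent to a $1$-dimensional variety $Y$ is killed directly by the definition of the broad norm (the tangent plane $T_{z_0}Y$ serves as one of the discarded subspaces), and the quantitative gain in the transverse subcase comes from the random-shift lemma $\max_{b}\|f_{j,\mathrm{trans},b}\|_{L^2}^{2}\le R^{O(\delta_2)}(R/\rho)^{-1/2}\|f_{j,\mathrm{trans}}\|_{L^2}^{2}$ together with $\sum_{j}\|f_{j,\mathrm{trans}}\|_{L^2}^{2}\le C(D)\|f\|_{L^2}^{2}$, not from iterating the partitioning at lower degree. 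Third, the partitioning used is Guth's equidistribution form with $\sim D^{3}$ equal-mass cells, not a bisecting polynomial, and the cellular case closes by inducting at radius $R/2$, not at radius $R$. Finally, the resulting exponent $s>3/8$ is the one obtained by Du--Li's polynomial partitioning alone; the sharp $s>1/3$ of \cite{DGL} requires decoupling, which (as the paper notes explicitly) is not available here because $e^{it\Delta}f(x+R\gamma(t/R^{2}))$ lacks a clean Fourier support, so the claim that one recovers the DGL exponent is incorrect.
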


As the similar argument in \cite{DL}, for any subset $U \subset B\left( {0,R} \right) \times \left[ {0,R} \right],$ we define
\begin{align}
&\left\| {{e^{it\Delta}}f(x+R\gamma(\frac{t}{R^{2}}))} \right\|_{BL_{k,A}^p{L^\infty }\left( U \right)}^{} \nonumber\\
&:= {\left( {\sum\limits_{{B_K} \subset B\left( {0,R} \right)} {\mathop {\sup }\limits_{I_K^j \subset \left[ {0,R} \right]} \frac{{\left| {U \cap \left( {{B_K} \times I_K^j} \right)} \right|}}{{\left| {{B_K} \times I_K^j} \right|}}{\mu _{{e^{it\Delta}}{f(x+R\gamma(\frac{t}{R^{2}}))}}}\left( {{B_K} \times I_K^j} \right)} } \right)^{1/p}}, \nonumber
\end{align}
which can be approximated by
\begin{align}
&\left\| {{e^{it\Delta}}f(x+R\gamma(\frac{t}{R^{2}}))} \right\|_{BL_{k,A}^p{L^q}\left( U \right)} \nonumber\\
:&= {\left( {{{\sum\limits_{{B_K} \subset B\left( {0,R} \right)} {\left[ {\sum\limits_{I_K^j \subset \left[ {0,R} \right]}^{} {{{\left( {\frac{{\left| {U \cap \left( {{B_K} \times I_K^j} \right)} \right|}}{{\left| {{B_K} \times I_K^j} \right|}}{\mu _{{e^{it\Delta}}{f(x+R\gamma(\frac{t}{R^{2}}))}}}\left( {{B_K} \times I_K^j} \right)} \right)}^q}} } \right]} }^{1/q}}} \right)^{1/p}}.\nonumber
\end{align}
This means,
\[\left\| {{e^{it\Delta}}f(x+R\gamma(\frac{t}{R^{2}}))} \right\|_{BL_{k,A}^p{L^\infty }\left( U \right)}^{} = \mathop {\lim }\limits_{q \to  + \infty } \left\| {{e^{it\Delta}}f(x+R\gamma(\frac{t}{R^{2}}))} \right\|_{BL_{k,A}^p{L^q }\left( U \right)}^{},\]
hence Theorem \ref{Theorem 2.1} can be deduced to prove Theorem \ref{Theorem3.1} below.

\begin{theorem}\label{Theorem3.1}
Let $2 \le p \le 3.2$ and $k=2$. For each $\gamma \in \Gamma_{\alpha, R^{-1}}$ and any $\varepsilon  > 0,$ $1 \le q <  + \infty,$ there exist positive constants $\bar{A} = \bar{A}\left( \varepsilon  \right)$ and $C\left( {K,\varepsilon, C_{\alpha} } \right)$ such that
\begin{equation}\label{main reduction estimate}
 \left\| {{e^{it\Delta}}f(x+R\gamma(\frac{t}{R^{2}}))} \right\|_{BL_{k,A}^p{L^q}\left( {B\left( {0,R} \right) \times \left[ {0,R} \right]} \right)}^{} \le C\left( {K,\varepsilon, C_{\alpha} } \right) R^{\delta(Log \bar{A} - Log A)}{R^{\frac{2}{p} - \frac{5}{8} + \varepsilon }}{\left\| f \right\|_{{L^2}}},
 \end{equation}
for all $1 \le A \le \bar{A}$, $R \ge 1,$ ${\xi _0} \in B\left( {0,1} \right),$ all $f$ with $supp \widehat{f} \subset B\left( {{\xi _0},{M^{ - 1}}} \right), M^{-1}\geq R^{-1/2}.$ The constants here are all independent of the choice of $\gamma$.
\end{theorem}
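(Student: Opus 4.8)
The plan is to prove Theorem~\ref{Theorem3.1} by adapting the polynomial-partitioning argument that Du--Li \cite{DL} (following Guth \cite{Guth2}) carried out for the Schr\"odinger maximal function along vertical lines; no decoupling is used. The estimate will be obtained by a double induction: an outer induction on the radius $R$ (under the convention that $R$ is admissibly large), together with, for fixed $R$, an induction on the broad parameter $A$ whose accumulated cell-counting losses are bookkept by the factor $R^{\delta(Log\,\bar{A}-Log\,A)}$ in the statement. The induction on the frequency radius $M^{-1}$ has already been performed in the reduction to Theorem~\ref{reduction main theorem}, so we may freely invoke the base case $M^{-1}\le R^{-1/2}$ --- which is the vertical-line estimate of \cite{DL} --- as well as, after passing to smaller balls, the inductive hypothesis of Theorem~\ref{Theorem3.1} itself. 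The sole structural departure from \cite{DL} is that $\gamma$ is only $\alpha$-H\"older, so $e^{it\Delta}f(x+R\gamma(\frac{t}{R^{2}}))$ has no usable Fourier support in the $t$-variable and the locally constant property in $(x,t)$ is unavailable; every use of that property in \cite{DL} will be replaced by Lemma~\ref{main lemma}, at the cost of a rapidly convergent sum over integer translates of $f$.

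The core is a single round (iterated in the usual way) of polynomial partitioning applied directly to the $k$-broad quantity $\mu_{e^{it\Delta}f(x+R\gamma(t/R^{2}))}$ that defines the $BL_{k,A}^{p}L^{q}$ norm. Choose a polynomial $P$ of degree $D=O_{\varepsilon}(1)$ whose zero set bisects the broad mass inside $B(0,R)\times[0,R]\subset\mathbb{R}^{3}$, and decompose this region into $O(D^{3})$ cells $O_{i}$, each contained in a space-time ball of radius $R/D$, together with the wall $W$, the $R^{1-\delta}$-neighbourhood of $Z(P)$. On a cell lying over a time interval $(t_{i},t_{i}+R/D)$, the restriction of $t\mapsto R\gamma(\frac{t}{R^{2}})$ is again $\alpha$-H\"older with constant $\le C_{\alpha}$ (the full curve has H\"older constant $\le C_{\alpha}R^{1-2\alpha}\le C_{\alpha}$), and after the natural parabolic rescaling it becomes the curve $u\mapsto D[\gamma(\frac{u}{D^{2}}+\frac{t_{i}}{R^{2}})-\gamma(\frac{t_{i}}{R^{2}})]$, which lies in $\Gamma_{\alpha,(R/D)^{-1}}$ with the \emph{same} constant $C_{\alpha}$ --- this uses $D^{1-2\alpha}\le 1$, i.e. $\alpha\ge 1/2$, precisely the hypothesis of the theorem and the point where the method breaks for $\alpha<1/2$. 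Hence the cellular contribution is controlled by the inductive hypothesis at radius $R/D$, and summing over the $O(D^{3})$ cells, with the loss absorbed into $R^{\delta(Log\,\bar{A}-Log\,A)}$, closes this part exactly as in \cite{DL}.

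For the wall I would split the wave packets (tubes of direction $(-2\xi,1)$, thickened to radius $R^{1/2}$) into those tangential to $Z(P)$ and those transverse to it. The transverse tubes pass through few cells and, being $(KM)^{-1}$-separated in direction, are handled by the bilinear restriction estimate for the paraboloid in $\mathbb{R}^{3}$ together with $L^{2}$-orthogonality, just as in \cite{DL}; at each point where one needs the locally constant property or orthogonality of $e^{it\Delta}f(x+R\gamma(\frac{t}{R^{2}}))$, one first applies Lemma~\ref{main lemma}, which freezes $R\gamma(\frac{t}{R^{2}})$ at the left endpoint of the relevant time interval and replaces the function by $\sum_{\mathfrak{l}}(1+|\mathfrak{l}|)^{-100}|e^{it\Delta}f_{\mathfrak{l}}(\cdot)|$, an object with clean Fourier support to which the classical tools apply; the estimate $|R\gamma(\frac{t}{R^{2}})-R\gamma(\frac{t_{0}}{R^{2}})|\rho^{-\alpha}\le C_{\alpha}$ over a time interval of length $\rho$ is what makes this step legitimate (again forcing $\alpha\ge 1/2$). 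The tangential tubes are reabsorbed into a further lower-dimensional iteration of the partitioning or estimated by a trivial $L^{2}$ bound. Balancing $D$ and $\delta$ against $\varepsilon$ reproduces the exponent $\frac{2}{p}-\frac{5}{8}$ at $p=3.2$ exactly as in \cite{DL}, yielding Theorem~\ref{Theorem3.1}, from which Theorem~\ref{Theorem 2.1} follows on letting $q\to+\infty$.

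The hard part --- and the reason this is not a black-box citation of \cite{DL} --- is precisely the absence of Fourier-support information in the $t$-variable: $e^{it\Delta}f(x+R\gamma(\frac{t}{R^{2}}))$ is not a classical extension-type object, so the locally constant property and $L^{2}$-orthogonality between wave packets fail and must be recovered through Lemma~\ref{main lemma}. The substantive bookkeeping is to check that the rapidly decaying weights $(1+|\mathfrak{l}|)^{-100}$ and $(1+|\mathfrak{m}|)^{-100}$ produced by each application of Lemma~\ref{main lemma} dominate every power of $K$, $D$ or $A$ generated by the polynomial-partitioning iteration, so that these sums never interfere with the induction; once this is arranged, the numerology $p=3.2$, $\frac{2}{p}-\frac{5}{8}$, and ultimately $s>3/8$, is inherited unchanged from \cite{DL}.
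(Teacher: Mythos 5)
Your overall skeleton is right (double induction on $R$ and $A$, polynomial partitioning into cells plus a wall, the $\alpha\ge 1/2$ rescaling observation for $\bar\gamma$, and the use of Lemma~\ref{main lemma} as a surrogate for the locally-constant property and $L^{2}$-orthogonality), but the treatment of the wall has the roles of the two tube classes confused and, as written, would not close. Tubes \emph{transverse to $Z(P)$} are not handled by bilinear restriction, either here or in \cite{DL}; they exit the wall quickly, so the paper simply passes to the balls $B_{j}$ of radius $\rho$ (with $\rho^{1/2+\delta_2}=R^{1/2+\delta}$), applies the inductive hypothesis of Theorem~\ref{Theorem3.1} at scale $\rho$, and closes with the $L^{2}$-counting bound $\sum_j\|f_{j,\mathrm{trans}}\|^2_{L^2}\le C(D)\|f\|^2_{L^2}$ from Guth's Subsection~8.4. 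The bilinear estimate belongs to Guth~I; both \cite{DL} and this paper follow Guth~II and replace it with a genuine iteration on the \emph{tangent} part. Your ``trivial $L^{2}$ bound'' for the tangent tubes would not produce the exponent $\frac{2}{p}-\frac58$: the tangent contribution is precisely where the better exponent $\frac{1}{2p}-\frac14$ must be extracted, and this is the content of the separate Theorem~\ref{Theorem6.1}, which is proved by a second round of partitioning in which the algebraic tangent part is \emph{negligible for the broad norm} (the angle condition makes those caps $\tau$ avoidable by the $V_{\alpha}$'s) rather than trivially bounded. You also need a further ingredient you omit: to invoke Guth's $L^{2}$-orthogonality Lemmas~7.4--7.5 inside $B_{j}$, the paper transfers everything to the auxiliary function $\tilde f$ with $\widehat{\tilde f}(\xi)=e^{iR\gamma(t_j/R^{2})\cdot\xi}\hat f(\xi)$, for which the wave packets are again standard extension-operator wave packets; Lemma~\ref{main lemma} alone is not enough to make the original $e^{it\Delta}f(x+R\gamma(\frac{t}{R^{2}}))$ an admissible input to those lemmas.

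Two smaller inaccuracies: the wall is the $R^{1/2+\delta}$-neighbourhood of $Z(P)$, not $R^{1-\delta}$; and in the cellular case the paper does not assert that the cells lie in balls of radius $R/D$ — it pigeonholes to a cell with $\|f_i\|^2\lesssim D^{-2}\|f\|^2$, covers by balls $B_{R/2}$, and closes the induction at scale $R/2$ (which is where the rescaling check $2^{1-2\alpha}\le 1$ is actually used, not $D^{1-2\alpha}\le 1$). Neither of these changes the spirit of the argument, but combined with the transverse/tangent confusion they mean the proposal, taken literally, has no valid route through the wall. The correct structure is: Theorem~\ref{Theorem3.1} reduces the wall to Theorem~\ref{Theorem6.1} (tangent to a $2$-dimensional variety), and Theorem~\ref{Theorem6.1} is proved by another cellular/algebraic dichotomy in which the tangent case is negligible and the transverse case is closed via the modulated function $\tilde f$ and Guth's orthogonality lemmas.
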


We  notice that the result in Theorem \ref{Theorem3.1} is translation invariance in both $x$ and $t$. This is very important for the induction on the scale $R$. The translation invariance property also plays a key role in the proof of the corresponding  result in the vertical case. We leave the proof of Theorem \ref{Theorem3.1} to Appendix,
because it is quite long and technical.


\section{Proofs of Theorem \ref{necessary condition} and Theorem \ref{upper bound1}}\label{proof of necessary condition}
In order to prove Theorem \ref{necessary condition} and Theorem \ref{upper bound1}, we first show the following lemma.
\begin{lemma}\label{equivalence}
Let  $\gamma(x,t)= x - e_{1}t^{\alpha}$, $e_{1}= (1,0)$, $\alpha \in [1/2,1)$. Then for $p \ge 1$ and $s \ge 0$,
\begin{equation}\label{tangential}
\biggl\|\sup_{0<t<R^{-1}}|e^{it\triangle}f(\gamma(x,t))|\biggl\|_{L^p(B(0,1))} \leq C \|f\|_{H^s}
\end{equation}
holds whenever $\hat{f}$ has compact support and  supp $\hat{f} \subset \{(\xi_{1},\xi_{2}): |\xi_{1}-\xi_{1}^{0}| \lesssim R^{1/2}\}$ if and only if
\begin{equation}\label{vertical}
\biggl\|\sup_{0<t<R^{-1}}|e^{it\triangle}f(x)|\biggl\|_{L^p(B(0,1))}\leq C \|f\|_{H^s},
\end{equation}
holds no matter when $\hat{f}$ has compact support and  supp $\hat{f} \subset \{(\xi_{1},\xi_{2}): |\xi_{1}-\xi_{1}^{0}| \lesssim R^{1/2}\}$.
\end{lemma}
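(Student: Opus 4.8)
The plan is to reduce both implications to a single device borrowed from the proof of Lemma~\ref{main lemma}: on the time interval $(0,R^{-1})$ and under the frequency localization $|\xi_1-\xi_1^0|\lesssim R^{1/2}$, the curve displacement $\gamma(x,t)-x=-e_1 t^\alpha$ is small enough that the Fourier multiplier $e^{-it^\alpha\xi_1}$ which turns $e^{it\triangle}f(x)$ into $e^{it\triangle}f(\gamma(x,t))$ can be expanded, \emph{uniformly in} $t$, as a rapidly convergent superposition of pure modulations. The quantitative input is this: for $t\in(0,R^{-1})$ and $\alpha\in[1/2,1)$ one has $t^\alpha\le R^{-\alpha}\le R^{-1/2}$, so that, writing $\zeta:=R^{-1/2}(\xi_1-\xi_1^0)$, the quantity $t^\alpha R^{1/2}\zeta$ stays bounded ($t^\alpha R^{1/2}\le R^{1/2-\alpha}\le1$) on $\mathrm{supp}\,\hat f$. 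I would pick a smooth $\phi$ on $\mathbb{R}$, supported in $[-\pi,\pi]$ and equal to $1$ on the $\zeta$-projection of $\mathrm{supp}\,\hat f$ (normalizing the implicit constant in the slab, or enlarging the period, if necessary), expand in Fourier series
\[
e^{\mp it^\alpha R^{1/2}\zeta}\phi(\zeta)=\sum_{l\in\mathbb{Z}}c^{\pm}_l(t)\,e^{il\zeta},
\]
and integrate by parts $N$ times using $|t^\alpha R^{1/2}|\le1$ to get $\sup_{0<t<R^{-1}}|c^{\pm}_l(t)|\le C_N(1+|l|)^{-N}$ for every $N$. The key feature is that $e^{il\zeta}=e^{il(\xi_1-\xi_1^0)/R^{1/2}}$ is the multiplier of translation by $l e_1/R^{1/2}$ in the $x_1$ variable (times the unimodular constant $e^{-il\xi_1^0/R^{1/2}}$), which commutes with $e^{it\triangle}$ and preserves both $\mathrm{supp}\,\hat f$ and $\|\cdot\|_{H^s}$.

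For $(\ref{vertical})\Rightarrow(\ref{tangential})$: since $e^{it\triangle}f(\gamma(x,t))=\int e^{ix\cdot\xi+it|\xi|^2}e^{-it^\alpha\xi_1}\hat f(\xi)\,d\xi$ and, on $\mathrm{supp}\,\hat f$, $e^{-it^\alpha\xi_1}=e^{-it^\alpha\xi_1^0}\sum_l c^-_l(t)e^{il\zeta}$ (using $\phi(\zeta)\equiv1$ there), one gets
\[
e^{it\triangle}f(\gamma(x,t))=e^{-it^\alpha\xi_1^0}\sum_{l\in\mathbb{Z}}c^-_l(t)\,e^{it\triangle}f_l(x),\qquad f_l(x):=e^{-il\xi_1^0/R^{1/2}}f\!\left(x+\tfrac{l}{R^{1/2}}e_1\right).
\]
Then I would take $\sup_{0<t<R^{-1}}$, bound $\sup_t|c^-_l(t)|\le C_N(1+|l|)^{-N}$, take the $L^p(B(0,1))$ norm, and apply the hypothesis $(\ref{vertical})$ to each $f_l$ — legitimate over the translated unit ball $B(le_1/R^{1/2},1)$ because $e^{it\triangle}$ and $\|\cdot\|_{H^s}$ are translation invariant, with $\|f_l\|_{H^s}=\|f\|_{H^s}$ and $\mathrm{supp}\,\widehat{f_l}=\mathrm{supp}\,\hat f$ — and sum the series (any $N\ge2$) to obtain $(\ref{tangential})$.

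For $(\ref{tangential})\Rightarrow(\ref{vertical})$: on $\mathrm{supp}\,\hat f$ write $\hat f(\xi)=\phi(\zeta)\hat f(\xi)=e^{-it^\alpha R^{1/2}\zeta}\bigl(e^{it^\alpha R^{1/2}\zeta}\phi(\zeta)\bigr)\hat f(\xi)=e^{-it^\alpha R^{1/2}\zeta}\sum_l c^+_l(t)e^{il\zeta}\hat f(\xi)$, which gives, after inserting this into $e^{it\triangle}f(x)$ and using $-t^\alpha R^{1/2}\zeta=-t^\alpha\xi_1+t^\alpha\xi_1^0$ together with $\gamma(x+a,t)=\gamma(x,t)+a$,
\[
e^{it\triangle}f(x)=e^{it^\alpha\xi_1^0}\sum_{l\in\mathbb{Z}}c^+_l(t)\,e^{-il\xi_1^0/R^{1/2}}\,e^{it\triangle}f\!\left(\gamma\bigl(x+\tfrac{l}{R^{1/2}}e_1,\,t\bigr)\right).
\]
Again taking $\sup_t$, then $L^p(B(0,1))$, using the translation covariance of $\gamma(\cdot,t)$ to move the $l$-th term to the unit ball $B(le_1/R^{1/2},1)$, invoking $(\ref{tangential})$ there (same constant by translation invariance), and summing the rapidly decaying series, yields $(\ref{vertical})$.

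The only genuine obstacle is the uniform-in-$t$ rapid decay of the Fourier coefficients $c^\pm_l(t)$, which needs $\sup_{0<t<R^{-1}}t^\alpha R^{1/2}=R^{1/2-\alpha}\lesssim1$ — that is, precisely $\alpha\ge1/2$ together with the slab width $R^{1/2}$, which is exactly the hypothesis of the lemma and explains its restriction to this range. A minor bookkeeping point is that the auxiliary balls $B(le_1/R^{1/2},1)$ drift with $l$, but this is harmless since the free evolution and the $H^s$ norm are translation invariant and the coefficients are summable, so the drift is absorbed into a fixed constant independent of $R$.
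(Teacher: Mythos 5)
Your proof is correct and follows essentially the same route as the paper's: rescale the $\xi_1$ variable so that the slab has unit width, observe that $t^{\alpha}R^{1/2}\le 1$ on $(0,R^{-1})$ when $\alpha\ge 1/2$, expand the $t$-dependent modulation $e^{\pm it^{\alpha}R^{1/2}\zeta}\phi(\zeta)$ in a Fourier series with uniformly rapidly decaying coefficients, and recognize each mode as a spatial translation commuting with $e^{it\Delta}$ and preserving $H^s$ and the frequency support. The only difference is that you write out the $(\ref{tangential})\Rightarrow(\ref{vertical})$ direction in full — correctly handling the drift $\gamma(x+a,t)=\gamma(x,t)+a$ by replacing $f$ with its translate $f(\cdot+a)$, so the maximal function is again over $B(0,1)$ — whereas the paper states that it follows "by the same method" and omits the details.
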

\begin{proof}
We first show that if inequality (\ref{vertical}) holds whenever $\hat{f}$ has compact support and  $\text{supp} \hat{f} \subset \{(\xi_{1},\xi_{2}): |\xi_{1}-\xi_{1}^{0}| \lesssim R^{1/2}\}$, then inequality (\ref{tangential}) holds for such $f$. As the similar argument in \cite{LR}, we introduce a cut-off function $\phi$ which is real-valued, smooth and equal to $1$ on $(-2,2)$ and supported on $(-\pi, \pi)$.
Making a change of variables,
\[\xi_{1}\rightarrow R^{1/2}\eta_{1} + \xi_{1}^{0}, \quad \quad  \xi_{2} \rightarrow \eta_{2}, \]
one gets
\begin{align}
&e^{it\Delta}f(\gamma(x,t)) = \int_{\mathbb{R}^{2}}e^{ix \cdot \xi - it^{\alpha} \xi_{1} + it|\xi|^{2}}\hat{f}(\xi)d\xi = R^{1/2}e^{-it^{\alpha} \xi_{1}^{0}} \nonumber\\
 &\quad \times \int_{\mathbb{R}^{2}}e^{i x \cdot (R^{1/2}\eta_{1} + \xi_{1}^{0}, \eta_{2}) - iR^{1/2}t^{\alpha} \eta_{1} + it|(R^{1/2}\eta_{1} + \xi_{1}^{0}, \eta_{2})|^{2}} \phi(\eta_{1}) \hat{f}(R^{1/2}\eta_{1} + \xi_{1}^{0}, \eta_{2})d\eta_{1}d\eta_{2}.
\end{align}
Since for each $t \in (0,R^{-1})$ and $\alpha \in [1/2,1)$,
\[R^{1/2} t^{\alpha} \le 1,\]
then by Fourier expansion,
\[\phi(\eta_{1})e^{iR^{1/2}t^{\alpha} \eta_{1}} = \sum_{\mathfrak{l} \in \mathbb{Z}}{c_{\mathfrak{l}}( t)e^{i\mathfrak{l} \eta_{1}}}.\]
Integration by parts shows that
\[|c_{\mathfrak{l}}( t)| \le \frac{C}{(1+|\mathfrak{l}|)^{2}}\]
uniformly for each $\mathfrak{l} \in \mathbb{Z}$ and $t \in (0, R^{-1})$. Then we have
\begin{align}
&|e^{it\Delta}f(\gamma(x,t))|  \nonumber\\
&= \biggl| \sum_{\mathfrak{l} \in \mathbb{Z}}c_{\mathfrak{l}}( t) R^{1/2}e^{-it^{\alpha}\xi_{1}^{0}} \int_{\mathbb{R}^{2}}e^{i x \cdot (R^{1/2}\eta_{1} + \xi_{1}^{0}, \eta_{2}) - i\mathfrak{l} \eta_{1} + it|(R^{1/2}\eta_{1} + \xi_{1}^{0}, \eta_{2})|^{2}}  \hat{f}(R^{1/2}\eta_{1} + \xi_{1}^{0}, \eta_{2})d\eta_{1}d\eta_{2}\biggl|
\nonumber\\
&= \biggl| \sum_{\mathfrak{l} \in \mathbb{Z}}c_{\mathfrak{l}}( t) e^{-it^{\alpha} \xi_{1}^{0}} \int_{\mathbb{R}^{2}}e^{i x \cdot \xi - i\mathfrak{l} (\xi_{1}-\xi_{1}^{0})/R^{1/2} + it|\xi|^{2}}  \hat{f}(\xi_{1}, \xi_{2})d\xi_{1}d\xi_{2} \biggl|
\nonumber\\
&= \biggl| \sum_{\mathfrak{l} \in \mathbb{Z}}c_{\mathfrak{l}}( t) e^{-it^{\alpha} \xi_{1}^{0} + i\mathfrak{l}  \xi_{1}^{0}/R^{1/2}}  \int_{\mathbb{R}^{2}}e^{i x \cdot \xi + it|\xi|^{2}} e^{- i\mathfrak{l} \xi_{1}/R^{1/2} } \hat{f}(\xi_{1}, \xi_{2})d\xi_{1}d\xi_{2} \biggl|
\nonumber\\
&\le \sum_{\mathfrak{l} \in \mathbb{Z}}{\frac{C}{(1+|\mathfrak{l}|)^{2}} | e^{it\Delta}f_{R}^{\mathfrak{l}}(x) |}, \nonumber
\end{align}
where $\widehat{f_{R}^{\mathfrak{l}}}(\xi_{1}, \xi_{2})=e^{- i\mathfrak{l} \xi_{1}/R^{1/2} } \hat{f}(\xi_{1}, \xi_{2})$ and supp $\widehat{f_{R}^{\mathfrak{l}}} \subset \{(\xi_{1},\xi_{2}): |\xi_{1}-\xi_{1}^{0}| \lesssim R^{1/2}\}$. Therefore, applying inequality (\ref{vertical}) to get
\begin{align}
\biggl\|\sup_{0<t<R^{-1}}|e^{it\triangle}f(\gamma(x,t))|\biggl\|_{L^p(B(0,1))} &\le \sum_{\mathfrak{l} \in \mathbb{Z}}{\frac{C}{(1+|\mathfrak{l}|)^{2}}  \biggl\|\sup_{0<t<R^{-1}}|e^{it\triangle}f_{R}^{\mathfrak{l}}(x)|\biggl\|_{L^{p}(B(0,1))} } \nonumber\\
&\le \sum_{\mathfrak{l} \in \mathbb{Z}}{\frac{C}{(1+|\mathfrak{l}|)^{2}}  \|f_{R}^{\mathfrak{l}} \|_{H^{s}} } \nonumber\\
&\le C \|f\|_{H^{s}},
\end{align}
then we arrive at inequality (\ref{tangential}). By the same method, we can also prove  if inequality (\ref{tangential}) holds whenever $\hat{f}$ has compact support and  supp $\hat{f} \subset \{(\xi_{1},\xi_{2}): |\xi_{1}-\xi_{1}^{0}| \lesssim R^{1/2}\}$ for some $\xi_{1}^{0}$, then inequality (\ref{vertical}) holds also for such $f$. We omit its proof here.
\end{proof}

\textbf{Proof of Theorem \ref{necessary condition}.}
If inequality (\ref{tangential global}) holds for all $f \in H^{s}(\mathbb{R}^2)$, by Lemma \ref{equivalence}, inequality (\ref{vertical}) holds whenever $\hat{f}$ has compact support and  supp $\hat{f} \subset \{(\xi_{1},\xi_{2}): |\xi_{1}-\xi_{1}^{0}| \lesssim R^{1/2}\}$. Bourgain \cite{B} actually showed that there exists $f_{R}$,
\[\hat{f_{R}}(\xi) = \chi_{A_{R}}(\xi),\]
where $A_{R}$ is the subset of $\{\xi \in \mathbb{R}^{2}: |\xi| \sim R\}$ defined by
\[A_{R}= \bigcup_{l \in \mathbb{N}^{+}, l \sim R^{1/3}}A_{R,l},\]
\[A_{R,l}= [R-R^{1/2}, R+R^{1/2}] \times [R^{2/3}l, R^{2/3}l+1].\]
Here $\chi_{A_{R}}(\xi)$ is the characteristic function of the set $A_{R}$.
One can find a set $S$ with positive measure such that for each $x \in S$, there exists $t$ with $0< t < R^{-1}$,
\begin{equation}
\biggl| e^{it\Delta}f_{R}(x) \biggl| \ge R^{3/4}.
\end{equation}
Hence,
\begin{equation}\label{Eq2.2.3}
\mathop{sup}_{0<t<R^{-1}} \biggl| e^{it\Delta}f(x)\biggl| \ge R^{3/4}.
\end{equation}
It is obvious that supp $\hat{f_{R}}  \subset \{(\xi_{1},\xi_{2}): |\xi_{1}-R| \lesssim R^{1/2}\}$, applying inequality (\ref{vertical}) to $f_{R}$, and we obtain that
\[R^{3/4} \lesssim R^{s} R^{5/12}.\]
Finally we get $s \ge 1/3$ since $R$ can be sufficiently large.
\hfill$\Box$

\textbf{Proof of Theorem \ref{upper bound1}.}
 If inequality (\ref{tangential large p}) holds for some $0 <s <1/2$, we get inequality (\ref{vertical})
whenever  $\hat{f}$ has compact support and  supp $\hat{f} \subset \{(\xi_{1},\xi_{2}): |\xi_{1}-\xi_{1}^{0}| \lesssim R^{1/2}\}$. According to the proof of \cite [Theorem 5]{S3}, there is $f_{R}$ with supp $\hat{f_{R}} \subset (-R-R^{1/2}, -R+R^{1/2})\times (-R-R^{1/2}, -R+R^{1/2})$ and
\[\|f_{R}\|_{H^{s}} \le C R^{s-1/2}.\]
For each $x \in \{(x_{1},x_{2}): x_{1} \in I, |x_{1}-x_{2}| \le \delta R^{-1/2} \}$, there exists $t \in (0,R^{-1})$ such that
\[|e^{it\triangle}f(x)| \ge c,\]
here $I$ is a small interval around the origin and $\delta$, $c$ are small positive numbers. Combining this argument with inequality (\ref{vertical}), we have
\[R^{-1/2p} \le C R^{s-1/2},\]
which implies
\[p \le \frac{1}{1-2s},\]
since $R$ can be sufficiently large.

\section{Proof of Theorem \ref{nontangential maximal theorem}}\label{proof of nontangential}
\begin{proof}
 Using Littlewood-Paley decomposition, we only need to show that for $f$ with supp$\hat{f} \subset \{\xi \in \mathbb{R}: |\xi| \sim \lambda\}$, $\lambda \gg 1$,
 \begin{equation}\label{Eq2.1}
\biggl\|\mathop{\rm{sup}}_{(t,\theta) \in (0,1) \times \Theta} |e^{it\Delta}f(\gamma(x,t,\theta))|\biggl\|_{L^{p}(B(x_{0},r))} \le C\lambda^{s_{0}+\varepsilon} \|f\|_{L^{2}}, \:\ \forall \varepsilon >0,
\end{equation}
where $s_{0}$ and $p$ are chosen as in Theorem \ref{nontangential maximal theorem}.

We decompose $\Theta$ into small subsets $\{\Theta_{k}\}$ such that $\Theta= \cup_{k}\Theta_{k}$ with bounded overlap, where each $\Theta_{k}$ is contained in a closed ball with  diameter $\lambda^{-\mu}, \mu = \min \{1, 2\alpha\}$.
 Due to the definition of $\beta(\Theta)$, we have
\begin{equation}\label{Eq2.2}
1 \le k \le \lambda^{ \mu \beta(\Theta) + \varepsilon}.
\end{equation}
We claim that
 \begin{equation}\label{uniform estimate}
\mathop{\rm{sup}}_{k}  \biggl\|\mathop{\rm{sup}}_{(t,\theta) \in (0,1) \times \Theta_{k}} |e^{it\Delta}f(\gamma(x,t,\theta))|\biggl\|_{L^{p}(B(x_{0},r))} \le C\lambda^{\nu+\frac{(p-1)\varepsilon}{p}} \|f\|_{L^{2}},
\end{equation}
where $\nu=\max\{\frac{1}{2}-\alpha, \frac{1}{4}\}$. Then
\begin{align}
\biggl\|\mathop{\rm{sup}}_{(t,\theta) \in (0,1) \times \Theta} |e^{it\Delta}f(\gamma(x,t,\theta))|\biggl\|_{L^{p}(B(x_{0},r))}
&\le \biggl(\sum_{k} \biggl\|\mathop{\rm{sup}}_{(t,\theta) \in (0,1) \times \Theta_{k}} |e^{it\Delta}f(\gamma(x,t,\theta))|\biggl\|_{L^{p}(B(x_{0},r))}^{p}  \biggl)^{1/p} \nonumber\\
&\le C\biggl(\sum_{k} \lambda^{p\nu+(p-1)\varepsilon} \|f\|_{L^{2}}^{p} \biggl)^{1/p} \nonumber\\
&\le  C\lambda^{\frac{ \mu \beta{(\Theta)}}{p} + \nu +\varepsilon} \|f\|_{L^{2}}, \nonumber
\end{align}
which implies inequality (\ref{Eq2.1}).

Now let's turn to prove inequality (\ref{uniform estimate}). In fact, inequality  (\ref{uniform estimate}) comes from the following lemma.

\begin{lemma}\label{lemma2.3}
Under the assumption of Theorem \ref{nontangential maximal theorem},  and $f$ is a Schwartz function whose Fourier transform is supported in the annulus $A_{\lambda}=\{\xi \in \mathbb{R}: |\xi| \sim \lambda\}$. Then for each $k$,
\begin{align}\label{Eq2.1n}
\biggl\|\mathop{\rm{sup}}_{t \in (0,1), \theta \in \Theta_{k}} |e^{it\Delta}f(\gamma(x,t,\theta))|\biggl\|_{L^{p}(B(x_{0},r))} \le C \lambda^{\nu} \|f\|_{L^{2}},
\end{align}
where $p$ is chosen as in Theorem \ref{nontangential maximal theorem} and $\nu = \max \{1/2-\alpha, 1/4\}$.
Moreover, the constant $C$ in inequality (\ref{Eq2.1n}) depends on   $C_{1},C_{2}, C_{3}$, $\Theta$ and the choice of $B(x_{0},r)$, but does not depend on $f$ and $k$.
\end{lemma}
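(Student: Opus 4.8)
The plan is to separate the three variables. First I would peel off $\theta$: by \textbf{(C3)} it ranges over a set of diameter $\le C_3\lambda^{-\mu}$, and a Fourier-expansion device in the spirit of Lemma \ref{lemma2.1n} should reduce matters to a maximal estimate for the single reference curve $\gamma_k(x,t):=\gamma(x,t,\theta_k)$, where $\theta_k$ is the centre of the ball of diameter $\lambda^{-\mu}$ containing $\Theta_k$. Then I would treat that single-curve estimate by the time-localization lemma of \cite{CLV}, parabolic rescaling, and the sharp one-dimensional Schr\"odinger maximal estimates for the vertical line. Throughout, every constant must be tracked so as to depend only on $C_1,C_2,C_3,\Theta$ and $B(x_0,r)$, never on $k$ or on $\gamma$, since the proof of Theorem \ref{nontangential maximal theorem} sums over $k$.

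\emph{Peeling off $\theta$.} By \textbf{(C3)}, $w_\theta(x,t):=\gamma(x,t,\theta)-\gamma_k(x,t)$ satisfies $|w_\theta(x,t)|\le C_3\lambda^{-\mu}$ for every $\theta\in\Theta_k$. Writing $e^{it\Delta}f(\gamma(x,t,\theta))=\int e^{i\gamma_k(x,t)\xi+iw_\theta(x,t)\xi+it\xi^2}\hat f(\xi)\,d\xi$, inserting a cutoff equal to $1$ on $\operatorname{supp}\hat f\subset\{|\xi|\sim\lambda\}$, rescaling $\xi=\lambda\eta$, and Fourier-expanding the factor $e^{i\lambda w_\theta(x,t)\eta}$ on the annulus $|\eta|\sim1$ yields, after integration by parts, a pointwise bound
\begin{equation*}
\sup_{t\in(0,1),\,\theta\in\Theta_k}\bigl|e^{it\Delta}f(\gamma(x,t,\theta))\bigr|\ \lesssim\ \sum_{\mathfrak l\in\mathbb Z}\frac{1}{(1+|\mathfrak l|)^{100}}\ \sup_{t\in(0,1)}\bigl|e^{it\Delta}f_{\mathfrak l}(\gamma_k(x,t))\bigr|,\qquad \widehat{f_{\mathfrak l}}(\xi)=e^{i\lambda^{-1}\mathfrak l\,\xi}\hat f(\xi),
\end{equation*}
when $\mu=1$, i.e. $\alpha\ge1/2$: here the exponent $\lambda w_\theta\eta$ is $O(C_3)$, so the whole family $\{\gamma(x,t,\theta):\theta\in\Theta_k\}$ fits inside one wave packet of width $\lambda^{-1}$. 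When $\mu=2\alpha<1$ the same scheme works only after a preliminary parabolic rescaling that puts the $\theta$-fattening width (which becomes $\lambda^{1-2\alpha}$) at the same scale on which $\gamma_k$ is rough, and one then exploits the near-orthogonality of the modulations $f_{\mathfrak l}$ to keep the resulting loss under control. In all cases $\|f_{\mathfrak l}\|_{L^2}=\|f\|_{L^2}$, so Lemma \ref{lemma2.3} is reduced to the single-curve estimate
\begin{equation*}
\Bigl\|\sup_{t\in(0,1)}\bigl|e^{it\Delta}g(\gamma_k(x,t))\bigr|\Bigr\|_{L^p(B(x_0,r))}\ \lesssim\ \lambda^{\,\nu}\,\|g\|_{L^2},\qquad\operatorname{supp}\hat g\subset\{|\xi|\sim\lambda\},
\end{equation*}
for the appropriate $(p,\nu)$, uniformly in the bilipschitz and $\alpha$-H\"older data of $\gamma_k$.

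\emph{The single-curve estimate.} After a Littlewood--Paley splitting in $t$, the time-localization lemma of \cite{CLV} (already used in the reduction of Theorem \ref{maximal theorem along tangential}) reduces this to the bound on one interval $(t_0,t_0+\lambda^{-1})$, with no loss and with constant depending only on $C_1,C_2$. There one parabolically rescales to unit frequency: $\gamma_k$ becomes $s\mapsto\lambda\gamma_k(x/\lambda,s/\lambda^2)$, which is $\alpha$-H\"older in $s$ with constant $C_2\lambda^{1-2\alpha}$. When $\alpha\ge1/2$ this is $\le C_2$, so the rescaled curve moves by $\lesssim1$ over each unit $s$-interval; a further Fourier expansion (now with a bounded exponent) replaces it by the vertical line, reducing matters to the classical one-dimensional $L^4$ Schr\"odinger maximal estimate $\|\sup_{0<t<1}|e^{it\Delta}h|\|_{L^4(\mathbb R)}\lesssim\|h\|_{H^{1/4}}$, which gives $p=4$, $\nu=1/4$. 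When $\alpha\le1/4$ one simply invokes the $TT^{*}$/van der Corput estimate of \cite{CLV}, giving the $L^2$ bound at level $\nu=1/2-\alpha$. When $1/4<\alpha<1/2$ the rescaled H\"older constant $\lambda^{1-2\alpha}>1$ means the curve outruns the wave packet; interpolating the $L^4$ mechanism above against the $L^2$ mechanism (equivalently, interpolating the $L^2$-orthogonality of the pieces against the trivial $L^\infty$ bound) is what forces the exponent down to $p=8\alpha$ while keeping $\nu=1/4$.

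\emph{Main obstacle.} The hard case is $\alpha<1/2$. After parabolic renormalization the reference curve $\gamma_k$ is rough on the same scale as the spatial ball is large, so there is no honest reduction to the vertical line and no locally-constant property to lean on; one must quantify, uniformly in $\gamma$ and in $k$, how far the curve — together with the $\theta$-fattening, which by the very choice $\mu=2\alpha$ sits at exactly that scale — escapes a single tube, and verify that the stated pairs $(p,\nu)=(8\alpha,1/4)$ and $(2,1/2-\alpha)$ are precisely the ones that make the orthogonality-versus-interpolation bookkeeping close. The recurring side condition, which matters for the subsequent summation over $k$, is uniformity: the Fourier coefficients, the time-localization constant, and the rescaling must all be shown to depend only on $C_1,C_2,C_3,\Theta$ and $B(x_0,r)$.
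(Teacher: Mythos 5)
Your plan takes a genuinely different route from the paper, and the route has a real gap in precisely the regime you identify as hard. The paper does not attempt to peel off $\theta$ at all: it linearizes $t$ and $\theta$ jointly via Kolmogorov--Seliverstov--Plessner, forms the operator $Tf(x)=\int_{A_\lambda}e^{i\gamma(x,t(x),\theta(x))\xi+it(x)\xi^2}f(\xi)\,d\xi$, and runs a single $TT^*$ argument on the whole time interval $(0,1)$. The kernel $K(x,y)$ is estimated by stationary phase/van der Corput in three regimes (the trivial bound $\lambda$; rapid decay when $|t(x)-t(y)|\gtrsim\lambda^{-1}$; and $|K(x,y)|\lesssim\max\{\lambda^{1/2}|x-y|^{-1/2},\,|x-y|^{-1/2\alpha}\}$ when $|x-y|\ge 2C_1C_3\,\mathrm{diam}(\Theta_k)$), and the $\theta$-variation enters only through the exception set $|x-y|<2C_1C_3\,\mathrm{diam}(\Theta_k)$, whose contribution is closed by choosing $\mathrm{diam}(\Theta_k)=\lambda^{-\mu}$ with $\mu=\min\{1,2\alpha\}$. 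The exponents $(p,\nu)$ then drop out of Hardy--Littlewood--Sobolev with the kernel's homogeneity; there is no time localization, no parabolic rescaling, and no interpolation.

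The concrete problem with your reduction: in the Fourier-expansion step you need the exponent $\lambda\,w_\theta(x,t)\,\eta$ in $e^{i\lambda w_\theta\eta}$ to be $O(1)$ uniformly, and by \textbf{(C3)} and the choice of $\Theta_k$ this is $O(\lambda\,\mathrm{diam}(\Theta_k))=O(\lambda^{1-\mu})$. For $\alpha\ge1/2$ (so $\mu=1$) this is bounded and the scheme works, but for $\alpha<1/2$ (so $\mu=2\alpha$) it is $O(\lambda^{1-2\alpha})\to\infty$. The Fourier coefficients $c_{\mathfrak l}(t)$ then do not decay until $|\mathfrak l|\gtrsim\lambda^{1-2\alpha}$, so you are summing $\sim\lambda^{1-2\alpha}$ pieces $f_{\mathfrak l}$ each of full $L^2$ mass, and the modulations $\widehat{f_{\mathfrak l}}(\xi)=e^{i\lambda^{-1}\mathfrak l\xi}\hat f(\xi)$ are not mutually orthogonal for a general $\hat f$ supported in an annulus; you get a genuine $\lambda^{1-2\alpha}$ loss that the statement does not tolerate. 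The phrase ``preliminary parabolic rescaling \ldots near-orthogonality'' is not yet an argument. You cannot simply shrink $\mathrm{diam}(\Theta_k)$ to $\lambda^{-1}$ either, since the cardinality of the $\Theta_k$'s would then be $\sim\lambda^{\beta(\Theta)}$ and the summation over $k$ in the proof of Theorem \ref{nontangential maximal theorem} would overshoot $s_0$.

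Separately, the claim that ``interpolating the $L^4$ mechanism against the $L^2$ mechanism forces $p=8\alpha$'' in the range $1/4<\alpha<1/2$ is too loose. The map $\alpha\mapsto(p,\nu)=(8\alpha,1/4)$ is not a Riesz--Thorin interpolate of the two endpoint estimates; in the paper the exponent $8\alpha$ is produced by applying Hardy--Littlewood--Sobolev to the kernel decay $|K(x,y)|\lesssim\lambda^{1/2}|x-y|^{-1/4\alpha}$ (valid on $\Omega_1\cap\{|x-y|\ge2C_1C_3\lambda^{-2\alpha}\}$, after absorbing $|x-y|^{-1/2\alpha}$ into $\lambda^{1/2}|x-y|^{-1/4\alpha}$ there), which gives $\|T^*g\|_{L^2}^2\lesssim\lambda^{1/2}\|g\|_{L^{8\alpha/(8\alpha-1)}}^2$ directly. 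Your ``outruns the wave packet'' heuristic correctly locates the difficulty but does not substitute for this computation, and without it the constants' dependence on $C_1,C_2,C_3,\Theta$ — which you correctly flag as crucial for the summation over $k$ — has not been established.
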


Next we will prove Lemma \ref{lemma2.3}. By the Kolmogorov-Seliverstov-Plessner linearization, we choose $t(x) \in (0,1)$, $\theta(x) \in \Theta_{k} $ to be  measurable functions defined on $B(x_{0},r)$, such that
\[\sup_{t\in (0,1), \theta \in \Theta_{k}} \biggl|e^{it\triangle}f(\gamma(x,t, \theta))\biggl| \le 2 \biggl|e^{it(x)\triangle}f(\gamma(x,t(x), \theta(x)))\biggl| . \]
Set
\[Tf(x):= \int_{A_{\lambda}}e^{i\gamma(x,t(x),\theta(x))\xi+it(x)\xi^{2}}f(\xi)d\xi .\]
It is sufficient to show that
\begin{equation*}
\|Tf\|_{L^p(B(x_{0},r))}\leq C \lambda^{\nu} \|f\|_{L^2(A_{\lambda})}
\end{equation*}
holds for all $f$ with supp $f \subset A_{\lambda}$. Notice that  we used the Plancherel's theorem here to replace $\hat{f}$ by $f$.  By duality, this is equivalent to show
\begin{equation*}
\|T^*g\|_{L^2(A_{\lambda})}\leq C \lambda^{\nu} \|g\|_{L^{p^{\prime}}(B(x_{0},r))}, \quad \quad 1/p + 1/p^{\prime}=1,
\end{equation*}
holds for all $g \in L^{p^{\prime}}(B(x_{0},r))$, where
\[T^*g(\xi):= \int_{B(x_{0},r)}e^{-i\gamma(x,t(x), \theta(x))\xi-it(x)\xi^{2}}g(x)dx.\]
We choose a real-valued cutoff function $\phi$ which is equal to $1$ on $A_{1}$ and rapidly decay outside.
Then
\begin{align}
&\|T^*g\|_{L^2(A_{\lambda})}^{2}\nonumber\\
&=\int_{B(x_{0},r)}\int_{B(x_{0},r)}g(x) \bar{g}(y)  \int_{A_{\lambda}} e^{i\gamma(y,t(y),\theta(y)) \xi-i\gamma(x,t(x),\theta(x))\xi+it(y) \xi^{2}-it(x)\xi^{2}} \phi(\xi/\lambda) d\xi dx dy \nonumber\\
&=\int_{B(x_{0},r)}\int_{B(x_{0},r)}g(x) \bar{g}(y) K(x,y) dx dy. \nonumber
\end{align}
Here
\[K(x,y):=\int_{A_{\lambda}} e^{i\gamma(y,t(y), \theta(y)) \xi-i\gamma(x,t(x), \theta(x))\xi+it(y) \xi^{2}-it(x)\xi^{2}} \phi(\xi/\lambda) d\xi.\]

We have the following kernel estimates: \\
\textbf{(E1)} for each $x,y \in B(x_{0},r)$, $ |K(x,y)| \le \lambda.$ \\
\textbf{(E2)} for each $x,y \in B(x_{0},r)$ and $x \neq y$, if  $|t(x)-t(y)| \ge 5 (C_{1}r+ C_{2} +C_{3} \text{diam}(\Theta)) \lambda^{-1} $, then
\[|K(x,y)| \le C \lambda^{-100}.\]
\textbf{(E3)} for each $x,y \in B(x_{0},r)$ and $x \neq y$, if  $|t(x)-t(y)| < 5 (C_{1}r+ C_{2} +C_{3} \text{diam} (\Theta)) \lambda^{-1} $ and $|x-y| \ge 2C_{1}C_{3} \text{diam}(\Theta_{k})$, then
\[|K(x,y)| \le C \max\{\frac{\lambda^{1/2}}{|x-y|^{1/2}},|x-y|^{-1/2\alpha}\}.\]
We also remark that the constant $C$ in (E2) and (E3) depends only on $C_{1}, C_{2}, C_{3}$, $r$ and $\Theta$.

(E1) is trivial so we will only prove (E2) and (E3) by  stationary phase method. By rescaling,
\[K(x,y) = \lambda\int_{A_{1}} e^{i\lambda [\gamma(y,t(y), \theta(y)) \eta-\gamma(x,t(x), \theta(x))\eta+ \lambda t(y) \eta^{2}-\lambda t(x)\eta^{2}]} \phi(\eta) d\eta.\]
Denote
\[\psi (x,y, \eta): = \gamma(y,t(y),\theta(y)) \eta-\gamma(x,t(x),\theta(x))\eta+ \lambda t(y) \eta^{2}-\lambda t(x)\eta^{2}.\]
Then
\[\frac{\partial}{\partial \eta} \psi (x,y,\eta)= \gamma(y,t(y),\theta(y)) -\gamma(x,t(x),\theta(x)) + 2\lambda t(y) \eta-2\lambda t(x)\eta,\]
\[\frac{\partial^{2}}{\partial \eta^{2}} \psi (x,y,\eta)= 2\lambda t(y) -2\lambda t(x).\]

We first prove (E2). Note that
\begin{align}
&\biggl|\gamma(y,t(y),\theta(y)) -\gamma(x,t(x),\theta(x))\biggl| \nonumber\\
&=\biggl|\gamma(y,t(y),\theta(y)) -\gamma(x,t(y),\theta(y)) +\gamma(x,t(y),\theta(y))-\gamma(x,t(x),\theta(y)) \nonumber\\
& \quad \quad +\gamma(x,t(x),\theta(y))-\gamma(x,t(x),\theta(x)) \biggl| \nonumber\\
&\le C_{1} |x-y| +C_{2} |t(x)-t(y)| +C_{3} |\theta(x) -\theta(y)| \nonumber\\
&\le 2C_{1}r+ C_{2} +C_{3} \text{diam}(\Theta).
\end{align}
Therefore, if $|t(x)-t(y)| \ge 5 (C_{1}r+ C_{2} +C_{3} \text{diam} (\Theta)) \lambda^{-1} $, then integration by parts implies
(E2), and the constant $C$ depends only on $C_{1}, C_{2}, C_{3}$, $r$ and $\Theta$.

Next we prove (E3). By the triangle inequality,
\begin{align}
&\Biggl|\gamma(y,t(y),\theta(y)) -\gamma(x,t(x),\theta(x))\biggl| \nonumber\\
& \ge \Biggl| \biggl| \gamma(y,t(y),\theta(y)) -\gamma(x,t(y),\theta(y))
+\gamma(x,t(x),\theta(y))-\gamma(x,t(x),\theta(x)) \biggl| \nonumber\\
 & \quad \quad -\biggl| \gamma(x,t(y),\theta(y))-\gamma(x,t(x),\theta(y))\biggl| \Biggl|. \nonumber
\end{align}
According to the assumption of (E3), $|x-y| \ge 2C_{1}C_{3} \text{diam} (\Theta_{k})$, then
\begin{align}
&\biggl| \gamma(y,t(y),\theta(y)) -\gamma(x,t(y),\theta(y)) +\gamma(x,t(x),\theta(y))-\gamma(x,t(x),\theta(x)) \biggl| \nonumber\\
&\le C_{1}|x-y|+ C_{3} \text{diam}(\Theta_{k}) \nonumber\\
&\le (C_{1}+\frac{1}{2C_{1}})|x-y|, \nonumber
\end{align}
and
\begin{align}
&\biggl| \gamma(y,t(y),\theta(y)) -\gamma(x,t(y),\theta(y)) +\gamma(x,t(x),\theta(y))-\gamma(x,t(x),\theta(x)) \biggl| \nonumber\\
&\ge C_{1}^{-1}|x-y|- C_{3} \text{diam}(\Theta_{k}) \nonumber\\
&\ge (2C_{1})^{-1}|x-y|. \nonumber
\end{align}

If  $ |x-y| \le 100 C_{1}C_{2} |t(x)-t(y)|^{\alpha}$,
\[ \biggl|\frac{\partial^{2}}{\partial \eta^{2}} \psi (x, y, \eta) \biggl| = 2 \lambda |t(x)-t(y)| \ge 2\lambda (100C_{1}C_{2})^{-1/\alpha} |x-y|^{1/\alpha}.\]
Using Van der Corput's lemma, we have
\[|K(x,y)| \le 2^{-1/2} ( 100C_{1}C_{2})^{1/2\alpha}|x-y|^{-1/2\alpha}. \]
If $|x-y| > 100 C_{1}C_{2} |t(x)-t(y)|^{\alpha}$ and $|x-y| > 100 C_{1} \lambda |t(x)-t(y)|$,
\[ \biggl|\frac{\partial}{\partial \eta} \psi (x, y, \eta)\biggl|  \ge  (10 C_{1} )^{-1} |x-y|,\]
and
\[ \biggl|\frac{\partial^{2}}{\partial \eta^{2}} \psi (x, y, \eta) \biggl| = 2 \lambda |t(x)-t(y)| \le 10 (C_{1}r+ C_{2} +C_{3} \text{diam}(\Theta)).\]
Integration by parts implies
\[|K(x,y)| \le \frac{C_{N} \lambda}{(1+ \lambda (10 C_{1} )^{-1} |x-y|)^{N}}.\]
If $|x-y| > 100 C_{1}C_{2} |t(x)-t(y)|^{\alpha}$ and $|x-y| \le 100 C_{1} \lambda |t(x)-t(y)|$,
\[ \biggl|\frac{\partial^{2}}{\partial \eta^{2}} \psi (x, y, \eta) \biggl| = 2 \lambda |t(x)-t(y)| \ge  (50C_{1})^{-1} |x-y|.\]
Applying Van der Corput's lemma to get
\[|K(x,y)| \le (50C_{1})^{1/2} \frac{\lambda^{1/2}}{|x-y|^{1/2}}.\]
We finish the estimate of (E3).

According to the kernel estimate, we break $B(x_{0},r) \times B(x_{0},r)$ into $ \Omega_{1}, \Omega_{2}$, where
\[\Omega_{1}:=\{(x,y) \in B(x_{0},r) \times B(x_{0},r): |t(x) -t(y)| \le 5 (C_{1}r+ C_{2} +C_{3} \text{diam} (\Theta)) \lambda^{-1} \},\]
\[\Omega_{2}:=\{(x,y) \in  B(x_{0},r) \times B(x_{0},r): |t(x) -t(y)| > 5 (C_{1}r+ C_{2} +C_{3} \text{diam} (\Theta)) \lambda^{-1} \}.\]
By (E2), we have
\begin{align}
& \biggl| \int \int_{ \Omega_{2}}g(x) \bar{g}(y) K(x,y) dx dy \biggl|  \le C \lambda^{-100} \|g\|_{L^{p^{\prime}}(B(x_{0},r))}^{2}, \nonumber
\end{align}
here the constant $C$ depends on $C_{1}, C_{2}, C_{3}$, $B(x_{0},r)$ and $\Theta$. To establish the estimate on $\Omega_{1}$, we will consider the following three cases, $\alpha \in [1/2, 1)$,  $\alpha \in (1/4, 1/2)$ and  $\alpha \in (0, 1/4]$, respectively.

\textbf{Case 1}. When $\alpha \in [1/2, 1)$. Note that we have $\text{diam} (\Theta_{k}) = \lambda^{-1} $.  If $(x,y) \in \Omega_{1}$ and $|x-y| \ge 2C_{1}C_{3} \lambda^{-1}$,
\[\lambda^{1/2} |x-y|^{1/2\alpha-1/2} \ge \lambda^{1-1/2\alpha} \ge 1.\]
Hence,
\begin{equation}
|K(x,y)| \le  \frac{C\lambda^{1/2}}{|x-y|^{1/2}}.
\end{equation}
Then we have
\begin{align}
&\biggl| \int \int_{ \Omega_{1}}  g(x) \bar{g}(y) K(x,y) dx dy \biggl|  \nonumber\\
&\le \int \int_{\{(x,y) \in \Omega_{1}: |x-y| < 2C_{1}C_{3} \lambda^{-1}\}} | g(x) \bar{g}(y) K(x,y) | dxdy \nonumber\\
& \quad  + \int \int_{\{(x,y) \in \Omega_{1}: |x-y| \ge 2C_{1}C_{3} \lambda^{-1}\}} | g(x) \bar{g}(y) K(x,y)| dxdy \nonumber\\
&\le \lambda \int \int_{\{(x,y)\in \Omega_{1}: |x-y| <2C_{1}C_{3} \lambda^{-1}\}}  |x-y|^{1/2}  |x-y|^{-1/2} |g(x) \bar{g}(y) | dxdy \nonumber\\
&\quad + C\lambda^{1/2} \int \int_{\{(x,y)\in \Omega_{1}: |x-y| \ge 2C_{1}C_{3}\lambda^{-1}\}} |g(x) \bar{g}(y)| |x-y|^{-1/2}dxdy \nonumber\\
&\le C\lambda^{1/2} \int_{\mathbb{R}}\int_{\mathbb{R}} |g(x)| \chi_{B(x_{0},r)}(x) |\bar{g}(y)| \chi_{B(x_{0},r)}(y) |x-y|^{-1/2} dx dy  \nonumber\\
&\le C\lambda^{1/2} \|g\|_{L^{4/3}(B(x_{0},r))}\biggl\||g| \chi_{B(x_{0},r)} \ast |\cdot|^{-1/2}\biggl\|_{L^{4}(\mathbb{R})} \nonumber\\
&\le C\lambda^{1/2} \|g\|_{L^{4/3}(B(x_{0},r))}^{2}.
\end{align}
Here we applied the HLS inequality
\begin{equation}\label{HLS}
\biggl\||g| \chi_{B(x_{0},r)} \ast |\cdot|^{-\gamma}\biggl\|_{L^{p_{1}}} \le A_{p_{1},p_{2}} \|g \chi_{B(x_{0},r)} \|_{L^{p_{2}}},
\end{equation}
$1 <p_{1}, p_{2}< \infty$, $1/p_{1}=1/p_{2}-1+\gamma$, $0 < \gamma <1$.

\textbf{Case 2}. When $\alpha \in (1/4, 1/2)$. Notice that we now have $\text{diam}(\Theta_{k})= \lambda^{-2\alpha}$. For each $(x,y) \in \Omega_{1}$,
\[|x-y| \le 2r,\]
which implies
\[\frac{\lambda^{1/2}}{|x-y|^{1/2}} \le C\frac{\lambda^{1/2}}{|x-y|^{1/4\alpha}},\]
where the constant $C$ depends on $r$.
So for each $x,y \in B(x_{0},r)$ satisfying $|x-y| \ge 2C_{1}C_{3} \lambda^{-2\alpha}$,
\begin{equation}
|K(x,y)| \le C \max\{\frac{\lambda^{1/2}}{|x-y|^{1/2}},|x-y|^{-1/2\alpha}\} \le C \max\{\frac{\lambda^{1/2}}{|x-y|^{1/4\alpha}},|x-y|^{-1/2\alpha}\},
\end{equation}
where the constant $C \ge 1$ depends on $C_{1}$, $C_{2}$ and $r$. When  $|x-y| \ge 2C_{1}C_{3} \lambda^{-2\alpha} $,
\[\frac{C\lambda^{1/2} |x-y|^{1/2\alpha}}{|x-y|^{1/4\alpha}} = C\lambda^{1/2} |x-y|^{1/4\alpha} \ge 1.\]
Thus
\begin{equation}
|K(x,y)| \le \frac{C\lambda^{1/2}}{|x-y|^{1/4\alpha}}.
\end{equation}
Then we have
\begin{align}
&\biggl| \int \int_{ \Omega_{1}}g(x) \bar{g}(y) K(x,y) dx dy \biggl|  \nonumber\\
&\le \int \int_{\{(x,y) \in \Omega_{1}: |x-y| < 2C_{1}C_{3}\lambda^{-2 \alpha}\}} |g(x) \bar{g}(y) K(x,y)| dxdy \nonumber\\
 &\quad  + \int \int_{\{(x,y) \in \Omega_{1}: |x-y| \ge 2C_{1}C_{3}\lambda^{-2\alpha}\}} |g(x) \bar{g}(y) K(x,y)| dxdy \nonumber\\
&\le \lambda \int \int_{\{(x,y) \in \Omega_{1}: |x-y| < 2C_{1}C_{3} \lambda^{-2\alpha}\}}  |x-y|^{1/4\alpha}  |x-y|^{-1/4\alpha} |g(x) \bar{g}(y)| dxdy \nonumber\\
&\quad + C\lambda^{1/2} \int \int_{\{(x,y) \in \Omega_{1}: |x-y| \ge 2C_{1}C_{3} \lambda^{-2\alpha}\}} |g(x) \bar{g}(y) | |x-y|^{-1/4\alpha}dxdy \nonumber\\
&\le C\lambda^{1/2} \int_{\mathbb{R}}\int_{\mathbb{R}} \chi_{B(x_{0},r)}(x) |g(x)| \chi_{B(x_{0},r)}(y) |\bar{g}(y) | \frac{1}{|x-y|^{1/4\alpha}} dx dy  \nonumber\\
&\le C\lambda^{1/2} \|g\|_{L^{8\alpha/(8\alpha-1)}(B(x_{0},r))}\biggl\||g |\chi_{B(x_{0},r)} \ast |\cdot|^{-1/4\alpha}\biggl\|_{L^{8\alpha}(B(x_{0},r))} \nonumber\\
&\le C\lambda^{1/2} \|g\|_{L^{8\alpha/(8\alpha-1)}(B(x_{0},r))}^{2}.
\end{align}
Here we applied the HLS inequality (\ref{HLS}).

\textbf{Case 3}. When $\alpha \in (0, 1/4]$, similar proof with the previous discussion in Case 1 and Case 2, we divide
\begin{align}
\biggl| \int \int_{ \Omega_{1}}g(x) \bar{g}(y) K(x,y) dx dy \biggl| &\le \int \int_{\{(x,y) \in \Omega_{1}: |x-y| < 2C_{1}C_{3} \lambda^{-2 \alpha}\}} |g(x) \bar{g}(y) K(x,y)|dxdy \nonumber\\
& + \int \int_{\{(x,y) \in \Omega_{1}: 2C_{1}C_{3} \lambda^{-2\alpha} \le |x-y| < \lambda^{-\alpha/(1-\alpha)}\}} |g(x) \bar{g}(y) K(x,y)| dxdy \nonumber\\
&+ \int \int_{\{(x,y) \in \Omega_{1}:  |x-y| \ge \lambda^{-\alpha/(1-\alpha)}\}}| g(x) \bar{g}(y) K(x,y)| dxdy.
\end{align}
Let's  estimate these three terms respectively. For the first term, by H\"{o}lder's inequality and $L^{2}$-estimate for the Hardy-Littlewood maximal function, we have
\begin{align}
&\int \int_{\{(x,y) \in \Omega_{1}: |x-y| < 2C_{1}C_{3} \lambda^{-2 \alpha}\}} |g(x) \bar{g}(y) K(x,y)|dxdy \nonumber\\
&\le C\lambda^{1-2\alpha} \int  M(|g| \chi_{B(x_{0},r)})(y) |\bar{g}(y)| \chi_{B(x_{0},r)}(y) dy \nonumber\\
&\le C\lambda^{1-2\alpha} \|M(|g| \chi_{B(x_{0},r)})\|_{L^{2}} \|g\|_{L^{2}(B(x_{0},r))} \nonumber\\
&\le C\lambda^{1-2\alpha}  \|g\|_{L^{2}(B(x_{0},r))}^{2}.
\end{align}
Let's turn to evaluate the second term. By H\"{o}lder's inequality and Schur's lemma,
\begin{align}
 &\int \int_{\{(x,y) \in \Omega_{1}: 2C_{1}C_{3}\lambda^{-2\alpha} \le |x-y| < \lambda^{-\alpha/(1-\alpha)}\}} |g(x) \bar{g}(y) K(x,y)| dxdy \nonumber\\
 &\le C \biggl\|\int_{\{x \in \mathbb{R}: 2C_{1}C_{3}\lambda^{-2\alpha} \le |x-y| < \lambda^{-\alpha/(1-\alpha)}\}}  |x-y|^{-1/2\alpha}\chi_{B(x_{0},r)}(x)|g(x)|dx \biggl\|_{L^{2} (B(x_{0},r))} \|g\|_{L^{2}(B(x_{0},r))} \nonumber\\
 &\le C \lambda^{1-2\alpha}  \|g\|_{L^{2}(B(x_{0},r))}^{2}.
\end{align}
For the last term, we can apply the HLS inequality and H\"{o}lder's inequality to obtain
\begin{align}
&\int \int_{\{(x,y) \in \Omega_{1}:  |x-y| \ge \lambda^{-\alpha/(1-\alpha)}\}}| g(x) \bar{g}(y) K(x,y)|dxdy \nonumber\\
&\le C\lambda^{1/2} \int_{\mathbb{R}}\int_{\mathbb{R}} |g(x)| \chi_{B(x_{0},r)}(x) |\bar{g}(y)| \chi_{B(x_{0},r)}(y) |x-y|^{-1/2} dx dy  \nonumber\\
&\le C\lambda^{1/2} \|g\|_{L^{4/3}(B(x_{0},r))}\biggl\||g| \chi_{B(x_{0},r)} \ast |\cdot|^{-1/2}\biggl\|_{L^{4}(\mathbb{R})} \nonumber\\
&\le C\lambda^{1/2} \|g\|_{L^{4/3}(B(x_{0},r))}^{2} \nonumber\\
&\le C\lambda^{1/2} \|g\|_{L^{2}(B(x_{0},r)}^{2}.
\end{align}
Notice that $g \in L^{2}(B(x_{0},r))$ here, and $1-2\alpha \ge 1/2$ since $\alpha \in (0,1/4]$.
 \end{proof}

\section{Proof of Theorem \ref{necessary bound for p}}\label{Necessary}

\textbf{Proof of Theorem \ref{necessary bound for p}.}
 The original idea of this proof comes from \cite[Proposition 1.5]{CLV}. Put
 \[\hat{f}(\xi)=\chi_{B(0,\lambda^{1/2})}(\xi).\]
 Then
 \[\|f\|_{H^{s}(\mathbb{R})} \le \lambda^{1/4} \lambda^{s/2}.\]
 By rescaling,
 \[|e^{it\Delta}f(\gamma(x,t))|= \lambda^{1/2} \biggl| \int_{B(0,1)}e^{i\lambda^{1/2}(x-t^{\alpha})\eta + i \lambda t \eta^{2}} d\eta \biggl|. \]
 If $t \in (0, \frac{1}{100} \lambda^{-1})$ and $x \in S:=\{y: |y-t^{\alpha}| \le \frac{1}{100} \lambda^{-1/2} \text{ for some } t \in (0, \frac{1}{100} \lambda^{-1})\}$, then
 \[|\lambda^{1/2}(x-t^{\alpha})\eta +  \lambda t \eta^{2}| \le 1/50\]
 and
 \[|e^{it\Delta}f(\gamma(x,t))| \ge \lambda^{1/2}.\]

 When $\alpha \in [1/2,1) $, we have $|S| \sim \lambda^{-1/2}$ and it follows from inequality (\ref{special Lp estimate}) that
 \[\lambda^{1/2-1/2p} \lesssim \lambda^{1/4+s/2}. \]
Apparently, $p$ can not be larger than $4$ when $s$ is sufficiently close to $1/4$, since $\lambda$ can be sufficiently large.

  When $\alpha \in (0,1/2) $, we get $|S| \sim \lambda^{-\alpha}$ and it follows from inequality (\ref{special Lp estimate}) that
 \[\lambda^{1/2-\alpha/p} \lesssim \lambda^{1/4+s/2}. \]
 It is easy to see that if $\alpha \in (1/4, 1/2)$, $p$ can not be larger than $8\alpha$ when $s$ is sufficiently close to $1/4$, since $\lambda$ can be sufficiently large. By the same reason, if $\alpha \in (0,1/4]$, $p$ can not be larger than $2$ when $s$ is sufficiently close to $1/2-\alpha$.
\hfill$\Box$

\section{Appendix}\label{Section 2}
In this appendix, we will prove Theorem \ref{Theorem3.1}. The original idea comes from \cite{Guth2} and \cite{DL}. The proof here looks long and technical, but we write most of the details for completeness.

\subsection{Wave packets decomposition}\label{Section of wave packets ddecomposition}
 We first introduce the wave packets decomposition for $f$.  Let $\varphi $ be a Schwartz function from $\mathbb{R}$ to $\mathbb{R}$, $\hat{\varphi} $ is non-negative and supported in a small neighborhood of the origin, and identically equal to $1$ in another smaller interval. Denote by $\theta  = \prod\nolimits_{j = 1}^2 {\theta _j} $ the rectangle in the frequency space with center $\left( {c\left( {{\theta _1}} \right),c\left( {{\theta _2}} \right)} \right)$ and
\[{\widehat{\varphi _\theta} }\left( {{\xi _1},{\xi _2}} \right) = \prod\limits_{j = 1}^2 {\frac{1}{{{{\left| {{\theta _j}} \right|}^{1/2}}}}\widehat\varphi } \left( {\frac{{{\xi _j} - c\left( {{\theta _j}} \right)}}{{\left| {{\theta _j}} \right|}}} \right).\]
A rectangle $\nu $ in the physical space is said to be dual to $\theta $ if $\left| {{\theta _j}} \right|\left| {{\nu _j}} \right| = 1,j = 1,2,$ and $\left( {\theta ,\nu } \right)$ is said to be a tile. Let $\boldsymbol{\rm T}$ be a collection of all tiles with fixed dimensions and coordinate axes. Define
\[\widehat{{\varphi}_{\theta, \nu}} \left( \xi  \right) = {e^{ - ic\left( \nu  \right) \cdot \xi}}{\widehat{\varphi _\theta }}\left( \xi  \right).\]
It is well-known that a Schwartz function $f$ can be decomposed by
\[f = \sum_{(\theta, \nu) \in \boldsymbol{\rm T}} \langle f, \varphi_{\theta, \nu}  \rangle \varphi_{\theta, \nu},\]
  and
\[\|f\|_{L^{2}}^{2} \sim \sum_{(\theta, \nu) \in \boldsymbol{\rm T}}  |\langle f, \varphi_{\theta, \nu}\rangle|^{2}. \]

Define $\tilde{\varphi}_{\theta, \nu}$ whose Fourier transform is given by
\[\widehat{\tilde{{\varphi}}_{\theta, \nu}} \left( \xi  \right) = e^{-iR\gamma(0) \cdot \xi}\widehat{{\varphi}_{\theta, \nu}} \left( \xi  \right) .\]
We claim that
\begin{equation}\label{wave packet decomposition}
f = \sum\limits_{\left( {\theta ,\nu } \right) \in \boldsymbol{\rm T}} {{f_{\theta ,\nu }}}  = \sum\limits_{\left( {\theta ,\nu } \right) \in \boldsymbol{\rm T}} {\left\langle {f,{\tilde{\varphi}_{\theta ,\nu }}} \right\rangle } {\tilde{\varphi }_{\theta ,\nu }}.
\end{equation}

Indeed, by Plancherel's theorem, for each Schwartz function $f$,
\[\hat{f}(\xi)=  \sum_{(\theta, \nu) \in \boldsymbol{\rm T}} \langle \hat{f}, \widehat{\tilde{\varphi}_{\theta, \nu} }  \rangle \widehat{\tilde{\varphi}_{\theta, \nu} }(\xi). \]
Replacing $\hat{f}$ by $e^{iR\gamma(0) \cdot \xi} \hat{f}$, we have
\begin{align}
e^{iR\gamma(0) \cdot \xi}\hat{f}(\xi)= \sum_{(\theta, \nu) \in \boldsymbol{\rm T}} \langle e^{iR\gamma(0) \cdot \xi} \hat{f}, \widehat{\tilde{\varphi}_{\theta, \nu} }  \rangle \widehat{\tilde{\varphi}_{\theta, \nu} }(\xi) = \sum_{(\theta, \nu) \in \boldsymbol{\rm T}} \langle \hat{f}, e^{-iR\gamma(0) \cdot \xi}\widehat{\tilde{\varphi}_{\theta, \nu} }  \rangle \widehat{\tilde{\varphi}_{\theta, \nu} }(\xi).
\end{align}
Then
\begin{align}
\hat{f}(\xi)= \sum_{(\theta, \nu) \in \boldsymbol{\rm T}} \langle \hat{f}, e^{-iR\gamma(0) \cdot \xi}\widehat{\tilde{\varphi}_{\theta, \nu} }  \rangle e^{-iR\gamma(0) \cdot \xi} \widehat{\tilde{\varphi}_{\theta, \nu} }(\xi).
\end{align}
The claim follows by applying the Plancherel's theorem again. And it is not hard to check that
\[\|f\|_{L^{2}}^{2} \sim \sum_{(\theta, \nu) \in \boldsymbol{\rm T}}  |\langle f, \tilde{\varphi}_{\theta, \nu}\rangle|^{2}. \]

Next, we consider the localization of $e^{it\Delta} \tilde{\varphi}_{\theta, \nu}(x+R\gamma(\frac{t}{R^{2}})) $ in $B(0,R) \times [0,R]$. In fact,
\begin{align}
 &e^{it\Delta} \tilde{\varphi}_{\theta, \nu}(x+R\gamma(\frac{t}{R^{2}})) \nonumber\\
 &= \frac{1}{{\sqrt R }}\int_{\mathbb{R}^2} {{e^{i\left( {\left( {x - c\left( \nu  \right)} \right) \cdot \left( {{R^{ - 1/2}}\eta  + c\left( \theta  \right)} \right) +[R\gamma(\frac{t}{R^{2}})-R\gamma(0)]  \cdot \left( {{R^{ - 1/2}}\eta  + c\left( \theta  \right)} \right) + t{{\left| {{R^{ - 1/2}}\eta  + c\left( \theta  \right)} \right|}^2}} \right)}}\prod\limits_{j = 1}^2 {\widehat\varphi } \left( {{\eta _j}} \right)} d\eta \nonumber.
\end{align}
After simple calculation,
\[\biggl|R\gamma(\frac{t}{R^{2}}) -R\gamma(0) \biggl|  \le C_{\alpha}R^{1/2}.\]
Put
\begin{equation}\label{large tube}
{T_{\theta ,\nu }}:= \left\{ {\left( {x,t} \right):0 \le t \le  R,\left| {x- c\left( \nu  \right) + 2tc\left( \theta  \right)} \right| \le {R^{\frac{1}{2} + \delta }}} \right\},\delta  \ll \varepsilon,
\end{equation}
which is a tube with direction
$G\left( \theta  \right) = \left( { - 2c\left( \theta  \right),1} \right)$. Integration by parts implies that in $B\left( {0,R} \right) \times \left[ {0,R} \right],$
\begin{equation}\label{Eq25}
\left| e^{it\Delta} \tilde{\varphi}_{\theta, \nu}(x+R\gamma(\frac{t}{R^{2}})) \right| \le \frac{1}{{\sqrt R }}{\chi^* _{{T_{\theta ,\nu }}}}\left( {x,t} \right),
\end{equation}
where $\chi^* _{{T_{\theta ,\nu }}}$ denotes a bump function satisfying $\chi^* _{{T_{\theta ,\nu }}}=1$ on the tube ${T_{\theta ,\nu }}$, and $\chi^* _{{T_{\theta ,\nu }}}=O(R^{-1000})$ outside ${T_{\theta ,\nu }}$. So we can essentially treat $\chi^* _{{T_{\theta ,\nu }}}$ by $\chi _{{T_{\theta ,\nu }}}$ which is the indicator function on the tube $T_{\theta ,\nu }$.

We do not know if  the Fourier transform of  ${e^{it\Delta}}{\tilde{\varphi} _{\theta ,\nu }}(x+R\gamma(\frac{t}{R^{2}}))$ is concentrated near a paraboloid. This  brings the main difficulty for us to apply the decoupling method to improve our result in Theorem \ref{Theorem3.1}.

  \subsection{ Proof of Theorem \ref{Theorem3.1}}\label{Section 5}
To prove Theorem \ref{Theorem3.1}, we just consider the case $p = 3.2$, other cases will follow from  H\"{o}lder's inequality. Next we will show inequality (\ref{main reduction estimate}) via induction on both $R$ and $A$.

Fix $\gamma \in \Gamma_{\alpha, R^{-1}}$. We say that we are in the algebraic case if there is a transverse complete intersection $Z\left( P \right)$ of dimension $2$, where $\deg P\left( z \right) \le D = D\left( \varepsilon  \right)$ (we will give the definition about $D\left( \varepsilon  \right)$ later), so that
\begin{align}\label{Eq51}
&\left\| {{e^{it\Delta}}f(x+R\gamma(\frac{t}{R^{2}}))} \right\|_{BL_{k,A}^p{L^q}\left( {B\left( {0,R} \right) \times \left[ {0,R} \right]} \right)}^{} \nonumber\\
&\le C\left\| {{e^{it\Delta}}f(x+R\gamma(\frac{t}{R^{2}}))} \right\|_{BL_{k,A}^p{L^q}(\left( {B\left( {0,R} \right) \times \left[ {0,R} \right]) \cap {N_{{R^{1/2 + \delta }}}}\left( {Z\left( P \right)} \right)} \right)}^{},
\end{align}
here $N_{R^{1/2+\delta}}(Z(P))$ denotes the $R^{1/2+\delta}$-neighborhood of $Z(P)$. Otherwise we are in the cellular case.

We will use polynomial partitioning as in \cite{DL}. There exist a non-zero polynomial $P\left( z \right) = \prod\limits_l {{Q_l}\left( z \right)} $ of degree at most $D$ and 2-dimensional transverse complete intersection $Z\left( P \right)$, such that $\left( {{\mathbb{R}^2} \times \mathbb{R}} \right)\backslash Z\left( P \right)$ is a union of $\sim {D^3}$ disjoint cells ${{\rm O}_i}$, and for each $i$, we have
\[\left\| {{e^{it\Delta}}f(x+R\gamma(\frac{t}{R^{2}}))} \right\|_{BL_{k,A}^p{L^q}\left( {B\left( {0,R} \right) \times \left[ {0,R} \right]} \right)}^p \le C{D^3}\left\| {{e^{it\Delta}}f(x+R\gamma(\frac{t}{R^{2}}))} \right\|_{BL_{k,A}^p{L^q}\left( {\left( {B\left( {0,R} \right) \times \left[ {0,R} \right]} \right) \cap {{\rm O}_i}} \right)}^p.\]

Set
\[W := {N_{{R^{1/2 + \delta }}}}\left( {Z\left( P \right)} \right),\hspace{0.2cm}{\rm O}_i^{'} := {\rm O}_i^{}\backslash W.\]
\textbf{Cellular case.}  In this case, $W \subset  \cup {}_l{N_{{R^{1/2 + \delta }}}}\left( {Z\left( {{Q_l}} \right)} \right),$ the contribution from $W$ is negligible. Hence for each $i$,
\begin{align}\label{Eq52}
&\left\| {{e^{it\Delta}}f(x+R\gamma(\frac{t}{R^{2}}))} \right\|_{BL_{k,A}^p{L^q}\left( {B\left( {0,R} \right) \times \left[ {0,R} \right]} \right)}^p \nonumber\\
&\le C{D^3}\left\| {{e^{it\Delta}}f(x+R\gamma(\frac{t}{R^{2}}))} \right\|_{BL_{k,A}^p{L^q}\left( {\left( {B\left( {0,R} \right) \times \left[ {0,R} \right]} \right) \cap {\rm O}_i^{'}} \right)}^p.
\end{align}

We do wave packets decomposition for $f$ as inequality (\ref{wave packet decomposition}) in $B(0,R) \times [0,R]$. For each cell ${\rm O}_i^{'},$ we put
\[{{\rm T}_i} := \left\{ {\left( {\theta ,\nu } \right) \in \boldsymbol{\rm T}:{T_{\theta ,\nu}} \cap {\rm O}_i^{'} \ne \emptyset} \right\}.\]
For the function $f$, we define
\[{f_i}: = \sum\limits_{\left( {\theta ,\nu} \right) \in {{\rm T}_i}} {{f_{\theta ,\nu}}} .\]
It follows that on ${\rm O}_i^{'},$
\[\biggl| {e^{it\Delta}}f(x+R\gamma(\frac{t}{R^{2}}))\biggl| \sim \biggl|{e^{it\Delta}}{f_i}(x+R\gamma(\frac{t}{R^{2}}))\biggl|.\]
By Fundamental Theorem of Algebra, see \cite{Guth2}, for each $\left( {\theta ,\nu} \right) \in \boldsymbol{\rm T},$ we have
\[Card\left\{ {i:\left( {\theta ,\nu} \right) \in {{\rm T}_i}} \right\} \le D + 1.\]
Hence
\[\sum\limits_i {\left\| {{f_i}} \right\|_{{L^2}}^2}  \le CD\left\| f \right\|_{{L^2}}^2,\]
by pigeonhole principle, there exists ${\rm O}_i^{'}$ such that
\begin{equation}\label{Eq53}
\left\| {{f_i}} \right\|_{{L^2}}^2 \le C{D^{ - 2}}\left\| f \right\|_{{L^2}}^2.
\end{equation}
So for such $i$, by inequality (\ref{Eq52}) we get
\begin{align}\label{decompose via R/2}
&\left\| {{e^{it\Delta}}f}(x+R\gamma(\frac{t}{R^{2}})) \right\|_{BL_{k,A}^p{L^q}\left( {B\left( {0,R} \right) \times \left[ {0,R} \right]} \right)}^p  \nonumber\\
&\le C{D^3}\left\| {{e^{it\Delta}}{f_i}(x+R\gamma(\frac{t}{R^{2}}))} \right\|_{BL_{k,A}^p{L^q}\left( {\left( {B\left( {0,R} \right) \times \left[ {0,R} \right]} \right) \cap {\rm O}_i^{'}} \right)}^p  \nonumber\\
  &\le C{D^3}\sum\limits_{{B_{R/2}}\;{\mathop{\rm cov}} er\;B\left( {0,R} \right) \times [0,R]} {\left\| {{e^{it\Delta}}{f_i}(x+R\gamma(\frac{t}{R^{2}})) } \right\|_{BL_{k, A }^p{L^q}\left( {{B_{R/2}} } \right)}^p}.
    \end{align}
  In order to apply the induction on $R$, we need the following \textbf{observation}. Suppose that the projection of $B_{\frac{R}{2}}$ on $t$-direction is contained in an interval $[t_{0}, t_{0} + R/2]\subset [0,R]$. We write $R\gamma(\frac{t}{R^{2}})=\frac{R}{2}2\gamma(\frac{t}{(R/2)^{2}} \frac{1}{4}):= \frac{R}{2}\bar{\gamma}(\frac{t}{(R/2)^{2}} )$, then check that  the function $\bar{\gamma}(t) :=2\gamma(\frac{t}{4})$ satisfies
  \begin{equation}\label{observation}
  |\bar{\gamma}(t)- \bar{\gamma}(t^{\prime})| \le C_{\alpha} |t-t^{\prime}|^{\alpha},
  \end{equation}
   for each $t,t^{\prime} \in [t_{0}/(\frac{R}{2})^{2}, (t_{0} + \frac{R}{2})/(\frac{R}{2})^{2} ]$. Indeed, for such $t, t^{\prime}$,  it is easy to see that $t/4, t^{\prime}/4 \in [t_{0}/R^{2}, (t_{0} + \frac{R}{2})/R^{2}] \subset [0,R^{-1}]$. Since $\gamma \in \Gamma_{\alpha, R^{-1}}$ and $\alpha \in [1/2,1)$, we obtain
   \[|\bar{\gamma}(t)-\bar{\gamma}(t^{\prime})| =\biggl|2 \gamma(\frac{t}{4}) -2 \gamma(\frac{t^{\prime}}{4})\biggl| \le 2^{1-2\alpha}C_{\alpha} |t-t^{\prime}|^{\alpha} \le C_{\alpha} |t-t^{\prime}|^{\alpha}, \]
   and inequality (\ref{observation}) is established.
    So we can apply the induction for $R/2$ and translation invariance to get
    \begin{equation}\label{induction on R/2}
    {\left\| {{e^{it\Delta}}{f_i}(x+R\gamma(\frac{t}{R^{2}})) } \right\|_{BL_{k, A }^p{L^q}\left( {{B_{R/2}} } \right)}^p} \le C(K, \varepsilon, C_{\alpha}) (\frac{R}{2})^{(log\bar{A} - log A)\delta} (\frac{R}{2})^{\varepsilon}\|f_{i}\|_{L^{2}}^{p}.
    \end{equation}
    Then it follows from inequalities (\ref{decompose via R/2}), (\ref{induction on R/2}), (\ref{Eq53}) that
  \begin{align}
\left\| {{e^{it\Delta}}f}(x+R\gamma(\frac{t}{R^{2}})) \right\|_{BL_{k,A}^p{L^q}\left( {B\left( {0,R} \right) \times \left[ {0,R} \right]} \right)}^p \le C{D^{3 - p}}{\left( {C\left( {K,\varepsilon, C_{\alpha} } \right) R^{\delta (log\bar{ A }- log A)} {R^\varepsilon }{{\left\| f \right\|}_{{L^2}}}} \right)^p}. \nonumber
 \end{align}
We choose $D=D(\varepsilon)$ sufficiently large such that $C{D^{3 - p}} \ll 1,$ this completes the induction in the cellular case.

\textbf{ Algebraic case.} We decompose $B\left( {0,R} \right) \times \left[ {0,R} \right]$ into balls ${B_j}$ of radius $\rho ,$ ${\rho ^{1/2 + {\delta _2}}} = {R^{1/2 + \delta }}.$ Choose ${\delta _2}\gg \delta,$ so that $\rho  \sim {R^{1 - O\left( {{\delta _2}} \right)}}.$ For each $j$ we define
\[{{\rm T}_j}: = \left\{ {\left( {\theta ,\nu} \right) \in \boldsymbol{\rm T}:{T_{\theta ,\nu}} \cap {N_{{R^{1/2 + \delta }}}}\left( {Z\left( P \right)} \right) \cap {B_j} \ne \emptyset } \right\},\]
and
\[  {f_j}: = \sum\limits_{\left( {\theta ,\nu} \right) \in {{\rm T}_j}} {{f_{\theta ,\nu}}} .\]
On each ${B_j} \cap {N_{{R^{1/2 + \delta }}}}\left( {Z\left( P \right)} \right)$, we have
\[ \biggl| {e^{it\Delta}}f(x+R\gamma(\frac{t}{R^{2}}))\biggl| \sim \biggl|{e^{it\Delta}}{f_j}(x+R\gamma(\frac{t}{R^{2}}))\biggl|.\]
Therefore,
\[\left\| {{e^{it\Delta}}f(x+R\gamma(\frac{t}{R^{2}}))} \right\|_{BL_{k,A}^p{L^q}\left( {B\left( {0,R} \right) \times [0,R]} \right)}^p \le \sum\limits_j {\left\| {{e^{it\Delta}}{f_j} (x+R\gamma(\frac{t}{R^{2}}))} \right\|_{BL_{k,A}^p{L^q}\left( {{B_j} \cap {N_{{R^{1/2 + \delta }}}}\left( {Z\left( P \right)} \right)} \right)}^p} .\]

We further divide ${{\rm T}_j}$ into tubes which are tangential to $Z(P)$ and tubes which are transverse to $Z(P)$. We say that ${T_{\theta ,\nu }}$ is tangential to $Z(P)$ in ${B_j}$ if the following two conditions hold:

\textbf{Distance condition:}
\[{T_{\theta ,\nu}} \cap 2{B_j} \subset {N_{{R^{1/2 + \delta }}}}\left( {Z\left( P \right)} \right) \cap 2{B_j} = {N_{{\rho ^{1/2 + {\delta _2}}}}}\left( {Z\left( P \right)} \right) \cap 2{B_j}.\]

\textbf{Angle condition:}
If $z \in Z(P) \cap {N_{O\left( {{R^{1/2 + \delta }}} \right)}}\left( {{T_{\theta ,\nu}}} \right) \cap 2{B_j} = Z(P) \cap {N_{O\left( {{\rho ^{1/2 + {\delta _2}}}} \right)}}\left( {{T_{\theta ,\nu}}} \right) \cap 2{B_j},$ then
\[Angle\left( {G\left( \theta  \right),{T_z}Z(P)} \right) \le C{\rho ^{ - 1/2 + {\delta _2}}}.\]
Here $T_{z}Z(P)$ denotes the tangential plane of $Z(P)$ at point $z$. The tangential wave packets are defined by
\[{{\rm T}_{j,{\rm{tang}}}} := \left\{ {\left( {\theta ,\nu} \right) \in {{\rm T}_j}:{T_{\theta ,\nu }} \text{ is tangential to } Z(P) \text{ in } {B_j}} \right\},\]
and the transverse wave packets
\[{{\rm T}_{j,trans}}: = {{\rm T}_j}\backslash {{\rm T}_{j,{\rm{tang}}}}.\]
Setting
\[{f_{j,{\rm{tang}}}}: = \sum\limits_{\left( {\theta ,\nu} \right) \in {{\rm T}_{j,{\rm{tang}}}}} {{f_{\theta ,\nu}}} , \hspace{0.8cm} {f_{j,{\rm{trans}}}}: = \sum\limits_{\left( {\theta ,\nu} \right) \in {{\rm T}_{j,{\rm{trans}}}}} {{f_{\theta ,\nu }}} ,\]
so
\[{f_j} = {f_{j,{\rm{tang}}}} + {f_{j,{\rm{trans}}}}.\]
Hence, we have
\begin{align}
 \left\| {{e^{it\Delta}}f(x+R\gamma(\frac{t}{R^{2}}))} \right\|_{BL_{k,A}^p{L^q}\left( {B\left( {0,R} \right) \times [0,R]} \right)}^p &\le \sum\limits_j {\left\| {{e^{it\Delta}}{f_j}(x+ R\gamma(\frac{t}{R^{2}})) } \right\|_{BL_{k,A}^p{L^q}\left( {{B_j}} \right)}^p}  \nonumber\\
  &\le \sum\limits_j {\left\| {{e^{it\Delta}}{f_{j,{\rm{tang}}}}(x+ R\gamma(\frac{t}{R^{2}})) } \right\|_{BL_{k,\frac{A}{2}}^p{L^q}\left( {{B_j}} \right)}^p} \nonumber\\
   \:\ & + \sum\limits_j {\left\| {{e^{it\Delta}}{f_{j,{\rm{trans}}}}(x+ R\gamma(\frac{t}{R^{2}})) } \right\|_{BL_{k,\frac{A}{2}}^p{L^q}\left( {{B_j}} \right)}^p}.  \nonumber
 \end{align}
We will treat the tangential term and the transverse term respectively.

\textbf{ Algebraic transverse case.} In this case, the transverse term dominates, by induction on the radius $R$,
\begin{align}
 \left\| {{e^{it\Delta}}{f_{j,{\rm{trans}}}} (x+ R\gamma(\frac{t}{R^{2}}))} \right\|_{BL_{k,\frac{A}{2}}^p{L^q}\left( {{B_j}} \right)}^{} &= \left\| {{e^{it\Delta}}{f_{j,{\rm{trans}}}}(x+ \rho \bar{\gamma}(\frac{t}{\rho^2})) } \right\|_{BL_{k,\frac{A}{2}}^p{L^q}\left( B_{j} \right)} \nonumber\\
  &\le C\left( {K,\varepsilon, C_{\alpha} } \right){\rho^{\delta \left( {\log \overline A  - \log \overline {\frac{A}{2}} } \right)}}{ \rho  ^\varepsilon }{\left\| {{f_{j,{\rm{trans}}}}} \right\|_{{L^2}}} \nonumber\\
  &\le {R^{ O(\delta)  - \varepsilon O\left( {{\delta _2}} \right)}}C\left( {K,\varepsilon, C_{\alpha}} \right) R^{\delta (log \bar{A} - log A)}{R^\varepsilon }{\left\| {{f_{j,{\rm{trans}}}}} \right\|_{{L^2}}}. \nonumber
  \end{align}
Here we used the induction on $R$, and the similar observation for the function $\bar{\gamma}(t)=\frac{R}{\rho} \gamma(t \frac{\rho^{2}}{R^{2}})$ as we did to establish inequality (\ref{induction on R/2}). By Subsection 8.4 in \cite{Guth2} we have
\begin{equation}\label{Eq54}
\sum\limits_j {\left\| {{f_{j,{\rm{trans}}}}} \right\|_{{L^2}}^2}  \le C\left( D \right)\left\| f \right\|_{{L^2}}^2.
\end{equation}
Then
\begin{align}
& \sum\limits_j {\left\| {{e^{it\Delta}}{f_{j,{\rm{trans}}}} (x+R\gamma(\frac{t}{R^{2}}))} \right\|_{BL_{k,\frac{A}{2}}^p{L^q}\left( {{B_j}} \right)}^p}  \nonumber\\
 &\le {R^{O(\delta)  - \varepsilon O\left( {{\delta _2}} \right)}}{\left[ {C\left( {K,\varepsilon, C_{\alpha} } \right) R^{\delta (log \bar{A} - log A)} {R^\varepsilon }} \right]^p}\sum\limits_j {\left\| {{f_{j,{\rm{trans}}}}} \right\|_{{L^2}}^p}  \nonumber\\
  &\le {R^{ O(\delta) - \varepsilon O\left( {{\delta _2}} \right)}}C\left( D \right){\left[ {C\left( {K,\varepsilon, C_{\alpha} } \right) R^{\delta (log \bar{A} - log A)} {R^\varepsilon }\left\| f \right\|_{{L^2}}^{}} \right]^p}. \nonumber
\end{align}
The induction follows by choosing $\varepsilon \delta_{2} \gg \delta$ and the fact that $R$ is sufficiently large.

\textbf{Algebraic tangential case.} In this case, the tangential term dominates, we need to do wave packets decomposition in ${B_j}$ at scale $\rho .$

\textbf{Wave packets decomposition in $\boldsymbol{B_j}$.} Choose tiles $\left( {\overline \theta  ,\overline {\nu}  } \right)$ as in Subsection \ref{Section of wave packets ddecomposition},  where $\overline \theta$ is a ${\rho ^{ - 1/2}}$-cube in frequency space and $\overline \nu$ is a ${\rho ^{1/2}}$-cube in physical space. Let $\bar{\boldsymbol{\rm T}}$ be a collection of all
such tiles with fixed dimensions and coordinate axes. Put
\begin{equation}\label{base for small ball}
\widehat{{\varphi _{\overline \theta  ,\overline \nu  }}}\left( \xi  \right) = {e^{  - i{x_j} \cdot \xi - i{t_j}{{\left| \xi  \right|}^2}- ic\left( {\overline \nu  } \right) \cdot \xi }}{\widehat{\varphi _{\overline \theta  }}}\left( \xi  \right),
\end{equation}
\[{\widehat{\varphi _{\overline \theta  }}}\left( {{\xi _1},{\xi _2}} \right) = \frac{1}{{{\rho ^{ - 1/2}}}}\prod\limits_{j = 1}^2 {\widehat\varphi } \left( {\frac{{{\xi _j} - c\left( {{\theta _j}} \right)}}{{{\rho ^{ - 1/2}}}}} \right).\]
Assume that $(x_{_{j}},t_{j})$ is the center of the ball $B_{j}$.
We set
\[\widehat{{{\tilde{\varphi}}_{\overline \theta  ,\overline \nu  }}}(\xi) = {e^{  -iR\gamma(\frac{t_{j}}{R^{2}}) \cdot \xi  }}\widehat{{\varphi _{\overline \theta  ,\overline \nu  }}}\left( \xi  \right),\]
then
\begin{equation}\label{Eq56}
f = \sum\limits_{\left( {\overline \theta  ,\overline \nu  } \right) \in {\bar{\boldsymbol{\rm T}}}} {\left\langle {f,{{\tilde{\varphi}}_{\overline \theta  ,\overline \nu  }}} \right\rangle } {\tilde{\varphi}_{\overline \theta  ,\overline \nu  }}.
\end{equation}
Therefore,
\[{e^{it\Delta}}f (x+R\gamma(\frac{t}{R^{3}}))= \sum\limits_{\left( {\overline \theta  ,\overline \nu  } \right) \in \bar{\boldsymbol{\rm T}}} {\left\langle {f,{{\tilde{\varphi}}_{\overline \theta  ,\overline \nu  }}} \right\rangle } {\tilde{\psi}_{\overline \theta  ,\overline \nu  }},\]
where
\[{\tilde{\psi}_{\overline \theta  ,\overline \nu  }}(x,t) = {e^{it\Delta}}{\tilde{\varphi}_{\overline \theta  ,\overline \nu  }}(x+R\gamma(\frac{t}{R^{2}})).\]

By similar argument  as in  Subsection \ref{Section of wave packets ddecomposition},  we restrict $(x,t) \in {B_j}$, then
\begin{equation}\label{small tube decay}
\left| {{{\tilde{\psi}}_{\overline \theta  ,\overline \nu  }}\left( {x,t} \right)} \right| \le {\rho ^{ - 1/2}}{\chi _{{T_{\overline \theta  ,\overline \nu  }}}}\left( {x,t} \right),
\end{equation}
where the tube ${T_{\overline \theta  ,\overline \nu  }}$ is defined by
\begin{equation}\label{small tube}
{T_{\overline \theta  ,\overline \nu  }} := \left\{ {\left( {x,t} \right) \in {B_j}:\left| {x - {x_j} - c\left( {\overline \nu  } \right) + 2c\left( {\overline \theta  } \right)\left( {t - {t_j}} \right)} \right| \le {\rho ^{1/2 + \delta }},\left| {t - {t_j}} \right| \le \rho } \right\}.
\end{equation}
For each $\left( {\theta ,\nu } \right) \in {{\rm T}_{j,{\rm{tang}}}},$ we consider the decomposition of ${f_{\theta ,\nu }}$ in $B_{j}$,
\[{f_{\theta ,\nu}} = \sum\limits_{\left( {\overline \theta  ,\overline \nu  } \right) \in {\bar{\boldsymbol{\rm T}}}} {\left\langle {{f_{\theta ,\nu }},{{\tilde{ \varphi}  }_{\overline \theta  ,\overline \nu  }}} \right\rangle } {\tilde{\varphi}_{\overline \theta  ,\overline \nu  }},\]
wave packets $\left( {\overline \theta  ,\overline \nu  } \right)$ which contribute to ${f_{\theta ,\nu }}$ satisfy
 \begin{equation}\label{Eq57}
\left| {c\left( \theta  \right) - c\left( {\overline \theta  } \right)} \right| \le 2{\rho ^{ - 1/2}},
\end{equation}
and
\begin{equation}\label{Eq58}
\left| {c\left( \nu  \right) - c\left( {\overline \nu  } \right) - {x_j}   - 2{t_j}c\left( \theta  \right)} \right| \le {R^{1/2 + \delta }}.
\end{equation}

From inequality (\ref{Eq57}) we know that
 \begin{equation}\label{Eq59}
Angle\left( {G\left( \theta  \right),G\left( {\overline \theta  } \right)} \right) \le 2{\rho ^{ - 1/2}},
 \end{equation}
and inequality (\ref{Eq58}) implies that if $\left( {x,t} \right) \in {T_{\overline \theta  ,\overline \nu  }},$ then
 \begin{equation}\label{Eq510}
\left| {x - c\left( \nu  \right) + 2c\left( \theta  \right)t} \right| \le C{R^{1/2 + \delta }},
 \end{equation}
i.e., ${T_{\overline \theta  ,\overline \nu  }} \subset {N_{{R^{1/2 + \delta }}}}\left( {{T_{\theta ,\nu}\cap B_{j}}} \right).$

We introduce the definition of ${\left( {{R^{'}}} \right)^{ - 1/2 + {\delta _m}}}$-tangent to $Z$ in $B$ with radius $R^{'}$, $\delta_m$ is a small positive constant, $m =1, 2$. Suppose that $Z = Z\left( {{P_1},...,{P_{3 - m}}} \right)$ is a transverse complete intersection in ${\mathbb{R}^2} \times \mathbb{R}.$ We say that ${T_{\bar{\theta} ,\bar{\nu} }}$(with scale ${R^{'}}$) is ${\left( {{R^{'}}} \right)^{ - 1/2 + {\delta _m}}}$-tangent to $Z$ in $B$ if the following two conditions hold:

 \textbf{(1) Distance condition:}
\[{T_{\bar{\theta} ,\bar{\nu} }} \subset {N_{{{\left( {{R^{'}}} \right)}^{1/2 + {\delta _m}}}}}\left( Z \right) \cap B.\]

\textbf{(2) Angle condition:} If $z \in Z \cap {N_{O\left( {{{\left( {{R^{'}}} \right)}^{1/2 + {\delta _m}}}} \right)}}\left( {{T_{\bar{\theta} ,\bar{\nu} }}} \right) \cap B,$ then
\[Angle\left( {G\left( \bar{\theta}  \right),{T_z}Z} \right) \le C{\left( {{R^{'}}} \right)^{ - 1/2 + {\delta _m}}}.\]
Moreover, set
\[{{\rm T}_Z}: = \left\{ {\left( {\bar{\theta} ,\bar{\nu} } \right):{T_{\bar{\theta} ,\bar{\nu} }} \;{\rm{is}}\; {\left( {{R^{'}}} \right)^{ - 1/2 + {\delta _m}}}{\rm{-tangent}}\;{\rm{to}}\;Z\;{\rm{in}}\;B} \right\},\]
we say that $f$ is concentrated in wave packets from ${{\rm T}_{Z}}$ in $B$  if there is some $\gamma \in \Gamma_{\alpha, R^{-1}}$, such that for each $(\bar{\theta}, \bar{\nu}) \in \bar{\boldsymbol{\rm T}}$, $e^{it\Delta} f_{\bar{\theta}, \bar{\nu}} (x + R\gamma(\frac{t}{R^{2}}))$ is essentially supported in $T_{\bar{\theta}, \bar{\nu}}$, and
\[\sum\limits_{\left( {\bar{\theta} ,\bar{\nu} } \right) \notin {\rm{T}_Z}} {\left\| {{f_{\bar{\theta} ,\bar{\nu} }}} \right\|_{{L^2}}^{}}  \le RapDec\left( {{R^{'}}} \right)\left\| f \right\|_{{L^2}}^{}.\]

We claim that new wave packets of ${f_{j,{\rm{tang}}}}$ are ${\rho ^{ - 1/2 + {\delta _2}}}$-tangent to $Z\left( P \right)$ in ${B_j}$ (note that we do not make a separate notation for convenience). In fact, if  $z \in Z \cap {N_{O\left( {{\rho ^{1/2 + {\delta _2}}}} \right)}}\left( {{T_{\overline \theta  ,\overline \nu  }}} \right) \cap {B_j},$ $z \in Z \cap {N_{O\left( {{\rho ^{1/2 + {\delta _2}}}} \right)}}\left( {{T_{\theta ,\nu }}} \right) \cap {B_j},$
\[Angle\left( {G\left( {\overline \theta  } \right),{T_z}Z} \right) \le Angle\left( {G\left( {\overline \theta  } \right),G\left( \theta  \right)} \right) + Angle\left( {G\left( \theta  \right),{T_z}Z} \right) \le C\rho {^{-1/2 + {\delta _2}}}.\]
Also,
\[{T_{\overline \theta  ,\overline \nu  }} \subset {N_{{R^{1/2 + \delta }}}}\left( {{T_{\theta ,\nu }\cap B_{j}}} \right) \cap {B_j} = {N_{{\rho ^{1/2 + {\delta _2}}}}}\left( {{T_{\theta ,\nu }\cap B_{j} }} \right) \cap {B_j} \subset {N_{O\left( {{\rho ^{1/2 + {\delta _2}}}} \right)}}\left( Z \right) \cap {B_j}.\]
Moreover, for each $t\in \{t:(x,t) \in B_{j}\}$,
\begin{align}
&\sum_{(\bar{\theta},\bar{\nu }) \notin \rm{T}_{Z}} \|f_{\bar{\theta}, \bar{\nu} }\|_{L^{2}} \le \sum_{(\theta, \nu) \in T_{j,tang}} \sum_{(\bar{\theta},\bar{\nu }) \notin \rm{T}_{Z}} \biggl|  \langle f_{\theta, \nu}, \tilde{\varphi}_{\bar{\theta}, \bar{\nu} } \rangle \biggl|   \nonumber\\
&\le \sum_{(\theta, \nu) \in T_{j,tang}} \biggl|   \langle f_{\theta, \nu}, \tilde{ \varphi}_{\theta,  \nu } \rangle \biggl|   \sum_{(\bar{\theta},\bar{\nu }) \notin \rm{T}_{Z}} \biggl|   \langle \tilde{\varphi}_{\theta, \nu}, \tilde{\varphi}_{\bar{\theta}, \bar{\nu} } \rangle \biggl|   \nonumber\\
&\le  \biggl( \sum_{(\theta, \nu) \in T_{j,tang}} \biggl| \langle f_{\theta, \nu},  \tilde{\varphi}_{\theta,  \nu } \rangle \biggl|^{2} \biggl)^{1/2} \biggl[ \sum_{(\theta, \nu) \in T_{j,tang}} \biggl( \sum_{(\bar{\theta},\bar{\nu }) \notin \rm{T}_{Z}} \biggl| \langle \tilde{\varphi}_{\theta, \nu}, \tilde{\varphi}_{\bar{\theta}, \bar{\nu} } \rangle \biggl| \biggl)^{2}   \biggl]^{1/2} \nonumber\\
&\le  \biggl( \sum_{(\theta, \nu) \in T_{j,tang}} \biggl| \langle f_{\theta, \nu},  \tilde{\varphi}_{\theta,  \nu } \rangle \biggl|^{2} \biggl)^{1/2} \nonumber\\
 & \times \biggl[ \sum_{(\theta, \nu) \in T_{j,tang}} \biggl( \sum_{(\bar{\theta},\bar{\nu }) \notin \rm{T}_{Z}} \biggl| \langle e^{it\Delta} \tilde{\varphi}_{\theta, \nu}(x +R\gamma(\frac{t}{R^{2}})), e^{it\Delta} \tilde{\varphi}_{\bar{\theta}, \bar{\nu} }(x+R\gamma(\frac{t}{R^{2}})) \rangle \biggl| \biggl)^{2}   \biggl]^{1/2} \nonumber\\
&\le RapDec(R)\|f_{j,tang}\|_{L^{2}}.
\end{align}
Here we used inequalities (\ref{Eq25}) and (\ref{small tube decay}).
Therefore, $f_{j,tang}$ is concentrated in wave packets from $\rm{T}_{Z}$ in $B_{j}$.
If Theorem \ref{Theorem6.1} below holds true, using the similar observation for the function $\frac{R}{\rho} \gamma(t \frac{\rho^{2}}{R^{2}})$ as we did to establish (\ref{induction on R/2}), we have
\begin{align}
 &\left\| {{e^{it\Delta}}{f_{j,{\rm{tang}}}}(x+R\gamma(\frac{t}{R^{2}}) } \right\|_{BL_{k,\frac{A}{2}}^p{L^q}\left( {{B_j}} \right)} \nonumber\\
 &\le { {{\rho ^{\left( {2 + 1/q} \right)\left( {1/p - 1/(4 + \delta) } \right)}}\left\| {{e^{it\Delta}}{f_{j,{\rm{tang}}}}(x+R\gamma(\frac{t}{R^{2}})) } \right\|_{BL_{k,\frac{A}{2}}^{4 + \delta }{L^q}\left( {{B_j}} \right)}^{}} } \nonumber\\
  &\le { {{\rho ^{\left( {2 + 1/q} \right)\left( {1/p - 1/(4 + \delta) } \right)}}C\left( {K,D,\frac{\varepsilon }{2}, C_{\alpha}} \right){\rho^{\delta \left( {\log \overline A  - \log   A /2} \right)}}  {{\rho  }^{\frac{1}{{2\left( {4 + \delta } \right)}} - \frac{1}{4} + \frac{\varepsilon }{2}}}{{\left\| {{f_{j,{\rm{tang}}}}} \right\|}_{{L^2}}}} } \nonumber\\
  &\le {{{R^{O\left( \delta  \right) - \varepsilon /2}}C\left( {K,D,\frac{\varepsilon }{2}, C_{\alpha}} \right)R^{\delta(log\bar{A} -logA )} {R^\varepsilon }{{\left\| {{f_{j,{\rm{tang}}}}} \right\|}_{{L^2}}}} } \nonumber\\
  &\le {R^{O\left( \delta  \right) - \varepsilon /2}}{ {C\left( {K,\varepsilon, C_{\alpha} } \right)R^{\delta(log\bar{A} -logA )}  {R^\varepsilon }{{\left\| f \right\|}_{{L^2}}}} }, \nonumber
 \end{align}
where we choose $C\left( {K,\varepsilon, C_{\alpha} } \right) \ge C\left( {K,D,\frac{\varepsilon }{2}}, C_{\alpha} \right).$ Thus,
\[\sum\limits_j {\left\| {{e^{it\Delta}}{f_{j,{\rm{tang}}}}(x+R\gamma(\frac{t}{R^{2}}))} \right\|_{BL_{k,\frac{A}{2}}^p{L^q}\left( {{B_j}} \right)}^p}  \le {R^{O\left( {{\delta _2}} \right)}}{R^{O\left( \delta  \right) - p\varepsilon /2}}{\left[ {C\left( {K,\varepsilon, C_{\alpha} } \right)R^{\delta(log\bar{A} -logA )}  {R^\varepsilon }{{\left\| f \right\|}_{{L^2}}}} \right]^p}.\]
The induction closes for the fact that ${\delta} \ll {\delta _2} \ll \varepsilon $ and $R$ is sufficiently large. Note that the constants throughout the proof are all independent of the choice of $\gamma$.

\begin{theorem}\label{Theorem6.1}
 Let $\gamma \in \Gamma_{\alpha, R^{-1}}$. Suppose that $Z=Z\left( P \right) \subset {\mathbb{R}^2} \times \mathbb{R}$ is a transverse complete intersection determined by some $P\left( z \right)$ with $degP\left( z \right) \le D_{Z}.$ For all $f$ with $ supp \hat{f} \subset B\left( {0,1} \right)$, $f$ can be decomposed by
\[f= \sum_{(\theta, \nu) \in \boldsymbol{\rm T}}f_{\theta, \nu} : = \sum_{(\theta, \nu) \in \boldsymbol{\rm T}} \langle f, \tilde{\varphi}_{\theta, \nu} \rangle \tilde{\varphi}_{\theta, \nu}. \]
In $B(0,R)\times [0,R]$, $e^{it\Delta}f_{\theta, \nu} (x + R\gamma(\frac{t}{R^{2}})) $ is essentially supported in $T_{\theta, \nu}$, $(\theta, \nu) \in \boldsymbol{\rm T}$ (given by equality (\ref{large tube})), and satisfies
\[\sum\limits_{\left( {\theta ,\nu} \right) \notin {\rm{T}_Z}} {\left\| {{f_{\theta ,\nu }}} \right\|_{{L^2}}^{}}  \le RapDec\left( {{R}} \right)\left\| f \right\|_{{L^2}}^{},\]
where
\[{{\rm T}_Z}: = \left\{ {\left( {\theta ,\nu} \right):{T_{\theta ,\nu }} \;is\; { {{R}} ^{ - 1/2 + {\delta _2}}}{\rm{-tangent}}\;to\;Z(P)\;in\;B(0,R) \times [0,R]} \right\}.\]
Then  for any $\varepsilon  > 0$ and $p > 4,$ there exist positive constants $\overline A  = \overline A \left( \varepsilon  \right)$ and $C\left( {K,D_{Z},\varepsilon, C_{\alpha} } \right)$ such that
\begin{align}\label{Eq61}
&\left\| {{e^{it\Delta}}f(x+R\gamma(\frac{t}{R^{2}}))} \right\|_{BL_{k,A}^p{L^q}\left( {B\left( {0,{R}} \right) \times [0,R]} \right)}^{} \nonumber\\
&\le C\left( {K,D_{Z},\varepsilon, C_{\alpha} } \right){R^{\delta \left( {\log \overline A  - \log A} \right)}} { R ^{\frac{1}{{2p}} - \frac{1}{4} + \varepsilon }}{\left\| f \right\|_{{L^2}}}
\end{align}
holds for all $1 \le A \le \overline A .$ The constant here does not depend on the choice of $\gamma$.
\end{theorem}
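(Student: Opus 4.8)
\noindent\textbf{Plan of proof of Theorem \ref{Theorem6.1}.} The scheme follows the polynomial partitioning argument of Guth \cite{Guth2}, in the Schr\"odinger-maximal form of Du--Li \cite{DL}, with the single but essential modification that every appeal to the locally constant property is replaced by Lemma \ref{main lemma}. I would argue by a multi-parameter induction: an outer induction on $\dim Z$ with base case $\dim Z = k-1 = 1$, and, for each fixed dimension, a nested induction on the scale $R$ and on the broadness parameter $A$. As usual it suffices to treat $p$ slightly larger than $4$, the general $p > 4$ following from H\"older's inequality in $L^q$. The exponent $\frac{1}{2p}-\frac14+\varepsilon$ will come out, exactly as in \cite{DL}, from combining the $k$-broad triangle/orthogonality inequality with the $L^2$-orthogonality $\|f\|_{L^2}^2 \sim \sum_{(\theta,\nu)\in\boldsymbol{\rm T}}|\langle f,\tilde{\varphi}_{\theta,\nu}\rangle|^2$ of the wave packets and the count of essentially distinct tube directions tangent to $Z$ at a given point.

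\noindent\textbf{Set-up and partitioning on $Z$.} First I would fix the wave packet decomposition $f = \sum_{(\theta,\nu)\in\boldsymbol{\rm T}} f_{\theta,\nu}$ as in Subsection \ref{Section of wave packets ddecomposition}, so that by (\ref{Eq25}) the function $e^{it\Delta}f_{\theta,\nu}(x+R\gamma(\frac{t}{R^{2}}))$ is essentially supported on the tube $T_{\theta,\nu}$ of (\ref{large tube}); the hypothesis that $f$ is concentrated on tubes tangent to $Z$ is preserved, up to $RapDec(R)$ errors, under restriction to cells and sub-balls. Then I would run polynomial partitioning \emph{on the variety} $Z$: produce $Q$ with $\deg Q \le O(D)$, $D = D(\varepsilon)$, whose zero set cuts $N_{R^{1/2+\delta}}(Z)$ into $\sim D^{\dim Z}$ cells, and split into a cellular case and an algebraic case. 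In the cellular case the broad norm is dominated by a single cell after pigeonholing $\|f_i\|_{L^2}^2 \lesssim D^{-\dim Z}\|f\|_{L^2}^2$; one rescales each sub-ball $B_\rho$ parabolically and invokes the induction hypothesis at the smaller scale $\rho$, using the \emph{observation} already exploited for (\ref{induction on R/2}): the rescaled curve $\bar{\gamma}(t) = \frac{R}{\rho}\gamma\!\left(t\frac{\rho^{2}}{R^{2}}\right)$ is again $\alpha$-H\"older with the \emph{same} constant $C_\alpha$, since $|\bar{\gamma}(t)-\bar{\gamma}(t')| \le C_\alpha(\rho/R)^{2\alpha-1}|t-t'|^\alpha \le C_\alpha|t-t'|^\alpha$ because $\alpha \in [1/2,1)$ and $\rho \le R$. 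This is exactly what lets the induction on $R$ close with all constants independent of $\gamma$.

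\noindent\textbf{Algebraic case and base case.} The algebraic case splits into a transverse subcase, handled by the Guth-type transversality count $\sum_j \|f_{j,\mathrm{trans}}\|_{L^2}^2 \le C(D)\|f\|_{L^2}^2$ together with the induction on $R$ (the factor $R^{O(\delta)-\varepsilon O(\delta_2)}$ absorbing the combinatorial losses), and a tangential subcase, which is the heart of the matter. In the tangential subcase I would perform a second wave packet decomposition inside each $B_j$ at scale $\rho$, as in the construction around (\ref{base for small ball}), verify via (\ref{Eq25}), (\ref{small tube decay}) and Cauchy--Schwarz (exactly the computation displayed just before the statement of Theorem \ref{Theorem6.1}) that the new packets of $f_{j,\mathrm{tang}}$ are $\rho^{-1/2+\delta_2}$-tangent to $Z$ in $B_j$ and that $f_{j,\mathrm{tang}}$ is concentrated in such packets, and then recurse with $Z$ replaced by a transverse complete intersection of one lower dimension. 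The induction terminates at $\dim Z = k-1 = 1$: then every contributing tube direction $G(\theta)$ lies in an $O(\rho^{-1/2+\delta_2})$-neighborhood of the one-dimensional tangent space $T_z Z$, so a single excluded subspace (already available since $A \ge 1$) captures every $\tau$, which forces the $2$-broad quantity $\mu_{e^{it\Delta}f(\cdot)}(B_K \times I_K^j)$ to be $RapDec(R)$; only a crude $L^\infty$-to-$L^2$ local bound is then needed for the negligible remainder. Throughout, whenever I need to compare $e^{it\Delta}f_\square(x+R\gamma(\frac{t}{R^{2}}))$ at two different times or to pass from a pointwise to an averaged estimate, I would apply Lemma \ref{main lemma} to freeze $\gamma$ at the center of the relevant ball, paying only the summable weights $\sum_{\mathfrak{l}\in\mathbb{Z}^{2}}(1+|\mathfrak{l}|)^{-100}$ and $\sum_{\mathfrak{m}\in\mathbb{Z}^{2}}(1+|\mathfrak{m}|)^{-100}$, with implied constants independent of $\gamma$.

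\noindent\textbf{Main obstacle.} I expect the principal difficulty to be precisely the absence of a clean Fourier support for $e^{it\Delta}f_{\theta,\nu}(x+R\gamma(\frac{t}{R^{2}}))$: since $\gamma$ is merely $\alpha$-H\"older, this function need not be concentrated near a paraboloid, so none of the decoupling shortcuts of \cite{Guth2,DGL,DL} are available, and the locally constant property has to be replaced everywhere by Lemma \ref{main lemma}. The genuinely delicate bookkeeping is then to check that every parabolic rescaling and every passage to a sub-ball preserves the H\"older constant $C_\alpha$ (which is where $\alpha \ge 1/2$ enters decisively, through $2^{1-2\alpha} \le 1$ and more generally $(\rho/R)^{2\alpha-1}\le 1$) and keeps all implied constants independent of $\gamma$; once this is in place, the numerology of the exponents and the closing of the induction are identical to the vertical case treated in \cite{DL}.
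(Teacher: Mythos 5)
Your plan does follow the same overall scheme as the paper: Guth's two-stage polynomial partitioning adapted \`{a} la Du--Li, with the outer ``partition in the ambient space'' handled in Theorem~\ref{Theorem3.1} and the ``partition on the variety'' in Theorem~\ref{Theorem6.1}; the cellular case closed by pigeonholing and the rescaling observation (\ref{observation}); the tangential-to-$Y$ case at $\dim Y=k-1=1$ killed directly by the broad-norm definition; and Lemma~\ref{main lemma} substituting for the locally constant property throughout, with all constants tracked independently of $\gamma$. Your ``outer induction on $\dim Z$'' is a mild reformulation of what the paper does inline: since $\dim Z=2$ and the base case is $\dim=1$, there is only one downward step, so the two framings coincide.

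There is, however, a real gap in your description of the algebraic transverse subcase of Theorem~\ref{Theorem6.1}. You write that it is handled by the transversality count $\sum_j\|f_{j,\mathrm{trans}}\|_{L^2}^2 \le C(D)\|f\|_{L^2}^2$ together with the induction on $R$, but this omits the translated-variety step. When one re-decomposes $f_{j,\mathrm{trans}}$ into wave packets at scale $\rho$ inside $B_j$, the new packets $T_{\bar\theta,\bar\nu}$ inherit only the angle condition (via (\ref{Eq613})) but not the distance condition: they lie in $N_{R^{1/2+\delta_2}}(Z)\cap B_j$ rather than $N_{\rho^{1/2+\delta_2}}(Z)\cap B_j$, since the parent tube's thickness $R^{1/2+\delta}$ dwarfs $\rho^{1/2+\delta_2}$. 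Consequently one cannot directly apply the inductive hypothesis (which requires concentration on $\rho^{-1/2+\delta_2}$-tangent packets to a \emph{single} variety in $B_j$). The paper resolves this by decomposing $f_{j,\mathrm{trans}} = \sum_b f_{j,\mathrm{trans},b}$ along translated varieties $Z+b$ with $|b|\le R^{1/2+\delta_2}$, pigeonholing dyadically on the volume $|B(\omega,R^{1/2+\delta_2})\cap N_{\rho^{1/2+\delta_2}}(Z)|\sim 2^s$, selecting a random family of $b$'s, and then invoking Guth's equidistribution Lemmas~7.4--7.5, applied to the modified function $\tilde f$ (with $\widehat{\tilde f}(\xi)=e^{iR\gamma(t_j/R^2)\cdot\xi}\hat f(\xi)$) to get the two crucial estimates (\ref{Eq621}) and (\ref{Eq620}), the second of which supplies the $(R/\rho)^{-(p/2-1)}$ gain. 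This is where the restriction $p>4$ genuinely enters. Your plan, as written, skips all of this; the bare transversality count you quote belongs to the transverse case of Theorem~\ref{Theorem3.1}, not Theorem~\ref{Theorem6.1}. Also, a small point of emphasis: the tangential-to-$Y$ subcase, which you call the heart of the matter, is actually the cheapest step (it is $RapDec(R)$ by the broad-norm definition); the transverse subcase with its $b$-translates and equidistribution is where the genuine work lies.
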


We first show that Theorem \ref{Theorem6.1} is translation invariance both in $x$ and $t$. Suppose that $\gamma$ satisfies $\alpha$-H\"{o}lder condition with constant $C_{\alpha}$ on an interval $[t_{0}/R^{2}, t_{0}/R^{2} + R^{-1}]$, $t_{0} \in \mathbb{R}$. $Z(P)$ is an algebraic surface required by Theorem \ref{Theorem6.1}. If $f$ with $ supp \hat{f} \subset B\left( {0,1} \right)$ can be decomposed by
\[f=\sum_{(\theta, \nu) \in \boldsymbol{\rm T}}f_{\theta, \nu}^{x_{0}, t_{0}}= \sum_{(\theta, \nu) \in \boldsymbol{\rm T}}\langle f, \tilde{\varphi}^{x_{0},t_{0}, \gamma}_{\theta, \nu} \rangle \tilde{\varphi}^{x_{0},t_{0},\gamma}_{\theta, \nu}, \]
where
\[\widehat{\tilde{\varphi}_{\theta, \nu}^{x_{0},t_{0},\gamma}} \left( \xi  \right) = e^{-i x_{0} \cdot \xi -it_{0}|\xi|^{2}-iR\gamma(\frac{t_{0}}{R}) \cdot \xi}\widehat{{\varphi}_{\theta, \nu}} \left( \xi  \right) ,\]
and $\varphi_{\theta, \nu}$ was given in Subsection \ref{Section of wave packets ddecomposition}.
In $B(x_0, R)\times [t_0, t_0+R]$, by previous analysis in Subsection \ref{Section of wave packets ddecomposition}, ${{e^{it\Delta}}f_{\theta, \nu}^{x_{0}, t_{0}}(x+R\gamma(\frac{t}{R^{2}}))}$ is essentially supported in
\[{T^{x_{0},t_{0}}_{\theta ,\nu }} := \left\{ {\left( {x,t} \right):t_{0} \le t \le t_{0}+  R,\left| {x-x_{0}  - c\left( \nu  \right) + 2(t-t_{0}) c\left( \theta  \right)} \right| \le {R^{\frac{1}{2} + \delta }}} \right\},\delta  \ll \varepsilon.\]
Denote
\[{{\rm T}_Z}: = \left\{ {\left( {\theta,\nu} \right):{T^{x_{0},t_{0}}_{\theta,\nu }} \;{\rm{is}}\; { {{R}} ^{ - 1/2 + {\delta _2}}}{\rm{-tangent}}\;{\rm{to}}\;Z(P)\;{\rm{in}}\;B(x_{0},R) \times [t_{0},t_{0} + R]} \right\}.\]
Assume that $f$ satisfies
\[\sum\limits_{\left( {\theta ,\nu} \right) \notin {\rm{T}_Z}} {\left\| {{f^{x_{0},t_{0}}_{\theta ,\nu}}} \right\|_{{L^2}}^{}}  \le RapDec\left( {{R}} \right)\left\| f \right\|_{{L^2}}.\]
Changing variables implies,
\begin{align}
&\left\| {{e^{it\Delta}}f(x+R\gamma(\frac{t}{R^{2}}))} \right\|_{BL_{k,A}^p{L^q}\left( {B\left( {x_{0},  {R}} \right) \times [t_{0},t_{0}+ R]} \right)}^{} \nonumber\\
&= \left\| {{e^{it\Delta}}g(x+R\gamma(\frac{t+t_{0}}{R^{2}})-R\gamma(\frac{ t_{0}}{R^{2}}) )} \right\|_{BL_{k,A}^p{L^q}\left( {B\left( {0,  {R}} \right) \times [0, R]} \right)}^{},
\end{align}
where $\hat{g}(\xi) = e^{i x_{0} \cdot \xi + it_{0}|\xi|^{2} + iR\gamma(\frac{t_{0}}{R^{2}}) \cdot \xi  }\hat{f}(\xi)$.
From the decomposition of $f$, we get
\[g = \sum_{(\theta,\nu) \in \boldsymbol{\rm T}}g_{\theta, \nu} = \sum_{(\theta,\nu) \in \boldsymbol{\rm T}} \langle g, \varphi_{\theta, \nu} \rangle \varphi_{\theta, \nu}.\]
The function $e^{it\Delta}g_{\theta, \nu}(x + R\gamma(\frac{t+t_{0}}{R^{2}}) -R\gamma(\frac{t_{0}}{R^{2}}) ) $ is essentially supported in $T_{\theta, \nu}$ defined by (\ref{large tube}).
It is easy to see that $T_{\theta, \nu} =T^{x_{0},t_{0}}_{\theta, \nu} - (x_{0},t_{0})$. Therefore,  $(\theta, \nu) \in {\rm{T}}_{Z}$ if and only if $(\theta, \nu) \in {\rm{T}}_{Z-(x_{0},t_{0})}$  which is defined by
\[{{\rm T}_{Z -(x_{0},t_{0})}}: = \left\{ {\left( {\theta ,\nu} \right):{T_{\theta ,\nu}} \;{\rm{is}}\; { {{R}} ^{ - 1/2 + {\delta _2}}}{\rm{-tangent}}\;{\rm{to}}\;Z(P)-(x_{0},t_{0})\;{\rm{in}}\;B(0,R) \times [0,  R]} \right\}.\]
Moreover,
\[\langle f, \tilde{\varphi}^{x_{0},t_{0}, \gamma}_{\theta, \nu} \rangle =\langle g, \varphi_{\theta, \nu} \rangle, \]
then
\[\sum\limits_{\left( {\theta ,\nu} \right) \notin {\rm{T}_{Z-(x_{0},t_{0})}}} {\left\| {{g_{\theta ,\nu}}} \right\|_{{L^2}}^{}}  = \sum\limits_{\left( {\theta , \nu} \right) \notin {\rm{T}_Z}} {\left\| {{f^{x_{0},t_{0}}_{\theta ,\nu}}} \right\|_{{L^2}}^{}}  \le RapDec\left( {{R}} \right)\left\| f \right\|_{{L^2}}^{}.\]
So it is proved that $g$ is concentrated in wave packets which are tangential to  $Z(P)-(x_{0},t_{0})$ in $B(0,R) \times [0,R]$. Also notice that $\gamma(t+t_{0}/R^{2}) - \gamma(t_{0}/R^{2}) \in \Gamma_{\alpha, R^{-1}}$.
Thus we can apply  Theorem \ref{Theorem6.1} to $g$ to get
\begin{align}
&\left\| {{e^{it\Delta}}g(x+R\gamma(\frac{t+t_{0}}{R^{2}}) -R\gamma(\frac{t_{0}}{R^{2}}) )} \right\|_{BL_{k,A}^p{L^q}\left( {B\left( {0,{R}} \right) \times [0,R]} \right)}^{} \nonumber\\
&\le C\left( {K,D_{Z},\varepsilon, C_{\alpha}  } \right){R^{\delta \left( {\log \overline A  - \log A} \right)}} { R ^{\frac{1}{{2p}} - \frac{1}{4} + \varepsilon }}{\left\| g\right\|_{{L^2}}}.
\end{align}
Because of  $\|g\|_{L^{2}}=\|f\|_{L^{2}}$, we have
\begin{align}
&\left\| {{e^{it\Delta}}f(x+R\gamma(\frac{t}{R^{2}}))} \right\|_{BL_{k,A}^p{L^q}\left( {B\left( {x_{0},{R}} \right) \times [x_{0},t_{0} + R]} \right)}^{} \nonumber\\
&\le C\left( {K,D_{Z},\varepsilon,  C_{\alpha} } \right){R^{\delta \left( {\log \overline A  - \log A} \right)}} { R ^{\frac{1}{{2p}} - \frac{1}{4} + \varepsilon }}{\left\| f\right\|_{{L^2}}}.
\end{align}
Theorem \ref{Theorem6.1} remains valid under translation.

  \subsection{Proof of Theorem \ref{Theorem6.1}}\label{Section 6}
We will again use the induction on $R$ and $A$ to prove Theorem \ref{Theorem6.1}. The base of the induction is the fact that Theorem \ref{Theorem6.1} is trivial when $R \lesssim 1$ or $\bar{A}$ large enough and $A=1$.

Let $D = D\left( {\varepsilon, {D_Z}} \right)$, we will define it later. We say  we are in algebraic case if there is transverse complete intersection ${Y} \subset {Z}$ of dimension $1$ defined by polynomials of degree no more than $D$, such that
\begin{align}
&\left\| {{e^{it\Delta}}f (x+R\gamma(\frac{t}{R^{2}}))} \right\|_{BL_{k,A}^p{L^q}\left( {B\left( {0,R} \right) \times \left[ {0,R} \right]} \right)}^{} \nonumber\\
&\le C\left\| {{e^{it\Delta}}f(x+R\gamma(\frac{t}{R^{2}})) } \right\|_{BL_{k,A}^p{L^q}\left( ({B\left( {0,R} \right) \times \left[ {0,R} \right]) \cap {N_{{R^{1/2 + \delta_{2} }}}}\left( Y \right)} \right)}^{}.\nonumber
\end{align}
Otherwise we are in the cellular case.

\textbf{Cellular case.} For fixed  $\omega \in \Lambda^{2}\mathbb{R}^{3}$, let $Z_{\omega}:=\{z\in Z(P): \nabla P(z) \wedge \omega =0\}$ be a transverse complete intersection of dimension $1$. As the similar argument in  \cite{Guth2}, we can choose a finite set of $\omega \in \Lambda^{2}\mathbb{R}^{3}$ such that the angle of $T_{z}(Z(P))$ changes smaller than $1/1000$ on each connected exponent of $Z(P) \backslash \cup_{\omega}Z_{\omega}$. By pigeonhole principle, we can identify a significant piece $N_{1}$ of $(B\left( {0,R} \right) \times \left[ 0,R \right]) \cap {N_{{R^{1/2 + {\delta _2}}}}}\left( {Z\left( P \right)} \right)$,  where locally $Z\left( P \right)$ behaves like a $2$-plane $V$. Notice that for each $\omega$, $Z_{\omega}$ is a transverse complete intersection determined by polynomial with degree less than $D$ and we are in the cellular case, so we obtain that
\begin{align}\label{Eq62}
& \left\| {{e^{it\Delta}}f(x+R\gamma(\frac{t}{R^{2}})) } \right\|_{BL_{k,A}^p{L^q}\left( {B\left( {0,R} \right) \times \left[ {0,R} \right]} \right)}^{} \nonumber\\
 &\le C\left\| {{e^{it\Delta}}f(x+R\gamma(\frac{t}{R^{2}})) } \right\|_{BL_{k,A}^p{L^q}\left( ({B\left( {0,R} \right) \times \left[ {0,R} \right]) \cap {N_{{R^{1/2 + {\delta _2}}}}}\left( {Z\left( P \right)} \right)} \right)}^{} \nonumber\\
  &\le C\left\| {{e^{it\Delta}}f(x+R\gamma(\frac{t}{R^{2}})) } \right\|_{BL_{k,A}^p{L^q}\left( {{N_1}} \right)}^{}.
 \end{align}
  By Theorem 5.5 in \cite{Guth2}, there exists a polynomial $Q\left( z \right): = \prod\limits_{l = 1}^s {{Q_l}} $ with $\deg Q \le D$, where polynomials
\[{Q_l}\left( z \right) = {Q_{V,l}}\left( {\pi \left( z \right)} \right),\hspace{0.2cm}l = 1,2,...,s,\]
$\pi $ is the orthogonal projection from ${\mathbb{R}^2} \times \mathbb{R}$ to $V,$ then ${\mathbb{R}^2} \times \mathbb{R}\backslash Z\left( Q \right)$ is divided into $\sim {D^2}$ cells ${{\rm O}_i}$ such that
 \begin{equation}\label{Eq63}
\left\| {{e^{it\Delta}}f(x+R\gamma(\frac{t}{R^{2}})) } \right\|_{BL_{k,A}^p{L^q}\left( {{N_1}} \right)}^{p
} \le C{D^2}\left\| {{e^{it\Delta}}f(x+R\gamma(\frac{t}{R^{2}})) } \right\|_{BL_{k,A}^p{L^q}\left( {{N_1} \cap {{\rm O}_i}} \right)}^p.
\end{equation}
 For each $l$, the variety ${Y_l} = Z\left( {P,{Q_l}} \right)$ is a transverse complete intersection of dimension $1$.  Define $W: = {N_{{R^{1/2 + \delta }}}}\left( {Z\left( Q \right)} \right),$ ${\rm O}_i^{'} := {\rm O}_i^{}\backslash W.$ From the analysis in \cite{Guth2}, we have
 \[W \cap {N_1} \subset { \cup _l}{N_{O\left( {{R^{1/2 + \delta_{2} }}} \right)}}\left( {{Y_l}} \right),\]
 since we are in the cellular case, the contribution from $W \cap N_{1}$ is negligible. So we have
\begin{equation}\label{Eq64}
\left\| {{e^{it\Delta}}f(x+R\gamma(\frac{t}{R^{2}})) } \right\|_{BL_{k,A}^p{L^q}\left( {{N_1}} \right)}^{p} \le C{D^2}\left\| {{e^{it\Delta}}f(x+R\gamma(\frac{t}{R^{2}})) } \right\|_{BL_{k,A}^p{L^q}\left( {{N_1} \cap {\rm O}_i^{'}} \right)}^p.
\end{equation}
From inequalities (\ref{Eq62})-(\ref{Eq64}) we actually obtain
 \begin{align}\label{Eq65}
&\left\| {{e^{it \Delta}}f(x+R\gamma(\frac{t}{R^{2}})) } \right\|_{BL_{k,A}^p{L^q}\left( {B\left( {0,R} \right) \times [0,R]} \right)}^{p} \nonumber\\
&\le C{D^2}\left\| {{e^{it\Delta}}f(x+R\gamma(\frac{t}{R^{2}})) } \right\|_{BL_{k,A}^p{L^q}\left( ({B\left( {0,R} \right) \times [0,R]) \cap {\rm O}_i^{'}} \right)}^p.
\end{align}

 For each cell ${\rm O}_i^{'},$ we set
\[{{\rm T}_i} := \left\{ {\left( {\theta ,\nu} \right) \in {\boldsymbol{\rm T}}:{T_{\theta ,\nu}} \cap {\rm O}_i^{'} \ne \emptyset} \right\}.\]
 For the function $f$, we define
\[{f_i}: = \sum\limits_{\left( {\theta , \nu} \right) \in {{\rm T}_i}} {{f_{\theta ,\nu}}} .\]
It follows that on ${\rm O}_i^{'},$
 \begin{equation}\label{Eq66}
\biggl| {e^{it\Delta}}f(x+R\gamma(\frac{t}{R^{2}}))\biggl| \sim \biggl| {e^{it\Delta}}{f_i}(x+R\gamma(\frac{t}{R^{2}}))\biggl|.
\end{equation}
 By the Fundamental Theorem of Algebra, for each $\left( {\theta ,\nu} \right) \in {\boldsymbol{\rm T}},$
we have
\[\text{Card}\left\{ {i:\left( {\theta ,\nu} \right) \in {{\rm T}_i}} \right\} \le D + 1.\]
Hence
\[\sum\limits_i {\left\| {{f_i}} \right\|_{{L^2}}^2}  \le CD\left\| f \right\|_{{L^2}}^2,\]
by pigeonhole principle, there exists ${\rm O}_i^{'}$ such that
\begin{equation}\label{Eq67}
\left\| {{f_i}} \right\|_{{L^2}}^2 \le C{D^{ - 1}}\left\| f \right\|_{{L^2}}^2.
\end{equation}
So by inequalities (\ref{Eq65}), (\ref{Eq66}), the induction on ${R},$ and inequality (\ref{Eq67}), we have
\begin{align}
& \left\| {{e^{it\Delta}}f(x+R\gamma(\frac{t}{R^{2}})) } \right\|_{BL_{k,  A }^p{L^q}\left( {B\left( {0,R} \right) \times [0,R]} \right)}^p \nonumber\\
 &\le C{D^2}\left\| {{e^{it\Delta}}{f_i}(x+R\gamma(\frac{t}{R^{2}})) } \right\|_{BL_{k, A }^p{L^q}\left( {B\left( {0,R} \right) \times [0,R]} \right)}^p \nonumber\\
  &\le C{D^2}\sum\limits_{{B_{R/2}}\;{\mathop{\rm cov}} er\;B\left( {0,R} \right) \times [0,R]} {\left\| {{e^{it\Delta}}{f_i}(x+R\gamma(\frac{t}{R^{2}})) } \right\|_{BL_{k,  A }^p{L^q}\left( {{B_{R/2}}} \right)}^p}  \nonumber\\
  &\le C{D^{2 - \frac{p}{2}}}{\left( {C\left( {K,D_{Z},\varepsilon, C_{\alpha} } \right)\biggl({\frac{R}{2}\biggl)^{\delta \left( {\log \overline A  - \log A} \right)}}{{\left( {\frac{R}{2}} \right)}^{\frac{1}{{2p}} - \frac{1}{4} + \varepsilon }}{{\left\| f \right\|}_{{L^2}}}} \right)^p} \nonumber\\
  &\le C{D^{2 - \frac{p}{2}}}{\left( {C\left( {K,D_{Z},\varepsilon, C_{\alpha}} \right){R^{\delta \left( {\log \overline A  - \log A} \right)}} {R^{\frac{1}{{2p}} - \frac{1}{4} + \varepsilon }}{{\left\| f \right\|}_{{L^2}}}} \right)^p}, \nonumber
 \end{align}
choosing $D:=D(\varepsilon, D_Z)$ sufficiently large such that $C{D^{2 - \frac{p}{2}}} \ll 1,$ this completes the induction.

\textbf{Algebraic case. }In the algebraic case, there exists a transverse complete intersection $Y \subset Z\left( P \right)$ of dimension $1$, determined by polynomial with degree no more than  $D = D\left( {\varepsilon, {D_Z}} \right),$ so that
\begin{align}
&\left\| {{e^{it\Delta}}f (x+R\gamma(\frac{t}{R^{2}}))} \right\|_{BL_{k,A}^p{L^q}\left( {B\left( {0,R} \right) \times [0,R]} \right)}^{} \nonumber\\
&\le C\left\| {{e^{it\Delta}}f(x+R\gamma(\frac{t}{R^{2}})) } \right\|_{BL_{k,A}^p{L^q}\left( ({B\left( {0,R} \right) \times  [0,R]) \cap {N_{{R^{1/2 + {\delta _2}}}}}\left( Y \right)} \right)}^{}.
\end{align}
We decompose $B\left( {0,R} \right) \times \left[ {0,R} \right]$ into balls ${B_j}$ of radius $\rho $, ${\rho ^{1/2 + {\delta _1}}} = {R^{1/2 + {\delta _2}}},$ ${\delta _2} \ll {\delta _1},$ in fact $\rho  \sim {R^{1 - O\left( {{\delta _1}} \right)}}.$ For each $j$, we define
\[{{\rm T}_j} := \left\{ {\left( {\theta ,\nu} \right) \in {\boldsymbol{\rm T}}:{T_{\theta ,\nu }} \cap {N_{{R^{1/2 + {\delta _2}}}}}\left( {Y} \right) \cap {B_j} \ne \emptyset } \right\},\]
and
\[{f_j}: = \sum\limits_{\left( {\theta ,\nu} \right) \in {{\rm T}_j}} {{f_{\theta ,\nu}}} .\]
On each ${B_j}: B_{j} \cap {N_{{R^{1/2 + {\delta _2}}}}}(Y) \neq \emptyset$, we have
\[ \biggl| {e^{it\Delta}}f(x+R\gamma(\frac{t}{R^{2}}))\biggl| \sim \biggl|{e^{it\Delta}}{f_j}(x+R\gamma(\frac{t}{R^{2}}))\biggl|.\]
Therefore,
\[\left\| {{e^{it\Delta}}f (x+R\gamma(\frac{t}{R^{2}}))} \right\|_{BL_{k,A}^p{L^q}\left( {B\left( {0,R} \right) \times [0,R]} \right)}^p   \le \sum\limits_j {\left\| {{e^{it\Delta}}{f_j}(x+R\gamma(\frac{t}{R^{2}})) } \right\|_{BL_{k,A}^p{L^q}\left( {{B_j}} \right)}^p} .\]
We further divide ${{\rm T}_j}$ into tubes that are tangential to $Y$ and tubes that are transverse to $Y$. We say that ${T_{\theta ,\nu}}$ is tangential to $Y$ in ${B_j}$ if the following two conditions hold:

\textbf{Distance condition:}
\begin{equation}\label{Eq68}
{T_{\theta ,\nu}} \cap 2{B_j} \subset {N_{{R^{1/2 + {\delta _2}}}}}\left( Y \right) \cap 2{B_j} = {N_{{\rho ^{1/2 + {\delta _1}}}}}\left( Y \right) \cap 2{B_j}.
\end{equation}

\textbf{Angle condition:}
If $z \in Y \cap {N_{O\left( {{R^{1/2 + {\delta _2}}}} \right)}}\left( {{T_{\theta ,\nu}}} \right) \cap 2{B_j} = Y \cap {N_{O\left( {{\rho ^{1/2 + {\delta _1}}}} \right)}}\left( {{T_{\theta ,\nu}}} \right) \cap 2{B_j},$ then
\begin{equation}\label{Eq69}
Angle\left( {G\left( \theta  \right),{T_z}Y} \right) \le C{\rho ^{ - 1/2 + {\delta _1}}}.
\end{equation}
The tangential wave packets are defined by
\[{{\rm T}_{j,{\rm{tang}}}} := \left\{ {\left( {\theta ,\nu} \right) \in {{\rm T}_j}:{T_{\theta ,\nu}} \text{ is tangent to } Y \text{ in } {B_j}} \right\},\]
and the transverse wave packets
\[{{\rm T}_{j,trans}}: = {{\rm T}_j}\backslash {{\rm T}_{j,{\rm{tang}}}}.\]
Set
\[{f_{j,{\rm{tang}}}}: = \sum\limits_{\left( {\theta ,\nu} \right) \in {{\rm T}_{j,{\rm{tang}}}}} {{f_{\theta ,\nu}}} , \hspace{0.2cm} {f_{j,{\rm{trans}}}}: = \sum\limits_{\left( {\theta ,\nu} \right) \in {{\rm T}_{j,{\rm{trans}}}}} {{f_{\theta ,\nu}}} ,\]
so
\[{f_j} = {f_{j,{\rm{tang}}}} + {f_{j,{\rm{trans}}}}.\]
Therefore, we have
\begin{align}
 \left\| {{e^{it\Delta}}f(x+R\gamma(\frac{t}{R^{2}})) } \right\|_{BL_{k,A}^p{L^q}\left( {B\left( {0,R} \right) \times [0,R]} \right)}^p &\le \sum\limits_j {\left\| {{e^{it\Delta}}{f_j} (x+R\gamma(\frac{t}{R^{2}}))} \right\|_{BL_{k,A}^p{L^q}\left( {{B_j}} \right)}^p}  \nonumber\\
  &\le \sum\limits_j {\left\| {{e^{it\Delta}}{f_{j,{\rm{tang}}}}(x+R\gamma(\frac{t}{R^{2}})) } \right\|_{BL_{k,\frac{A}{2}}^p{L^q}\left( {{B_j}} \right)}^p} \nonumber\\
   &+ \sum\limits_j {\left\| {{e^{it\Delta}}{f_{j,{\rm{trans}}}}(x+R\gamma(\frac{t}{R^{2}})) } \right\|_{BL_{k,\frac{A}{2}}^p{L^q}\left( {{B_j}} \right)}^p}.  \nonumber
 \end{align}
We will treat the tangential term and the transverse term respectively. Again, we need to use wave packets decomposition in ${B_j}.$

\textbf{ Algebraic tangential case.} In this case, the tangential term dominates. We claim that the new wave packets ${T_{\overline \theta  ,\overline \nu  }}$ of ${f_{j,{\rm{tang}}}}$ are ${\rho ^{ - 1/2 + {\delta _1}}}$-tangent to $Y$ in ${B_j}$. In fact, by (\ref{Eq59}) and (\ref{Eq510}),
if $z \in Y\cap {N_{O\left( {{\rho ^{1/2 + {\delta _1}}}} \right)}}\left( {{T_{\overline \theta  ,\overline \nu  }}} \right) \cap {B_j},$ then $z \in Y \cap {N_{{O(R^{1/2 + {\delta _2}})}}}\left( {{T_{\theta ,\nu}}} \right) \cap {B_j},$ we have
\begin{equation}\label{Eq610}
Angle\left( {G\left( {\overline \theta  } \right),{T_z}Y} \right) \le Angle\left( {G\left( {\overline \theta  } \right),G\left( \theta  \right)} \right) + Angle\left( {G\left( \theta  \right),{T_z}Y} \right) \le C{\rho ^{ - 1/2 + {\delta _1}}}.
\end{equation}
Also,
\begin{equation}\label{Eq611}
{T_{\overline \theta  ,\overline \nu  }} \subset {N_{{R^{1/2 + {\delta _2}}}}}\left( {{T_{\theta ,\nu}\cap B_{j}}} \right) \cap {B_j} = {N_{{\rho ^{1/2 + {\delta _1}}}}}\left( {{T_{\theta ,\nu}\cap B_{j}}} \right) \cap {B_j} \subset {N_{O\left( {{\rho ^{1/2 + {\delta _1}}}} \right)}}\left( Y \right) \cap {B_j}.
\end{equation}
 Consider ${B_K} \times I_K^j$ such that
\[\left[ {{N_{O\left( {{\rho ^{1/2 + {\delta _1}}}} \right)}}\left( Y \right) \cap {B_j}} \right] \cap \left( {{B_K} \times I_K^j} \right) \ne \emptyset ,\]
there exists ${z_0} \in Y \cap {B_j} \cap {N_{O\left( {{\rho ^{1/2 + {\delta _1}}}} \right)}}\left( {{B_K} \times I_K^j} \right)$, for each ${T_{\overline \theta  ,\overline \nu  }}$ such that ${T_{\overline \theta  ,\overline \nu  }} \cap \left( {{B_K} \times I_K^j} \right) \ne \emptyset $, we have that ${z_0} \in Y \cap {B_j} \cap {N_{O\left( {{\rho ^{1/2 + {\delta _1}}}} \right)}}\left( {{T_{\overline \theta  ,\overline \nu  }}} \right)$, it holds
\[Angle\left( {G\left( {\overline \theta  } \right),{T_{{z_0}}}Y} \right) \le C{\rho ^{ - 1/2 + {\delta _1}}}.\]
Then for each $\tau $ with such a $\bar{\theta} $ in it, it follows
\[Angle\left( {G\left( \tau  \right),{T_{{z_0}}}Y} \right)  \le {\left( {KM} \right)^{ - 1}}.\]

Note that $T_{z_{0}}Y$ only depends on $B_{K} \times I_{K}^{j}$, so it can be chosen as one of the vectors in $V_{1},V_{2},...,V_{A}$, by the definition of the broad norm, such $\tau $ does not contribute to \[\left\| {{e^{it\Delta}}{f_{j,{\rm{tang}}}}(x+R\gamma(\frac{t}{R^{2}})) } \right\|_{BL_{k,\frac{A}{2}}^p{L^q}\left( {{B_j}} \right)}^p.\]
 Since ${f_{j,{\rm{tang}}}}$ is concentrated in wave packets from $\rm{T}_{Y}$ in ${B_j},$
\[\left\| {{e^{it\Delta}}{f_{j,{\rm{tang}}}} (x+R\gamma(\frac{t}{R^{2}}))} \right\|_{BL_{k,\frac{A}{2}}^p{L^q}\left( {{B_j}} \right)}^p \le RapDec(\rho )\left\| f \right\|_{{L^2}}^p,\]
which can be negligible. So we only need to consider the transverse case.

\textbf{Algebraic transverse case.} In this case, the transverse term dominates. So we need to estimate
\[\sum\limits_j {\left\| {{e^{it\Delta}}{f_{j,{\rm{trans}}}}(x+R\gamma(\frac{t}{R^{2}})) } \right\|_{BL_{k,\frac{A}{2}}^p{L^q}\left( {{B_j}} \right)}^p} .\]
Consider the new wave packets decomposition of ${f_{j,{\rm{trans}}}}$ in ${B_j},$ by (\ref{Eq59}) and (\ref{Eq510}), the new wave packets ${T_{\overline \theta  ,\overline \nu  }}$ satisfy
\begin{equation}\label{Eq612}
{T_{\overline \theta  ,\overline \nu  }} \subset {N_{{R^{1/2 + {\delta}}}}}\left( {{T_{\theta ,\nu}\cap B_{j}}} \right) \cap {B_j} \subset {N_{{R^{1/2 + {\delta _2}}}}}\left( Z \right) \cap {B_j}.
\end{equation}
And if $z \in Z \cap {N_{O\left( {{\rho^{1/2 + {\delta _2}}}} \right)}}\left( {{T_{\overline \theta  ,\overline \nu  }}} \right) \cap {B_j} \subset Z \cap {N_{O\left( {{R ^{1/2 + {\delta _2}}}} \right)}}\left( {{T_{\theta ,\nu}}} \right) \cap {B_j},$ then
\begin{equation}\label{Eq613}
Angle\left( {G\left( {\overline \theta  } \right),{T_z}Z} \right) \le Angle\left( {G\left( \theta  \right),{T_z}Z} \right) + Angle\left( {G\left( \theta  \right),G\left( {\overline \theta  } \right)} \right) \le C{\rho ^{ - 1/2 + {\delta _2}}}.
\end{equation}
 ${T_{\overline \theta  ,\overline \nu  }}$ is no longer $\rho^{-1/2+\delta_{2}}$-tangent to $Z$ in ${B_j}$ because the distance condition is not satisfied.

For each vector $b$ with $\left| b \right| \le {R^{1/2 + {\delta _2}}},$ define
\[{\overline {\rm T} _{Z + b}} := \left\{ {\left( {\overline \theta  ,\overline \nu} \right):{T_{\overline \theta  ,\overline \nu }} \text{ is } \rho^{-1/2+\delta_{2}} \text{-tangent to }Z + b \text{ in }{B_j}} \right\}.\]
By the angle condition, it turns out that each ${T_{\overline \theta  ,\overline \nu  }} \in {\overline {\rm T} _{Z + b}}$ for some $b$. We set
\[{f_{j,{\rm{trans,b}}}}: = \sum\limits_{\left( {\overline \theta  ,\overline \nu } \right) \in {{\overline {\rm T} }_{Z + b}}} {{f_{\overline \theta  ,\overline \nu}}} .\]
Then on ${B_j},$ it holds
\begin{equation}\label{Eq614}
\biggl| {{e^{it\Delta}}{f_{j,{\rm{trans,b}}}}}  (x+R\gamma(\frac{t}{R^{2}})) \biggl| \sim {\chi _{{N_{{\rho ^{1/2 + {\delta _2}}}}}\left( {Z + b} \right)}}\left( {x,t} \right)\biggl| {{e^{it\Delta}}{f_{j,{\rm{trans}}}}} (x+R\gamma(\frac{t}{R^{2}}))\biggl| .
\end{equation}

Next we choose a family of vectors $b \in {B_{R^{1/2 + {\delta _2}}}}.$ We cover ${N_{{R^{1/2 + {\delta _2}}}}}\left( Z \right) \cap {B_j}$ with disjoint balls of radius ${R^{1/2 + {\delta _2}}}.$  In each ball $B$,   we will dyadically pigeonhole the volume of ${N_{{\rho ^{1/2 + {\delta _2}}}}}\left( Z \right) \cap B.$

For each $s\in \mathbb{Z}$, denote
\[{\mathcal{B}_s}: = \left\{ {B\left( {\omega,{R^{1/2 + {\delta _2}}}} \right) \subset {N_{{R^{1/2 + {\delta _2}}}}}\left( Z \right) \cap {B_j}: |B\left( {\omega,{R^{1/2 + {\delta _2}}}} \right) \cap {N_{{\rho^{1/2 + {\delta _2}}}}}\left( Z \right)| \sim {2^s}} \right\}.\]
We take a value of $s$ so that
\[\left\| {{e^{it\Delta}}{f_{j,{\rm{trans}}}} (x+R\gamma(\frac{t}{R^{2}}))} \right\|_{BL_{k,\frac{A}{2}}^p{L^q}\left( {{B_j}} \right)}^p \le \left( {\log R} \right)\left\| {{e^{it\Delta}}{f_{j,{\rm{trans}}}} (x+R\gamma(\frac{t}{R^{2}}))} \right\|_{BL_{k,\frac{A}{2}}^p{L^q}\left( {{ \cup _{B \in {{\mathcal{B}}_s}}B}} \right)}^p.\]
Therefore, we only consider $\left( {\theta ,\nu } \right)$ such that ${T_{\theta ,\nu}}$ meets at least one of the balls in $\mathcal{B}_s$. We choose a random set of $\left| {{B_{{R^{1/2 + {\delta _2}}}}}} \right|/{2^s}$ vectors $b \in {B_{{R^{1/2 + {\delta _2}}}}}$. For a typical ball $B\left( {\omega,{R^{1/2 + {\delta _2}}}} \right) \in {\mathcal{B}_s},$ the union ${ \cup _b}{N_{{\rho ^{1/2 + {\delta _2}}}}}\left( {Z + b} \right) \cap {B_j}$ covers a definite fraction of the ball with high probability.  It follows
\begin{align}\label{Eq615}
&\left\| {{e^{it\Delta}}{f_{j,{\rm{trans}}}}(x+R\gamma(\frac{t}{R^{2}})) } \right\|_{BL_{k,\frac{A}{2}}^p{L^q}\left( {{B_j}} \right)}^p \nonumber\\
&\le \left( {\log R} \right)\sum\limits_b {\left\| {{e^{it\Delta}}{f_{j,{\rm{trans,b}}}} (x+R\gamma(\frac{t}{R^{2}}))} \right\|_{BL_{k,\frac{A}{2}}^p{L^q}\left( {{N_{{\rho ^{1/2 + {\delta _2}}}}}\left( {Z + b} \right) \cap {B_j}} \right)}^p}.
 \end{align}

 By the induction on $R$, we have
\begin{align}
 &\left\| {{e^{it\Delta}}{f_{j,{\rm{trans,b}}}}(x+R\gamma(\frac{t}{R^{2}})) } \right\|_{BL_{k,\frac{A}{2}}^p{L^q}\left( {{N_{{\rho ^{1/2 + {\delta _2}}}}}\left( {Z + b} \right) \cap {B_j}} \right)}^p \nonumber\\
 &\le \left\| {{e^{it\Delta}}{f_{j,{\rm{trans,b}}}} (x+R\gamma(\frac{t}{R^{2}}))} \right\|_{BL_{k,\frac{A}{2}}^p{L^q}\left( {{B_j}} \right)}^p \nonumber\\
  &\le {\left[ {C\left( {K,D_Z,\varepsilon, C_{\alpha} } \right){\rho^{\delta \left( {\log \overline A  - \log {\frac{A}{2}} } \right)}}  {{ \rho  }^{\frac{1}{{2p}} - \frac{1}{4} + \varepsilon }}{{\left\| {{f_{j,{\rm{trans,b}}}}} \right\|}_{{L^2}}}} \right]^p} \nonumber\\
  &\le {\left[ {C\left( {K,D_Z,\varepsilon, C_{\alpha} } \right){R^{O\left( \delta  \right)}}R^{-\varepsilon O(\delta_{1})} R^{\delta(log\bar{A} -log A )} {{ R  }^{\frac{1}{{2p}} - \frac{1}{4} + \varepsilon }}{{\left\| {{f_{j,{\rm{trans,b}}}}} \right\|}_{{L^2}}}} \right]^p}. \nonumber
  \end{align}
  Here we used the induction on the radius of $B_{j}$  and the observation for $\frac{R}{\rho} \gamma(t \frac{\rho^{2}}{R^{2}})$ as we did in order to get inequality  (\ref{induction on R/2}).
 If
\begin{equation}\label{Eq621}
\sum\limits_j {\sum\limits_b {\left\| {{f_{j,{\rm{trans,b}}}}} \right\|_{{L^2}}^2} }  \le \sum\limits_j {\left\| {{f_{j,{\rm{trans}}}}} \right\|_{{L^2}}^2}  \le D\left\| f \right\|_{{L^2}}^2,
\end{equation}
\begin{equation}\label{Eq620}
\mathop {\max }\limits_b \left\| {{f_{j,{\rm{trans,b}}}}} \right\|_{{L^2}}^2 \le {R^{O\left( {{\delta _2}} \right)}}{\left( {\frac{R}{\rho }} \right)^{ - 1/2}}\left\| {{f_{j,{\rm{trans}}}}} \right\|_{{L^2}}^2,
\end{equation}
then we have
\begin{align}
 &\sum\limits_j {\sum\limits_b {\left\|  e^{it\Delta}f_{j,trans,b} (x+R\gamma(\frac{t}{R^{2}})) \right\|_{BL_{k,\frac{A}{2}}^p{L^q}\left( {{B_j}} \right)}^p} }  \nonumber\\
&\le {\left[ {C\left( {K,D_Z,\varepsilon, C_{\alpha} } \right){R^{O\left( \delta  \right)}} R^{-\varepsilon O(\delta_{1})}  R^{\delta(log\bar{A} -log A )} {{ R }^{\frac{1}{{2p}} - \frac{1}{4} + \varepsilon }}} \right]^p}\sum\limits_j {\sum\limits_b {\left\| {{f_{j,{\rm{trans,b}}}}} \right\|_{{L^2}}^p} }  \nonumber\\
&\le {\left[ {C\left( {K,D_Z,\varepsilon, C_{\alpha} } \right){R^{O\left( \delta  \right)}} R^{-\varepsilon O(\delta_{1})} R^{\delta(log\bar{A} -log A )} {{ R  }^{\frac{1}{{2p}} - \frac{1}{4} + \varepsilon }}} \right]^p}\sum\limits_j {\sum\limits_b {\left\| {{f_{j,{\rm{trans,b}}}}} \right\|_{{L^2}}^2\mathop {\mathop {\max }\limits_b \left\| {{f_{j,{\rm{trans,b}}}}} \right\|_{{L^2}}^{p - 2}}\limits_{} } }  \nonumber\\
&\le {\left[ {C\left( {K,D_Z,\varepsilon, C_{\alpha} } \right){R^{O\left( \delta  \right)}} R^{-\varepsilon O(\delta_{1})} R^{\delta(log\bar{A} -log A )} {{ R  }^{\frac{1}{{2p}} - \frac{1}{4} + \varepsilon }}} \right]^p}D{R^{O\left( {{\delta _2}} \right)}}{\left( {\frac{R}{\rho }} \right)^{ - (p/2 - 1)}}\left\| f \right\|_{{L^2}}^p \nonumber\\
&=D{R^{O\left( \delta  \right)}}{R^{O\left( {{\delta _2}} \right)}}{R^{ -\varepsilon  O\left( {{\delta _1}} \right)  }}\;{\left[ {C\left( {K,D_Z,\varepsilon, C_{\alpha} } \right) R^{\delta(log\bar{A} -log A )} {R^{\frac{1}{{2p}} - \frac{1}{4} + \varepsilon }}\left\| f \right\|_{{L^2}}^{}} \right]^p} \nonumber\\
&\le {R^{O\left( {{\delta_2 }} \right)}}{R^{ - \varepsilon O\left( {{\delta _1}} \right) }}{\left[ {C\left( {K,D_Z,\varepsilon, C_{\alpha}} \right) R^{\delta(log\bar{A} -log A )} {R^{\frac{1}{{2p}} - \frac{1}{4} + \varepsilon }}\left\| f \right\|_{{L^2}}^{}} \right]^p}, \nonumber
\end{align}
so the induction closes by taking ${\delta _2} \ll   \varepsilon {\delta _1}$ and the fact that $R$ is sufficiently large. This completes the proof of Theorem \ref{Theorem6.1}.

Next we will prove inequalities (\ref{Eq621}) and (\ref{Eq620}). It is easy to check that  $f_{j,trans, b}$ and $f_{j,trans}$ are concentrated in wave packets from $(\bar{\theta}, \bar{\nu}) \in \bar{\rm{T}}_{Z+b}$ and $(\theta, \nu) \in  {\rm{T}_{Z}}$ respectively, and  the tubes  $T_{\bar{\theta}, \bar{\nu}}$'s and $T_{\theta, \nu}$'s are distributed as required by \cite[Lemma 7.4 and Lemma 7.5]{Guth2}. But Guth's results were built for the extension operator for the paraboloid, here we identity it as  the free Schr\"{o}dinger operator $e^{it\Delta}$ by Plancherel's theorem. In order to apply \cite[Lemma 7.4 and Lemma 7.5]{Guth2} to Schr\"{o}dinger operator along tangential curves, we construct a new function $\tilde{f}$ whose Fourier transform is given by $e^{iR\gamma(\frac{t_{j}}{R^{2}}) \cdot \xi}\hat{f}$. We recall that $(x_j, t_j)$ is the center of $ B_j$.

 We first do wave packets decomposition for $\tilde{f}$ in $B(0,R) \times [0,R]$. Choosing $\tilde{\tilde{\varphi}}_{\theta, \nu}$ whose Fourier transform is given by $e^{iR\gamma(\frac{t_{j}}{R^{2}})-iR\gamma(0)}\widehat{\varphi_{\theta,\nu}}$, and $\widehat{\varphi_{\theta,\nu}}$ was given in Subsection \ref{Section of wave packets ddecomposition}.
 Decompose
 \[\tilde{f}=\sum_{(\theta, \nu) \in \boldsymbol{\rm T}}\tilde{f}_{\theta, \nu} =\sum_{(\theta, \nu) \in \boldsymbol{\rm T}} \langle \tilde{f}, \tilde{\tilde{\varphi}}_{\theta, \nu} \rangle \tilde{\tilde{\varphi}}_{\theta, \nu}. \]
Also we will make the wave packet decomposition for $\tilde{f}$ in the ball $B_{j}$ with radius $\rho$. We decompose
  \[\tilde{f}=\sum_{(\bar{\theta}, \bar{\nu}) \in \bar{\boldsymbol{\rm T}}}\tilde{f}_{\bar{\theta}, \bar{\nu}} = \sum_{(\bar{\theta}, \bar{\nu}) \in \bar{\boldsymbol{\rm T}}} \langle \tilde{f}, \varphi_{\bar{\theta}, \bar{\nu}} \rangle  \varphi_{\bar{\theta}, \bar{\nu}}, \]
where $\varphi_{\bar{\theta}, \bar{\nu}}$ was defined by inequality (\ref{base for small ball}).

Now we check the following facts.

\textbf{(a)} By the same arguments as in Subsection \ref{Section of wave packets ddecomposition}, it is not hard to see that $e^{it\Delta}\tilde{f}_{\theta,\nu}$ is essentially supported in $T_{ \theta,  \nu }$ which is defined by inequality (\ref{large tube}).

\textbf{(b)} For each $(\bar{\theta}, \bar{\nu} ) \in \bar{\boldsymbol{\rm T}}$,  $e^{it\Delta}\tilde{f}_{\bar{\theta},\bar{\nu}}$ is essentially supported in $T_{\bar{\theta}, \bar{\nu}}$ (see (\ref{small tube})).

\textbf{(c)} For each $(\theta, \nu) \in \boldsymbol{\rm T}$, we have $\|f_{\theta, \nu}\|_{L^{2}} = \|\tilde{f}_{\theta, \nu}\|_{L^{2}}.$

\textbf{(d)} For each $(\bar{\theta}, \bar{\nu} ) \in \bar{\boldsymbol{\rm T}}$, it holds $\| f_{\bar{\theta}, \bar{\nu}}\|_{L^{2}}=\|\tilde{f}_{\bar{\theta}, \bar{\nu}}\|_{L^{2}}.$

 Denote
 \[\tilde{f}_{j,trans} = \sum_{(\theta, \nu) \in T_{j,trans}}\tilde{f}_{\theta, \nu}; \quad \quad \tilde{f}_{j,trans,b} = \sum_{(\bar{\theta}, \bar{\nu}) \in \bar{T}_{Z+b}}\tilde{f}_{\bar{\theta}, \bar{\nu}}.\]
By facts (a) and (b), we can apply the result of \cite[Lemma 7.4 and Lemma 7.5]{Guth2} to obtain that
 \begin{align}\label{result1}
 \sum_{b}\|\tilde{f}_{j,trans,b}\|_{L^{2}}^{2} \le \| \tilde{f}_{j,trans}\|_{L^{2}}^{2},
 \end{align}
 and
  \begin{align}\label{result2}
\mathop {\max }_{b}\|\tilde{f}_{j,trans,b}\|_{L^{2}}^{2} \le {R^{O\left( {{\delta _2}} \right)}}{\left( {\frac{R}{\rho }} \right)^{ - 1/2}} \|\tilde{f}_{j,trans}\|_{L^{2}}^{2}.
 \end{align}

Inequalities (\ref{Eq621}) and (\ref{Eq620}) can be implied by combining  inequalities (\ref{result1}), (\ref{result2}) and (\ref{Eq54}) with
\[ \quad \quad \|f_{j,trans}\|_{L^{2}} = \|\tilde{f}_{j,trans}\|_{L^{2}}, \quad \quad \|f_{j,trans,b}\|_{L^{2}} = \|\tilde{f}_{j,trans,b}\|_{L^{2}},\]
which follow from facts (c) and (d).

\section*{Conflict of interest}
 On behalf of all authors, the corresponding author states that there is no conflict of interest.

\begin{flushleft}
\vspace{0.3cm}\textsc{Wenjuan Li\\School of Mathematics and Statistics\\Northwestern Polytechnical University\\710129\\Xi'an, People's Republic of China}

\vspace{0.3cm}\textsc{Huiju Wang (Corresponding author)\\School of Mathematical Sciences\\University of Chinese Academy of Sciences\\100049\\Beijing, People's Republic of China}

\end{flushleft}

\end{document}